\numberwithin{equation}{section}
\DeclareMathOperator{\supp}{supp}
\begin{document}

\baselineskip 16.1pt \hfuzz=6pt

\theoremstyle{plain}
\newtheorem{theorem}{Theorem}[section]
\newtheorem{prop}[theorem]{Proposition}
\newtheorem{lemma}[theorem]{Lemma}
\newtheorem{corollary}[theorem]{Corollary}
\newtheorem{example}[theorem]{Example}

\theoremstyle{definition}
\newtheorem{definition}[theorem]{Definition}
\newtheorem{remark}[theorem]{Remark}

\renewcommand{\theequation}
{\thesection.\arabic{equation}}

\allowdisplaybreaks

\newcommand{\XX}{X}
\newcommand{\GG}{\mathop G \limits^{    \circ}}
\newcommand{\GGs}{{\mathop G\limits^{\circ}}}
\newcommand{\GGtheta}{{\mathop G\limits^{\circ}}_{\theta}}
\newcommand{\xoneandxtwo}{X_1\times\mathcal X_2}

\newcommand{\GGp}{{\mathop G\limits^{\circ}}}

\newcommand{\GGpp}{{\mathop G\limits^{\circ}}_{\theta_1,\theta_2}}

\newcommand{\e}{\varepsilon}
\newcommand{\bmo}{{\rm BMO}}
\newcommand{\vmo}{{\rm VMO}}
\newcommand{\cmo}{{\rm CMO}}
\newcommand{\Z}{\mathbb{Z}}
\newcommand{\N}{\mathbb{N}}
\newcommand{\R}{\mathbb{R}}
\newcommand{\C}{\mathbb{C}}

\newcommand{\dxone}{d\mu(x_{1})}
\newcommand{\dxtwo}{d\mu(x_{2})}
\newcommand{\dyone}{d\mu(y_{1})}
\newcommand{\dytwo}{d\mu(y_{2})}
\newcommand{\dzone}{d\mu(z_{1})}
\newcommand{\dztwo}{d\mu(z_{2})}
\newcommand{\dyonep}{d\mu(y_{1}^{'})}
\newcommand{\dytwop}{d\mu(y_{2}^{'})}



\newcommand{\hardy}{H^{1}(\XX\times\XX)}

\newcommand{\poissonone}{\frac{\displaystyle 2^{-k_1\epsilon}}{\displaystyle (2^{-k_1}+\rho(x_1,y_1))^{1+\epsilon}}}
\newcommand{\poissontwo}{\frac{\displaystyle 2^{-k_2\epsilon}}{\displaystyle (2^{-k_2}+\rho(x_2,y_2))^{1+\epsilon}}}

\newcommand{\smoothxone}{\bigg(\frac{\displaystyle \rho(x_1,x_1^{'})}{\displaystyle (2^{-k_1}+\rho(x_1,y_1))}\bigg)^{\epsilon}}
\newcommand{\smoothxtwo}{\bigg(\frac{\displaystyle \rho(x_2,x_2^{'})}{\displaystyle (2^{-k_2}+\rho(x_2,y_2))}\bigg)^{\epsilon}}
\newcommand{\smoothyone}{\bigg(\frac{\displaystyle \rho(y_1,y_1^{'})}{\displaystyle (2^{-k_1}+\rho(x_1,y_1))}\bigg)^{\epsilon}}
\newcommand{\smoothytwo}{\bigg(\frac{\displaystyle \rho(y_2,y_2^{'})}{\displaystyle (2^{-k_2}+\rho(x_2,y_2))}\bigg)^{\epsilon}}

\newcommand{\conditionxone}{\rho(x_1,x_1^{'})\leq \frac{1}{\displaystyle 2A}(2^{-k_1}+\rho(x_1,y_1))}
\newcommand{\conditionxtwo}{\rho(x_2,x_2^{'})\leq \frac{1}{\displaystyle 2A}(2^{-k_1}+\rho(x_2,y_2))}
\newcommand{\conditionyone}{\rho(y_1,y_1^{'})\leq \frac{1}{\displaystyle 2A}(2^{-k_2}+\rho(x_1,y_1))}
\newcommand{\conditionytwo}{\rho(y_2,y_2^{'})\leq \frac{1}{\displaystyle 2A}(2^{-k_2}+\rho(x_2,y_2))}

\newcommand{\xy}{(x_1,x_2,y_1,y_2)}
\newcommand{\xyp}{(x_1,x_2,y_{\tau_1^{'}}^{k_1^{'},v_1^{'}},y_{\tau_2^{'}}^{k_2^{'},v_2^{'}})}



\newcommand{\orth}{2^{-|k_1-k_1^{'}|\epsilon^{'}}2^{-|k_2-k_2^{'}|\epsilon^{'}}}
\newcommand{\orthpone}{\frac{\displaystyle 2^{-(k_1\wedge k_1^{'})\epsilon}}{\displaystyle (2^{-(k_1\wedge k_1^{'})}+\rho(x_1,y_1))^{1+\epsilon}}}
\newcommand{\orthptwo}{\frac{\displaystyle 2^{-(k_2\wedge k_2^{'})\epsilon}}{\displaystyle (2^{-(k_2\wedge k_2^{'})}+\rho(x_2,y_2))^{1+\epsilon}}}
\newcommand{\orthR}{
\bigg( \frac{\displaystyle \mu(\Qone)}{\displaystyle\mu(\Qonep)}
\wedge \frac{\displaystyle \mu(\Qonep)}{\displaystyle\mu(\Qone)}
\bigg)^{\epsilon^{'}} \bigg( \frac{\displaystyle
\mu(\Qtwo)}{\displaystyle\mu(\Qtwop)} \wedge \frac{\displaystyle
\mu(\Qtwop)}{\displaystyle\mu(\Qtwo)} \bigg)^{\epsilon^{'}} }
\newcommand{\orthQone}{\bigg( \frac{\displaystyle \mu(\Qone)}{\displaystyle\mu(\Qonep)} \wedge \frac{\displaystyle \mu(\Qonep)}{\displaystyle\mu(\Qone)} \bigg)^{\epsilon^{'}}}
\newcommand{\orthQtwo}{\bigg( \frac{\displaystyle \mu(\Qtwo)}{\displaystyle\mu(\Qtwop)} \wedge \frac{\displaystyle \mu(\Qtwop)}{\displaystyle\mu(\Qtwo)} \bigg)^{\epsilon^{'}}}
\newcommand{\orthponeR}{\frac{\displaystyle (\muQone\vee\muQonep)^{\epsilon}}{\displaystyle (\muQone\vee\muQonep+dist(\Qone,\Qonep))^{1+\epsilon}}}
\newcommand{\orthptwoR}{\frac{\displaystyle (\muQtwo\vee\muQtwop)^{\epsilon}}{\displaystyle (\muQtwo\vee\muQtwop+dist(\Qtwo,\Qtwop))^{1+\epsilon}}}
\newcommand{\orthRfinal}{
\bigg(\frac{\displaystyle\mu(\Qone)}{\displaystyle\mu(\Qonep)}\wedge\frac{\displaystyle\mu(\Qonep)}{\displaystyle\mu(\Qone)}\bigg)^{1+\epsilon^{'}}
\bigg(\frac{\displaystyle\mu(\Qtwo)}{\displaystyle\mu(\Qtwop)}\wedge\frac{\displaystyle\mu(\Qtwop)}{\displaystyle\mu(\Qtwo)}\bigg)^{1+\epsilon^{'}}
}
\newcommand{\orthQonefinal}{\bigg( \frac{\displaystyle \mu(\Qone)}{\displaystyle\mu(\Qonep)} \wedge \frac{\displaystyle \mu(\Qonep)}{\displaystyle\mu(\Qone)} \bigg)^{1+\epsilon^{'}}}
\newcommand{\orthQtwofinal}{\bigg( \frac{\displaystyle \mu(\Qtwo)}{\displaystyle\mu(\Qtwop)} \wedge \frac{\displaystyle \mu(\Qtwop)}{\displaystyle\mu(\Qtwo)} \bigg)^{1+\epsilon^{'}}}
\newcommand{\orthponeRfinal}{\frac{\displaystyle (\muQone\vee\muQonep)^{1+\epsilon}}{\displaystyle (\muQone\vee\muQonep+dist(\Qone,\Qonep))^{1+\epsilon}}}
\newcommand{\orthptwoRfinal}{\frac{\displaystyle (\muQtwo\vee\muQtwop)^{1+\epsilon}}{\displaystyle (\muQtwo\vee\muQtwop+dist(\Qtwo,\Qtwop))^{1+\epsilon}}}



\newcommand{\yone}{y_{\tau_1}^{k_1}}
\newcommand{\ytwo}{y_{\tau_2}^{k_2}}

\newcommand{\qone}{Q_{\tau_1}^{k_1}}
\newcommand{\qtwo}{Q_{\tau_2}^{k_2}}

\newcommand{\mR}{m_{\qone\times\qtwo}}
\newcommand{\muqone}{\mu(\qone)}
\newcommand{\muqtwo}{\mu(\qtwo)}

\newcommand{\pone}{\frac{\displaystyle 2^{-k_1\epsilon}}{\displaystyle (2^{-k_1}+\rho(x_1,y_{\tau_1}^{k_1}))^{1+\epsilon}}}
\newcommand{\ptwo}{\frac{\displaystyle 2^{-k_2\epsilon}}{\displaystyle (2^{-k_2}+\rho(x_2,y_{\tau_2}^{k_2}))^{1+\epsilon}}}

\newcommand{\ponep}{\frac{\displaystyle 2^{-k_1\epsilon}}{\displaystyle (2^{-k_1}+\rho(x_1^{'},y_{\tau_1}^{k_1}))^{1+\epsilon}}}
\newcommand{\ptwop}{\frac{\displaystyle 2^{-k_2\epsilon}}{\displaystyle (2^{-k_2}+\rho(x_2^{'},y_{\tau_2}^{k_2}))^{1+\epsilon}}}

\newcommand{\poney}{\frac{\displaystyle 2^{-k_1\epsilon}}{\displaystyle (2^{-k_1}+\rho(y_1,y_{\tau_1}^{k_1}))^{1+\epsilon}}}
\newcommand{\ptwoy}{\frac{\displaystyle 2^{-k_2\epsilon}}{\displaystyle (2^{-k_2}+\rho(y_2,y_{\tau_2}^{k_2}))^{1+\epsilon}}}

\newcommand{\sone}{ \bigg(\frac{\displaystyle \rho(x_1,y_1)}{\displaystyle 2^{-k_1}+\rho(x_1,y_{\tau_1}^{k_1})} \bigg)^{\epsilon^{'}} }
\newcommand{\stwo}{ \bigg(\frac{\displaystyle \rho(x_2,y_2)}{\displaystyle 2^{-k_2}+\rho(x_2,y_{\tau_2}^{k_2})} \bigg)^{\epsilon^{'}} }

\newcommand{\pdone}{\frac{\displaystyle 2^{-j_1\epsilon}}{\displaystyle (2^{-j_1}+\rho(x_1,y_1))^{1+\epsilon}}}
\newcommand{\pdtwo}{\frac{\displaystyle 2^{-j_2\epsilon}}{\displaystyle (2^{-j_2}+\rho(x_2,y_2))^{1+\epsilon}}}

\newcommand{\pdonep}{\frac{\displaystyle 2^{-j_1\epsilon}}{\displaystyle (2^{-j_1}+\rho(x_1,\yone))^{1+\epsilon}}}
\newcommand{\pdtwop}{\frac{\displaystyle 2^{-j_2\epsilon}}{\displaystyle (2^{-j_2}+\rho(x_2,\ytwo))^{1+\epsilon}}}


\pagestyle{myheadings}\markboth{\rm\small Yongsheng Han, Ji Li
and Lesley A.~Ward}{\rm\small Product $H^p$, $\cmo^p$ and
$\vmo$ on spaces of homogeneous type}

\title[Hardy space theory on spaces of homogeneous type via
wavelet bases]{Hardy space theory on spaces of homogeneous type\\
via orthonormal wavelet bases}

\author{Yongsheng Han, Ji Li and Lesley A.~Ward}

\thanks{The second and third authors are supported by the
Australian Research Council under Grant No.~ARC-DP120100399.
The second author was also supported by the NNSF of China Grant
No.~11001275, by China Postdoctoral Science Foundation funded
project Grant No.~201104383, and by the Fundamental Research
Funds for the Central Universities, Grant No.~11lgpy56. Parts
of this paper were written while the second author was a member
of the Department of Mathematics, Sun Yat-sen University,
Guangzhou, 510275, P.R. China.}

\subjclass[2010]{Primary 42B35; Secondary 43A85, 42B25, 42B30}

\keywords{Spaces of homogeneous type, orthonormal basis, test
function space, distributions, Calder\'on reproducing formula,
wavelet expansion, product Hardy space, Carleson measure space,
BMO, VMO, duality.}

\begin{abstract}
    In this paper, using the remarkable orthonormal wavelet
    basis constructed recently by Auscher and Hyt\"onen, we
    establish the theory of product Hardy spaces on spaces
    ${\widetilde X} = X_1\times X_2\times\cdot \cdot\cdot\times
    X_n$, where each factor $X_i$ is a space of homogeneous
    type in the sense of Coifman and Weiss. The main tool we
    develop is the Littlewood--Paley theory on $\widetilde X$,
    which in turn is a consequence of a corresponding theory on
    each factor space. We define the square function for this
    theory in terms of the wavelet coefficients. The Hardy
    space theory developed in this paper includes
    product~$H^p$, the dual $\cmo^p$ of $H^p$ with the special
    case $\bmo = \cmo^1$, and the predual $\vmo$ of $H^1$. We
    also use the wavelet expansion to establish the
    Calder\'on--Zygmund decomposition for product $H^p$, and
    deduce an interpolation theorem. We make no additional
    assumptions on the quasi-metric or the doubling measure for
    each factor space, and thus we extend to the full generality
    of product spaces of homogeneous type the aspects of both
    one-parameter and multiparameter theory involving the
    Littlewood--Paley theory and function spaces. Moreover, our
    methods would be expected to be a powerful tool for
    developing wavelet analysis on spaces of homogeneous type.
\end{abstract}

\maketitle

\tableofcontents

\section{Introduction}\label{sec:introduction}
\setcounter{equation}{0} We work on wavelet analysis in the
setting of product spaces of homogeneous type in the sense of
Coifman and Weiss~\cite{CW1}, where each factor is of the form
$(X,d,\mu)$ with $d$ a quasi-metric and $\mu$ a doubling
measure. We make no additional assumptions on $d$ or $\mu$.
After recalling the systems of dyadic cubes of Hyt\"onen and
Kairema~\cite{HK} and the orthonormal wavelet basis of Auscher
and Hyt\"onen~\cite{AH}, we define an appropriate class of test
functions and the induced class of distributions on spaces of
homogeneous type. We prove that the Auscher--Hyt\"onen wavelets
are test functions, and that the Auscher--Hyt\"onen reproducing
formula for~$L^p$ also holds for our test functions and
distributions. We show that the kernels of certain wavelet
operators~$D_k$ defined in terms of these wavelets satisfy
decay and smoothness conditions similar to those of our test
functions. These facts play a crucial role in our development
of the Littlewood--Paley theory and function spaces, later in
our paper.

We define the discrete Littlewood--Paley square function via
the Auscher--Hyt\"onen wavelet coefficients. In order to
establish its $L^p$-boundedness, we also introduce a different,
\emph{continuous} Littlewood--Paley square function defined in
terms of the wavelet operators~$D_k$. We prove that the
discrete and continuous square functions have equivalent norms,
by first establishing some inequalities of Plancherel--P\'olya
type. We develop this Littlewood--Paley theory first in the
one-parameter setting, and then for product spaces.

For $p$ in a range that depends on the upper dimensions of the
spaces $X_1$ and $X_2$ and strictly includes the range $1 \leq
p < \infty$, we define the product Hardy space $H^p(X_1\times
X_2)$ as the class of distributions whose discrete
Littlewood--Paley square functions are in $L^p(X_1 \times
X_2)$. (Here we write only two factors, for simplicity, but our
results extend to $n$ factors.) For $p$ in this range with $p
\leq 1$, we define the Carleson measure space $\cmo^p(X_1
\times X_2)$ via the Auscher--Hyt\"onen wavelet coefficients,
as a subset of our space of distributions, and prove the
duality $(H^p(X_1\times X_2))' = \cmo^p(X_1 \times X_2)$ by
means of sequence spaces that form discrete analogues of these
spaces. This duality result includes the special case
$(H^1(X_1\times X_2))' = \bmo(X_1 \times X_2)$. We define the
space $\vmo(X_1 \times X_2)$ of functions of vanishing mean
oscillation, also in terms of the Auscher--Hyt\"onen wavelet
coefficients, and prove the duality $(\vmo(X_1\times X_2))' =
H^1(X_1 \times X_2)$ by adapting an argument of
Lacey--Terwilleger--Wick~\cite{LTW}. Using the wavelet
expansion, we establish the Calder\'on--Zygmund decomposition
for functions in our Hardy spaces $H^p(X_1 \times X_2)$, again
for a suitable range of~$p$ that strictly includes $1 \leq p <
\infty$. As a consequence, we deduce an interpolation theorem
for linear operators from these product Hardy spaces to
Lebesgue spaces on $X_1 \times X_2$.

We now set our work in context. As Meyer remarked in his
preface to~\cite{DH}, \emph{``One is amazed by the dramatic
changes that occurred in analysis during the twentieth century.
In the 1930s complex methods and Fourier series played a
seminal role. After many improvements, mostly achieved by the
Calder\'on--Zygmund school, the action takes place today on
spaces of homogeneous type. No group structure is available,
the Fourier transform is missing, but a version of harmonic
analysis is still present. Indeed the geometry is conducting
the analysis.''} Spaces of homogeneous type were introduced by
Coifman and Weiss in the early 1970s, in~\cite{CW1}. We say
that $(\XX,d,\mu)$ is a {\it space of homogeneous type} in the
sense of Coifman and Weiss if $d$ is a quasi-metric on~$\XX$
and $\mu$ is a nonzero measure satisfying the doubling
condition. A \emph{quasi-metric}~$d$ on a set~$\XX$ is a
function $d:\XX\times\XX\longrightarrow[0,\infty)$ satisfying
(i) $d(x,y) = d(y,x) \geq 0$ for all $x$, $y\in\XX$; (ii)
$d(x,y) = 0$ if and only if $x = y$; and (iii) the
\emph{quasi-triangle inequality}: there is a constant $A_0\in
[1,\infty)$ such that for all $x$, $y$, $z\in\XX$, 
\begin{eqnarray}\label{eqn:quasitriangleineq}
    d(x,y)
    \leq A_0 [d(x,z) + d(z,y)].
\end{eqnarray}

We define the quasi-metric ball by $B(x,r) := \{y\in X: d(x,y)
< r\}$ for $x\in X$ and $r > 0$. Note that the quasi-metric, in
contrast to a metric, may not be H\"older regular and
quasi-metric balls may not be open.
We say that a nonzero measure $\mu$ satisfies the
\emph{doubling condition} if there is a constant $C_\mu$ such
that for all $x\in\XX$ and $r > 0$,
\begin{eqnarray}\label{doubling condition}
   \mu(B(x,2r))
   \leq C_\mu \mu(B(x,r))
   < \infty.
\end{eqnarray}
We point out that the doubling condition (\ref{doubling
condition}) implies that there exist positive constants
$\omega$ (the \emph{upper dimension} of~$\mu$) and $C$ such
that for all $x\in X$, $\lambda\geq 1$ and $r > 0$,
\begin{eqnarray}\label{upper dimension}
    \mu(B(x, \lambda r))
    \leq C\lambda^{\omega} \mu(B(x,r)).
\end{eqnarray}

Spaces of homogeneous type include many special spaces in
analysis and have many applications in the theory of singular
integrals and function spaces; see~\cite{CW2,NS1,NS2} for more
detail. For instance, Coifman and Weiss introduced the atomic
Hardy space on $(\XX,d,\mu)$ and proved that if $T$ is a
Calder\'on--Zygmund singular integral operator and is bounded
on $L^2(X)$, then $T$ is bounded from $H^p(X)$ to $L^p(X)$ for
some $p\leq 1$.

However, for some applications, additional assumptions were
imposed on these general spaces of homogeneous type, because as
noted above the original quasi-metric~$d$ may have no
regularity and quasi-metric balls, even Borel sets, may not be
open. For example, to establish the maximal function
characterization of the Hardy space introduced by Coifman and
Weiss, Mac\'ias and Segovia proved in~\cite{MS} that one can
replace the quasi-metric~$d$ by another quasi-metric $d'$
on~$X$ such that the topologies induced on~$\XX$ by $d$
and~$d'$ coincide, and $d'$ has the following regularity
property:
\begin{eqnarray}\label{smooth metric}
    |d'(x,y) - d'(x',y)|
    \le C_0 \, d'(x,x')^\theta \,
        [d'(x,y) + d'(x',y)]^{1 - \theta}
\end{eqnarray}
for some constant~$C_0,$ some regularity exponent
$\theta\in(0,1)$, and for all $x$, $x'$, $y\in X$. Moreover, if
quasi-metric balls are defined by this new quasi-metric $d'$,
that is, $B'(x,r) := \{y\in X: d'(x,y) < r\}$ for $r > 0$, then
the measure $\mu$ satisfies the following property:
\begin{eqnarray}\label{regular}
    \mu(B'(x,r))\sim r.
\end{eqnarray}
Note that property~\eqref{regular} is much stronger than the
doubling condition. Mac\'{i}as and Segovia established the
maximal function characterization for Hardy spaces $H^p(\XX)$
with $(1 + \theta)^{-1} < p \leq 1$, on spaces of homogeneous
type~$(\XX,d',\mu)$ that satisfy the regularity
condition~\eqref{smooth metric} on the metric~$d'$ and
property~\eqref{regular} on the measure~$\mu$; see~\cite{MS}.

A fundamental result for these spaces $(\XX, d', \mu)$ is the
$T(b)$ theorem of David--Journ\'{e}--Semmes~\cite{DJS}. The
crucial tool in the proof of the $T(b)$ theorem is the
existence of a suitable approximation to the identity. The
construction of such an approximation to the identity is due to
Coifman. More precisely, take a smooth function $h$ defined on
$[0, \infty)$, equal to~1 on $[1, 2]$, and equal to 0 on $[0,
1/2]$ and on $[4, \infty)$. Let $T_k$ be the operator with
kernel $2^kh(2^k d'(x,y))$. The property~\eqref{regular} of the
measure~$\mu$ implies that $C^{-1} \leq T_k(1) \leq C$ for some
$C$ with $0 < C < \infty$. Let $M_k$ and $W_k$ be the operators
of multiplication by $1/T_k(1)$ and $\{T_k[1/T_k(1)]\}^{-1}$,
respectively, and let $S_k := M_kT_kW_kT_kM_k$. It is clear
that the regularity property~\eqref{smooth metric} on the
metric~$d'$ and property~\eqref{regular} on the measure $\mu$
imply that the kernel $S_k(x,y)$ of~$S_k$ satisfies the
following conditions: for some constants $C > 0$ and $\e > 0$,
\begin{eqnarray*}
    &\textup{(i)}& S_k(x,y) = 0 {\rm\ for\ } d'(x,y) \geq C2^{-k},
        {\rm\ and\ } \| S_k\|_{\infty} \leq C2^k,\\
    &\textup{(ii)}& |S_k(x,y)-S_k(x',y)|
        \leq C2^{k(1+\e)}d'(x,x')^{\e},\\
    &\textup{(iii)}& |S_k(x,y)-S_k(x,y')|
        \leq C2^{k(1+\e)}d'(y,y')^{\e}, \quad\text{and}\\
    &\textup{(iv)}& \int_{X}S_k(x,y) \, d\mu(y)
        = 1
        = \int_{X}S_k(x,y) \, d\mu(x).
\end{eqnarray*}

Let $D_k := S_{k+1} - S_k$. In \cite{DJS}, the
Littlewood--Paley theory for $L^p(X)$, $1 < p < \infty$, was
established; namely, if $\mu(X) = \infty$ and $\mu(B(x,r))>0$
for all $x\in X$ and $r>0,$ then for each $p$ with $1 < p <
\infty$ there exists a positive constant $C_p$ such that
\[
    C_p^{-1}\|f\|_p
    \leq \big\|\big\{\sum_{k}|D_k(f)|^2\big\}^{\frac{1}{2}}\big\|_p
    \leq C_p\| f\|_p.
\]
The above estimates were the key tool for proving the
$T(1)$~theorem on $(\XX, d', \mu)$; see \cite{DJS} for more
detail. Later, the Calder\'on reproducing formula, test
function spaces and distributions, the Littlewood--Paley
theory, and function spaces on $(X, d', \mu)$ were developed in
\cite{H1}, \cite{HS} and~\cite{H2}. However, in those works
wavelet bases were replaced by frames, which in many
applications offer the same service; see \cite{DH} for more
details.

In \cite{NS1}, Nagel and Stein developed the product $L^p$ $(1
< p < \infty)$ theory in the setting of the
Carnot--Carath\'eodory spaces formed by vector fields
satisfying H\"{o}rmander's finite rank condition. The
Carnot--Carath\'eodory spaces studied in~\cite{NS1} are spaces
of homogeneous type with a smooth quasi-metric $d$ and a
measure~$\mu$ satisfying the conditions $\mu(B(x, sr)) \sim
s^{m+2} \mu(B(x,r))$ for $s\geq 1$ and $\mu(B(x, sr)) \sim
s^4\mu(B(x,r))$ for $s\leq 1.$ These conditions on the measure
are weaker than property~\eqref{regular} but are still stronger
than the original doubling condition~\eqref{doubling
condition}. In~\cite{HMY}, motivated by the work of Nagel and
Stein, Hardy spaces were developed on spaces of homogeneous
type with a regular quasi-metric and a measure satisfying the
above conditions. Recently, in~\cite{HLL2}, it was observed
that Coifman's construction of an approximation to the identity
still works on spaces of homogeneous type $(X, d, \mu)$ where
the quasi-metric $d$ satisfies the H\"older regularity
property~\eqref{smooth metric} but the measure $\mu$ only needs
to be doubling. Specifically, the kernel $S_k(x,y)$ of the
approximation to the identity $S_k$ satisfies the following
conditions: there exist constants $C > 0$ and $\theta > 0$ such
that for all $k\in\mathbb{Z}$ and all $x$, $x'$, $y$, $y'\in
X$,
\begin{eqnarray*}
    &\textup{(i)}& S_k(x,y)=0\ {\rm for}\ d(x,y)\geq C2^{-k},\ {\rm and}\
        |S_k(x,y)| \leq C \, \frac{1}{V_{2^{-k}}(x)+V_{2^{-k}}(y)},\\
    &\textup{(ii)}& |S_k(x,y)-S_k(x',y)| \leq
        C2^{k\theta}d(x,x')^{\theta} \,
        \frac{1}{V_{2^{-k}}(x)+V_{2^{-k}}(y)},\\
    &\textup{(iii)}& {\rm property}\ \textup{(ii)}\
        {\rm also\ holds\ with\ } x\
        {\rm and}\ y\ {\rm interchanged},\\
    &\textup{(iv)}& \big|[S_k(x,y)-S_k(x,y')] - [S_k(x',y)-S_k(x',y')]\big|\\
        &&\hskip1cm\leq
        C2^{2k\theta}d(x,x')^{\theta}d(y,y')^{\theta} \,
        \frac{1}{V_{2^{-k}}(x)+V_{2^{-k}}(y)}, \quad\text{and}\nonumber\\
    &\textup{(v)}& \int_X S_k(x,y) \, d\mu(y)
        = 1
        = \int_X S_k(x,y) \, d\mu(x),
\end{eqnarray*}
where $V_r(x) := \mu(B(x,r))$.

For spaces of homogeneous type $(X, d, \mu)$ with some
additional assumptions, the one-parameter and product Hardy
spaces were developed in~\cite{HMY} and ~\cite{HLL2},
respectively.

A natural question arises: can one develop the theory of the
spaces $H^p$ and $\bmo$ on spaces of homogeneous type in the
sense of Coifman and Weiss, with only the original
quasi-metric~$d$ and a doubling measure~$\mu$?

Recently, Auscher and Hyt\"onen constructed an orthonormal
wavelet basis with H\"older regularity and exponential decay
for spaces of homogeneous type~\cite{AH}. This result is
remarkable since there are no additional assumptions other than
those defining spaces of homogeneous type in the sense of
Coifman and Weiss.

Auscher and Hyt\"onen's orthonormal wavelet bases open the door
for developing wavelet analysis on spaces of homogeneous type
in the sense of Coifman and Weiss. Motivated by Auscher and
Hyt\"onen's work, the purpose of the current paper is to answer
the above question. Specifically, we will employ a unified
approach to establish a product Hardy space theory on
${\widetilde X} = X_1 \times X_2\times\cdots\times X_n$, where
each factor is a space of homogeneous type in the sense of
Coifman and Weiss. It was well known that any analysis of the
product Hardy space on ${\widetilde X} = X_1\times
X_2\times\cdots\times X_n$ must be based, to start with, on a
formulation on each factor~$X_j.$ The Hardy space on $X_j$ is
then defined by developing the Littlewood--Paley theory
on~$X_j$. Our approach includes the following five steps.

\textbf{1. Introduce the spaces of test functions and
distributions.} In the classical case, the relevant spaces of
test functions and distributions are just Schwartz test
functions and the class of tempered distributions. In order to
study the Calder\'on reproducing formula associated with the
$T(b)$ theorem, the new test function and distribution spaces
were first introduced on Euclidean spaces in~\cite{H1}, and on
spaces of homogeneous type, where the quasi-metric~$d$
satisfies the H\"older regularity condition~\eqref{smooth
metric} and the measure $\mu$ satisfies
condition~\eqref{regular}, in~\cite{HS}. See \cite{HMY}
and~\cite{HLL2}, respectively, for spaces of test functions and
distributions on spaces of homogeneous type with additional
assumptions. In this paper, we will introduce test functions
and distributions on spaces of homogeneous type in the sense of
Coifman and Weiss. These spaces include all those considered
previously.

\textbf{2. Establish the wavelet reproducing formula on test
functions and on distributions.} The classical Calder\'on
reproducing formula was first used by Calder\'on in \cite{C}.
Such a reproducing formula is a powerful tool, particularly in
the theory of wavelet analysis. See~\cite{M1}. Using Coifman's
decomposition of the identity operator, as mentioned above,
David, Journ\'e and Semmes \cite{DJS} gave a Calder\'on-type
reproducing formula which was a key tool in proving the $T(b)$
theorem on $\mathbb{R}^n$ and the $T(1)$ theorem on spaces of
homogeneous type with the conditions~\eqref{smooth metric}
and~\eqref{regular}. See \cite{HMY} and~\cite{HLL2} for the
continuous and discrete Calder\'on reproducing formulas on
spaces of homogeneous type with additional assumptions. As
mentioned above, Auscher and Hyt\"onen established a wavelet
expansion on $L^2(X)$ (and on $L^p(X)$, $1 < p < \infty$). For
our purposes, we will show that the wavelet expansion
constructed in~\cite{AH} also converges in both the test
function and distribution spaces.

As Meyer pointed out in~\cite{M1}, \emph{``The wavelet bases
are universally applicable: `everything that comes to hand',
whether function or distribution, is the sum of a wavelet
series and, contrary to what happens with Fourier series, the
coefficients of the wavelet series translate the properties of
the function or distribution simply, precisely and
faithfully.''} In particular, our results provide such wavelet
expansions for test functions and for distributions, and are
used below to introduce square functions and develop the
Littlewood--Paley theory.

\textbf{3. Develop the Littlewood--Paley theory.} Based on the
wavelet expansion provided in~\cite{AH}, one can formally
introduce two kinds of square functions, namely, the discrete
version defined in terms of wavelet coefficients and the
continuous version defined via wavelet operators~$D_k$
(different from the operators $D_k = S_{k+1} - S_k$ mentioned
above). To show that the $L^p$ norms of these square functions
are equivalent, for a suitable range of $p$, we need a
Plancherel--P\'olya inequality. The classical
Plancherel--P\'olya inequality says that the $L^p$ norm of a
function~$f$ whose Fourier transform has compact support is
equivalent to the $\ell^p$ norm of the restriction of $f$ to an
appropriate lattice. This kind of inequality was first proved
in \cite{H2} on spaces of homogeneous type with the
conditions~\eqref{smooth metric} and~\eqref{regular}, and in
\cite{HMY} and [HLL2], respectively, for the one-parameter and
multiparameter cases with some additional assumptions. As a
consequence of our Plancherel--P\'olya type inequalities, the
Hardy space on spaces of homogeneous type in the sense of
Coifman and Weiss is well defined. In particular, as in the
classical case, $H^p=L^p$ for $1<p<\infty.$

\textbf{4. Introduce the generalized Carleson measure space.}
It is well known that in the classical one-parameter case, the
space $\bmo$, as the dual of $H^1$, can be characterized by
Carleson measures. Moreover, in~\cite{CF1} Chang and Fefferman
proved that the dual of product $H^1$ is characterized by
product Carleson measures. The generalized Carleson measure
space $\cmo^p$, as the dual of the product $H^p$, was
introduced in \cite{HLL1} and \cite{HLL2} on spaces of
homogeneous type with some additional assumptions. In the
current paper, working in the setting of spaces of homogeneous
type in the sense of Coifman and Weiss with no additional
assumptions, we introduce $\cmo^p$ in terms of wavelet
coefficients, and prove that $\cmo^p$ is the dual of $H^p$. In
particular, $\cmo^1 = \bmo$ is the dual of $H^1$. Moreover, we
also introduce the space $\vmo$ and show that $\vmo$ is the
predual of $H^1$.

\textbf{5. Establish the Calder\'on--Zygmund decomposition.}
The classical Calder\'on--Zygmund decomposition played a
crucial role in developing Calder\'on--Zygmund operator theory.
This decomposition has many applications in harmonic analysis
and partial differential equations. Such a decomposition for
product Euclidean spaces was first provided by Chang and
Fefferman in~\cite{CF2}. The main tool used in \cite{CF2} is
the atomic decomposition. In the current  paper, applying the
wavelet expansion constructed in \cite{AH}, we establish the
Calder\'on--Zygmund decomposition on product $H^p$ on spaces of
homogeneous type with no additional assumptions. As a
consequence, we obtain the interpolation of operators that are
bounded from Hardy spaces to Lebesgue spaces, and of operators
that are bounded on Hardy spaces.

\medskip
We note that in the original work on extending the
Calder\'on--Zygmund theory to spaces of homogeneous
type~$(X,d',\mu)$, the philosophy was as follows: Coifman
constructed the approximations to the identity~$S_k$, which
were used in~\cite{DJS} to define the continuous square
function and to establish the Littlewood--Paley theory. Later
the discrete Calder\'on reproducing formula was introduced and
the Littlewood--Paley theory for the classical function spaces
were established in \cite{H1} and \cite{HS}, respectively. By
contrast, in our setting of $(X,d,\mu)$ with the original
quasi-metric~$d$, we begin with the discrete wavelet
reproducing formula (Theorem~\ref{thm reproducing formula test
function}) and define the discrete square function~$S(f)$ in
terms of wavelet coefficients
(Definition~\ref{def:discrete_square_function}). However, there
does not seem to be a direct proof of the Littlewood--Paley
theory for~$S(f)$. The question then is: how to find a
continuous version of the square function? We introduce a new
continuous square function~$S_c(f)$
(Definition~\ref{def:continuous_square_function}), via certain
wavelet operators~$D_k$ that are expressed in terms of the
Auscher--Hyt\"onen wavelets (Lemma~\ref{lemma another version
of Lemma 9.1}). We prove that $\|S_c(f)\|_p \sim \|f\|_p$ for
$1 < p < \infty$ (Theorem~\ref{theorem P P inequality
one-parameter}), and that $\|S(f)\|_p \sim \|S_c(f)\|_p$ both
for $1 < p < \infty$ and moreover for an additional range of $p
\leq 1$ depending on the upper dimensions~$\omega_i$ of the
factor spaces~$X_i$ and on the H\"older regularity
exponents~$\eta_i$ of the wavelets (Theorem~\ref{theorem
Littlewood Paley}).

We remark that in this paper we concentrate on the product
case. As Nagel and Stein observed in~\cite{NS1}, \emph{``Any
product theory tends to be burdened with notational
complexities.''} For notational simplicity, we have written our
results and proofs for the case of two parameters. However, our
methods also establish the corresponding results for the
product case with $k$ factors, for $k\in\mathbb{N}$. We also
point out that these results extend related previous results
from~\cite{DJS,H1,H2,HLL1,HLL2,HMY,HS} and the references
therein. In those papers either extra assumptions are made on
the quasi-metric and the measure, or the product case is not
covered, or both.

The paper is organized as follows. In
Section~\ref{sec:preliminaries} we briefly recall the systems
of dyadic cubes from~\cite{HK} and the orthonormal bases
from~\cite{AH} on spaces of homogeneous type in the sense of
Coifman and Weiss. In
Section~\ref{sec:testfunctionsdistributions} we introduce the
one-parameter and product test functions in
Definitions~\ref{def-of-test-func-space}
and~\ref{def-of-test-func-space-product}, respectively,
together with the induced classes of distributions. The main
result in this section is Theorem~\ref{thm reproducing formula
test function}, which gives the wavelet reproducing formula for
test functions. In Section~\ref{sec:squarefunctionPP}, the
Littlewood--Paley square functions in terms of the wavelet
coefficients and of the wavelet operators are given in
Definitions~\ref{def:discrete_square_function}
and~\ref{def:continuous_square_function}, respectively. The two
main results here are Theorems~\ref{theorem Littlewood Paley}
and~\ref{theorem P P inequality one-parameter}.
Theorem~\ref{theorem Littlewood Paley} gives the
Littlewood--Paley theory, including the norm equivalence of the
discrete and continuous Littlewood--Paley square functions.
Theorem~\ref{theorem P P inequality one-parameter} gives the
Plancherel--P\'olya inequalities, which are the main tool in
proving Theorem~\ref{theorem Littlewood Paley}. The product
$H^p$, $\cmo^p$, $\bmo$ and $\vmo$ spaces are defined in
Section~\ref{sec:functionspaces} via the orthonormal wavelet
basis. We use the Plancherel--P\'olya inequalities again to
show that these function spaces are well defined. The duality
results are given in Theorem~\ref{thm-duality} for $(H^p)' =
\cmo^p$ and in Theorem~\ref{thm-duality 2} for $(\vmo)' = H^1$.
Finally, in Section~\ref{sec:CZdecomposition} we prove the
Calder\'on--Zygmund decomposition and the interpolation theorem
for Hardy spaces in Theorems~\ref{theorem C-Z decomposition for
Hp} and~\ref{theorem interpolation Hp}, respectively.

\section{Preliminaries}\label{sec:preliminaries}
\setcounter{equation}{0}

We are interested in establishing the Hardy space theory on
spaces ${\widetilde X} = X_1\times X_2\times\cdot
\cdot\cdot\times X_n$. Each factor is a space of homogeneous
type in the sense of Coifman and Weiss. We will first need to
develop a Littlewood--Paley theory for each factor $X_i$,
$1\leq i\leq n$, and then pass to the corresponding product
theory. In this paper, we always assume that $\mu(X_i) =
\infty$ and that $\mu(B(x,r)) > 0$ for all $r > 0$ and $x\in
X_i$, for $1\leq i\leq n$. As usual, $C$ denotes a constant
that is independent of the essential variables, and that may
differ from line to line.

In this section we recall the systems of dyadic cubes, in a
geometrically doubling metric space, constructed by Hyt\"onen
and Kairema~\cite{HK}; and the orthonormal wavelet basis, on
spaces of homogeneous type, constructed by Auscher and
Hyt\"onen~\cite{AH,AH2}. See also \cite{HK}, \cite{AH} and the
references therein for the history and applications of various
versions of dyadic cubes.



\subsection{Systems of dyadic cubes in a geometrically doubling metric
space}\label{sec:dyadiccubes}
\medskip

Let $d$ be a quasi-metric (defined in the Introduction) on a
set~$X$. The quasi-metric space $(X,d)$ is assumed to have the
following {\it geometric doubling property}: there exists a
positive integer $A_1\in\mathbb{N}$ such that for each $x\in X$
and each $r > 0$, the ball $B(x,r) := \{y\in X : d(y,x) < r\}$
can be covered by at most $A_1$ balls $B(x_i,r/2)$. It is shown
in~\cite{CW1} that spaces of homogeneous type have the
geometric doubling property.

As usual, a set $\Omega\subset X$ is {\it open} if for every
$x\in\Omega$ there exists $\varepsilon > 0$ such that
$B(x,\varepsilon)\subset \Omega$, and a set is {\it closed} if
its complement is open.

\begin{theorem}[\cite{HK} Theorem 2.2]\label{theorem dyadic cubes}
    Suppose that constants $0 < c_0 \leq C_0 < \infty$ and
    $\delta\in(0,1)$ satisfy
    \begin{eqnarray}\label{test condition for cubes}
        12A_0^3C_0\delta
        \leq c_0.
    \end{eqnarray}
    Given a set of points $\{z_\alpha^k\}_{\alpha}$, $\alpha
    \in \mathscr{A}_k$, for every $k\in\mathbb{Z}$, with the
    properties that
    \begin{eqnarray}\label{sparse property}
        d(z_\alpha^k,z_\beta^k)
        \geq c_0\delta^k\
            (\alpha\not=\beta),\hskip1cm
        \min_\alpha d(x,z_\alpha^k)
        < C_0\delta^k,
            \qquad \text{for all $x\in X$},
    \end{eqnarray}
    we can construct families of sets $\widetilde{Q}_\alpha^k
    \subseteq Q_\alpha^k \subseteq
    \overline{Q}_\alpha^k$ (called open, half-open and closed
    \emph{dyadic cubes}), such that:
    \begin{eqnarray}
        && \widetilde{Q}_\alpha^k \mbox{ and } \overline{Q}_\alpha^k
            \mbox{ are the interior and closure of } Q_\alpha^k, \mbox{ respectively};\\
        &&\mbox{if } \ell\geq k, \mbox{ then either } Q_\beta^\ell\subseteq
            Q_\alpha^k \mbox{ or } Q_\alpha^k
            \cap Q_\beta^\ell=\emptyset;\label {DyadicP1}\\
        &&  X = \bigcup_\alpha Q_\alpha^k\ \ (\mbox{disjoint union})\qquad
            \text{for all $k\in\mathbb{Z}$};\label {DyadicP2} \\
        && B(z_\alpha^k,c_1\delta^k)\subseteq Q_\alpha^k\subseteq
            B(z_\alpha^k,C_1\delta^k),\ \mbox{where } c_1 := (3A_0^2)^{-1}c_0\
            \mbox{and}\  C_1 := 2A_0C_0;\label {prop_cube3}\\
        &&\mbox{if } \ell \geq k \mbox{ and } Q_\beta^\ell\subseteq Q_\alpha^k,
            \mbox{ then } B(z_\beta^\ell,C_1\delta^\ell)\subseteq
            B(z_\alpha^k,C_1\delta^k).\label {DyadicP4}
    \end{eqnarray}
    The open and closed cubes $\widetilde{Q}_\alpha^k$ and
    $\overline{Q}_\alpha^k$ depend only on the points
    $z_\beta^\ell$ for $\ell\geq k$. The half-open cubes
    $Q_\alpha^k$ depend on $z_\beta^\ell$ for $\ell\geq
    \min(k,k_0)$, where $k_0\in\mathbb{Z}$ is a preassigned
    number entering the construction.
\end{theorem}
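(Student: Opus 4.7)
The plan is to follow Christ's classical approach, adapted to the quasi-metric setting. First I would convert the given point sets into a forest. For each $k$ and each $\beta\in\mathscr{A}_{k+1}$, I choose $\pi_k(\beta)\in\mathscr{A}_k$ to be an index that minimizes $d(z_\beta^{k+1},z_\alpha^k)$, with ties broken by fixing once and for all a well-order on each $\mathscr{A}_k$. The density bound forces $d(z_\beta^{k+1},z_{\pi_k(\beta)}^k)<C_0\delta^k$. Iterating produces, for every $\ell>k$, an ancestor map $\pi_{k,\ell}:\mathscr{A}_\ell\to\mathscr{A}_k$, and a telescoping quasi-triangle computation together with the geometric series $\sum_{j\geq 0}(A_0\delta)^j$ yields a uniform bound of the form $d(z_\beta^\ell,z_{\pi_{k,\ell}(\beta)}^k)\leq c\, C_0\delta^k$, which will drive every subsequent estimate.

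Next I would define the closed cube as the closure of the union of its descendants' tiny balls,
\[
\overline{Q}_\alpha^k:=\overline{\bigcup_{\ell\geq k}\ \bigcup_{\beta:\ \pi_{k,\ell}(\beta)=\alpha}B(z_\beta^\ell,c_1\delta^\ell)},
\]
take $\widetilde{Q}_\alpha^k$ to be its interior, and build the half-open cube $Q_\alpha^k$ by assigning each boundary point (where several closed cubes might meet) to the unique $\alpha$ which is smallest in the fixed well-order among those whose closed cube contains it; this gives the disjoint partition property \eqref{DyadicP2}, while the parent/ancestor bookkeeping gives the nesting property \eqref{DyadicP1}, and the ancestor distance bound applied with radius $C_1\delta^\ell$ gives \eqref{DyadicP4}. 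The use of indices at scale $\geq k$ only in the closed/open formulas, together with the need to coordinate the tie-breaking at all levels at once to obtain a simultaneous partition, is exactly the dependence statement at the end of the theorem.

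The analytic heart is the two-sided ball inclusion \eqref{prop_cube3}. The outer inclusion $\overline{Q}_\alpha^k\subseteq B(z_\alpha^k,C_1\delta^k)$ unwinds to the telescoping ancestor estimate above: every descendant center lies within $\sum_{j\geq 1}A_0^{j}C_0\delta^{k+j-1}$ of $z_\alpha^k$, and the hypothesis $12A_0^3 C_0\delta\leq c_0$ is used precisely to absorb all $A_0$ factors and fit this into the radius $C_1=2A_0C_0$ (after adding the tiny radius $c_1\delta^\ell$ of the descendant ball). I expect the inner inclusion $B(z_\alpha^k,c_1\delta^k)\subseteq Q_\alpha^k$ to be the main obstacle. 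Given $x\in B(z_\alpha^k,c_1\delta^k)$, I want to show that for every $\ell\geq k$ there is some descendant $z_\beta^\ell$ of $z_\alpha^k$ with $d(x,z_\beta^\ell)<c_1\delta^\ell$, which then places $x$ inside $\overline{Q}_\alpha^k$. I would induct on $\ell$: the density condition provides \emph{some} nucleus $z_\beta^\ell$ within $C_0\delta^\ell$ of $x$, and the task is to rule out the possibility that its level-$k$ ancestor is a competitor $z_{\alpha'}^k\neq z_\alpha^k$. For any such competitor, the ancestor bound plus the quasi-triangle inequality would force $d(z_\alpha^k,z_{\alpha'}^k)$ to be strictly less than the separation threshold $c_0\delta^k$; the slack is exactly what the hypothesis $12A_0^3 C_0\delta\leq c_0$ gives, so the competitor is excluded and the ancestor must be $z_\alpha^k$.

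Once the two-sided ball inclusion is in hand, the open/closed relation $\widetilde{Q}_\alpha^k=\mathrm{int}\,Q_\alpha^k$ and $\overline{Q}_\alpha^k=\overline{Q_\alpha^k}$ follows because $Q_\alpha^k$ is sandwiched between the open ball $B(z_\alpha^k,c_1\delta^k)$ and the closed set $\overline{B(z_\alpha^k,C_1\delta^k)}$, while the partition and nesting properties already force the half-open cubes to differ from their closures only on a negligibly thin boundary assembled from finitely many ancestor/descendant interfaces. No step uses more than the quasi-triangle inequality, the separation/density hypotheses, and the smallness of $\delta$, so the entire argument proceeds without any metric regularity or doubling assumption on an ambient measure.
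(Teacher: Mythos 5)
The paper itself does not prove this statement: it is Hyt\"onen and Kairema's Theorem~2.2, imported verbatim from~\cite{HK} and used as a black box, so there is no proof in the present paper to compare against. I therefore evaluate your sketch on its own.

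The overall architecture you propose (nearest-nucleus parent maps with a fixed tie-breaking well-order, cubes built from descendants, boundaries resolved at the end) is the right Christ-style skeleton and is indeed what~\cite{HK} adapts to the quasi-metric setting. One caution on the outer inclusion: the order of quasi-triangle applications along the ancestor chain matters. If you first bound $d(z_\beta^\ell,z_\alpha^k)$ by $\sum_{j\ge1}A_0^jC_0\delta^{k+j-1}\le\frac{12}{11}A_0C_0\delta^k$ and only afterwards add the radius $c_1\delta^\ell$ with one more quasi-triangle step, you incur an extra factor of $A_0$ and land at $\sim\frac{12}{11}A_0^2C_0\delta^k$, which exceeds $C_1\delta^k=2A_0C_0\delta^k$ already for $A_0\ge2$. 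One must instead include the point $x\in B(z_\beta^\ell,c_1\delta^\ell)$ as the final link of the chain before telescoping, so that the radius picks up the factor $(A_0\delta)^{\ell-k}\le1$ and is absorbed; then everything does fit into $C_1\delta^k$.

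The genuine gap is in the inner inclusion. You claim that for each $\ell\ge k$ and $x\in B(z_\alpha^k,c_1\delta^k)$ there is a descendant $z_\beta^\ell$ of $z_\alpha^k$ with $d(x,z_\beta^\ell)<c_1\delta^\ell$, but the density hypothesis only produces a nucleus within $C_0\delta^\ell$ of $x$, and $C_0\ge c_0=3A_0^2c_1$, so this is far stronger than what is given and is generally false (already at $\ell=k+1$ with $x=z_\alpha^k$). More structurally: with $\overline{Q}_\alpha^k$ defined as the closure of the union of descendant balls, the containment $B(z_\alpha^k,c_1\delta^k)\subseteq\overline{Q}_\alpha^k$ is automatic (take $\ell=k$, $\beta=\alpha$). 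What must actually be proved is that, after the boundary tie-breaking, $B(z_\alpha^k,c_1\delta^k)$ lies entirely inside the \emph{half-open} $Q_\alpha^k$, i.e.\ that none of its points is also an accumulation point of descendant balls of a competitor $z_{\alpha'}^k$. A direct chain estimate for $d(z_\alpha^k,z_{\alpha'}^k)$ through $z_\alpha^k\to x\to w_n\to z_{\beta_n}^{\ell_n}\to\cdots\to z_{\alpha'}^k$ produces quasi-triangle losses of order $A_0^2C_0\delta^k$ or $A_0^3C_0\delta^k$, which do \emph{not} contradict the separation threshold $c_0\delta^k$. The construction in~\cite{HK} avoids this by building the half-open cubes directly from a coherent level-by-level point-to-nucleus assignment that is forced to respect the parent maps, so that the partition, the nesting, and the inner ball inclusion all come out of a single structured choice rather than a closure followed by an ad hoc well-order on the boundary. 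Your sketch does not supply that coherence, and the inner-ball inclusion does not follow from the estimates you indicate.
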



\subsection{Orthonormal wavelet basis and wavelet
expansion}\label{sec:onbwavelets}

\medskip

In this subsection, we recall the orthonormal basis and wavelet
expansion in $L^2(X)$ which were recently constructed by
Auscher and Hyt\"onen~\cite{AH}. To state their result, we must
first recall the set $\{x_\alpha^k\}$ of {\it reference dyadic
points} as follows. Let $\delta$ be a fixed small positive
parameter (for example, as noted in Section~2.2 of \cite{AH},
it suffices to take $\delta\leq 10^{-3} A_0^{-10}$). For $k =
0$, let $\mathscr{X}^0 := \{x_\alpha^0\}_\alpha$ be a maximal
collection of 1-separated points in~$\XX$. Inductively, for
$k\in\mathbb{Z}_+$, let $\mathscr{X}^k := \{x_\alpha^k\}
\supseteq \mathscr{X}^{k-1}$ and $\mathscr{X}^{-k} :=
\{x_\alpha^{-k}\} \subseteq \mathscr{X}^{-(k-1)}$ be
$\delta^k$- and $\delta^{-k}$-separated collections in
$\mathscr{X}^{k-1}$ and $\mathscr{X}^{-(k-1)}$, respectively.

Lemma~2.1 in \cite{AH} shows that, for all $k\in\mathbb{Z}$ and
$x\in X$, the reference dyadic points satisfy
\begin{eqnarray}\label{delta sparse}
    d(x_\alpha^k,x_\beta^k)\geq\delta^k\ (\alpha\not=\beta),\hskip1cm
        d(x,\mathscr{X}^k)
    = \min_\alpha\,d(x,x_\alpha^k)
    < 2A_0\delta^k.
\end{eqnarray}
Also, taking $c_0 := 1$, $C_0 := 2A_0$ and $\delta \leq 10^{-3}
A_0^{-10}$, we see that $c_0$, $C_0$ and $\delta$ satisfy
\eqref{test condition for cubes} in Theorem~\ref{theorem dyadic
cubes}. Therefore we may apply Hyt\"onen and Kairema's
construction (Theorem~\ref{theorem dyadic cubes}), with the
reference dyadic points $\{x^k_\alpha\}_{k\in\Z,
\alpha\in\mathscr{X}^k}$ playing the role of the points
$\{z^k_\alpha\}_{k\in\Z, \alpha\in\mathscr{A}_k}$, to conclude
that there exists a set of half-open dyadic cubes
\[
    \{Q_\alpha^k\}_{k\in\mathbb{Z},\alpha\in\mathscr{X}^k}
\]
associated with the reference dyadic points
$\{x_\alpha^k\}_{k\in\mathbb{Z},\alpha\in\mathscr{X}^k}$. We
call the reference dyadic point~$x_\alpha^k$ the \emph{center}
of the dyadic cube~$Q_\alpha^k$. We also identify with
$\mathscr{X}^k$ the set of indices~$\alpha$ corresponding to
$x_\alpha^k \in \mathscr{X}^{k}$.

Note that $\mathscr{X}^{k}\subseteq \mathscr{X}^{k+1}$ for
$k\in\mathbb{Z}$, so that every $x_\alpha^k$ is also a point of
the form $x_\beta^{k+1}$. We denote
$\mathscr{Y}^{k}:=\mathscr{X}^{k+1}\backslash \mathscr{X}^{k}$,
and relabel the points $\{x_\alpha^k\}_\alpha$ that belong
to~$\mathscr{Y}^k$ as $\{y_\alpha^k\}_\alpha$.

We now recall the orthonormal wavelet basis of $L^2(X)$
constructed by Auscher and Hyt\"onen.

\begin{theorem}[\cite{AH} Theorem 7.1]\label{theorem AH orth basis}
    Let $(\XX,d,\mu)$ be a space of homogeneous type with
    quasi-triangle constant $A_0$, and let
    \begin{equation}\label{eqn:defn_of_a}
        a
        := (1+2\log_2A_0)^{-1}.
    \end{equation}
    There exists an orthonormal wavelet basis
    $\{\psi_\alpha^k\}$, $k\in\mathbb{Z}$, $y_\alpha^k\in
    \mathscr{Y}^k$, of $L^2(X)$, having exponential decay
    \begin{eqnarray}\label{exponential decay}
        |\psi_\alpha^k(x)|
        \leq {C\over \sqrt{\mu(B(y_\alpha^k,\delta^k))}}
            \exp\Big(-\nu\Big( {d(y^k_\alpha,x)\over\delta^k}\Big)^a\Big),
    \end{eqnarray}
    H\"older regularity
    \begin{eqnarray}\label{Holder-regularity}
        |\psi_\alpha^k(x)-\psi_\alpha^k(y)|
        \leq {C\over \sqrt{\mu(B(y_\alpha^k,\delta^k))}}
            \Big( {d(x,y)\over\delta^k}\Big)^\eta
            \exp\Big(-\nu\Big( {d(y^k_\alpha,x)\over\delta^k}\Big)^a\Big)
    \end{eqnarray}
    for $d(x,y)\leq \delta^k$, and the cancellation property
    \begin{eqnarray}\label{cancellation}
        \int_X \psi_\alpha^k(x)\,d\mu(x) = 0,
        \qquad \text{for $k\in\mathbb{Z}$, $\ y_\alpha^k\in\mathscr{Y}^k$}.
    \end{eqnarray}
\end{theorem}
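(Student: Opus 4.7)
Since this theorem is imported verbatim from Auscher--Hyt\"onen~\cite{AH}, my plan would be to follow their multiresolution strategy adapted to the dyadic structure of Hyt\"onen--Kairema (which is already in hand via Theorem~\ref{theorem dyadic cubes}). The overall scheme is a multiresolution analysis: build a nested sequence of closed subspaces $V_k \subset V_{k+1} \subset L^2(X)$ whose union is dense and whose intersection is trivial, and then set $W_k := V_{k+1} \ominus V_k$. Orthogonality of the scales and completeness would then give $L^2(X) = \bigoplus_k W_k$, and the wavelets $\psi_\alpha^k$ would be chosen to form an orthonormal basis of $W_k$, indexed by the new points $y_\alpha^k \in \mathscr{Y}^k = \mathscr{X}^{k+1}\setminus\mathscr{X}^k$ whose cardinality matches $\dim W_k$.

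The heart of the argument is the construction of the ``scaling functions'' generating~$V_k$. The naive choice---indicator functions of the dyadic cubes $Q_\alpha^k$---is orthonormal (after rescaling by $\mu(Q_\alpha^k)^{-1/2}$) and gives the nesting, but has no regularity. To upgrade to H\"older continuity with exponential decay, I would adapt a Daubechies--Meyer type smoothing via an iterative averaging procedure on the probabilistic dyadic systems: starting from cube indicators at scale $\delta^k$, repeatedly average along an independent family of dyadic systems (randomized over a parameter $\omega$) to produce functions $\varphi_\alpha^k$ that retain the localization near $x_\alpha^k$ at scale $\delta^k$ but acquire H\"older regularity at each iteration. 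One would verify by induction that each averaging step improves the regularity exponent while only degrading the localization to an exponential tail with rate controlled by the parameter $a = (1+2\log_2 A_0)^{-1}$ from \eqref{eqn:defn_of_a}. The inheriting of nesting $V_k \subset V_{k+1}$ under this averaging is the subtle point that forces the very particular value of $a$.

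Once the smoothed scaling functions $\{\varphi_\alpha^k\}_\alpha$ are in hand, I would orthonormalize them within each fixed scale via a Gram--Schmidt procedure that respects the geometry: since $\varphi_\alpha^k$ is essentially localized to $B(x_\alpha^k, \delta^k)$ with exponential tails, the Gram matrix is nearly diagonal, so one can invert its square root with an absolutely convergent Neumann-type series and show that the resulting orthonormal family $\{\tilde\varphi_\alpha^k\}$ still enjoys exponential decay \eqref{exponential decay} and H\"older regularity \eqref{Holder-regularity} (with possibly smaller constants). The wavelets are then obtained by restricting this orthonormalization procedure to $V_{k+1}$ and peeling off the $V_k$ component: concretely, for each $y_\alpha^k \in \mathscr{Y}^k$, write $\tilde\varphi_\alpha^{k+1}$ and project its $V_k$-component off, then re-orthonormalize the resulting family within $W_k$. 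The cancellation \eqref{cancellation} is then automatic because the locally constant functions belong to $V_k$ (at least in a weak sense guaranteed by the partition of unity property of the $\varphi_\alpha^k$), and $\psi_\alpha^k \perp V_k$ forces $\int \psi_\alpha^k\,d\mu = 0$.

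The main obstacle I anticipate is the simultaneous preservation of exponential decay and H\"older regularity through the orthonormalization steps: a priori, inverting the Gram matrix could destroy pointwise decay estimates, and one must prove a quantitative Schur-type bound ensuring that the entries of $G^{-1/2}$ decay rapidly with the dyadic distance between $\alpha$ and $\beta$. Closely related is the calibration of the exponent $\eta$ and the constant $\nu$ in \eqref{Holder-regularity} and \eqref{exponential decay}: these emerge from the averaging-plus-orthonormalization and depend delicately on the quasi-triangle constant $A_0$ through $a$. Finally, the exact matching between $\dim W_k$ and $|\mathscr{Y}^k|$ requires a dimension count that uses the sparseness of the reference points in \eqref{delta sparse}, which I would verify locally on each cube $Q_\alpha^k$ via the nesting of the point sets $\mathscr{X}^k \subset \mathscr{X}^{k+1}$.
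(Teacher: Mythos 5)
The paper does not prove this theorem: it is imported verbatim from Auscher--Hyt\"onen \cite{AH}, Theorem~7.1, and cited without proof, so there is no internal argument in the paper to compare your proposal against. What I can do is comment on how faithfully your reconstruction matches the actual argument in~\cite{AH}.

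Your overall architecture is correct: \cite{AH} does build a multiresolution analysis $V_k\subset V_{k+1}$, sets $W_k:=V_{k+1}\ominus V_k$, and produces the wavelets as an orthonormal basis of $W_k$ indexed by $\mathscr{Y}^k=\mathscr{X}^{k+1}\setminus\mathscr{X}^k$. The cancellation \eqref{cancellation} does indeed follow from $\psi_\alpha^k\perp V_k$ together with the partition-of-unity property $\sum_\alpha s_\alpha^k\equiv 1$ and the exponential decay, exactly as you indicate. However, two of your specific attributions are off. First, the scaling functions are not ``indicator functions smoothed by iterative averaging with exponential tails''; in \cite{AH} they are the spline functions $s_\alpha^k$ of their Theorem~3.1 (recalled later in this paper as Theorem~\ref{theorem AH spline}), which are compactly supported in $B(x_\alpha^k,8A_0^5\delta^k)$, H\"older continuous with exponent $\eta$, and produced in a single step from a probabilistic construction exploiting the small-boundary-layer property of random dyadic systems; the exponential tails appear only after orthonormalization, not in the splines themselves. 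The nesting $V_k\subset V_{k+1}$ is then immediate from the reproducing property $s_\alpha^k=\sum_\beta p_{\alpha,\beta}^k s_\beta^{k+1}$, with no delicate calibration of $a$. Second, the exponent $a=(1+2\log_2 A_0)^{-1}$ of \eqref{eqn:defn_of_a} does not arise from preserving nesting; it comes from the Mac\'ias--Segovia-type power transformation that makes $d^a$ comparable to a genuine metric, which is what controls the decay of the inverse square root of the Gram matrix (the Schur-type/Cotlar--Stein estimate in your last paragraph) and hence produces the stretched-exponential factor $\exp(-\nu(d/\delta^k)^a)$ in \eqref{exponential decay} and \eqref{Holder-regularity}. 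With these corrections your sketch is a fair description of the Auscher--Hyt\"onen construction, but since this paper simply cites \cite{AH}, no such argument is part of the paper itself.
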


Moreover, the wavelet expansion is given by
\begin{eqnarray}\label{eqn:AH_reproducing formula}
    f(x)
    = \sum_{k\in\mathbb{Z}}\sum_{\alpha \in \mathscr{Y}^k}
        \langle f,\psi_{\alpha}^k \rangle \psi_{\alpha}^k(x)
\end{eqnarray}
in the sense of $L^2(X)$.

Here $\delta$ is a fixed small parameter, say $\delta \leq
10^{-3} A_0^{-10}$, and $C < \infty$, $\nu > 0$ and
$\eta\in(0,1]$ are constants independent of $k$, $\alpha$, $x$
and~$y_\alpha^k$.


In what follows, we refer to the functions $\psi_\alpha^k$ as
wavelets. Throughout this paper, $a$ denotes the exponent
from~\eqref{eqn:defn_of_a} and $\eta$ denotes the
H\"older-regularity exponent from~\eqref{Holder-regularity}.

\begin{remark}\label{remark:crucial_estimate}
    The wavelets $\{\psi_\alpha^k\}_{k,\alpha}$ form an
    unconditional basis of $L^p(X)$ for $1 < p < \infty$, as
    shown in Corollary~10.4 in~\cite{AH}. Therefore, the
    reproducing formula~(\ref{eqn:AH_reproducing formula}) also
    holds for $f\in L^p(X)$. Moreover, for us the most crucial
    feature of the orthonormal wavelets construction is the
    following estimate, which is a special case
    of~\cite[Lemma~8.3]{AH}:
    \begin{equation}\label{eqn:AHLemma8.3}
        \sum_{j\in \mathbb Z: \delta^j \geq r}
            {{1}\over{\mu(B(x,\delta^j))}}
            \exp\Big(-\nu\Big( \frac{d(x,\mathscr{Y}^j)}{\delta^j} \Big)^a\Big)
        \leq {{C}\over{\mu(B(x,r))}},
    \end{equation}
    for all $x\in X$ and $r > 0$, and for the constants $\nu >
    0$ and $a := (1 + 2\log_2 A_0)^{-1}$ from
    Theorem~\ref{theorem AH orth basis}.  Series of this type
    naturally arise in the context of proving that the
    reproducing formula holds for test functions and
    distributions, as well as in relation to function spaces.
    This estimate allows us to drop the extra assumption, used
    in previous work, of a reverse-doubling property on the
    measure.

    Furthermore, this estimate is crucial for estimating the
    quantity $\sum_k\sum_\alpha \psi_\alpha^k(x)
    \psi_\alpha^k(y)$. We will use this estimate in the proofs
    of Theorems~\ref{thm reproducing formula test function}
    and~\ref{theorem Littlewood Paley} below.
\end{remark}

\section{Test functions, distributions, and wavelet reproducing
formula}\label{sec:testfunctionsdistributions}

\medskip
We now introduce test functions and distributions on spaces of
homogeneous type $(X,d,\mu)$ in the sense of Coifman and Weiss,
and on product spaces $(X_1,d_1,\mu_1) \times (X_2,d_2,\mu_2)$.
We show that the (scaled) Auscher--Hyt\"onen wavelets are test
functions (Theorem~\ref{theorem wavelet is test function}), and
establish the wavelet reproducing formula for test functions
and for distributions (Theorem~\ref{thm reproducing formula
test function}, Corollary~\ref{coro reproducing formula
distribution}, Theorem~\ref{thm product wavelet reproducing
formula test function}). Along the way we establish some
properties of the wavelet operators~$D_k$ (Lemma~\ref{lemma
another version of Lemma 9.1}), and construct smooth cut-off
functions using the splines of Auscher and Hyt\"onen
(Lemma~\ref{lemma technique 3 cut-off function}).

We begin with the one-parameter case. For $x$, $y\in X$ and $r
> 0$, let
\[
    V_r(x) := \mu(B(x,r))
    \qquad
    \text{and}
    \qquad
    V(x,y) := \mu(B(x,d(x,y))).
\]

\subsection{One-parameter test functions,
distributions, and wavelet reproducing
formula}\label{sec:oneparametertestfunctions}

\begin{definition}\label{def-of-test-func-space}
    \textup{(Test functions)} Fix $x_0\in X$, $r > 0$,
    $\beta\in(0,\eta]$ where $\eta$ is the regularity exponent
    from Theorem~\ref{theorem AH orth basis}, and $\gamma
    > 0$. A function $f$ defined on~$X$ is said to be a {\it test
    function of type $(x_0,r,\beta,\gamma)$ centered at $x_0\in
    X$} if $f$ satisfies the following three conditions.
    \begin{enumerate}
        \item[(i)] \textup{(Size condition)} For all $x\in
            X$,
            \[
                |f(x)|
                \leq C \,\frac{1}{V_{r}(x_0) + V(x,x_0)}
                \Big(\frac{r}{r + d(x,x_0)}\Big)^{\gamma}.
            \]

        \item[(ii)] \textup{(H\"older regularity
            condition)} For all $x$, $y\in X$ with $d(x,y)
            < (2A_0)^{-1}(r + d(x,x_0))$,
            \[
                |f(x) - f(y)|
                \leq C \Big(\frac{d(x,y)}{r + d(x,x_0)}\Big)^{\beta}
                \frac{1}{V_{r}(x_0) + V(x,x_0)} \,
                \Big(\frac{r}{r + d(x,x_0)}\Big)^{\gamma}.
            \]

        \item[(iii)] \textup{(Cancellation condition)}
            \[
                \int_X f(x) \, d\mu(x)
                = 0.
            \]
    \end{enumerate}
\end{definition}

\emph{A priori}, this definition makes sense for arbitrary
$\beta > 0$. Here we have used the condition $\beta \in
(0,\eta]$ both for consistency with the earlier literature and
since our focus is on the wavelets~$\psi^k_\alpha$, which (when
scaled) are test functions with $\beta = \eta$, as we will see.

We denote by $G(x_0,r,\beta,\gamma)$ the set of all test
functions of type $(x_0,r,\beta,\gamma)$. The norm of $f$ in $
G(x_0,r,\beta,\gamma)$ is defined by
\[
    \|f\|_{G(x_0,r,\beta,\gamma)}
    := \inf\{C>0:\ {\rm(i)\  and \ (ii)}\ {\rm hold} \}.
\]

For each fixed $x_0$, let $G(\beta,\gamma) :=
G(x_0,1,\beta,\gamma)$. It is easy to check that for each fixed
$x_0'\in X$ and $r > 0$, we have $G(x_0',r,\beta,\gamma) =
G(\beta,\gamma)$ with equivalent norms. Furthermore, it is also
easy to see that $G(\beta,\gamma)$ is a Banach space with
respect to the norm on $G(\beta,\gamma)$.


For $\beta \in (0,\eta]$ and $\gamma > 0$, let
$\GGs(\beta,\gamma)$ be the completion of the space
$G(\eta,\gamma)$ in the norm of $G(\beta,\gamma)$; of course
when $\beta = \eta$ we simply have $\GGs(\beta,\gamma) =
\GGs(\eta,\gamma) = G(\eta,\gamma)$. We define the norm on
$\GGs(\beta,\gamma)$ by $\|f\|_{\GGs(\beta,\gamma)} :=
\|f\|_{G(\beta,\gamma)}$.

It is immediate from the definition that the sets
$\GGs(\beta,\gamma)$ are nested; for example if $0 < \beta' <
\beta$ and $0 < \gamma' < \gamma$, then $\GGs(\beta,\gamma)
\subset \GGs(\beta',\gamma')$.

\begin{definition}
    \textup{(Distributions)} Fix $x_0\in X$, $r > 0$,
    $\beta\in(0,\eta]$ where $\eta$ is the regularity exponent
    from Theorem~\ref{theorem AH orth basis}, and $\gamma > 0$.
    The \emph{distribution space} $(\GGs(\beta,\gamma))'$ is
    defined to be the set of all linear functionals
    $\mathcal{L}$ from $\GGs(\beta,\gamma)$ to $\mathbb{C}$
    with the property that there exists $C > 0$ such that for
    all $f\in \GGs(\beta,\gamma)$,
    \[
        |\mathcal{L}(f)|
        \leq C\|f\|_{\GGs(\beta,\gamma)}.
    \]
\end{definition}

We note that for each $\beta \in (0,\eta]$ and $\gamma > 0$,
the set $\GGs(\beta,\gamma) \subset L^2(X)$, while each $f \in
L^2(X)$ induces a distribution in $(\GGs(\beta,\gamma))'$.

We now prove that the wavelets constructed in~\cite{AH},
suitably scaled, are test functions.

\begin{theorem}\label{theorem wavelet is test function}
    Suppose that $\{\psi_\alpha^k\}_{k\in\mathbb{Z},
    \alpha\in\mathscr{Y}^k}$ is an orthonormal wavelet basis as
    in Theorem~\ref{theorem AH orth basis} with H\"older
    regularity of order~$\eta$. Then for each $k\in\Z$,
    $y^k_\alpha \in \mathscr{Y}^k$, and $\gamma > 0$, the
    scaled wavelet $\psi_\alpha^k(x)/
    \sqrt{\mu(B(y_\alpha^k,\delta^k))}$ belongs to the set
    $G(y_\alpha^k, \delta^k,\eta,\gamma)$ of test functions of
    type $(y_\alpha^k, \delta^k,\eta,\gamma)$ centered at
    $y_\alpha^k$.
\end{theorem}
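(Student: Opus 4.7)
Set $f(x) := \psi_\alpha^k(x)/\sqrt{\mu(B(y_\alpha^k,\delta^k))}$ so that from \eqref{exponential decay} and \eqref{Holder-regularity} we have
\[
 |f(x)| \leq \frac{C}{\mu(B(y_\alpha^k,\delta^k))} \exp\Bigl(-\nu\bigl(d(x,y_\alpha^k)/\delta^k\bigr)^a\Bigr),
\]
together with the analogous pointwise H\"older bound for $d(x,y)\leq\delta^k$. The cancellation condition (iii) is immediate from \eqref{cancellation}. The plan for (i) and (ii) is to convert exponential decay in $d(x,y_\alpha^k)/\delta^k$ into the polynomial form demanded by Definition~\ref{def-of-test-func-space}, using doubling to exchange the normalizing volume $\mu(B(y_\alpha^k,\delta^k))$ for $V_{\delta^k}(y_\alpha^k)+V(x,y_\alpha^k)$ and the standard fact that $t^N e^{-\nu t^a}$ is bounded for any $N\geq 0$.

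For the size condition (i), split into two cases. If $d(x,y_\alpha^k)\leq\delta^k$, doubling gives $V(x,y_\alpha^k)\leq CV_{\delta^k}(y_\alpha^k)$, so $V_{\delta^k}(y_\alpha^k)+V(x,y_\alpha^k)\sim\mu(B(y_\alpha^k,\delta^k))$, while $\delta^k/(\delta^k+d(x,y_\alpha^k))\sim 1$; the claim is immediate. If $d(x,y_\alpha^k)>\delta^k$, use the inclusions $B(y_\alpha^k,\delta^k)\subset B(x,2A_0 d(x,y_\alpha^k))$ and $B(x,\delta^k)\subset B(y_\alpha^k,2A_0 d(x,y_\alpha^k))$ together with the upper dimension \eqref{upper dimension} to obtain
\[
 \frac{1}{\mu(B(y_\alpha^k,\delta^k))} \leq C\Bigl(\frac{d(x,y_\alpha^k)}{\delta^k}\Bigr)^{\!2\omega}\frac{1}{V(x,y_\alpha^k)},
\]
and similarly $V_{\delta^k}(y_\alpha^k)\leq CV(x,y_\alpha^k)$. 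Absorbing the factor $(d(x,y_\alpha^k)/\delta^k)^{2\omega+\gamma}$ into the exponential yields the required bound with decay $(\delta^k/(\delta^k+d(x,y_\alpha^k)))^\gamma$.

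For the H\"older condition (ii), suppose $d(x,y)<(2A_0)^{-1}(\delta^k+d(x,y_\alpha^k))$. If $d(x,y)\leq\delta^k$, apply \eqref{Holder-regularity} at scale $\delta^k$, write
\[
 \Bigl(\frac{d(x,y)}{\delta^k}\Bigr)^{\!\eta} = \Bigl(\frac{d(x,y)}{\delta^k+d(x,y_\alpha^k)}\Bigr)^{\!\eta}\Bigl(\frac{\delta^k+d(x,y_\alpha^k)}{\delta^k}\Bigr)^{\!\eta},
\]
and treat the remaining prefactor times the exponential by the same two-case argument as in (i), letting the exponential absorb the extra polynomial growth $((\delta^k+d(x,y_\alpha^k))/\delta^k)^\eta$. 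If $d(x,y)>\delta^k$, we instead estimate $|f(x)-f(y)|\leq |f(x)|+|f(y)|$. Applying (i) with $\gamma$ replaced by $\gamma+\eta$ and using that $\delta^k/(\delta^k+d(x,y_\alpha^k))\leq d(x,y)/(\delta^k+d(x,y_\alpha^k))$ converts one factor of $\delta^k/(\delta^k+d(x,y_\alpha^k))$ into the required H\"older factor $(d(x,y)/(\delta^k+d(x,y_\alpha^k)))^\eta$. The quasi-triangle inequality together with the restriction on $d(x,y)$ gives $\delta^k+d(y,y_\alpha^k)\sim\delta^k+d(x,y_\alpha^k)$, and then doubling shows $V_{\delta^k}(y_\alpha^k)+V(y,y_\alpha^k)\sim V_{\delta^k}(y_\alpha^k)+V(x,y_\alpha^k)$, so the $|f(y)|$ contribution takes the same form.

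The main bookkeeping obstacle is the second (non-small) regime $d(x,y_\alpha^k)\gg\delta^k$, where one must trade two factors of the upper-dimension polynomial growth against the subexponential decay in \eqref{exponential decay}; this is the only place where doubling alone (as opposed to any reverse-doubling or regularity hypothesis) is used in an essential way, and it is handled uniformly for both (i) and (ii) by the universal estimate $t^{2\omega+\gamma+\eta}e^{-\nu t^a}\leq C_{\omega,\gamma,\eta,\nu,a}$.
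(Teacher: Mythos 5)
Your proposal is correct and takes essentially the same approach as the paper: both prove the size and H\"older estimates by applying the exponential decay and H\"older regularity of the Auscher--Hyt\"onen wavelets, trading the factor $1/\mu(B(y_\alpha^k,\delta^k))$ for $1/(V_{\delta^k}(y_\alpha^k)+V(x,y_\alpha^k))$ via the doubling-derived observation \eqref{an observation}, and absorbing the resulting polynomial growth $(d(x,y_\alpha^k)/\delta^k)^N$ into the subexponential factor $\exp(-\nu(\cdot)^a)$. The only difference is organizational: you split the H\"older estimate into two sub-cases ($d(x,y)\leq\delta^k$ versus $d(x,y)>\delta^k$), whereas the paper uses three, with your first sub-case merging the paper's Cases 1 and 3; the two decompositions are logically equivalent given the standing restriction $d(x,y)<(2A_0)^{-1}(\delta^k+d(x,y_\alpha^k))$.
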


Before proving Theorem~\ref{theorem wavelet is test function},
we make the following useful observation: by the doubling
property~\eqref{upper dimension} on the measure~$\mu$, for each
$x_0$, $x\in X$ and $r
> 0$ with $r\leq d(x_0,x)$, we have $V(x_0,x) \leq
C (d(x_0,x)/r)^{\omega} V_r(x_0)$ and hence
\begin{eqnarray}\label{an observation}
    {1\over V_{r}(x_0)}
    \leq C \, \frac{1}{V_r(x_0)+V(x_0,x)}
        \Big(\frac{d(x_0,x)}{r}\Big)^\omega.
\end{eqnarray}

\begin{proof}[Proof of Theorem~\ref{theorem wavelet is test function}]
By property~\eqref{exponential decay} of $\psi_\alpha^k$ from
Theorem~\ref{theorem AH orth basis}, we obtain that
\begin{eqnarray*}
    {\psi_\alpha^k(x)\over \sqrt{\mu(B(y_\alpha^k,\delta^k))} }
    &\leq& {C\over \mu(B(y_\alpha^k,\delta^k))}
        \exp(-\nu(\delta^{-k} d(y_\alpha^k,x))^a)\\
    &\leq&{C\over V_{\delta^k}(y_\alpha^k)}
        \Big({\delta^k\over \delta^{k}+ d(y_\alpha^k,x)}\Big)^\Gamma
\end{eqnarray*}
for all $\Gamma > 0$, with a constant $C$ depending only on
$\nu$, $a = (1 + 2\log_2A_0)^{-1}$, and~$\Gamma$. To see that
$\psi_\alpha^k(x)/ \sqrt{\mu(B(y_\alpha^k,\delta^k))}$
satisfies the size condition
Definition~\ref{def-of-test-func-space}(i) for $\gamma > 0$, we
consider two cases. First, if $\delta^{k} > d(y_\alpha^k,x)$,
then $V(y_\alpha^k,x)\leq V_{\delta^k}(y_\alpha^k)$ and hence
\begin{eqnarray*}
    {\psi_\alpha^k(x)\over \sqrt{\mu(B(y_\alpha^k,\delta^k))} }
    &\leq&{C\over V_{\delta^k}(y_\alpha^k)+V(y_\alpha^k,x)}
        \Big({\delta^k\over \delta^{k}+ d(y_\alpha^k,x)}\Big)^\Gamma.
\end{eqnarray*}
For the second case, if $\delta^{k}\leq d(y_\alpha^k,x)$, an
application of (\ref{an observation}) shows that
\begin{eqnarray}
    {\psi_\alpha^k(x)\over \sqrt{\mu(B(y_\alpha^k,\delta^k))} }
    &\leq& C  {1\over V_{\delta^k}(y_\alpha^k)+V(y_\alpha^k,x)}
    \Big({\delta^k\over \delta^{k}+
    d(y_\alpha^k,x)}\Big)^{\Gamma - \omega}.\nonumber
\end{eqnarray}
Taking $\Gamma > \omega$ and setting $\gamma := \Gamma -
\omega$, we see that $\psi_\alpha^k(x)/
\sqrt{\mu(B(y_\alpha^k,\delta^k))}$ satisfies the size
condition Definition~\ref{def-of-test-func-space}(i) with $x_0
= y_\alpha^k$ and $r = \delta^k$, and for arbitrary $\gamma >
0$.

We now show that $\psi_\alpha^k(x)/
\sqrt{\mu(B(y_\alpha^k,\delta^k))}$ satisfies the smoothness
condition Definition~\ref{def-of-test-func-space}(ii) for
$\gamma > 0$ and  $\beta = \eta$, if $d(x,y) \leq
(2A_0)^{-1}(\delta^k + d(y_\alpha^k,x))$. We consider three
cases. First, suppose $d(x,y)\leq\delta^k \leq
(2A_0)^{-1}(\delta^k + d(y_\alpha^k,x))$. Then
property~\eqref{Holder-regularity} yields
\begin{eqnarray*}
\Big|{\psi_\alpha^k(x)-\psi_\alpha^k(y)\over
\sqrt{\mu(B(y_\alpha^k,\delta^k))} }\Big|
    &\leq& C\Big( {d(x,y)\over\delta^k}
    \Big)^\eta{1\over \mu(B(y_\alpha^k,\delta^k))}
    \Big({\delta^k\over \delta^{k}+ d(y_\alpha^k,x)}\Big)^\Gamma.
\end{eqnarray*}
Note that in this case, $\delta^k \leq (2A_0-1)^{-1}
d(y_\alpha^k,x)$, and so we may apply~\eqref{an observation}
with $r = \delta^k$ and $x_0 = y^k_\alpha$ to conclude that
\begin{eqnarray*}
    \Big|{\psi_\alpha^k(x) - \psi_\alpha^k(y)\over
        \sqrt{\mu(B(y_\alpha^k,\delta^k))} }\Big|
    \leq C \Big( {d(x,y)\over\delta^k +d(y_\alpha^k,x)}\Big)^\eta
        {1\over V_{\delta^k}(y_\alpha^k)+V(y_\alpha^k,x)}
        \Big({\delta^k\over \delta^{k} +
        d(y_\alpha^k,x)}\Big)^{\Gamma - \omega - \eta}.
\end{eqnarray*}

Second, consider the case where $\delta^k \leq d(x,y) \leq
(2A_0)^{-1}(\delta^k + d(y_\alpha^k,x))$. It is straightforward
to verify from the quasi-triangle inequality that in this case,
\begin{equation}\label{eqn:smoothness_case2_equiv}
    \delta^k + d(y_\alpha^k,y)
    \sim \delta^k + d(y_\alpha^k,x).
\end{equation}
Applying property~(\ref{exponential decay}) for
$\psi_\alpha^k(x)$ and~$\psi_\alpha^k(y)$, we find
\begin{eqnarray*}
    \Big|{\psi_\alpha^k(x)-\psi_\alpha^k(y)\over
        \sqrt{\mu(B(y_\alpha^k,\delta^k))} }\Big|
    &\leq& {C\over V_{\delta^k}(y_\alpha^k)}
        \Big({\delta^k\over \delta^{k}+ d(y_\alpha^k,x)}\Big)^\Gamma
        + {C\over V_{\delta^k}(y_\alpha^k)}
        \Big({\delta^k\over \delta^{k}+ d(y_\alpha^k,y)}\Big)^\Gamma\\
    &\leq& C \Big( {d(x,y)\over\delta^k}
        \Big)^\eta{1\over V_{\delta^k}(y_\alpha^k)}
        \Big({\delta^k\over \delta^{k}+ d(y_\alpha^k,x)}\Big)^\Gamma \\
    &\leq& C \Big( {d(x,y)\over\delta^k +d(y_\alpha^k,x)}
        \Big)^\eta {1\over V_{\delta^k}(y_\alpha^k)+V(y_\alpha^k,x)}
        \Big({\delta^k\over \delta^{k}+
        d(y_\alpha^k,x)}\Big)^{\Gamma - \omega - \eta}.
\end{eqnarray*}
Here the second inequality follows
from~\eqref{eqn:smoothness_case2_equiv} and the fact that
$d(x,y)/\delta^k \geq 1$,
and the third inequality follows from~\eqref{an observation}.

For the third and last case, if $\delta^k >
(2A_0)^{-1}(\delta^k + d(y_\alpha^k,x)) \geq d(x,y)$, then we
have $d(x,y) < \delta^k$ and $d(y_\alpha^k,x)\leq
(2A_0-1)\delta^k$. Therefore, applying property
(\ref{Holder-regularity}) together with the fact that
$V(y_\alpha^k,x)\leq C\mu(B(y_\alpha^k,\delta^k))$ yields
\begin{eqnarray*}
\Big|{\psi_\alpha^k(x)-\psi_\alpha^k(y)\over
\sqrt{\mu(B(y_\alpha^k,\delta^k))} }\Big|
    &\leq& C\Big( {d(x,y)\over\delta^k}\Big)^\eta
        {1\over V_{\delta^k}(y_\alpha^k)}
        \Big({\delta^k\over \delta^{k}+ d(y_\alpha^k,x)}\Big)^\Gamma\\
    &\leq& C \Big( {d(x,y)\over\delta^k +d(y_\alpha^k,x)}\Big)^\eta
        {1\over V_{\delta^k}(y_\alpha^k)+V(y_\alpha^k,x)}
        \Big({\delta^k\over \delta^{k} +
        d(y_\alpha^k,x)}\Big)^{\Gamma - \eta}.
\end{eqnarray*}

Combining all the cases above, in the first and second cases
take $\Gamma > \omega + \eta$ and set $\gamma := \Gamma -
\omega - \eta$, and in the third case take $\Gamma > \eta$ and
set $\gamma := \Gamma - \eta$. We see that the function
$\psi_\alpha^k(x)/ \sqrt{\mu(B(y_\alpha^k,\delta^k))}$
satisfies the smoothness condition (ii) in Definition
\ref{def-of-test-func-space} with $x_0 = y_\alpha^k$, $r =
\delta^k$, $\beta = \eta$ and for arbitrary $\gamma > 0$.

Moreover, the cancellation property of the function
$\psi_\alpha^k(x)/ \sqrt{\mu(B(y_\alpha^k,\delta^k))}$ is
immediate from property (\ref{cancellation}) of
$\psi_\alpha^k$, by Theorem~\ref{theorem AH orth basis}.
Thus, $\psi_\alpha^k(x)/ \sqrt{\mu(B(y_\alpha^k,\delta^k))}$
belongs to the test function
space~$G(y_\alpha^k,\delta^k,\eta,\gamma)$. This completes the
proof of Theorem~\ref{theorem wavelet is test function}.
\end{proof}

Now we state and prove the main result of this subsection,
which will be the crucial tool for establishing the
Littlewood--Paley theory and developing the Hardy spaces.

\begin{theorem}\label{thm reproducing formula test function}
    \textup{(Wavelet reproducing formula for test functions)} Suppose
    that $f\in \GGs(\beta,\gamma)$ with $\beta$,
    $\gamma\in(0,\eta)$. Then the wavelet reproducing formula
    \begin{equation}\label{eqn:reproducing_formula}
        f(x)
        = \sum_{k\in\mathbb{Z}}\sum_{\alpha \in \mathscr{Y}^k}
            \langle f,\psi_{\alpha}^k \rangle \psi_{\alpha}^k(x)
    \end{equation}
    holds in $\GGs(\beta',\gamma')$ for all $\beta'\in(0,\beta)$ and
    $\gamma'\in(0,\gamma)$.
\end{theorem}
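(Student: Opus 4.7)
The strategy is to begin with the Auscher--Hyt\"onen $L^2$-identity \eqref{eqn:AH_reproducing formula}, which already applies to $f$ since every element of $\GGs(\beta,\gamma)$ lies in $L^2(X)$, and then upgrade this $L^2$-convergence to convergence in the test function norm $\|\cdot\|_{\GGs(\beta',\gamma')}$. Setting
\[
    S_N(f)(x):=\sum_{|k|\le N}\sum_{\alpha\in\mathscr{Y}^k}\langle f,\psi_\alpha^k\rangle\,\psi_\alpha^k(x),
\]
I would prove that the partial sums $\{S_N(f)\}$ form a Cauchy sequence in $\GGs(\beta',\gamma')$ whose $L^2$-limit is $f$; this pins down the $\GGs(\beta',\gamma')$-limit as $f$ itself.

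The analytical heart of the argument is a two-sided decay estimate on the scalar products $\langle f,\psi_\alpha^k\rangle$. For $k\ge 0$ (small-scale wavelets) I would exploit the cancellation \eqref{cancellation} of $\psi_\alpha^k$ by writing
\[
    \langle f,\psi_\alpha^k\rangle
    =\int_X \bigl(f(y)-f(y_\alpha^k)\bigr)\,\psi_\alpha^k(y)\,d\mu(y)
\]
and combining the H\"older regularity of $f$ with the exponential decay \eqref{exponential decay} of $\psi_\alpha^k$, harvesting a factor $\delta^{k\beta}$. For $k<0$ (large-scale wavelets) the roles swap: using the cancellation $\int_X f\,d\mu=0$ one writes
\[
    \langle f,\psi_\alpha^k\rangle
    =\int_X f(y)\,\bigl(\psi_\alpha^k(y)-\psi_\alpha^k(x_0)\bigr)\,d\mu(y)
\]
and applies the H\"older regularity \eqref{Holder-regularity} of $\psi_\alpha^k$ together with the spatial decay of $f$, harvesting a factor $\delta^{|k|\eta}$. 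After renormalising by $\sqrt{\mu(B(y_\alpha^k,\delta^k))}$ so that the wavelet becomes a test function in the sense of Theorem~\ref{theorem wavelet is test function}, both cases yield a pointwise bound schematically of the form
\[
    \bigl|\langle f,\psi_\alpha^k\rangle\,\psi_\alpha^k(x)\bigr|
    \le C\,\delta^{|k|\beta''}\,\frac{\exp\!\bigl(-\nu(d(y_\alpha^k,x)/\delta^k)^a\bigr)}{\mu(B(y_\alpha^k,\delta^k))}\,\Phi_f(x)
\]
for some $\beta''\in(\beta',\beta)$, where $\Phi_f(x)$ retains the size/smoothness profile of a test function centered at $x_0$ at scale $r$.

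The inner sum over $\alpha\in\mathscr{Y}^k$ would then be collapsed using the crucial estimate \eqref{eqn:AHLemma8.3} from Remark~\ref{remark:crucial_estimate}, which reduces the exponentially decaying sum indexed by the reference points to a ball-volume factor of order $1/V_{\delta^k}(x)$; this is precisely the step that bypasses any reverse-doubling hypothesis. The resulting geometric outer sum over $|k|>N$ is then controlled by $\delta^{N\beta''}$, giving the Cauchy property for the size and H\"older seminorms of $f-S_N(f)$. The cancellation condition is preserved term-by-term since $\int\psi_\alpha^k\,d\mu=0$.

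The main obstacle I expect is the bookkeeping in the H\"older smoothness verification: the estimate for $|(f-S_N(f))(x)-(f-S_N(f))(x')|$ splits into the usual subcases depending on whether $d(x,x')$ is smaller than $\delta^k$ or comparable to the outer scale $r+d(x,x_0)$, and in the $k<0$ regime one must carefully use~\eqref{an observation} to trade the $V_{\delta^k}(y_\alpha^k)$-denominator produced by \eqref{eqn:AHLemma8.3} for the $V_r(x_0)+V(x,x_0)$-denominator required by Definition~\ref{def-of-test-func-space}. It is precisely the strict inequalities $\beta'<\beta$ and $\gamma'<\gamma$ that create the slack needed to absorb the residual powers of $\delta^{|k|}$ and close the geometric summation.
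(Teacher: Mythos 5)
Your proposal correctly identifies the overall architecture (upgrade $L^2$-convergence of the Auscher--Hyt\"onen expansion to convergence in the test-function norm by controlling the tail $f - S_N(f)$), correctly splits into small-scale and large-scale regimes, correctly identifies the cancellation-versus-regularity exchange for the wavelet coefficients, and correctly recognizes that the Auscher--Hyt\"onen estimate~\eqref{eqn:AHLemma8.3} is the device that replaces any reverse-doubling assumption when collapsing the $\alpha$-sum. This is all in agreement with the paper's proof of the \emph{size} estimate~\eqref{size decay estimate of the remaining terms}, which the paper organizes slightly more systematically via the wavelet operators $D_k$ and Lemma~\ref{lemma another version of Lemma 9.1} rather than by bounding $|\langle f,\psi_\alpha^k\rangle\psi_\alpha^k(x)|$ term by term.

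However, there is a genuine gap in the smoothness part, which you frame as ``bookkeeping'' but which is in fact where the proof becomes hard. You expect to prove that the H\"older seminorm of $f - S_N(f)$ decays like $\delta^{N\beta''}$ by direct summation, with $\beta' < \beta$ absorbing residual powers. This does not work: for the intermediate scales $\delta^k < d(x,x') \le (2A_0)^{-1}(1+d(x,x_0))$ the individual terms $\psi_\alpha^k(x)-\psi_\alpha^k(x')$ cannot produce the H\"older factor $\big(d(x,x')/(1+d(x,x_0))\big)^{\beta'}$ (one only has the size bound there), and the exponent trade-offs do not close the summation on their own. The paper's actual strategy is different: it proves the smoothness estimate~\eqref{smooth estimate of the remaining terms} \emph{uniformly} in the truncation level, with no decay in $L$ at all, and then recovers decay for the H\"older seminorm only via the geometric-mean interpolation between~\eqref{size decay estimate of the remaining terms} and~\eqref{smooth estimate of the remaining terms}; this is precisely why $\beta'$ is strictly below $\beta$, not merely to absorb geometric sums. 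Moreover, proving the uniform smoothness estimate requires treating the truncated operator $T$ with kernel $K(x,y)=\sum_{|k|\le M}D_k(x,y)$ as a Calder\'on--Zygmund singular integral: the paper follows Meyer's argument (splitting $f = f_1 + f_2 + f_3$ using spline-based cut-offs, establishing a weak-boundedness estimate $\|T\chi_0\|_\infty\le C$, and crucially establishing a \emph{double-smoothness} Calder\'on--Zygmund estimate~\eqref{y smooth2 of C-Z-S-I-O} for $K$ via the new claim~\eqref{claim}). The paper explicitly warns that these kernel estimates for $K$ cannot be obtained by just summing the $D_k$ estimates over $k$ without extra assumptions on $\mu$, so the special wavelet structure and Lemma~8.3 of~\cite{AH} have to be re-engaged at this level as well. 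Your proposal does not contain the geometric-mean step, the singular-integral/Meyer argument, or the double-smoothness kernel estimate, and without them the smoothness half of the proof does not close.
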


As an immediate consequence of Theorem~\ref{thm reproducing
formula test function}, we obtain the following result.

\begin{corollary}\label{coro reproducing formula distribution}
    \textup{(Wavelet reproducing formula for distributions)}
    Take $\beta$, $\gamma\in(0,\eta)$. Then the wavelet
    reproducing formula~\eqref{eqn:reproducing_formula} also
    holds in the space $(\GGs(\beta,\gamma))'$ of
    distributions.
\end{corollary}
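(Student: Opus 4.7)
The plan is to deduce the distributional formula from its test-function counterpart (Theorem~\ref{thm reproducing formula test function}) by duality, interpreting the corollary as weak-$*$ convergence in $(\GGs(\beta,\gamma))'$ of the partial sums $f_N:=\sum_{|k|\le N,\,\alpha\in\mathscr{Y}^k_N}\langle f,\psi_\alpha^k\rangle\psi_\alpha^k$ (with $\mathscr{Y}^k_N\uparrow\mathscr{Y}^k$) to $f$.

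A preliminary check: by Theorem~\ref{theorem wavelet is test function}, each normalized wavelet $\psi_\alpha^k/\sqrt{\mu(B(y_\alpha^k,\delta^k))}$ lies in $G(y_\alpha^k,\delta^k,\eta,\gamma)$, whose norm is equivalent to that of $G(\eta,\gamma)\subset\GGs(\beta,\gamma)$; hence $\psi_\alpha^k\in\GGs(\beta,\gamma)$ and every pairing $\langle f,\psi_\alpha^k\rangle$ is well defined. Writing $T_Ng:=\sum_{|k|\le N,\,\alpha\in\mathscr{Y}^k_N}\langle g,\psi_\alpha^k\rangle\psi_\alpha^k$, symmetry of the pairing gives $\langle f_N,g\rangle=\langle f,T_Ng\rangle$ for any $g\in\GGs(\beta,\gamma)$, so the task reduces to showing $\langle f,T_Ng\rangle\to\langle f,g\rangle$.

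The main obstacle is a topology mismatch: Theorem~\ref{thm reproducing formula test function} supplies $T_Ng\to g$ only in $\GGs(\beta',\gamma')$ for $\beta'<\beta,\ \gamma'<\gamma$, whereas $f$ is continuous only on $\GGs(\beta,\gamma)$. To bridge this, I would first treat $g$ in the better subclass $\GGs(\beta_1,\gamma_1)$ with $\beta<\beta_1<\eta$ and $\gamma<\gamma_1<\eta$. Applying Theorem~\ref{thm reproducing formula test function} to such a $g$ with the improved input parameters $(\beta_1,\gamma_1)$ yields convergence $T_Ng\to g$ in the norm of $\GGs(\beta,\gamma)$ itself, and continuity of $f$ then gives the identity for such~$g$. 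Passing to general $g\in\GGs(\beta,\gamma)$ then rests on (i) uniform $\GGs(\beta,\gamma)$-boundedness of the family $\{T_N\}$, and (ii) density of $\GGs(\beta_1,\gamma_1)$ inside $\GGs(\beta,\gamma)$; a standard $3\varepsilon$-approximation closes the argument.

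The hardest part is extracting these two auxiliary properties from the proof of Theorem~\ref{thm reproducing formula test function}. Both come down to careful bookkeeping of the margins in $\beta$ and $\gamma$ within the kernel estimates underlying that theorem, ultimately resting on the Auscher--Hyt\"onen bound~(\ref{eqn:AHLemma8.3}) used to sum geometric tails indexed by the scales~$k$.
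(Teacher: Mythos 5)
Your proposal correctly isolates the genuine subtlety: Theorem~\ref{thm reproducing formula test function} yields $T_Ng\to g$ only in the norm of $\GGs(\beta',\gamma')$ with a strict loss $\beta'<\beta$, $\gamma'<\gamma$, while $f\in(\GGs(\beta,\gamma))'$ is a priori continuous only on $\GGs(\beta,\gamma)$. The paper itself gives no separate proof (the corollary is declared an ``immediate consequence''), and the reading that makes it immediate is exactly the first half of your argument: pair $f$ with $g$ in a strictly smaller class $\GGs(\beta_1,\gamma_1)$, $\beta<\beta_1<\eta$, $\gamma<\gamma_1<\eta$, apply Theorem~\ref{thm reproducing formula test function} to such $g$ with output indices $(\beta,\gamma)$, and invoke continuity of $f$. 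That is the distributional formula in the form in which the paper actually uses it.

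The second half of your proposal — upgrading to weak-$*$ convergence against \emph{every} $g\in\GGs(\beta,\gamma)$ — does not close, and both auxiliary inputs are problematic. For (i): what the proof of Theorem~\ref{thm reproducing formula test function} establishes is the estimate~\eqref{eqn:waveletreproducingformula1}, i.e.\ uniform boundedness $T_N:\GGs(\beta,\gamma)\to\GGs(\beta',\gamma')$ with a strict drop in both indices; the drop in $\gamma$ is structural because the size estimate~\eqref{size decay estimate of the remaining terms} buys the factor $\delta^{\sigma' L}$ precisely by trading $\gamma$ down to $\gamma'$, and the geometric-mean step producing~\eqref{smooth decay estimate of the remaining terms} cannot recover it. Uniform boundedness of $T_N$ \emph{on} $\GGs(\beta,\gamma)$ is nowhere established. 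For (ii): the density of $\GGs(\beta_1,\gamma_1)$ in $\GGs(\beta,\gamma)$ is in fact \emph{false} once $\gamma_1>\gamma$. Every $h\in\GGs(\beta_1,\gamma_1)\subset G(\beta_1,\gamma_1)$ decays like $\big(1+d(x,x_0)\big)^{-\gamma_1}$, whereas $G(\eta,\gamma)\subset\GGs(\beta,\gamma)$ contains functions $g$ whose modulus genuinely saturates the slower rate $\big(1+d(x,x_0)\big)^{-\gamma}$; since the size part of the $G(\beta,\gamma)$-seminorm weights the tail by exactly $\big(1+d(x,x_0)\big)^{\gamma}\big(V_1(x_0)+V(x,x_0)\big)$, one gets $\|g-h\|_{G(\beta,\gamma)}\ge c>0$ uniformly over $h\in\GGs(\beta_1,\gamma_1)$. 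So the $3\varepsilon$-approximation cannot run, and the distributional reproducing formula should be interpreted — as in the first half of your argument and as the paper tacitly does — against the strictly more regular test class $\GGs(\beta_1,\gamma_1)$ rather than against all of $\GGs(\beta,\gamma)$.
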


\begin{proof}[Proof of Theorem~\ref{thm reproducing formula test function}]
Take $f\in \GGs(\beta,\gamma)$ with $\beta$,
$\gamma\in(0,\eta)$. It suffices to show that
\begin{eqnarray}\label{convergence in test function spaces}
    \Big\|\sum_{|k| > L}\sum_{\alpha \in \mathscr{Y}^k}
        \langle \psi_{\alpha}^k,f\rangle
        \psi_{\alpha}^k(\cdot)\Big\|_{G(\beta',\gamma')}
    \longrightarrow 0
\end{eqnarray}
as $L$ tends to infinity, for each $\beta'\in(0,\beta)$ and
$\gamma'\in(0,\gamma)$.

The proof of (\ref{convergence in test function spaces}) is
based on the following estimate: for each $\beta'\in(0,\beta)$
and $\gamma'\in(0,\gamma)$, there is a constant $\sigma > 0$
such that for each $L\in\N$,
\begin{eqnarray}\label{decay estimate of the remaining terms}
    \Big\|\sum_{|k|>L}\sum_{ \alpha \in \mathscr{Y}^k}
        \langle \psi_{\alpha}^k,f\rangle
        \psi_{\alpha}^k(\cdot)\Big\|_{G(\beta',\gamma')}
    \leq C \delta^{\sigma L}\|f\|_{G(\beta,\gamma)},
\end{eqnarray}
where $C$ is a positive constant independent of
$f\in\GGs(\beta,\gamma)$.

To verify (\ref{decay estimate of the remaining terms}), it
suffices to show that the following decay and smoothness
estimates hold: for each $\gamma'\in(0,\gamma)$, there exist a
positive constant~$C$ independent of~$f$, and a positive number
$\sigma'$, such that
\begin{eqnarray}
    &&\Big|\sum_{|k|>L}\sum_{ \alpha \in \mathscr{Y}^k}
        \langle \psi_{\alpha}^k,f\rangle \psi_{\alpha}^k(x)\Big|
    \leq C \delta^{\sigma' L}{1\over V_1(x_0)+V(x,x_0)}
        \Big(\frac{\displaystyle 1}{\displaystyle 1 + d(x,x_0)}\Big)^{\gamma'}
        \|f\|_{G(\beta,\gamma)},\ \ {\rm and}
        \label{size decay estimate of the remaining terms}\\
    &&\Big|\sum_{|k|>L}\sum_{ \alpha \in \mathscr{Y}^k}
        \langle \psi_{\alpha}^k,f\rangle
        \psi_{\alpha}^k(x) - \sum_{|k|>L}\sum_{ \alpha \in \mathscr{Y}^k}
        \langle \psi_{\alpha}^k,f\rangle \psi_{\alpha}^k(x')\Big|
        \label{smooth estimate of the remaining terms}\\
    &&\hskip2cm\leq C
        \Big(\frac{\displaystyle d(x,x')}{\displaystyle 1 + d(x,x_0)}\Big)^{\beta}
        {1\over V_1(x_0)+V(x,x_0)}
        \Big(\frac{\displaystyle 1}{\displaystyle 1 + d(x,x_0)}\Big)^{\gamma}
        \|f\|_{G(\beta,\gamma)}\nonumber
\end{eqnarray}
for all $x$ and $x'$ such that $d(x,x') \leq (2A_0)^{-1}(1 +
d(x,x_0))$.

Indeed, to see that (\ref{size decay estimate of the remaining
terms}) and (\ref{smooth estimate of the remaining terms})
imply (\ref{decay estimate of the remaining terms}), we take
the geometric mean between (\ref{smooth estimate of the
remaining terms}) and the following estimate (obtained directly
from (\ref{size decay estimate of the remaining terms})):
\begin{eqnarray*}
    \lefteqn{\Big|\sum_{|k|>L}\sum_{ \alpha \in \mathscr{Y}^k}
        \langle \psi_{\alpha}^k,f\rangle \psi_{\alpha}^k(x)
        - \sum_{|k|>L}\sum_{ \alpha \in \mathscr{Y}^k}
        \langle \psi_{\alpha}^k,f\rangle \psi_{\alpha}^k(x')\Big|} \hspace{1cm}\\
    &&\leq \Big|\sum_{|k|>L}\sum_{ \alpha \in \mathscr{Y}^k}
        \langle \psi_{\alpha}^k,f\rangle \psi_{\alpha}^k(x)\Big|
        + \Big|\sum_{|k|>L}\sum_{ \alpha \in \mathscr{Y}^k}
        \langle \psi_{\alpha}^k,f\rangle \psi_{\alpha}^k(x')\Big|\\
    &&\leq C \delta^{\sigma' L}{1\over V_1(x_0)+V(x,x_0)}
        \Big(\frac{1}{1 + d(x,x_0)}\Big)^{\gamma'}\|f\|_{G(\beta,\gamma)}.
\end{eqnarray*}
This gives
\begin{eqnarray}\label{smooth decay estimate of the remaining terms}
    &&\Big|\sum_{|k|>L}\sum_{ \alpha \in \mathscr{Y}^k}
        \langle \psi_{\alpha}^k,f\rangle \psi_{\alpha}^k(x)
        - \sum_{|k|>L}\sum_{ \alpha \in \mathscr{Y}^k}
        \langle \psi_{\alpha}^k,f\rangle \psi_{\alpha}^k(x')\Big|\\
    &&\hskip2cm\leq C  \delta^{\sigma L}\Big(\frac{d(x,x')}{r + d(x,x_0)}\Big)^{\beta'}
        {1\over V_1(x_0)+V(x,x_0)}\Big(\frac{1}{1 + d(x,x_0)}\Big)^{\gamma'}
        \|f\|_{G(\beta,\gamma)}\nonumber
\end{eqnarray}
for some $\sigma < \sigma'$. Now (\ref{size decay estimate of
the remaining terms}) and (\ref{smooth decay estimate of the
remaining terms}), together with the fact that
\[
    \int_X\sum_{|k|>L}\sum_{ \alpha \in \mathscr{Y}^k}
        \langle\psi_{\alpha}^k,f\rangle
        \psi_{\alpha}^k(x)\, d\mu(x) = 0,
\]
imply that $\sum_{|k|>L}\sum_{ \alpha \in \mathscr{Y}^k}\langle
\psi_{\alpha}^k,f\rangle \psi_{\alpha}^k$ is a test function in
$G(\beta',\gamma')$. Moreover, we see from the upper bounds
in~\eqref{size decay estimate of the remaining terms}
and~\eqref{smooth decay estimate of the remaining terms}
that~\eqref{decay estimate of the remaining terms} holds, as
required.

To prove the decay and smoothness estimates (\ref{size decay
estimate of the remaining terms}) and (\ref{smooth estimate of
the remaining terms}), we need the following lemma which gives
estimates for the kernels $D_k(x,y) = \sum_{\alpha \in
\mathscr{Y}^k}\psi_{\alpha}^k(x)\psi_{\alpha}^k(y)$ of the
wavelet operators~$D_k$. These wavelet operators are defined by
\[
    D_k f(x)
    := \sum_{\alpha \in \mathscr{Y}^k} \langle \psi^k_\alpha, f \rangle
        \psi^k_\alpha(x)
    = \int_X D_k(x,y) f(y) \, dy,
\]
for $k\in\Z$. We note that the first two estimates in
Lemma~\ref{lemma another version of Lemma 9.1} are similar to
estimates given in Lemma~9.1 in~\cite{AH}.

\begin{lemma}\label{lemma another version of Lemma 9.1}
    \textup{(Properties of wavelet operators~$D_k$)} Let
    \[
        D_k(x,y)
        := \sum_{\alpha \in \mathscr{Y}^k}
            \psi_{\alpha}^k(x)\psi_{\alpha}^k(y)
    \]
    for $x$, $y\in X$. Fix $\gamma > 0$. Then the following
    estimates hold.
    \begin{enumerate}
        \item[(i)] \textup{(Decay condition)} For all $x$,
            $y\in X$, we have
            \begin{equation}\label{e1 in lemma another
            version of Lemma 9.1}
            \big|D_k(x,y)\big|
            \leq C{1\over V_{\delta^k}(x) + V(x,y)}
                \Big({\delta^k\over \delta^{k} + d(x,y)}\Big)^{\gamma}.
            \end{equation}

        \item[(ii)] \textup{(Smoothness condition)} If
            $d(y,y')\leq (2A_0)^{-1} \max\{\delta^k +
            d(x,y),\delta^k + d(x,y')\}$, then
            \begin{equation}\label{e2 in lemma another version of Lemma 9.1}
            \big |D_k(x,y) - D_k(x,y')\big|
                \leq C \Big({d(y,y') \over \delta^k + d(x,y)}\Big)^\eta
                {1\over V_{\delta^k}(x) + V(x,y)}
                \Big({\delta^k\over \delta^{k} + d(x,y)}\Big)^{\gamma}.
            \end{equation}
            The same estimate holds with $x$ and $y$
            interchanged.

        \item[(iii)] \textup{(Double smoothness condition)}
            If
            \begin{align*}
                d(x,x')
                &\leq (2A_0)^{-1}
                    \max\{\delta^k + d(x,y), \delta^k + d(x',y)\} \quad \text{and}\\
                d(y,y')
                &\leq (2A_0)^{-1}
                    \max\{\delta^k + d(x,y), \delta^k + d(x,y')\},
            \end{align*}
            then
            \begin{eqnarray}\label{e3 in lemma another version of Lemma 9.1}
            \lefteqn{\big|D_k(x,y) - D_k(x',y) - D_k(x,y') + D_k(x',y')\big |}\hspace{0.3cm}
                \nonumber \\
            &&\leq C \Big({d(x,x')\over\delta^k +d(x,y)}\Big)^\eta
                 \Big({d(y,y')\over\delta^k +d(x,y)}\Big)^\eta
                {1\over V_{\delta^k}(x)+V(x,y)}
                \Big({\delta^k\over \delta^{k} + d(x,y)}\Big)^{\gamma}.
            \end{eqnarray}
    \end{enumerate}
\end{lemma}

We defer the proof of Lemma~\ref{lemma another version of Lemma
9.1} until after the end of the proof of Theorem~\ref{thm
reproducing formula test function}.

Returning to the proof of Theorem~\ref{thm reproducing formula
test function}, we first show (\ref{size decay estimate of the
remaining terms}). Write
\begin{eqnarray*}
    \Big|\sum_{|k|>L} \sum_{ \alpha \in \mathscr{Y}^k}
        \langle \psi_{\alpha}^k,f\rangle \psi_{\alpha}^k(x)\Big|
    &\leq& \Big|\sum_{k>L} \sum_{ \alpha \in \mathscr{Y}^k}
        \langle \psi_{\alpha}^k,f\rangle \psi_{\alpha}^k(x)\Big|
        + \Big|\sum_{k<-L}\sum_{ \alpha \in \mathscr{Y}^k}
        \langle \psi_{\alpha}^k,f\rangle \psi_{\alpha}^k(x)\Big|\\
    &=:& \textup{(A)} + \textup{(B)}.
\end{eqnarray*}
For $\textup{(A)}$, using the cancellation property
(\ref{cancellation}) of the wavelet $\psi_\alpha^k$ and
integrating over the sets $W_1 := \{ y\in X: d(x,y) \leq
(2A_0)^{-1} (1 + d(x,x_0))\}$ and $W_2 := X\setminus W_1$, we
obtain
\begin{eqnarray*}
    \textup{(A)}
    &\leq & \sum_{k>L} \int_{W_1} \big|D_k(x,y)\big|
        \,\big|f(y)-f(x)\big|\,d\mu(y)\\
    && {}+ \sum_{k>L}\int_{W_2} \big|D_k(x,y)\big|
        \,\big(|f(y)|+|f(x)|\big)\,d\mu(y)\\
    &=:&\textup{(A)}_1+\textup{(A)}_2.
\end{eqnarray*}
To deal with $\textup{(A)}_1$, applying the decay
condition~\eqref{e1 in lemma another             version of
Lemma 9.1} from Lemma~\ref{lemma another version of Lemma 9.1}
on $D_k(x,y)$ and the H\"older regularity property (Definition
\ref{def-of-test-func-space}(ii)) of the test function $f\in
G(\beta,\gamma)$ gives
\begin{eqnarray*}
    \textup{(A)}_1
    &\leq& C\|f\|_{G(\beta,\gamma)}\sum_{k>L}
        \int_{W_1}  \frac{1}{V_{\delta^k}(x) + V(x,y)}
        \Big(\frac{\delta^k}{\delta^k + d(x,y)}\Big)^{\gamma}\\
    &&\hskip1cm {}\times\Big(\frac{d(x,y)}{1 + d(x,x_0)}\Big)^{\beta}
        \frac{1}{V_{1}(x_0) + V(x,x_0)}
        \Big(\frac{1}{1 + d(x,x_0)}\Big)^{\gamma}\,d\mu(y)\\
    &\leq& C \delta^{\beta L} {1\over V_1(x_0) + V(x,x_0)}
        \Big(\frac{1}{1 + d(x,x_0)}\Big)^{\gamma}
        \|f\|_{G(\beta,\gamma)}.
\end{eqnarray*}
To estimate $\textup{(A)}_2,$ applying the size conditions on
both $D_k(x,y)$ (Lemma~\ref{lemma another version of Lemma
9.1}) and $f$ (Definition \ref{def-of-test-func-space}(i))
gives
\begin{eqnarray*}
\textup{(A)}_2
    &\leq & C\|f\|_{G(\beta,\gamma)}
 \sum_{k>L}\int_{W_2}\frac{\displaystyle
1}{\displaystyle V_{\delta^k}(x)+V(x,y)} \Big(\frac{\displaystyle
\delta^k}{\displaystyle \delta^k+d(x,y)}\Big)^{\gamma}\\
&&{}\times \bigg[\frac{\displaystyle 1}{\displaystyle
V_{1}(x_0)+V(y,x_0)} \Big(\frac{\displaystyle 1}{\displaystyle
1+d(y,x_0)}\Big)^{\gamma}
+\frac{\displaystyle 1}{\displaystyle
V_{1}(x_0)+V(x,x_0)} \Big(\frac{\displaystyle 1}{\displaystyle
1+d(x,x_0)}\Big)^{\gamma}\bigg]\,d\mu(y).
\end{eqnarray*}
For the first sum, use the fact that if $d(x,y)
> (2A_0)^{-1} (1+d(x,x_0))$ then $V(x,y)\geq CV(x,
(2A_0)^{-1} (1+d(x,x_0)) \geq C[V_1(x_0)+V(x,x_0)]$. For the
second sum, apply the following estimate:
\[
    \int_{W_2}\frac{\displaystyle 1}{\displaystyle V_{\delta^k}(x)+V(x,y)}
        \Big(\frac{\displaystyle\delta^k}{\displaystyle\delta^k+d(x,y)}\Big)^{\gamma}
        \mu(y)
    \leq C\delta^{\gamma k}.
\]
We obtain
\begin{eqnarray*}
    \textup{(A)}_2
    \leq C\|f\|_{G(\beta,\gamma)}\delta^{\gamma L}{1\over V_1(x_0)+V(x,x_0)}
    \Big(\frac{\displaystyle 1}{\displaystyle 1+d(x,x_0)}\Big)^{\gamma}.
\end{eqnarray*}
Hence for some $\sigma > 0$,
\[
    \textup{(A)}
    \leq C \delta^{\sigma L} {1\over V_1(x_0)+V(x,x_0)}
        \Big(\frac{\displaystyle 1}{\displaystyle 1+d(x,x_0)}\Big)^{\gamma}
        \|f\|_{G(\beta,\gamma)}.
\]

We now turn to $\textup{(B)}$. Using the fact that $\int_X
f(x)\, d\mu(x) = 0$ and considering the sets $W_3 := \{y\in X:
d(y,x_0)\leq (2A_0)^{-1} (\delta^k+d(x,x_0))\}$ and $W_4 :=
X \setminus W_3$, we have
\begin{eqnarray*}
    \textup{(B)}
    &\leq& \sum_{k<-L}\int_{W_3}\big|D_k(x,y)-D_k(x,x_0)\big|\,|f(y)|\,d\mu(y)\\
    &&
    {}+ \sum_{k<-L}\int_{W_4}\Big(\big|D_k(x,y)\big|
        + \big|D_k(x,x_0)\big|\Big)\,|f(y)|\,d\mu(y)\\
    &=:& \textup{(B)}_1+\textup{(B)}_2.
\end{eqnarray*}

For $\textup{(B)}_1$, applying the smoothness estimate from
Lemma~\ref{lemma another version of Lemma 9.1}(ii) and the size
estimate of the test function $f$ (Definition
\ref{def-of-test-func-space}(i)) yields
\begin{eqnarray*}
    \textup{(B)}_1
     &\leq& C\|f\|_{G(\beta,\gamma)}
        \sum_{k<-L}\int_{W_3}
            \Big( {d(y,x_0)\over\delta^k + d(x,x_0)}\Big)^{\eta'}
            {1\over V_{\delta^k}(x) + V(x,x_0)}\\
    &&{}\times \Big({\delta^k\over \delta^{k} + d(x,x_0)}\Big)^{\gamma'}
        \frac{1}{V_{1}(x_0) + V(y,x_0)}
        \Big(\frac{1}{1 + d(y,x_0)}\Big)^{\gamma}\, d\mu(y)\\
    &\leq& C\|f\|_{G(\beta,\gamma)}
        \delta^{(\eta' - \gamma')L}
        {1\over V_1(x_0) + V(x,x_0)}
        \Big(\frac{1}{1 + d(x,x_0)}\Big)^{\gamma'},
\end{eqnarray*}
where we choose $\gamma' < \eta' < \gamma$. To estimate
$\textup{(B)}_2$, we first write $\textup{(B)}_2 =
\textup{(B)}_{21} + \textup{(B)}_{22}$ where
\begin{eqnarray*}
    \textup{(B)}_{21}
    := \sum_{k<-L} \int_{W_4}\big|D_k(x,y)\big|\,|f(y)|\,d\mu(y)
\end{eqnarray*}
and
\begin{eqnarray*}
    \textup{(B)}_{22}
    := \sum_{k<-L}\int_{W_4}\big|D_k(x,x_0)\big|\,|f(y)|\,d\mu(y).
\end{eqnarray*}
Since here $d(y,x_0) > (2A_0)^{-1} (\delta^k+d(x,x_0))$, the
size estimates for the test function~$f$
(Definition~\ref{def-of-test-func-space}(i)) imply that for $0
< \gamma' < \gamma$,
\begin{eqnarray*}
    |f(y)|
    &\leq& C \|f\|_{G(\beta,\gamma)}{1\over V_1(x_0)+V(y,x_0)}
        \Big(\frac{\displaystyle 1}{\displaystyle
        1+d(y,x_0)}\Big)^{\gamma}\\
    &\leq& C\|f\|_{G(\beta,\gamma)}\delta^{k(\gamma'-\gamma)}{1\over
        V_1(x_0)+V(x,x_0)}\Big(\frac{\displaystyle 1}{\displaystyle
        1+d(x,x_0)}\Big)^{\gamma'}.
\end{eqnarray*}
The above estimate, together with the fact that
$\int_X\big|D_k(x,y)\big|\, d\mu(y)\leq C$, yields
\begin{eqnarray*}
    \textup{(B)}_{21}
    \leq C\delta^{(\gamma - \gamma')L}{1\over V_1(x_0)+V(x,x_0)}
        \Big(\frac{1}{1 + d(x,x_0)}\Big)^{\gamma'}
        \|f\|_{G(\beta,\gamma)}.
\end{eqnarray*}
The estimate for $\textup{(B)}_{22}$ is similar, but easier.
Indeed, by Lemma~\ref{lemma another version of Lemma 9.1}, we
have
\[
    \big|D_k(x,x_0)\big|
    \leq C {1\over V_\delta^k(x_0) + V(x,x_0)}
        \Big(\frac{\delta^k}{\delta^k + d(x,x_0)}\Big)^{\gamma'}
\]
and
\[
    \int_{W_4}|f(y)|\, d\mu(y)
    \leq C \Big(\frac{1}{\delta^k + d(x,x_0)}\Big)^{\gamma}.
\]
Thus, we obtain the same estimate for $\textup{(B)}_{22}$ as
for $\textup{(B)}_{21}$, but with $\gamma - \gamma'$ replaced
by $\eta' - \gamma'$. This completes the proof of~(\ref{size
decay estimate of the remaining terms}).

Finally, we show (\ref{smooth estimate of the remaining
terms}). To do this, we first need to construct a smooth
cut-off function. For this purpose, we recall the following
result on the properties of the {\it spline
functions}~$s^k_\alpha$ on~$(X,d,\mu)$ that were constructed by
Auscher and Hyt\"onen~\cite{AH}.

\begin{theorem}[\cite{AH}, Theorem~3.1]\label{theorem AH spline}
    The spline functions $s_\alpha^k$ satisfy the following
    properties: bounded support
    \begin{eqnarray*}
        \chi_{B(x_\alpha^k,{1/8}A_0^{-3}\delta^k)}(x)
        \leq s_\alpha^k(x)
        \leq \chi_{B(x_\alpha^k,8A_0^{5}\delta^k)}(x);
    \end{eqnarray*}
    the interpolation and reproducing properties
    \begin{eqnarray*}
        s_\alpha^k(x_{\beta}^k) = \delta_{\alpha,\beta},
        \hskip1cm
        \sum_\alpha s_\alpha^k(x) = 1,
        \hskip1cm
        s_\alpha^k = \sum_\beta p_{\alpha,\beta}^k s_\beta^{k+1}(x),
    \end{eqnarray*}
    where $\{p_{\alpha,\beta}^k\}_\beta$ is a finite nonzero set of
    nonnegative coefficients with $p_{\alpha,\beta}^k\leq 1$; and
    H\"older continuity
    \begin{eqnarray*}
        |s_\alpha^k(x) - s_\alpha^k(y)|
        \leq C\Big( {d(x,y)\over\delta^k} \Big)^\eta.
    \end{eqnarray*}
\end{theorem}
We point out that in the above theorem, $\alpha$ runs over
$\mathscr{X}^k$.  Using these splines we can construct a smooth
cut-off function, as follows.

\begin{lemma}\label{lemma technique 3 cut-off function}
    \textup{(Smooth cut-off function)} For each fixed $x_0\in
    X$ and $R_0\in(0,\infty)$, there exists a smooth cut-off
    function $h(x)$ such that $0\leq h(x)\leq1$,
    \begin{eqnarray*}
        h(x)\equiv1 \textup{\ \  \ when\ } x\in B(x_0,R_0/4),
        \hskip1cm
        h(x)\equiv0 \textup{\ \  \ when\ } x\in B(x_0,A_0^2R_0)^c,
    \end{eqnarray*}
    and there exists a positive constant $C$ independent of
    $x_0$, $R_0$, $x$, $y$ such that
    \begin{eqnarray*}
        |h(x) - h(y)|
        \leq C\Big( {d(x,y)\over\delta^k} \Big)^\eta.
    \end{eqnarray*}
\end{lemma}

\begin{proof}
For a fixed $R_0\in(0,\infty)$, we choose $k_0\in\mathbb{Z}$
such that
\[
    8A_0^5\delta^{k_0}
    \leq R_0/4
    \hskip1cm
    \textup{and}
    \hskip1cm
    8A_0^5\delta^{k_0 - 1}
    > R_0/4.
\]
Next, we define the index set $\mathcal{I}_{k_0}$ as follows:
\[
    \mathcal{I}_{k_0}
    := \big\{\alpha\in\mathscr{X}^{k_0}:\
        B(x_\alpha^{k_0},8A_0^{5}\delta^{k_0})
        \cap B(x_0,R_0/4) \not= \emptyset \big\}.
\]
Then the number of indices contained in $\mathcal{I}_{k_0}$ is
bounded by a constant independent of $R_0$, $k_0$, and $x_0$,
since $8A_0^{5}\delta^{k_0}$ is comparable to $R_0$ and the
reference dyadic points $\{x_\alpha^{k_0}\}$ are
$\delta^{k_0}$-separated.

Now define
$$ h(x) := \sum_{\alpha\in \mathcal{I}_{k_0}}s_\alpha^{k_0}(x). $$
From the properties of the spline functions $s_\alpha^k(x)$
(Theorem~\ref{theorem AH spline}), it is easy to verify that
$h(x)$ satisfies all the properties listed in Lemma~\ref{lemma
technique 3 cut-off function}.
\end{proof}

In what follows, the H\"older-regularity index of the cut-off
function $h(x)$ is the $\eta$ given in Theorem~\ref{theorem AH
spline} (\cite{AH}, Theorem~3.1).

We now return to the proof of (\ref{smooth estimate of the
remaining terms}). It suffices to show that there exists a
constant $C$ such that for each $M > 0$ and for $d(x,x') \leq
(2A_0)^{-1} \big( 1+d(x,x_0)\big)$,
\begin{eqnarray}\label{e1 for smooth estimate of the remainning terms}
    &&\Big|\int_X \sum_{|k|\leq M}\big[D_k(x,y) - D_k(x',y)\big]f(y)\, d\mu(y)\Big|\\
    &&\hskip1cm\leq C\Big(\frac{\displaystyle d(x,x')}{\displaystyle
        1+d(x,x_0)}\Big)^{\beta}{1\over
        V_1(x_0)+V(x,x_0)}\Big(\frac{\displaystyle 1}{\displaystyle
        1+d(x,x_0)}\Big)^{\gamma}.\nonumber
\end{eqnarray}

To do this, fix $M > 0$ and let $T$ denote the wavelet operator given
by
\[
    T(f)(x)
    := \int_X K(x,y)f(y)\, d\mu(y),
\]
with kernel $K(x,y) := \sum_{|k|\leq M} D_k(x,y)$, where the
kernel $D_k(x,y)$ of the wavelet operator~$D_k$ is given by
$\sum_{\alpha \in \mathscr{Y}^k} \psi_{\alpha}^k(x)
\psi_{\alpha}^k(y)$.

To show (\ref{e1 for smooth estimate of the remainning terms}),
it suffices to prove that if $d(x,x')\leq (2A_0)^{-1} \big(
1+d(x,x_0)\big)$, then
\begin{eqnarray*}
    \big|T(f)(x)-T(f)(x')\big|
    \leq C\Big(\frac{d(x,x')}{1 + d(x,x_0)}\Big)^{\beta}
        {1\over V_1(x_0)+V(x,x_0)}
        \Big(\frac{1}{1 + d(x,x_0)}\Big)^{\gamma}.
\end{eqnarray*}
Let $R = d(x,x_0)$ and $r = d(x,x')$, and consider the case
where $R\geq 10$ and $r \leq (20A^2_0)^{-1}(1 + d(x,x_0))$.
Following \cite{M2}, set $1 = I(y) + J(y) + L(y)$, where $I(y)$
is a smooth cut-off function as in Lemma \ref{lemma technique 3
cut-off function}, satisfying
\[
    I(y)\equiv1 \ \ \textup{when}\ y\in B(x,R/32A_0^2)
    \hskip1cm\textup{and}\hskip1cm
    I(y)\equiv0 \ \ \textup{when}\ y\in B(x,R/8)^c,
\]
and
\[
    J(y)\equiv1 \ \ \textup{when}\ y\in B(x_0,R/32A_0^2)
    \hskip1cm\textup{and}\hskip1cm
    I(y)\equiv0 \ \ \textup{when}\ y\in B(x_0,R/8)^c.
\]
Also set
\[
    f_1(y) := f(y)I(y),
    \qquad
    f_2(y) := f(y)J(y)
    \qquad\text{and}\qquad
    f_3(y) := f(y)L(y).
\]
It is easy to see that $f_1$, $f_2$ and $f_3$ satisfy the
following estimates~\eqref{(size f1)}--\eqref{(L1 norm f2)}:
\begin{eqnarray}
    | f_1(y)|
    &\leq& C\|  f \| _{G(\beta ,\gamma )}{1\over V_1(x_0)+V(x,x_0)}
        \Big({1\over 1+d(x,x_0)}\Big)^\gamma \label{(size f1)}
\end{eqnarray}
since $| f_1(y)|\leq | f(y)|$ and $1 + d(y,x_0) \geq C(1 +
d(x,x_0))$ by the form of $f_1$;
\begin{eqnarray}
    \ \ | f_1(y) - f_1(y^\prime )|
    &\leq& |  I(y) | |  f(y) - f(y' )|
        +| f(y' )| |  I(y) - I(y' )| \nonumber\\
    &\leq& C\|  f \| _{G(\beta ,\gamma )}
        \Big({d(y,y')\over 1 + d(x,x_0)}\Big)^\beta
        {1\over V_1(x_0) + V(x,x_0)}
        \Big({1\over 1 + d(x,x_0)}\Big)^\gamma\label{(smoothness f1)}
\end{eqnarray}
for all $y$ and $y^\prime$;
\begin{equation}
    | f_3(y)|
    \leq C\|  f \| _{G(\beta ,\gamma )}
        {1\over V_1(x_0) + V(y,x_0)}
        \Big({1\over 1 + d(y,x_0)}\Big)^\gamma
        \chi_{\lbrace y\in X: d  (y,x_0) > R/8\rbrace};\label{(size f3)}
\end{equation}
\begin{equation}
    \int_X | f_3(y)|  \, d\mu (y)
    \leq C\|  f \| _{G(\beta ,\gamma)}
        \Big({1\over 1 + d(x,x_0)}\Big)^\gamma; 
        \label{(L1 norm f3)}
\end{equation}
and
\begin{eqnarray}
    \bigg| \int_X f_2(y) \, d\mu (y)\bigg|
    &=& \bigg|  - \int_X f_1(y) \, d\mu (y) - \int_X f_3(y)
        \, d\mu (y)\bigg| \nonumber \\
    &\leq& C\|  f \| _{G(\beta ,\gamma )}
        \Big({1\over 1 + d(x,x_0)}\Big)^\gamma. \label{(L1 norm f2)}
\end{eqnarray}

We write
\begin{eqnarray*}
    T(f_1)(x)
    &=& \underbrace{\int_X K(x,y)u(y)[f_1(y) - f_1(x)] \, d\mu (y)}_{p(x)} \\
    && {}+ \underbrace{\int_X K(x,y)v(y)f_1(y) \, d\mu (y)
        + f_1(x)\int_X K(x,y)u(y)\, d\mu (y)}_{q(x)},
\end{eqnarray*}
where $u(y)$ is a smooth cut-off function as in
Lemma~\ref{lemma technique 3 cut-off function}, satisfying
$u(y)\equiv1$ when $y\in B(x, r)$, and $u(y)\equiv0$ when $y\in
B(x,4A_0^2 r)^c$, and where $v(y) := 1 - u(y)$.

In order to estimate the expressions $p(x)$ and $q(x)$, we show
that the kernel $K(x,y)$ of $T$ satisfies the following four
estimates:
\begin{eqnarray}\label{size of C-Z-S-I-O}
    |K(x, y)|
    \leq {{C}\over {V(x, y)}}
\end{eqnarray}
for all $x\not= y$;
\begin{eqnarray}\label{x smooth of C-Z-S-I-O}
    |  K(x, y) - K(x', y) |
    \leq {C\over V(x,y)} \Big({d(x, x')\over d(x,y)}\Big)^\eta
\end{eqnarray}
for $d(x, x')\leq (2A_0)^{-1} d(x, y)$;
\begin{eqnarray}\label{y smooth of C-Z-S-I-O}
    |  K(x, y) - K(x, y') |
    \leq {C\over V(x,y)} \Big({d(y, y')\over d(x,y)}\Big)^\eta
\end{eqnarray}
for $d(y, y')\leq (2A_0)^{-1} d(x, y);$ and
\begin{eqnarray}\label{y smooth2 of C-Z-S-I-O}
    \hspace{0.5cm} |  K(x, y) - K(x, y') - K(x,y') + K(x',y') |
    \leq {C\over V(x,y)} \Big({d(x, x')\over d(x,y)}\Big)^\eta
        \Big({d(y, y')\over d(x,y)}\Big)^\eta
\end{eqnarray}
for $d(x, x') \leq (2A_0)^{-1} d(x, y)$ and $d(y, y')\leq
(2A_0)^{-1} d(x, y)$.

We point out that without additional assumptions on the measure
$\mu$, the decay and smoothness estimates \eqref{e1 in lemma
another version of Lemma 9.1}--\eqref{e2 in lemma another
version of Lemma 9.1} for $D_k(x,y)$ as given in
Lemma~\ref{lemma another version of Lemma 9.1} are not by
themselves sufficient to imply the above estimates for $K(x,y)
= \sum_kD_k(x,y)$.
Fortunately, however, in our setting, although we have no
additional assumptions on~$\mu$, we do have the special form
$K(x,y) = \sum_k D_k(x,y) = \sum_{k,\alpha} \psi_\alpha^k(x)
\psi_\alpha^k(y)$ in terms of the wavelet basis, rather than
the operators $D_k = S_{k+1} - S_k$ from the classical case.
Instead of using the estimates \eqref{e1 in lemma another
version of Lemma 9.1}--\eqref{e2 in lemma another version of
Lemma 9.1} directly, we can use the estimate~\eqref{claim}
proved below, together with the approach of Lemma~9.1
in~\cite{AH}. The estimates \eqref{size of C-Z-S-I-O}--\eqref{y
smooth of C-Z-S-I-O} for our $K(x,y)$ were proved in Lemmas~9.2
and~9.3 of~\cite{AH}. Thus, we only need to show the
estimate~\eqref{y smooth2 of C-Z-S-I-O}. To do this, following
the approach of Lemma~9.3 in~\cite{AH}, we first claim that if
$d(x,x') \leq (2A_0)^{-1} d(x,y)$ and $d(y,y') \leq (2A_0)^{-1}
d(x,y)$, then
\begin{eqnarray}\label{claim}
    \lefteqn{\sum_{\alpha\in\mathscr{Y}^k}
        \Big|\big[\psi_{\alpha}^k(x) - \psi_{\alpha}^k(x')\big]
        \big[\psi_{\alpha}^k(y) - \psi_{\alpha}^k(y')\big]\Big|} \hspace{1cm}\\
    &&\leq {C\over V(x,\delta^k)}
        \min\Big\{ 1, \Big({ d(x,x')\over\delta^k }\Big)^\eta\Big\}
        \min\Big\{ 1, \Big({ d(y,y')\over\delta^k }\Big)^\eta\Big\}\nonumber\\
    &&\hskip1cm {}\times \exp(-\nu(\delta^{-k} d(x,\mathscr{Y}^k))^a)
        \exp(-\nu(\delta^{-k} d(x,y))^a).\nonumber
\end{eqnarray}
Recall that $a := (1 + \log_2 A_0)^{-1}$ is the exponent
defined in \eqref{eqn:defn_of_a} in Theorem~\ref{theorem AH
orth basis}.

We prove~\eqref{claim} following the method used to prove the
second assertion in Lemma~9.1 of~\cite{AH}. We consider four
cases. First suppose $\delta^k \geq d(x,x')$ and $\delta^k\geq
d(y,y')$. Then
\begin{eqnarray*}
    \lefteqn{\Big|\big[\psi_{\alpha}^k(x) - \psi_{\alpha}^k(x')\big]
        \big[\psi_{\alpha}^k(y) - \psi_{\alpha}^k(y')\big]\Big|}\hspace{1cm}\\
    &&\leq {C\over V(x,\delta^k)}
        \Big({ d(x,x')\over\delta^k }\Big)^\eta
        \Big({ d(y,y')\over\delta^k }\Big)^\eta
        \exp(-\nu(\delta^{-k} d(x,y_\alpha^k))^a)
        \exp(-\nu(\delta^{-k} d(y,y_\alpha^k))^a)\\
    &&\leq {C\over V(x,\delta^k)}
        \Big({ d(x,x')\over\delta^k }\Big)^\eta
        \Big({ d(y,y')\over\delta^k }\Big)^\eta
        \exp(-\nu(\delta^{-k} d(x,y_\alpha^k))^a)
        \exp(-\nu(\delta^{-k} d(x,y))^a).
\end{eqnarray*}
(Here the constant~$\nu$ changes from line to line to
accommodate the constant~$A_0$ that arises from the use of the
quasi-triangle inequality.) The sum over
$\alpha\in\mathscr{Y}^k$ of the fourth factor on the right-hand
side is dominated by the expression $\exp(-\nu(\delta^{-k}
d(x,\mathscr{Y}^k))^a)$.

Second, suppose $\delta^k \geq d(x,x')$ and $\delta^k < d(y,y')
\leq (2A_0)^{-1} d(x,y)$. Then we have
\begin{eqnarray*}
    \lefteqn{\Big|\big[\psi_{\alpha}^k(x) - \psi_{\alpha}^k(x')\big]
        \big[\psi_{\alpha}^k(y) - \psi_{\alpha}^k(y')\big]\Big|}\hspace{1cm}\\
    &&\leq \Big|\big[\psi_{\alpha}^k(x) - \psi_{\alpha}^k(x')\big] \psi_{\alpha}^k(y)\Big|
        + \Big|\big[\psi_{\alpha}^k(x)-\psi_{\alpha}^k(x')\big] \psi_{\alpha}^k(y')\Big|.
\end{eqnarray*}
Then, by using the second estimate in Lemma~9.1 from~\cite{AH},
and the quasi-triangle inequality, we obtain that
\begin{eqnarray*}
    &&\lefteqn{\Big|\big[\psi_{\alpha}^k(x) - \psi_{\alpha}^k(x')\big]
        \big[\psi_{\alpha}^k(y) - \psi_{\alpha}^k(y')\big]\Big|}\hspace{1cm}\\
    &&\leq {C\over V(x,\delta^k)}
        \Big({ d(x,x')\over\delta^k }\Big)^\eta
        \exp(-\nu(\delta^{-k} d(x,y_\alpha^k))^a)
        \exp(-\nu(\delta^{-k} d(x,y))^a).
\end{eqnarray*}
Here the sum over $\alpha\in\mathscr{Y}^k$ of the third factor
on the right-hand side is dominated by $\exp(-\nu(\delta^{-k}
d(x,\mathscr{Y}^k))^a)$.

The other two cases, namely when $\delta^k < d(x,x') \leq
(2A_0)^{-1} d(x,y) $ and $\delta^k \geq d(y,y')$, and when
$\delta^k < d(x,x') \leq (2A_0)^{-1} d(x,y) $ and $\delta^k <
d(y,y') \leq (2A_0)^{-1} d(x,y)$, can be handled similarly. We
omit the details.

Combining the estimates for all four cases above, we have
established the claim~(\ref{claim}).

Now we verify (\ref{y smooth2 of C-Z-S-I-O}).  From the
definition of $K(x,y)$ and the claim (\ref{claim}), we see that
\begin{eqnarray*}
    \lefteqn{|  K(x, y) - K(x, y')-K(x,y')+K(x',y') |}\hspace{0.3cm}\\
    &&\leq \sum_k\sum_{\alpha\in\mathscr{Y}^k}
        \Big|\big[\psi_{\alpha}^k(x) - \psi_{\alpha}^k(x')\big]
        \big[\psi_{\alpha}^k(y) - \psi_{\alpha}^k(y')\big]\Big|\\
    &&\leq \sum_{k:\ \delta^k\geq (2A_0)^{-1} d(x,y) }
        {C\over V(x,\delta^k)}\Big({ d(x,x')\over\delta^k }\Big)^\eta
        \Big({ d(y,y')\over\delta^k }\Big)^\eta
        \exp(-\nu(\delta^{-k} d(x,\mathscr{Y}^k))^a)\\
    && {}+ \sum_{\substack{k:\ \delta^k < (2A_0)^{-1} d(x,y), \\ d(x,x')\leq\delta^k, d(y,y')\leq\delta^k }}
        {C\over V(x,y)}\Big( {d(x,y)\over\delta^k} \Big)^{\omega}
        \Big({ d(x,x')\over\delta^k }\Big)^\eta
        \Big({ d(y,y')\over\delta^k }\Big)^\eta
        \exp(-\nu(\delta^{-k} d(x,y))^a)\\
    && {}+ \sum_{\substack{k:\ \delta^k < (2A_0)^{-1} d(x,y), \\ d(x,x')>\delta^k, d(y,y')\leq\delta^k }}
        {C\over V(x,y)}\Big( {d(x,y)\over\delta^k} \Big)^{\omega}
        \Big({ d(y,y')\over\delta^k }\Big)^\eta
        \exp(-\nu(\delta^{-k} d(x,y))^a)\\
    && {}+ \sum_{\substack{k:\ \delta^k < (2A_0)^{-1} d(x,y), \\ d(x,x')\leq\delta^k, d(y,y')>\delta^k }}
        {C\over V(x,y)}\Big( {d(x,y)\over\delta^k} \Big)^{\omega}
        \Big({ d(x,x')\over\delta^k }\Big)^\eta
        \exp(-\nu(\delta^{-k} d(x,y))^a)\\
    && {}+ \sum_{\substack{k:\ \delta^k < (2A_0)^{-1} d(x,y), \\ d(x,x')>\delta^k, d(y,y')>\delta^k }}
        {C\over V(x,y)}\Big( {d(x,y)\over\delta^k} \Big)^{\omega}
        \exp(-\nu(\delta^{-k} d(x,y))^a)\\
    &&=: B_1 + B_2 + B_3 + B_4 + B_5.
\end{eqnarray*}
Following Lemma~8.3 in \cite{AH}, an application of the
estimate in Remark~\ref{remark:crucial_estimate} shows that
$B_1$ is bounded by ${C\over V(x,y)} \big({ d(x,x')\over d(x,y)
}\big)^\eta \big({ d(y,y')\over d(x,y) }\big)^\eta$. Further,
$B_2$ satisfies the same estimate since $\sum_{m=0}^\infty
\delta^{-m(\omega+\eta)} \exp\{-\nu\delta^{-ma}\} \leq C$. We
can deal with $B_3$, $B_4$ and $B_5$ similarly. Thus~\eqref{y
smooth2 of C-Z-S-I-O} is proved.

We remark that the estimate~\eqref{y smooth2 of C-Z-S-I-O} is
crucial for the proof of~\eqref{e1 for smooth estimate of the
remainning terms}; see the estimate for $T(f_2)$ below.

Now that we have established the estimates~\eqref{size of
C-Z-S-I-O}--\eqref{y smooth2 of C-Z-S-I-O} on the
kernel~$K(x,y)$ of~$T$, we return to estimating the expressions
$p(x)$ and~$q(x)$. The size condition on the kernel $K(x,y)$
and the smoothness condition~(\ref{(smoothness f1)}) on~$f_1$
yield
\begin{eqnarray*}
    |  p(x)|
    &\leq& C\|  f \| _{G(\beta ,\gamma )}
        \int_{ d  (x,y)\leq 4A_0^2 r }
        {1\over V(x,y)}
        \Big({{ d(x,y) }\over{1 + d(x,x_0)}}\Big)^\beta \\
    && {}\times {1\over V_1(x_0) + V(x,x_0)}
        \Big({1\over 1 + d(x,x_0)}\Big)^\gamma \, d\mu (y)\\
    &\leq& C\|  f \| _{G(\beta ,\gamma )}
        \Big({{ d(x,x') }\over{1 + d(x,x_0)}}\Big)^\beta
        {1\over V_1(x_0) + V(x,x_0)}
        \Big({1\over 1 + d(x,x_0)}\Big)^\gamma.
\end{eqnarray*}

This estimate still holds when $x$ is replaced by $x^\prime $,
for $ d (x,x^\prime ) = r$. Thus
\[
    |  p(x) - p(x^\prime )|
    \leq C\|  f \| _{G(\beta ,\gamma )}
        \Big({{ d  (x,x')}\over{1 + d(x,x_0)}}\Big)^\beta
        {1\over V_1(x_0) + V(x,x_0)}
        \Big({1\over 1+d(x,x_0)}\Big)^\gamma.
\]
For $q(x)$, since $T1 = 0$ (by the definition of $D_k(x,y)$ and
the cancellation property of~$\psi^k_\alpha$), we obtain
\begin{eqnarray*}
    q(x) - q(x^\prime )
    &=& \int_X [K(x,y) - K(x^\prime ,y)] \, v(y) \,
        [f_1(y) - f_1(x)] \, d\mu (y) \\
    && {}+ [f_1(y) - f_1(x)]\int_X K(x,y)u(y) \, d\mu (y)\\
    &=:& \textup{(E)} + \textup{(F)}.
\end{eqnarray*}

We claim that there exists a constant $C$ such that for
all~$x$,
\begin{equation}\label{eqn:L1_bound_on_K_u}
    \Big|\int_X K(x,y)u(y) \, d\mu (y)\Big|
    \leq C.
\end{equation}
Assuming this claim (which is proved below), together with the
estimate for $f_1$ in (\ref{(smoothness f1)}), we find that
\[
    | \textup{(F)} |
    \leq C| f_1(x) - f_1(x^\prime )|
    \leq C\|  f \| _{G(\beta,\gamma )}
        \Big({{ d(x,x')}\over{1 + d(x,x_0)}}\Big)^\beta
        {1\over V_1(x_0) + V(x,x_0)}
        \Big({1\over 1 + d(x,x_0)}\Big)^\gamma.
\]

Applying the smoothness estimates for both $f_1$ and $K(x,y)$,
we obtain
\begin{eqnarray*}
    | \textup{(E)} |
    &\leq& C \int \limits_{ d  (x,y)\geq 4A^2_0 r }|  K(x,y) -
        K(x',y)| \, |  v(y) | \, | f_1(y) - f_1(x)|  \, d\mu (y)\\
    &\leq& C\|  f \| _{G(\beta ,\gamma )}\int _{ d  (x,y)\geq
        4A_0^2 r }{1\over V(x,y)}\Big({ d (x,x') \over d(x,y)
        }\Big)^\eta \\
    &&\hskip2cm {}\times \Big({{ d  (x,y)
        }\over{1+d(x,x_0)}}\Big)^\beta{1\over V_1(x_0)+V(x,x_0)}\Big({1\over
        1+d(x,x_0)}\Big)^\gamma \, d\mu (y)\\
    &\leq& C\|  f \| _{G(\beta ,\gamma )} \Big({{ d  (x,x')
        }\over{1+d(x,x_0)}}\Big)^\beta{1\over V_1(x_0)+V(x,x_0)}\Big({1\over
        1+d(x,x_0)}\Big)^\gamma,
\end{eqnarray*}
since $\beta < \eta$. Therefore
\[
    |  T(f_1)(x) - T(f_1)(x^\prime )|
    \leq C\|  f \| _{G(\beta ,\gamma )}
    \Big({{ d(x,x')} \over {1 + d(x,x_0)}}\Big)^\beta
    {1\over V_1(x_0) + V(x,x_0)}
    \Big({1\over 1 + d(x,x_0)}\Big)^\gamma.
\]

We consider three cases. First suppose $d(x,x^\prime ) = r \leq
(20A^2_0)^{-1}(1 + R)$ and $R\geq 10$. Then the points $x$ and
$x^\prime $ are not in the supports of $f_2$ and~$f_3$. Using
the double smoothness and smoothness conditions (\eqref{y
smooth2 of C-Z-S-I-O} and~\eqref{x smooth of C-Z-S-I-O}
respectively) on $K(x,y)$, and the estimate~(\ref{(L1 norm
f2)}) of~$f_2$, we find
\begin{eqnarray*}
    && |  T(f_2)(x) - T(f_2)(x^\prime )|
        = \bigg| \int_X [ K(x,y) - K(x^\prime,y)] f_2(y)
        \, d\mu (y)\bigg|\\
    &&\qquad \leq \int_X |  K(x,y) - K(x^\prime,y) - K(x,x_0) - K(x^ \prime,x_0)|
        \, | f_2(y)|  \, d\mu (y)\\
    &&\qquad\quad {}+ | K(x,x_0) - K(x^\prime ,x_0)|
        \bigg| \int_X f_2(y) \, d\mu (y) \bigg| \\
    && \qquad \leq C\|  f \| _{G(\beta ,\gamma )}
        \Big({{ d(x,x')} \over {1 + d(x,x_0)}}\Big)^\beta
        {1\over V_1(x_0) + V(x,x_0)}
        \Big({1\over 1 + d(x,x_0)}\Big)^\gamma.
\end{eqnarray*}
Also,
\begin{eqnarray*}
    && |  T(f_3)(x) - T(f_3)(x^\prime )| = \bigg|
        \int_X [ K(x,y) - K(x^\prime ,y)]f_3(y) \, d\mu (y)\bigg|\\
    && \qquad \leq C\int_{ d  (x,y)\geq {{R}\over{8}}\geq 2A_0 r}
        {1\over V(x,y)} \Big({ d (x,x') \over d(x,y) }\Big)^\eta
        | f_3(y)| \, d\mu (y)\\
    && \qquad \leq C\|  f \| _{G(\beta ,\gamma )}
        \Big( {{ d(x,x')} \over {1 + d(x,x_0)}}\Big)^\beta
        {1\over V_1(x_0) + V(x,x_0)}
        \Big({1\over 1 + d(x,x_0)}\Big)^\gamma.
\end{eqnarray*}
In the second case, where $ d (x, x_0)= R$ and $(2A_0)^{-1} (1
+ R) \geq  d (x,x' ) = r \geq (20A_0^2)^{-1} (1 + R)$, the
desired estimate for $T(f)(x)$ follows from the estimate
of~(\ref{size decay estimate of the remaining terms}). So we
need only consider the third case, where $R \leq 10$ and $r
\leq 11/(20A_0)$. This case is similar, and indeed easier. In
fact, all we need to do is to replace $R$ in the proof above
by~10. We leave the details to the reader. This completes the
proof of~(\ref{smooth estimate of the remaining terms}).

To finish the argument for Theorem~\ref{thm reproducing formula
test function}, it remains to establish the
claim~\eqref{eqn:L1_bound_on_K_u}. To do so, we prove that
there exists a constant $C$ such that
\begin{eqnarray}
    \|T\phi \|_\infty
    \leq C
\end{eqnarray}
for all functions $\phi$ with the properties that $\| \phi
\|_\infty \leq 1$ and there exist $x_0 \in X$ and $t > 0$ such
that $\supp \phi \subseteq B(x_0, t)$ and $\|\phi \|_\eta :=
\sup_{x \neq y} \{|\phi(x) - \phi(y)| / d(x,y)^\eta\} \leq
t^{-\eta }$.

We again follow the idea of Meyer's proof in~\cite{M2}.  Let
$\chi_0(x) = h(x)$, where $h(x)$ is a smooth cut-off function
as in Lemma~\ref{lemma technique 3 cut-off function} with the
property that $h(x)\equiv 1$ on $B(x_0,2t)$ and $h(x)\equiv 0$
on $B(x_0,8A_0^2t)^c$. Set $\chi_1 := 1 - \chi_0$. Then $\phi =
\phi \chi_0$ and for all $\psi \in C^\eta_0(X)$,
\begin{align*}
    \langle T\phi,\psi \rangle
    &= \langle K(x, y), \phi (y)\psi (x)\rangle
        = \langle K(x, y), \chi_0(y) \phi(y)\psi (x)\rangle\\
    &= \langle K(x, y), \chi _0(y)[\phi (y) - \phi (x)]\psi (x)\rangle
        + \langle K(x, y), \chi _0(y)\phi (x)\psi (x)\rangle\\
    &:= \textup{(G)} + \textup{(H)}.
\end{align*}
Applying the size condition~\eqref{size of C-Z-S-I-O} on the
kernel $K(x,y)$ yields
$$\vert \textup{(G)} \vert \leq C\Vert \psi \Vert_1.\ \ \ $$
To estimate $\textup{(H)}$, it suffices to show that for $x\in
B(x_0, t)$,
\begin{eqnarray}\label{e1 for T bd on test function}
 \vert T\chi _0(x)\vert \leq C,
\end{eqnarray}
since as $\textup{(H)} = \langle T\chi _0,\phi \psi \rangle$,
we then have
\[
    \vert \textup{(H)} \vert
    \leq \Vert  T\chi _0\Vert_{{L^ \infty }(B({x_0}, t))}
        \Vert \phi \psi \Vert_{{L^1}(B({x_0,t} ))}
    \leq C\Vert \psi \Vert _1.
\]

To show (\ref{e1 for T bd on test function}), we use Meyer's
idea again~\cite{M2}. Take $\psi \in C^\eta (X)$ with $\supp
\psi \subseteq B(x_0, t)$ and $\int_X \psi (x) \, d\mu(x) = 0$.
Since $T1 = 0$ and $\int_X \psi (x) \, d\mu(x) = 0$, and using
the smoothness condition~\eqref{x smooth of C-Z-S-I-O} on
$K(x,y)$, we obtain
\begin{eqnarray*}
    \vert \langle T\chi _0,\psi \rangle \vert
    &=& \vert -\langle T\chi _1, \psi \rangle \vert
    = \bigg\vert \iint_{X\times X} [K(x,y) - K(x_0,y)]
        \chi _1(y) \psi (x) \, d\mu (x) \, d\mu(y) \bigg\vert \\
    &\leq& C\Vert \psi \Vert _1.
\end{eqnarray*}

Thus, $T\chi_0(x) = \Lambda + \gamma(x)$ for $x\in B(x_0, t)$,
where $\Lambda$ is a constant and $\Vert \gamma \Vert_\infty
\leq C$. To estimate $\Lambda$, choose $\phi _1 \in
C_0^\eta(X)$ with $\supp \phi_1 \subseteq B(x_0, t)$, $\Vert
\phi _1\Vert_\infty \leq 1$, $\Vert \phi_1\Vert_\eta \leq
t^{-\eta}$ and $\int_X \phi_1(x) \, d\mu(x) = C t$. Since $T$
is bounded on $L^2(X)$, we have
\[
    \bigg\vert C t \Lambda
        + \int_X \phi _1(x)\gamma (x) \, d\mu(x) \bigg\vert
    = \vert \langle T\chi _0,\phi _1 \rangle \vert
    \leq C t.
\]
Therefore $\vert \Lambda \vert \leq C$, and hence the
claim~\eqref{eqn:L1_bound_on_K_u} is proved. This completes the
proof of Theorem~\ref{thm reproducing formula test function},
modulo the proof of Lemma~\ref{lemma another version of Lemma
9.1}.
\end{proof}

It remains to prove the technical lemma used in the proof of
Theorem~\ref{thm reproducing formula test function}.

\begin{proof}[Proof of Lemma~\ref{lemma another version of Lemma 9.1}]
(i) To establish the decay condition~\eqref{e1 in lemma another
version of Lemma 9.1}, we write
\begin{eqnarray*}
    \big |D_k(x,y)\big |
    = \bigg|\sum_{\alpha \in \mathscr{Y}^k}
        \mu(B(y_\alpha^k,\delta^k))
        {\psi_{\alpha}^k(x)\over \sqrt{\mu(B(y_\alpha^k,\delta^k))}}
        {\psi_{\alpha}^k(y)\over\sqrt{\mu(B(y_\alpha^k,\delta^k))}}\bigg|.
\end{eqnarray*}
By Theorem \ref{theorem wavelet is test function}, we know that
$\psi_\alpha^k(x)/ \sqrt{\mu(B(y_\alpha^k,\delta^k))}$ belongs
to $G(y_\alpha^k,\delta^k,\eta,\gamma + \eta)$. Applying the
size condition~(i) from
Definition~\ref{def-of-test-func-space}, we see that
\begin{eqnarray*}
    \big |D_k(x,y)\big | &\leq& C\sum_{\alpha \in
    \mathscr{Y}^k}\mu(B(y_\alpha^k,\delta^k)) {1\over
    V_{\delta^k}(y_\alpha^k)+V(y_\alpha^k,x)}
    \Big({\delta^k\over \delta^{k}+
    d(y_\alpha^k,x)}\Big)^{\gamma+\eta}\\
    &&\hskip.7cm {}\times {1\over V_{\delta^k}(y_\alpha^k)+V(y_\alpha^k,y)}
    \Big({\delta^k\over \delta^{k}+
    d(y_\alpha^k,y)}\Big)^{\gamma+\eta}.
\end{eqnarray*}

Note that for each $z\in B(y_\alpha^k,\delta^k)$ one can
replace $y_\alpha^k$ by $z$ to get $\delta^k
+d(y_\alpha^k,x)\sim\delta^k + d(z,x)$ and
$V_{\delta^k}(y_\alpha^k) + V(y_\alpha^k,x) \sim
V_{\delta^k}(z)+V(z,x)$, and similarly for $\delta^k
+d(y_\alpha^k,y)$ and $V_{\delta^k}(y_\alpha^k) +
V(y_\alpha^k,y).$ Thus, first replacing
$\mu(B(y_\alpha^k,\delta^k))$ by $\int_{B(y_\alpha^k,\delta^k)}
\, d\mu(z)$ and then replacing $y_\alpha^k$ by $z$, and finally
summing up over $\alpha \in \mathscr{Y}^k$, we find that the
last sum above is bounded by
\begin{eqnarray*}
    &&C \int_{X} {1\over V_{\delta^k}(z)+V(z,x)}
        \Big({\delta^k\over \delta^{k}+
        d(z,x)}\Big)^{\gamma+\eta}
        {1\over V_{\delta^k}(z)+V(z,y)}
        \Big({\delta^k\over \delta^{k}+
        d(z,y)}\Big)^{\gamma+\eta} \, d\mu(z)\\
    &&=: \textup{(P)} + \textup{(Q)}.
\end{eqnarray*}
Here $\textup{(P)}$ is the result of integrating over the set
$d(x,z) \leq (2A_0)^{-1}(\delta^k + d(x,y))$ and $\textup{(Q)}$
over the set $d(x,z) > (2A_0)^{-1}(\delta^k + d(x,y))$. To
estimate $\textup{(P)}$, note that if $d(x,z) \leq
(2A_0)^{-1}(\delta^k + d(x,y))$ and $2\delta^k \leq d(x,y),$
then $d(y,z) > (10A_0)^{-1}(\delta^k + d(x,y))$ and by the
doubling property,
\[
    V(z,y)
    = \mu(B(y,d(z,y)))
    \geq \mu(B(y,(10A_0)^{-1}d(x,y)))
    \geq (10A_0)^{-\omega} V(x,y).
\]
Therefore, $V_{\delta^k}(x) + V(x,y) \leq C (V_{\delta^k}(z) +
V(z,y))$. Next, if $d(x,z)\leq (2A_0)^{-1}(\delta^k + d(x,y))$
and $2\delta^k > d(x,y)$, then $d(x,z) \leq
3(2A_0)^{-1}\delta^k$. For this case, first suppose that
$d(x,z) \leq \delta^k$ and hence, $V_{\delta^k}(z)\sim
V_{\delta^k}(x)$. On the other hand, if $d(x,z) > \delta^k$,
then $(2A_0 - 1) \delta^k \leq (2A_0)^{-1} d(x,y)$ and hence
$V_{\delta^k}(x) \leq C V(x,y).$ Therefore, in this case, again
$V_{\delta^k}(x) + V(x,y) \leq C (V_{\delta^k}(z) + V(z,y))$
and thus we get
\begin{eqnarray*}
    \textup{(P)}
    \leq C \frac{1}{V_{\delta^k}(x)+V(x,y)}
        \Big({\delta^k\over \delta^{k} + d(x,y)}\Big)^{\gamma+\eta}
    \leq C \frac{1}{V_{\delta^k}(x)+V(x,y)}
        \Big({\delta^k\over \delta^{k} + d(x,y)}\Big)^{\gamma},
\end{eqnarray*}
as required. The estimate for $\textup{(Q)}$ is the same, but
with $x$ and~$y$ reversed.

\noindent (ii) To establish the smoothness condition~\eqref{e2
in lemma another version of Lemma 9.1}, we write
\begin{eqnarray*}
    \lefteqn{\big |D_k(x,y)-D(x,y')\big |}\hspace{0.5cm}\\
    &&= \sum_{\alpha \in \mathscr{Y}^k}
        \mu(B(y_\alpha^k,\delta^k))
        \bigg|{\psi_{\alpha}^k(x)\over \sqrt{\mu(B(y_\alpha^k,\delta^k))}}
        \Big[{\psi_{\alpha}^k(y)\over\sqrt{\mu(B(y_\alpha^k,\delta^k))}}
        - {\psi_{\alpha}^k(y')\over\sqrt{\mu(B(y_\alpha^k,\delta^k))}}\Big]\bigg|\\
    &&=: \textup{(R)} + \textup{(S)}.
\end{eqnarray*}
Here $\textup{(R)}$ is the result of summing over the set of
$\alpha \in \mathscr{Y}^k$ such that $d(y,y') \leq (2A_0)^{-1}
(\delta^k+d(y,y_\alpha^k))$ or $d(y,y') \leq (2A_0)^{-1}
(\delta^k+d(y',y_\alpha^k))$, and $\textup{(S)}$ over the set
of $\alpha \in \mathscr{Y}^k$ such that $d(y,y') > (2A_0)^{-1}
(\delta^k + d(y,y_\alpha^k))$ and $d(y,y') > (2A_0)^{-1}
(\delta^k + d(y',y_\alpha^k))$.

For $\textup{(R)}$, use the size condition
(Definition~\ref{def-of-test-func-space}(i)) for the first
factor $\psi_\alpha^k(x)/ \sqrt{\mu(B(y_\alpha^k,\delta^k))}$
and the H\"older regularity condition
(Definition~\ref{def-of-test-func-space}(ii)) for the terms
$\psi_\alpha^k(y)/ \sqrt{\mu(B(y_\alpha^k,\delta^k))}$ in the
second factor. We find that
\begin{eqnarray*}
    &&\textup{(R)}
    \leq C \sum_{\alpha \in \mathscr{Y}^k}\mu(B(y_\alpha^k,\delta^k))
        \Big( {d(y,y')\over\delta^k +d(y_\alpha^k,y)}
        \Big)^\eta{1\over V_{\delta^k}(y_\alpha^k)+V(y_\alpha^k,y)}
        \Big({\delta^k\over \delta^{k}+
        d(y_\alpha^k,y)}\Big)^{\gamma+\eta}\\
    &&\hskip2cm {} \times \, {1\over V_{\delta^k}(y_\alpha^k)+V(y_\alpha^k,x)}
        \Big({\delta^k\over \delta^{k}+
        d(y_\alpha^k,x)}\Big)^{\gamma+\eta}.
\end{eqnarray*}
Applying the same proof as for (\ref{e1 in lemma another
version of Lemma 9.1}), we see that the last sum above is
bounded by
\begin{eqnarray*}
    \lefteqn{C \int_{X} \Big(
        {d(y,y')\over\delta^k +d(z,y)}
        \Big)^\eta{1\over V_{\delta^k}(z)+V(z,y)}
        \Big({\delta^k\over \delta^{k}+
        d(z,y)}\Big)^{\gamma+\eta}} \hspace{0.5cm}\\
    &&\hskip1cm {}\times \,
        {1\over V_{\delta^k}(z)+V(z,x)}
        \Big({\delta^k\over \delta^{k}+
        d(z,x)}\Big)^{\gamma+\eta}\, d\mu(z)\\
    &&\leq C\Big( {d(y,y')\over\delta^k +d(x,y)}
        \Big)^\eta{1\over V_{\delta^k}(x)+V(x,y)}
        \Big({\delta^k\over \delta^{k}+
        d(x,y)}\Big)^{\gamma}.
\end{eqnarray*}

To deal with $\textup{(S)}$, we can write
\begin{eqnarray*}
    \textup{(S)}
    &\leq& \sum_{\alpha \in \mathscr{Y}^k : d(y,y') > (2A_0)^{-1} (\delta^k+ d(y,y_\alpha^k))}
        \mu(B(y_\alpha^k,\delta^k))
        \bigg|{\psi_{\alpha}^k(x)\over \sqrt{\mu(B(y_\alpha^k,\delta^k))}}
        {\psi_{\alpha}^k(y)\over\sqrt{\mu(B(y_\alpha^k,\delta^k))}}\bigg|\\
    &&\hskip0.2cm+\sum_{\alpha \in \mathscr{Y}^k : d(y,y') > (2A_0)^{-1} (\delta^k + d(y',y_\alpha^k))}
        \mu(B(y_\alpha^k,\delta^k))
        \bigg|{\psi_{\alpha}^k(x)\over \sqrt{\mu(B(y_\alpha^k,\delta^k))}}
        {\psi_{\alpha}^k(y')\over\sqrt{\mu(B(y_\alpha^k,\delta^k))}}\bigg|.
\end{eqnarray*}
For the first sum, following the same approach as for
$\textup{(R)}$ but with $d(y,y') > (2A_0)^{-1}(\delta^k +
d(y,y_\alpha^k))$, we must deal with the integral
$$\int_{X} \!\!\Big( {d(y,y')\over\delta^k +d(z,y)}
    \Big)^\eta{1\over V_{\delta^k}(z)+V(z,y)}
    \Big({\delta^k\over \delta^{k}+
    d(z,y)}\Big)^{\gamma+\eta}
    {1\over V_{\delta^k}(z)+V(z,x)}
    \Big({\delta^k\over \delta^{k}+
    d(z,x)}\Big)^{\gamma+\eta} \, d\mu(z).$$
Applying the same proof as for~$\textup{(R)}$, but using the
size condition (Definition~\ref{def-of-test-func-space}(i)) for
both factors, we obtain that this integral is bounded by
\begin{eqnarray*}
    C \Big({d(y,y')\over\delta^k + d(x,y)}\Big)^\eta
        {1\over V_{\delta^k}(x) + V(x,y)}
        \Big({\delta^k\over \delta^{k} + d(x,y)}\Big)^{\gamma}.
\end{eqnarray*}
The second sum is similar to the first one, with $y$ and $y'$
reversed. Thus, by the same proof we find that the second sum
is bounded by
\begin{eqnarray*}
    C \Big({d(y,y')\over\delta^k + d(x,y')}\Big)^\eta
        {1\over V_{\delta^k}(x) + V(x,y')}
        \Big({\delta^k\over \delta^{k} + d(x,y')}\Big)^{\gamma}.
\end{eqnarray*}
Note that the fact $d(y,y') \leq (2A_0)^{-1}(\delta^k +
d(x,y))$ implies that $\delta^k + d(x,y) \sim \delta^k+d(x,y')$
and $V_{\delta^k}(x) + V(x,y) \sim V_{\delta^k}(x) + V(x,y')$.
Therefore, we obtain the desired estimate for the second sum.

\noindent (iii) The proof for the double smoothness
condition~\eqref{e3 in lemma another version of Lemma 9.1} is
similar to that for~\eqref{e2 in lemma another version of Lemma
9.1}, and we omit the details.

This completes the proof of Lemma~\ref{lemma another version of
Lemma 9.1}.
\end{proof}


\subsection{Product test functions,
distributions, and wavelet reproducing
formula}\label{sec:producttestfunctions}

We now consider the product setting $(X_1,d_1,\mu_1) \times
(X_2,d_2,\mu_2)$, where $(X_i,d_i,\mu_i)$, $i = 1$, 2, are
spaces of homogeneous type as defined in the Introduction. For
$i = 1$, 2, let $C_{\mu_i}$ be the doubling constant as in
inequality~(\ref{doubling condition}), let $\omega_i$ be the
upper dimension as in inequality~(\ref{upper dimension}), and
let $A_0^{(i)}$ be the constant in the quasi-triangle
inequality~\eqref{eqn:quasitriangleineq}. In this subsection we
use the notation $(x,y)$ for an element of~$X_1 \times X_2$.

On each $X_i$ there is a wavelet
basis~$\{\psi^{k_i}_{\alpha_i}\}$, with H\"older
exponent~$\eta_i$ as in inequality~\eqref{Holder-regularity}.

We now define the spaces of test functions and distributions on
the product space $X_1\times X_2$.

\begin{definition}\label{def-of-test-func-space-product}
    \textup{(Product test functions)} Let $(x_0,y_0)\in
    X_1\times X_2$ and $r = (r_1,r_2)$ with $r_1$, $r_2 > 0$.
    Take $\beta = (\beta_1,\beta_2)$, with $\beta_1 \in
    (0,\eta_1]$, $\beta_2 \in (0,\eta_2]$, and $\gamma =
    (\gamma_1,\gamma_2)$ with $\gamma_1$, $\gamma_2 > 0$. A
    function $f(x,y)$ defined on $X_1\times X_2$ is said to be
    a {\it test function of type} $(x_0,y_0;r;\beta;\gamma)$ if
    the following three conditions hold.
    \begin{enumerate}
        \item[(a)] For each fixed $y\in X_2$, $f(x,y)$ as a
            function of the variable $x\in X_1$ is a test
            function in $G(x_0,r_1,\beta_1,\gamma_1)$.

        \item[(b)] For each fixed $x\in X_1$, $f(x,y)$ as a
            function of the variable $y\in X_2$ is a test
            function in $G(y_0,r_2,\beta_2,\gamma_2)$.

        \item[(c)] The following properties hold:

            \begin{enumerate}
                \item[(i)] (Size condition) For all $y
                    \in X_2$,
                    \[
                        \Vert f(\cdot,y)\Vert_{G(x_0,r_1,\beta_1,\gamma_1)}
                        \leq C \frac{\displaystyle 1}{\displaystyle V_{r_2}(y_0)+V(y,y_0)}
                            \Big(\frac{\displaystyle r_2}{\displaystyle r_2 + d_2(y,y_0)}
                            \Big)^{\gamma_2}.
                    \]

                \item[(ii)] (H\"older regularity
                    condition) For all $y$, $y'\in X_2$
                    such that $d_2(y,y')\leq
                    (2A_0^{(2)})^{-1} (r_2 +
                    d_2(y,y_0))$, we have
                    \[
                        \Vert f(\cdot,y) - f(\cdot,y')
                            \Vert_{G(x_0,r_1,\beta_1,\gamma_1)}
                        \leq C \Big(\frac{d_2(y,y')}{r_2 + d_2(y,y_0)}\Big)^{\beta_2}
                            \frac{1}{V_{r_2}(y_0) + V(y_0,y)}
                            \Big(\frac{r_2}{r_2 + d_2(y,y_0)}\Big)^{\gamma_2}.
                    \]

                \item[(iii)] Properties (i) and (ii)
                    also hold with $x$ and $y$
                    interchanged.

                \item[(iv)] (Cancellation condition)
                    $\int_{X_1} f(x,y)\, d\mu_1(x) = 0$
                    for all $y\in X_2$, and $\int_{X_2}
                    f(x,y) \linebreak\, d\mu_2(y) = 0$
                    for all $x\in X_1$.
            \end{enumerate}
    \end{enumerate}
\end{definition}

When $f$ is a test function of type $(x_0,y_0;r;\beta;\gamma)$,
we write $f\in G(x_{0},y_{0};r;\beta;\gamma)$. Note the use of
semicolons here to distinguish the product definition from the
one-parameter version.

The expression
\[
    \|f\|_{G(x_{0},y_{0};r;\beta;\gamma)}
    := \inf\{C:\ {\rm(i),\ (ii)\ and\ (iii)}\ \ {\rm hold}\}
\]
defines a norm on $G(x_{0},y_{0};r;\beta;\gamma)$.

We denote by $G(\beta_{1},\beta_{2};\gamma_{1},\gamma_{2})$ the
class $G(x_{0},y_{0};1,1;\beta;\gamma)$ for arbitrary fixed
$(x_{0},y_{0})\in X_1\times X_2$.  Then
$G(x_{0},y_{0};r;\beta;\gamma) =
G(\beta_{1},\beta_{2};\gamma_{1},\gamma_{2})$, with equivalent
norms, for all $(x_{0},y_{0})\in X_1\times X_2$ and $r_1$, $r_2
> 0$. Furthermore,
$G(\beta_{1},\beta_{2};\gamma_{1},\gamma_{2})$ is a Banach
space with respect to the norm on
$G(\beta_{1},\beta_{2};\gamma_{1},\gamma_{2})$.

For $\beta_i \in (0,\eta_i]$ and $\gamma_i > 0$, for $i = 1$,
2, let $\GGp(\beta_1,\beta_2;\gamma_1,\gamma_2)$ be the
completion of the space $G(\eta_1,\eta_2;\gamma_1,\gamma_2)$ in
$G(\beta_1,\beta_2;\gamma_1,\gamma_2)$ in the norm of
$G(\beta_1,\beta_2;\gamma_1,\gamma_2)$. We define the norm on
$\GGp(\beta_{1},\beta_{2};\gamma_{1},\gamma_{2})$ by
$\|f\|_{\GGp(\beta_{1},\beta_{2};\gamma_{1},\gamma_{2})} :=
\|f\|_{G(\beta_{1},\beta_{2};\gamma_{1},\gamma_{2})}$.

The (scaled) product wavelets given by
$\psi^{k_1}_{\alpha_1}(x) \psi^{k_1}_{\alpha_1}(y)
\big(\mu_1(B(y^{k_1}_{\alpha_1}, \delta_1^{k_1}))
\mu_2(B(y^{k_2}_{\alpha_2}, \delta_2^{k_2}))\big)^{-1/2}$ are
product test functions in
$G(y^{k_1}_{\alpha_1},y^{k_2}_{\alpha_2}; \delta; \beta;
\gamma)$ for each $\gamma = (\gamma_1,\gamma_2)$ with $\gamma_1
> 0$, $\gamma_2 > 0$, where $\delta = (\delta_1^{k_1},
\delta_2^{k_2})$ and $\beta = (\eta_1,\eta_2)$; this is
straightforward to check.

\begin{definition}
    \textup{(Product distributions)} Let $(x_0,y_0)\in
    X_1\times X_2$ and $r = (r_1,r_2)$ with $r_1$, $r_2 > 0$.
    Take $\beta = (\beta_1,\beta_2)$, with $\beta_1 \in
    (0,\eta_1]$, $\beta_2 \in (0,\eta_2]$, and $\gamma =
    (\gamma_1,\gamma_2)$ with $\gamma_1$, $\gamma_2 > 0$. We
    define the \emph{distribution space}
    $\big(\GGp(\beta_{1},\beta_{2};\gamma_{1},\gamma_{2})\big)'$
    to consist of all linear functionals $\mathcal{L}$ from
    $\GGp(\beta_{1},\beta_{2};\gamma_{1},\gamma_{2})$ to
    $\mathbb{C}$ with the property that there exists a constant
    $C$ such that for all $f\in
    \GGp(\beta_{1},\beta_{2};\gamma_{1},\gamma_{2})$,
    \[
        |\mathcal{L}(f)|
        \leq C \|f\|_{\GGp(\beta_{1},\beta_{2};\gamma_{1},\gamma_{2})}.
    \]
\end{definition}

We have the following version of the wavelet reproducing formula in the
product setting $X_1\times X_2$.

\begin{theorem}\label{thm product wavelet reproducing formula test function}
    \textup{(Product reproducing formula)} Take $\beta_i$,
    $\gamma_i \in (0,\eta_i)$ for $i = 1$, $2$.
    \begin{enumerate}
        \item[(a)] The wavelet reproducing formula
            \begin{eqnarray}\label{product reproducing formula}
                f(x,y)
                = \sum_{k_1}\sum_{\alpha_1 \in \mathscr{Y}^{k_1}}
                    \sum_{k_2}\sum_{\alpha_2 \in \mathscr{Y}^{k_2}}
                    \langle f,\psi_{\alpha_1}^{k_1}\psi_{\alpha_2}^{k_2} \rangle
                    \psi_{\alpha_1}^{k_1}(x)\psi_{\alpha_2}^{k_2}(y)
            \end{eqnarray}
            holds in the space of test functions
            $\GGp(\beta_{1}',\beta_{2}';\gamma_{1}',\gamma_{2}')$
            for each $\beta_{i}' \in (0,\beta_i)$ and
            $\gamma_i' \in (0,\gamma_i)$, for $i = 1$, $2$.

        \item[(b)] The wavelet reproducing
            formula~\eqref{product reproducing formula}
            also holds in the space of distributions
            \linebreak
            $(\GGp(\beta_{1},\beta_{2};\gamma_{1},\gamma_{2}))'$.
    \end{enumerate}
\end{theorem}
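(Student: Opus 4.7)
The plan is to iterate the one-parameter wavelet reproducing formula (Theorem~\ref{thm reproducing formula test function}) in each factor, and to control convergence in the product test-function norm by a tensor-product analogue of the decay estimate~\eqref{decay estimate of the remaining terms}. Fix $f \in \GGp(\beta_1,\beta_2;\gamma_1,\gamma_2)$ with $\beta_i,\gamma_i \in (0,\eta_i)$. Conditions (a)--(c) of Definition~\ref{def-of-test-func-space-product} say that $f(\cdot,y)\in\GGs(\beta_1,\gamma_1)$ for each fixed $y$, so Theorem~\ref{thm reproducing formula test function} applied in the $x$ variable gives
\[
    f(x,y) = \sum_{k_1\in\Z}\sum_{\alpha_1\in\mathscr{Y}^{k_1}}
        c_{k_1,\alpha_1}(y)\,\psi_{\alpha_1}^{k_1}(x),
    \qquad
    c_{k_1,\alpha_1}(y) := \langle f(\cdot,y),\psi_{\alpha_1}^{k_1}\rangle_{X_1}.
\]
By property (c)(iii) of the product test function and the fact that (scaled) wavelets are one-parameter test functions (Theorem~\ref{theorem wavelet is test function}), $c_{k_1,\alpha_1}(\cdot)$ is a test function in $\GGs(\beta_2,\gamma_2)$ (with a norm bound that includes an appropriate scaling factor in $k_1,\alpha_1$). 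A second application of Theorem~\ref{thm reproducing formula test function} in $y$ then produces~\eqref{product reproducing formula}.

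The substantive step is to upgrade this iterated pointwise convergence to convergence in the product norm. Following the blueprint of the single-variable proof, it suffices to show that for each $\beta_i' \in (0,\beta_i)$ and $\gamma_i' \in (0,\gamma_i)$ there exist $\sigma_1,\sigma_2>0$ with
\[
    \Bigl\|\sum_{\substack{(k_1,k_2):\\ |k_1|>L_1\ \text{or}\ |k_2|>L_2}}\
        \sum_{\alpha_1\in\mathscr{Y}^{k_1}}\sum_{\alpha_2\in\mathscr{Y}^{k_2}}
        \langle f,\psi_{\alpha_1}^{k_1}\psi_{\alpha_2}^{k_2}\rangle
        \psi_{\alpha_1}^{k_1}\otimes\psi_{\alpha_2}^{k_2}\Bigr\|_{G(\beta_1',\beta_2';\gamma_1',\gamma_2')}
    \leq C\bigl(\delta_1^{\sigma_1 L_1}+\delta_2^{\sigma_2 L_2}\bigr)\|f\|_{G(\beta_1,\beta_2;\gamma_1,\gamma_2)}.
\]
I would split the index set into three regions: $|k_1|>L_1,|k_2|\leq L_2$; $|k_1|\leq L_1,|k_2|>L_2$; and $|k_1|>L_1,|k_2|>L_2$. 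In each region, the partial sum is an iterated wavelet operator with kernel $\bigl(\sum_{k_1\in A_1}D_{k_1}(x,\cdot)\bigr)\otimes\bigl(\sum_{k_2\in A_2}D_{k_2}(y,\cdot)\bigr)$; I would apply the one-variable size/smoothness estimates of Lemma~\ref{lemma another version of Lemma 9.1} in each factor, together with the decomposition $1 = I + J + L$ of Meyer-type cutoffs (Lemma~\ref{lemma technique 3 cut-off function}) used in the proof of Theorem~\ref{thm reproducing formula test function}, now in each of the two variables. The cancellation conditions \textup{(c)(iv)} of $f$ in each variable reproduce, factor by factor, the estimates~\eqref{size decay estimate of the remaining terms}--\eqref{smooth estimate of the remaining terms}; the tail sum in the truncated factor contributes the geometric gain $\delta_i^{\sigma_i L_i}$, while the bounded factor contributes a uniformly controlled $L^\infty$-type quantity.

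Part~(b) is then an essentially formal consequence of part~(a). Given $\mathcal{L}\in(\GGp(\beta_1,\beta_2;\gamma_1,\gamma_2))'$ and a test function $g$ in a slightly smaller space $\GGp(\beta_1',\beta_2';\gamma_1',\gamma_2')$, part~(a) expresses $g$ as a convergent wavelet series in the topology of $\GGp(\beta_1,\beta_2;\gamma_1,\gamma_2)$ (after rechoosing exponents via the nested inclusions of these spaces). Since $\mathcal{L}$ is continuous in this norm, one may pass $\mathcal{L}$ through the sum, which is precisely the assertion that~\eqref{product reproducing formula} holds in the distribution space.

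The main obstacle, as anticipated, is the tensor-product tail estimate in Step~2. In the one-parameter proof the key input was the full set of Calder\'on--Zygmund estimates \eqref{size of C-Z-S-I-O}--\eqref{y smooth2 of C-Z-S-I-O} for the partial kernel $K(x,y)=\sum_{|k|\leq M}D_k(x,y)$, together with the claim~\eqref{claim}. In the product setting one needs the corresponding \emph{product} Calder\'on--Zygmund estimates on $K_1(x,x')K_2(y,y')$, and it is precisely at this point that the double smoothness condition~\eqref{e3 in lemma another version of Lemma 9.1} of Lemma~\ref{lemma another version of Lemma 9.1} becomes indispensable; without it, one cannot handle the mixed difference $f(x,y)-f(x',y)-f(x,y')+f(x',y')$ that naturally appears when combining the cancellations in both variables.
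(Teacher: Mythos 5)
Your high-level strategy—iterate the one-parameter reproducing formula (Theorem~\ref{thm reproducing formula test function}) in each variable—is exactly the paper's strategy, and Part~(b) is indeed a formal consequence of Part~(a) as you say. The genuine difference lies in how the tail estimate in the product test-function norm is organized. You propose to redo the Meyer-cutoff and kernel analysis in both variables simultaneously (and you flag the mixed second difference as requiring the double smoothness~\eqref{e3 in lemma another version of Lemma 9.1} of~$D_k$). The paper avoids that entirely via a short structural observation: if $f\in G(\beta_1,\beta_2;\gamma_1,\gamma_2)$, then $y\mapsto\|f(\cdot,y)\|_{G(\beta_1,\gamma_1)}$ is itself a one-parameter test function in $G(\beta_2,\gamma_2)$, and symmetrically in the other slot. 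This lets them decompose the error as $g=g_1+g_2$ (the $x$-truncation error applied to the $y$-truncated function, plus the $y$-truncation error applied to $f$) and then apply the one-parameter bounds~\eqref{decay estimate of the remaining terms} and~\eqref{eqn:waveletreproducingformula1} essentially as black boxes in each factor, with the $G(\beta_1,\gamma_1)$-norm in the other slot playing the role of a ``scalar''. In particular, the double smoothness of $D_k$ from Lemma~\ref{lemma another version of Lemma 9.1}(iii) is not invoked in the paper's proof of this theorem; and even on your more direct route, the estimate you would actually need is the double smoothness~\eqref{y smooth2 of C-Z-S-I-O} of the truncated kernel $K=\sum_{|k|\le M}D_k$ (established via the claim~\eqref{claim}), not Lemma~\ref{lemma another version of Lemma 9.1}(iii) for an individual~$D_k$—so the reference is slightly off. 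Your three-region split of the index set is a reorganization of the paper's two-term telescoping and is not wrong, but it commits you to the heavier bilinear kernel argument; the paper's nested-norm trick buys a much shorter proof once the one-parameter case is in hand.
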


\begin{proof}
As before, the wavelet reproducing formula for distributions follows
immediately from that for test functions. The proof for test
functions proceeds by iteration of Theorem~\ref{thm reproducing
formula test function}. Write
\begin{eqnarray*}
    g(x,y)
    &:=& \sum_{|k_1|\le L_1}\sum_{\alpha_1 \in \mathscr{Y}^{k_1}}
        \sum_{|k_2|\le L_2}\sum_{\alpha_2 \in \mathscr{Y}^{k_2}}
        \langle f,\psi_{\alpha_1}^{k_1}\psi_{\alpha_2}^{k_2} \rangle
        \psi_{\alpha_1}^{k_1}(x)\psi_{\alpha_2}^{k_2}(y)-f(x,y)\\
    &=:& g_1(x,y) + g_2(x,y),
\end{eqnarray*}
where
\begin{eqnarray*}
    g_1(x,y)
    &:=& \sum_{|k_1|\le L_1}\sum_{\alpha_1 \in \mathscr{Y}^{k_1}}
    \Big\langle \psi_{\alpha_1}^{k_1},
        \sum_{|k_2|\le L_2}\sum_{\alpha_2 \in \mathscr{Y}^{k_2}}
        \langle f(\cdot,\cdot),\psi_{\alpha_2}^{k_2} \rangle
        \psi_{\alpha_2}^{k_2}(y)\Big\rangle \psi_{\alpha_1}^{k_1}(x)\\
    && {} - \sum_{|k_2|\le L_2}\sum_{\alpha_2 \in \mathscr{Y}^{k_2}}
        \langle f(x,\cdot),\psi_{\alpha_2}^{k_2} \rangle
        \psi_{\alpha_2}^{k_2}(y)
\end{eqnarray*}
and
\begin{eqnarray*}
    g_2(x,y)
    := \sum_{|k_2|\le L_2}\sum_{\alpha_2 \in \mathscr{Y}^{k_2}}
        \langle f(x,\cdot),\psi_{\alpha_2}^{k_2} \rangle
        \psi_{\alpha_2}^{k_2}(y) - f(x,y).
\end{eqnarray*}

To see the convergence in the space of test functions, we
recall the following (one-parameter) estimate on~$X$, as shown
in the proof of Theorem~\ref{thm reproducing formula test
function}: Given $\beta$, $\gamma\in(0,\eta)$, for each $\beta'
\in (0,\beta)$ and $\gamma' \in (0,\gamma)$ there is a constant
$\sigma > 0$ such that for each positive integer~$L$
\begin{eqnarray}\label{eqn:waveletreproducingformula2}
    \Big\|f(\cdot) - \sum_{|k|\leq L}\sum_{\alpha \in \mathscr{Y}^k}
        \langle \psi_{\alpha}^k,f\rangle
        \psi_{\alpha}^k(\cdot)\Big\|_{G(\beta',\gamma')}
    \leq C \delta^{\sigma L}\Vert f\Vert_{G(\beta,\gamma)},
\end{eqnarray}
where $C$ is a constant independent of $f\in
\GGs(\beta,\gamma)$. Note that
inequality~\eqref{eqn:waveletreproducingformula2} is the same
as inequality~\eqref{decay estimate of the remaining terms},
slightly rewritten.
Inequality~\eqref{eqn:waveletreproducingformula2}, together
with the triangle inequality, implies that
\begin{eqnarray}\label{eqn:waveletreproducingformula1}
    \Big\|\sum_{|k|\leq L} \sum_{\alpha \in \mathscr{Y}^k}
        \langle \psi_{\alpha}^k,f\rangle
        \psi_{\alpha}^k(\cdot)\Big\|_{G(\beta',\gamma')}
    \leq C\Vert f\Vert_{G(\beta,\gamma)}.
\end{eqnarray}

We observe that if $f\in G(\beta_1,\beta_2;\gamma_1,\gamma_2),$
then $\Vert f(\cdot, y)\Vert_{G(\beta_1,\gamma_1)},$ as a
function of the variable $y,$ is in $G(\beta_2,\gamma_2)$, and
satisfies $\big\Vert \Vert f(\cdot,
\cdot)\Vert_{G(\beta_1,\gamma_1)}\big\Vert_{G(\beta_2,\gamma_2)}\le
\Vert f\Vert_{G(\beta_1,\beta_2;\gamma_1,\gamma_2)}.$
Similarly, $\big\Vert \Vert f(\cdot,
\cdot)\Vert_{G(\beta_2,\gamma_2)}\big\Vert_{G(\beta_1,\gamma_1)}\le
\Vert f\Vert_{G(\beta_1,\beta_2;\gamma_1,\gamma_2)}.$
Therefore, we obtain
\begin{eqnarray*}
    \Vert g_1(\cdot,y)\Vert_{G(\beta'_1,\gamma'_1)}
    &\le& C \delta^{L_1\sigma}
        \big\Vert \sum_{|k_2|\le L_2}\sum_{\alpha_2 \in \mathscr{Y}^{k_2}}
        \langle f(\cdot,\cdot),\psi_{\alpha_2}^{k_2} \rangle
        \psi_{\alpha_2}^{k_2}(y)\big\Vert_{G(\beta_1,\gamma_1)}\\
    &\le& C \delta^{L_1\sigma}
        \big\Vert \Vert f(\cdot,\cdot)\Vert_{G(\beta_2,\gamma_2)}
        \frac{\displaystyle 1}{\displaystyle
        V_{r_2}(y_0)+V(y_0,y)}\Big(\frac{\displaystyle r_2}{\displaystyle
        r_2+d(y,y_0)}\Big)^{\gamma_2}\big\Vert_{G(\beta_1,\gamma_1)}\\
    &\le& C \delta^{L_1\sigma}
        \Vert f(\cdot,\cdot)\Vert_{G(\beta_1,\beta_2;\gamma_1,\gamma_2)}
        \frac{\displaystyle
        1}{\displaystyle V_{r_2}(y_0)+V(y_0,y)}\Big(\frac{\displaystyle
        r_2}{\displaystyle r_2+d(y,y_0)}\Big)^{\gamma_2},
\end{eqnarray*}
where the first inequality follows
from~\eqref{eqn:waveletreproducingformula2} and the second
inequality follows from~\eqref{eqn:waveletreproducingformula1}.
Similarly,
\begin{eqnarray*}
    \Vert g_2(x,y)\Vert_{G(\beta'_1,\gamma'_1)}
    \le C \delta^{L_2\sigma}\Vert f \Vert_{G(\beta_1,\beta_2;\gamma_1,\gamma_2)}
        \frac{\displaystyle 1}{\displaystyle V_{r_2}(y_0) + V(y_0,y)}
        \Big(\frac{\displaystyle r_2}{\displaystyle r_2+d(y,y_0)}\Big)^{\gamma_2}.
\end{eqnarray*}
Noting that $g(x,y) - g(x,y') = [g_1(x,y) - g_1(x,y')] +
[g_2(x,y) - g_2(x,y')]$, by repeating the same estimates we
obtain
\begin{eqnarray*}
    \lefteqn{\Vert g(\cdot,y) - g(\cdot, y')\Vert_{G(\beta'_1,\gamma'_1)}}
        \hspace{1cm}\\
    &&\le C(\delta^{L_1\sigma} + \delta^{L_2\sigma})\Vert
        f(\cdot,\cdot)\Vert_{G(\beta_1,\beta_2;\gamma_1,\gamma_2)}\\
    &&\hskip1cm {}\times \Big(\frac{\displaystyle
        d(y,y')}{\displaystyle
        r_2+d(y,y_0)}\Big)^{\beta_2}\frac{\displaystyle 1}{\displaystyle
        V_{r_2}(y_0)+V(y_0,y)}\Big(\frac{\displaystyle r_2}{\displaystyle
        r_2+d(y,y_0)}\Big)^{\gamma_2}
\end{eqnarray*}
where $d(y,y')\leq (2A_0^{(2)})^{-1} (r_2+d(y,y_0)).$

The same proof can be carried out for the estimates with $x$
and $y$ interchanged. Hence
\[
    \Vert g \Vert_{G(\beta'_1,\beta'_2;\gamma'_1,\gamma'_2)}
    \le C(\delta^{L_1\sigma} + \delta^{L_2\sigma})
        \Vert f\Vert_{G(\beta_1,\beta_2;\gamma_1,\gamma_2)},
\]
which yields the convergence in
$\GGs(\beta'_1,\beta'_2;\gamma'_1,\gamma'_2)$.
\end{proof}


\section{Littlewood--Paley square functions and
Plancherel--P\'olya inequalities}\label{sec:squarefunctionPP}

We now carry out the philosophy described near the end of the
introduction, in order to establish the Littlewood--Paley
theory for the discrete square function in terms of wavelet
coefficients. We define the discrete and continuous square
functions, and prove their norm-equivalence via
Plancherel--P\'olya inequalities, whose proof takes up most of
this section. Again we begin with the one-parameter case.

\subsection{One-parameter square functions via wavelets, and
Plancherel--P\'olya
inequalities}\label{oneparametersquarefunction} We first apply
the orthonormal wavelet basis constructed in~\cite{AH} to
introduce the discrete Littlewood--Paley square function,
defined via the wavelet coefficients as follows.

\begin{definition}\label{def:discrete_square_function}
    \textup{(Discrete square function in terms of wavelet
    coefficients)} For $f$ in $(\GGs(\beta,\gamma))'$ with
    $\beta$, $\gamma\in(0,\eta)$, the \emph{discrete
    Littlewood--Paley square function $S(f)$ of $f$} is defined
    by
    \begin{eqnarray}
        S(f)(x)
        := \Big\{ \sum_{k}\sum_{\alpha\in\mathscr{Y}^k} \big|
            \langle \psi_{\alpha}^{k},f \rangle
            \widetilde{\chi}_{{Q}_{\alpha}^{k}}(x)
            \big|^2 \Big\}^{1/2},
    \end{eqnarray}
    where $\widetilde{\chi}_{{ Q}_{\alpha}^{k}}(x) := \chi_{{
    Q}_{\alpha}^{k}}(x) \mu({ Q}_{\alpha}^{k})^{-1/2}$ and $\chi_{{
    Q}_{\alpha}^{k}}(x)$ is the indicator function of the dyadic
    cube~${ Q}_{\alpha}^{k}$.
\end{definition}

It is straightforward that $\|S(f)\|_{L^2(X)} =
\|f\|_{L^2(X)}$, since $\lbrace\psi_{\alpha}^{k}\rbrace$ forms
an orthonormal wavelet basis for~$L^2(X)$. However, it is not
easy to see why $\|S(f)\|_{L^p(X)} \sim \|f\|_{L^p(X)}$ for $1
< p < \infty$ with $p\not = 2$. This difficulty is because the
classical method, namely the vector-valued Calder\'on--Zygmund
operator theory, cannot be carried out here due to the lack of
smoothness in the $x$~variable. For this reason, we introduce
the following continuous Littlewood--Paley square function in
terms of the wavelet operators~$D_k$.

\begin{definition}\label{def:continuous_square_function}
    \textup{(Continuous square function in terms of wavelet
    operators)} Let $D_k$ be the operator with kernel $D_k(x,y)
    = \sum_{\alpha \in \mathscr{Y}^k}\psi_{\alpha}^k(x)
    \psi_{\alpha}^k(y)$. For $f\in (\GGs(\beta,\gamma))'$ with
    $\beta$, $\gamma\in(0,\eta)$, the {\it continuous
    Littlewood--Paley square function $S_c(f)$ of $f$} is
    defined by
    \[
        S_c(f)(x)
        := \Big\{ \sum_{k}|D_k(f)(x)|^2 \Big\}^{1/2}.
    \]
\end{definition}

The two main results in this subsection are as follows.

\begin{theorem}\label{theorem Littlewood Paley}
    \textup{(Littlewood--Paley theory)} Suppose $\beta$,
    $\gamma\in (0,\eta)$ and $\frac{\omega}{\omega + \eta} < p
    < \infty$, where $\omega$ is the upper dimension
    of~$(X,d,\mu)$. For $f$ in $(\GGs(\beta,\gamma))'$, we have
    \[
        \|S(f)\|_{L^p(X)}
        \sim \|S_c(f)\|_{L^p(X)}.
    \]
    Moreover, if $1 < p < \infty$, then
    \[
        \|S(f)\|_{L^p(X)}
        \sim \|S_c(f)\|_{L^p(X)} \sim \|f\|_{L^p(X)}.
    \]
\end{theorem}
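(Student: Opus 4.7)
The plan is to prove the two norm equivalences separately: first $\|S_c(f)\|_p \sim \|f\|_p$ for $1<p<\infty$ via a standard vector-valued Calder\'on--Zygmund argument, and second $\|S(f)\|_p \sim \|S_c(f)\|_p$ for the extended range $\omega/(\omega+\eta) < p < \infty$ by reducing both quantities to sample-type square functions over dyadic cubes and invoking the Plancherel--P\'olya inequalities (Theorem~\ref{theorem P P inequality one-parameter}). Together these will give the full chain of equivalences in the stated range.

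For $\|S_c(f)\|_p \sim \|f\|_p$, the case $p=2$ follows directly from the orthonormality of $\{\psi_\alpha^k\}$. For general $p\in(1,\infty)$ I would view $S_c$ as arising from the vector-valued operator $T\colon f\mapsto\{D_kf\}_k$ from $L^p(X)$ into $L^p(X;\ell^2)$. The size and smoothness bounds on $D_k(x,y)$ established in Lemma~\ref{lemma another version of Lemma 9.1}, together with the telescoping-in-$k$ argument for $K(x,y)=\sum_kD_k(x,y)$ already carried out via~\eqref{claim} inside the proof of Theorem~\ref{thm reproducing formula test function}, show that $T$ is a vector-valued Calder\'on--Zygmund operator, hence bounded from $L^p$ to $L^p(\ell^2)$; this gives $\|S_c(f)\|_p\lesssim\|f\|_p$. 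The reverse inequality follows by duality: using the reproducing formula $f=\sum_k D_k f$ from Corollary~\ref{coro reproducing formula distribution} (which extends to $L^p$ by~\cite{AH}) together with the self-adjointness and idempotency $D_k^*=D_k=D_k^2$, one writes $\langle f,g\rangle=\sum_k\langle D_kf,D_kg\rangle$, then applies Cauchy--Schwarz in $k$ and H\"older in $x$ to obtain $\|f\|_p\lesssim\|S_c(f)\|_p$.

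For $\|S(f)\|_p \sim \|S_c(f)\|_p$, the key observation is that both square functions are built from the same data $\{D_kf\}$ but sampled at different points within each dyadic cube. Since $\widetilde\chi_{Q_\alpha^k}(x)^2=\mu(Q_\alpha^k)^{-1}\chi_{Q_\alpha^k}(x)$ and the scaled wavelet $\psi_\alpha^k/\sqrt{\mu(Q_\alpha^k)}$ behaves as an $L^2$-normalized bump concentrated near its center $y_\alpha^k$ (Theorem~\ref{theorem wavelet is test function}), one has $|\langle\psi_\alpha^k,f\rangle|^2\mu(Q_\alpha^k)^{-1}$ comparable to $|D_kf(y_\alpha^k)|^2$, so that $S(f)(x)$ coincides with a square function sampling $D_kf$ at the center of whichever cube $Q_\alpha^k$ contains $x$, while $S_c(f)(x)$ samples $D_kf$ at $x$ itself. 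Theorem~\ref{theorem P P inequality one-parameter} asserts that for $p>\omega/(\omega+\eta)$ any two such sampling choices produce $L^p$-equivalent square functions; applying it once on each side yields the desired equivalence.

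The main obstacle is Theorem~\ref{theorem P P inequality one-parameter} itself. The standard route is to control the oscillation $|D_kf(y)-D_kf(y')|$ for $y,y'\in Q_\alpha^k$ using the H\"older smoothness~\eqref{e2 in lemma another version of Lemma 9.1} of~$D_k(x,\cdot)$, then dominate $|D_kf(y)|$ pointwise by some $[M(|D_jf|^r)(z)]^{1/r}$ for a fixed $r<p$, and finally apply the Fefferman--Stein vector-valued maximal inequality on $L^{p/r}(\ell^{2/r})$. The lower bound $p>\omega/(\omega+\eta)$ enters precisely at the step of choosing $r\in(\omega/(\omega+\eta),p)$ so that $M$ remains bounded on the relevant space; this reflects the trade-off between the upper dimension $\omega$ of $\mu$ and the H\"older exponent~$\eta$ of the wavelets.
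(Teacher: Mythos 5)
Your plan matches the paper's in structure: a vector-valued Calder\'on--Zygmund argument gives $\|S_c(f)\|_p \sim \|f\|_p$ for $1<p<\infty$ (the duality argument you give via $D_k^* = D_k = D_k^2$ and $D_jD_k = 0$ for $j\ne k$ is fine), and the Plancherel--P\'olya inequalities bridge $S(f)$ and $S_c(f)$ on the full range $\omega/(\omega+\eta)<p<\infty$. So the overall route is the paper's.

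However, one intermediate claim is wrong and should be removed: you assert that $|\langle\psi_\alpha^k,f\rangle|^2\mu(Q_\alpha^k)^{-1}$ is pointwise comparable to $|D_kf(y_\alpha^k)|^2$, and hence that $S(f)$ ``coincides with'' a square function built by sampling $D_kf$ at cube centers. This is false: $D_kf(y_\alpha^k)=\sum_{\beta\in\mathscr{Y}^k}\langle\psi_\beta^k,f\rangle\psi_\beta^k(y_\alpha^k)$ involves the wavelet coefficients of \emph{all} $\beta$ at scale $k$, not just $\alpha$, and there is no pointwise two-sided bound between a single wavelet coefficient and a sample of $D_kf$. Indeed if such a pointwise comparison held, Theorem~\ref{theorem P P inequality one-parameter} would reduce to the classical maximal-function argument comparing different samplings of a fixed family $\{D_{k'}f\}$, and would not be a nontrivial statement. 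What the paper actually uses is the elementary pointwise sandwich
\[
\sum_{k'}\sum_{\alpha'\in\mathscr{X}^{k'+N}}\inf_{z\in Q^{k'+N}_{\alpha'}}|D_{k'}f(z)|^2\,\chi_{Q^{k'+N}_{\alpha'}}(x)
\ \le\ S_c(f)(x)^2\ \le\
\sum_{k'}\sum_{\alpha'\in\mathscr{X}^{k'+N}}\sup_{z\in Q^{k'+N}_{\alpha'}}|D_{k'}f(z)|^2\,\chi_{Q^{k'+N}_{\alpha'}}(x),
\]
and then invokes \eqref{eqn:PP1} and \eqref{eqn:PP2} directly, each of which already has the wavelet-coefficient square function $S(f)$ on one side and a sup/inf-sampled square function of $D_{k'}f$ on the other. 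So the link between $S(f)$ and samples of $D_{k'}f$ is exactly what the Plancherel--P\'olya theorem supplies; it is not a pointwise identity you can assume going in. Also, for the vector-valued CZ step, note that the kernel bound you need is for the $\ell^2$-valued kernel $\{D_k(x,y)\}_k$, not for the scalar $\sum_k D_k(x,y)$; the estimate of Remark~\ref{remark:crucial_estimate} is what controls this $\ell^2$ sum, rather than the ``telescoping in $k$'' computation you cite.
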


The key idea in proving Theorem~\ref{theorem Littlewood
Paley} is the following Plancherel--P\'olya type
inequalities.

\begin{theorem}\label{theorem P P inequality one-parameter}
    \textup{(Plancherel--P\'olya inequalities)} Suppose
    $\beta$, $\gamma\in (0,\eta)$ and $\frac{\omega}{\omega +
    \eta} < p < \infty$, where $\omega$ is the upper dimension
    of~$(X,d,\mu)$. Fix $N\in \N$. Then there is a positive
    constant~$C$ such that for all $f\in
    (\GGs(\beta,\gamma))'$, we have
    \begin{eqnarray}\label{eqn:PP1}
        &&\Big\|\Big\{\sum_{k'}\sum_{\alpha'\in \mathscr{X}^{k'+N}}
            \Big[\sup_{z\in {Q}^{{k^\prime +N }}_{\alpha ^\prime }}
            \big|D_{k^\prime}(f)(z)\big|^2\Big]
            \chi_{{ Q}^{{k^\prime +N }}_{\alpha^\prime }}(\cdot)
            \Big\}^{1/2}\Big\|_{L^p(X)}\nonumber\\
        &&\hskip.5cm \leq C \Big\|\Big\{\sum_{k}\sum_{\alpha\in\mathscr{Y}^k}
            \big|\langle \psi_{\alpha}^{k},f \rangle
            \widetilde{\chi}_{{ Q}_{\alpha}^{k}}(\cdot) \big|^2
            \Big\}^{1/2} \Big\|_{L^p(X)}.
    \end{eqnarray}
    Moreover, for a fixed sufficiently large integer~$N$ ($N$ will be
    determined later in the proof), there is a positive
    constant~$C$ such that for all $f\in
    (\GGs(\beta,\gamma))'$, we have
    \begin{eqnarray}\label{eqn:PP2}
        &&\Big\|\Big\{ \sum_{k}\sum_{\alpha\in\mathscr{Y}^k}
            \big| \langle \psi_{\alpha}^{k},f \rangle
            \widetilde{\chi}_{{Q}_{\alpha}^{k}}(\cdot) \big|^2
            \Big\}^{1/2} \Big\|_{L^p(X)}\nonumber\\
        &&\hskip.5cm \leq C \Big\|\Big\{\sum_{k'}\sum_{\alpha'\in \mathscr{X}^{k'+N}}
            \Big[\inf_{z\in { Q}^{{k^\prime + N}}_{\alpha ^\prime }}
            \big|D_{k^\prime}(f)(z)\big|^2\Big]
            \chi_{{Q}^{{k^\prime +N }}_{\alpha^\prime }}(\cdot)
            \Big\}^{1/2}\Big\|_{L^p(X)}.
    \end{eqnarray}
\end{theorem}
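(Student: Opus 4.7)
The plan is to prove both inequalities via the same two-step framework: first obtain a pointwise almost-diagonal bound using the wavelet expansion and the kernel estimates of Lemma~\ref{lemma another version of Lemma 9.1}, then pass to $L^p$ through a pointwise maximal-function dominance trick followed by the Fefferman--Stein vector-valued maximal inequality. Both steps are chosen so as to decouple the subtlety at $p\leq 1$ from the scale-by-scale analysis.

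For~\eqref{eqn:PP1} I would start from the identity $D_{k'}(f)(z) = \sum_{\alpha \in \mathscr{Y}^{k'}} \langle f,\psi_\alpha^{k'}\rangle\,\psi_\alpha^{k'}(z)$, apply Theorem~\ref{theorem AH orth basis} to dominate each $|\psi_\alpha^{k'}(z)|$ by a rapidly decaying weight of arbitrary polynomial order~$\Gamma$ centered at $y_\alpha^{k'}$, and use the comparability $\mu(Q_\alpha^{k'})\sim\mu(B(y_\alpha^{k'},\delta^{k'}))$ together with the fact that $Q_{\alpha'}^{k'+N}$ lives at a much finer scale than $\delta^{k'}$ to obtain, uniformly in $z\in Q_{\alpha'}^{k'+N}$,
\[
|D_{k'}(f)(z)|\lesssim\sum_{\alpha}|\langle f,\psi_\alpha^{k'}\rangle|\,\mu(Q_\alpha^{k'})^{-1/2}\Big(\tfrac{\delta^{k'}}{\delta^{k'}+d(y_{\alpha'}^{k'+N},y_\alpha^{k'})}\Big)^{\Gamma}.
\]
Taking the supremum in $z$ and combining with the standard pointwise maximal dominance
\[
\sum_{\alpha}|a_\alpha|\,\widetilde{\chi}_{Q_\alpha^{k'}}(x)\Big(\tfrac{\delta^{k'}}{\delta^{k'}+d(x,y_\alpha^{k'})}\Big)^{\Gamma}\lesssim\Big[M\Big(\sum_\alpha|a_\alpha|^r\chi_{Q_\alpha^{k'}}\Big)(x)\Big]^{1/r}\mu(Q_\alpha^{k'})^{-1/2},
\]
valid whenever $\Gamma r>\omega$, I would then take $\ell^2$-norms in $k'$ and apply the Fefferman--Stein inequality at the safe exponent $p/r>1$ with values in $\ell^{2/r}$, recovering~\eqref{eqn:PP1}.

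For~\eqref{eqn:PP2} I would exploit the self-adjoint identity $\langle f,\psi_\alpha^k\rangle = \langle D_k f,\psi_\alpha^k\rangle=\int_X D_k(f)(y)\,\psi_\alpha^k(y)\,d\mu(y)$ (which follows from $D_k\psi_\alpha^k=\psi_\alpha^k$) and decompose the integral along the partition $\{Q_{\alpha'}^{k+N}\}_{\alpha'}$. On each small cube I split $|D_k(f)(y)|$ as $\inf_{z\in Q_{\alpha'}^{k+N}}|D_k(f)(z)|$ plus an oscillation term, the latter being controlled by the Hölder smoothness of $D_k(x,y)$ from Lemma~\ref{lemma another version of Lemma 9.1}(ii) and therefore producing a gain of order~$\delta^{N\eta}$. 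Choosing $N$ large, the oscillation contribution is absorbed into the left-hand side (after an \emph{a priori} truncation to guarantee finiteness, obtained for instance by first working with $f\in L^2(X)\cap(\GGs(\beta,\gamma))'$ and then invoking density), while the main term, being once again a weighted sum of cube infima with rapidly decaying weights, is handled by the same maximal-dominance plus Fefferman--Stein machinery used in the first inequality.

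The main obstacle is the subunit range $p\leq 1$, where Fefferman--Stein is unavailable. The remedy, built into both arguments above, is to choose an auxiliary exponent $r$ with $\omega/(\omega+\eta)<r<p$ and apply the pointwise maximal-dominance step before invoking Fefferman--Stein, so that the latter is only used at the safe exponent $p/r>1$. This is precisely where the hypothesis $p>\omega/(\omega+\eta)$ enters: one needs $\Gamma r>\omega$ while $\Gamma$ is effectively capped at $\omega+\eta$ (the decay exponent available once the smoothness-exchange used to produce the $\delta^{N\eta}$ oscillation gain has been invoked), forcing $r>\omega/(\omega+\eta)$. A secondary delicate point is justifying the absorption in~\eqref{eqn:PP2}, which is legitimate only once finiteness of both sides is known; I would arrange this through the density argument mentioned above, reducing first to sufficiently regular $f$ and then extending by the completeness of $\GGs(\beta,\gamma)$.
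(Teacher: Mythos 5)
Your plan correctly identifies the overall two-step architecture (almost-diagonal kernel estimates, then maximal-function dominance plus Fefferman--Stein), and your density reduction to $f\in L^2$ matches the paper's opening move. But the two inequalities deserve separate comments.

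For~\eqref{eqn:PP1} you take a genuinely simpler route than the paper. You observe that $D_{k'}(f) = \sum_{\alpha\in\mathscr{Y}^{k'}}\langle f,\psi_\alpha^{k'}\rangle\psi_\alpha^{k'}$ already contains only the wavelet coefficients at the single scale $k'$, so one can bound $\sup_z|D_{k'}(f)(z)|$ directly by a weighted $\ell^1$ sum over $\alpha\in\mathscr{Y}^{k'}$ and then run the standard Frazier--Jawerth majorization into a maximal function. The paper instead substitutes the full wavelet expansion of $f$ over all scales and then invokes the almost-orthogonality claim~\eqref{eqn:almost_orthogonality_estimate}, which is overkill for this direction but mirrors the structure needed for~\eqref{eqn:PP2}. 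Your single-scale argument is valid and in fact would not, by itself, require the restriction $p>\omega/(\omega+\eta)$, since there is no $|k-k'|$-summation to balance; that restriction is essential only where the two scales interact.

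For~\eqref{eqn:PP2} your plan is morally the paper's but glosses over the step where the real work happens. You write $\langle f,\psi_\alpha^k\rangle=\int D_k(f)\,\psi_\alpha^k$, discretize along $\{Q_{\alpha'}^{k+N}\}$, and split $D_k(f)(y)$ into the cube infimum plus an oscillation of size $\delta^{N\eta}$; so far this is exactly the decomposition that lies behind the paper's operator $T_N$ in Lemma~\ref{lem:TN_bounded}. The missing point is that the oscillation $|D_k(f)(y)-D_k(f)(z_{\alpha'})|$ is controlled only after re-expressing it via the projection identity $D_kD_k=D_k$ as $\int[D_k(y,w)-D_k(z_{\alpha'},w)]\,D_k(f)(w)\,d\mu(w)$, which re-introduces $D_k(f)$ and hence produces (after the usual machinery) a quantity comparable to the LHS of~\eqref{eqn:PP1}, not directly the discrete square function $\|S(f)\|_{L^p}$ appearing on the left of~\eqref{eqn:PP2}. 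To ``absorb into the left-hand side'' one must first invoke the already-proved~\eqref{eqn:PP1} to replace that quantity by a constant times $\|S(f)\|_{L^p}$, and only then choose $N$ large. The paper packages exactly this self-referential iteration as the Neumann series $T_N^{-1}=\sum_{i\geq0}R_N^i$ in Lemma~\ref{lem:TN_bounded}(iii), with $\|S(R_Nf)\|_{L^p}\lesssim\delta^{\eta N}\|S(f)\|_{L^p}$ being the quantitative oscillation bound; your absorption is the same content unfolded once, but it needs both the projection identity and the appeal to~\eqref{eqn:PP1} made explicit, together with the a priori finiteness of $\|S(f)\|_{L^p}$ that you correctly flag.

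Finally, your explanation of where the hypothesis $p>\omega/(\omega+\eta)$ enters is not correct. It is not that $\Gamma$ is capped at $\omega+\eta$; the wavelet decay is super-polynomial, so the decay exponent in the size estimates can be taken arbitrarily large. The constraint arises in the paper's argument from balancing the almost-orthogonality gain $\delta^{|k-k'|\eta}$ against the Frazier--Jawerth loss incurred when the decay scale $\delta^{k\wedge k'}$ and the cube scale $\delta^k$ mismatch, which forces the auxiliary exponent $r$ to satisfy $r>\omega/(\omega+\eta)$ for the $k'$-sum to converge.
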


Note that in each of the inequalities~\eqref{eqn:PP1}
and~\eqref{eqn:PP2}, on one side, for each $k\in\Z$ the sum
runs over the set $\mathscr{Y}^k$, while on the other side for
each $k'\in\Z$ the sum runs over the set $\mathscr{X}^{k' +
N}$. Besides the distinction between $\mathscr{Y}$ and
$\mathscr{X}$, the other difference here is that in the
expressions involving $D_{k'}$, it is not sufficient to sum at
the scale of~$k'$, but rather, following~\cite{DH}, we must sum
over all cubes at the smaller scale $k' + N$.

\begin{proof}[Proof of Theorem~\ref{theorem Littlewood Paley}]
\label{prf:Littlewood-Paley}
Theorem~\ref{theorem Littlewood Paley} follows from
Theorem~\ref{theorem P P inequality one-parameter}, by standard
arguments that can be found in~\cite{DH}. We sketch the idea.
The first estimate in Theorem~\ref{theorem Littlewood Paley}
follows from Theorem~\ref{theorem P P inequality one-parameter}
together with the following observation:
\begin{align*}
    \sum_{k'}\sum_{\alpha'\in \mathscr{X}^{k'+N}}
        \inf_{z\in Q^{{k'+N }}_{\alpha'}}
        \big|D_{k'}(f)(z)\big|^2
        \chi_{Q^{k'+N }_{\alpha'}}(x)
    &\leq \sum_{k}|D_k(f)(x)|^2 \\
    &\leq \sum_{k'}\sum_{\alpha'\in \mathscr{X}^{k'+N}}
        \sup_{z\in Q^{{k'+N }}_{\alpha'}}
        \big|D_{k'}(f)(z)\big|^2
        \chi_{Q^{{k'+N } }_{\alpha'}}(x).
\end{align*}
For the second estimate in Theorem~\ref{theorem Littlewood
Paley}, when $1 < p < \infty$ one obtains from the classical
method of vector-valued Calder\'on--Zygmund operator theory
that $\|S_c(f)\|_{L^p(X)} \leq C\|f\|_{L^p(X)}$. This estimate
together with the wavelet expansion as
in~\eqref{eqn:AH_reproducing formula} gives $\|f\|_{L^p(X)}
\leq C\|S_c(f)\|_{L^p(X)}$, and Theorem~\ref{theorem Littlewood
Paley} follows.
\end{proof}

We would like to point out that to consider $S_c(f)$ as a
vector-valued Calder\'on--Zygmund operator, we need to use
the crucial estimate mentioned in
Remark~\ref{remark:crucial_estimate} to show that the
kernel of the operator $S_c(f)$ satisfies all conditions
for the Calder\'on--Zygmund singular integral operator. We
omit the details.

\smallskip
\noindent{\textbf{Outline of proof of Theorem~\ref{theorem P P
inequality one-parameter}.}} Since the proof (below) is rather
complex, we begin by outlining our approach. For the first
Plancherel--P\'olya inequality~\eqref{eqn:PP1}, we substitute
the wavelet reproducing formula~\eqref{eqn:reproducing_formula} for~$f$
into the left-hand side. Thus the desired wavelet coefficients
$\langle \psi^k_\alpha, f\rangle$ appear. To deal with the
unwanted terms $D_{k'}$ and $\psi^k_\alpha$, we apply the
almost-orthogonality
estimates~\eqref{eqn:almost_orthogonality_estimate} given
below. Then the standard technique, as in~\cite{DH}, of
applying an estimate from~\cite{FJ} and the Fefferman--Stein
vector-valued maximal function inequality~\cite{FS}
establishes~\eqref{eqn:PP1}.

The second Plancherel--P\'olya inequality~\eqref{eqn:PP2} is
harder. Roughly speaking, we need to control the wavelet
coefficients by the quantities $D_{k'}(f)$. Now for spaces of
homogeneous type with additional assumptions, one proceeds as
in~\cite{DH} via a frame reproducing formula of the form
\[
    f(x)
    = \sum_{k'} \sum_{\alpha'\in\mathscr{X}^{k' + N}}
    \mu(Q^{k' + N}_{\alpha'})
    \widetilde{D}_{k'}(x,x^{k'+N}_{\alpha'})
    D_k(f)(x^{k' + N}_{\alpha'}).
\]
However, for our spaces of homogeneous type with no additional
assumptions on~$d$ and~$\mu$, no such frame reproducing formula
is available. A new idea is needed. We introduce a suitable
operator~$T_N$, show that $T_N$ is bounded and that the
$L^p(X)$ norm of $S(T_N^{-1}(f))$ is controlled by that of
$S(f)$ (Lemma~\ref{lem:TN_bounded} below), and rewrite the
wavelet coefficient as $\langle \psi^k_\alpha, T_N^{-1} T_N
f\rangle$. Pulling out the operator $T_N^{-1}$ from the
left-hand side of~\eqref{eqn:PP2}, we obtain expressions of the
form $\langle \psi^k_\alpha/\sqrt{\mu(Q^k_\alpha)}, T_N
f\rangle$. Because of the form of~$T_N$, we can now apply the
almost-orthogonality
estimates~\eqref{eqn:almost_orthogonality_estimate} to these
terms and complete the remainder of the proof of the second
Plancherel--Polya inequality~\eqref{eqn:PP2} by following the
approach used for~\eqref{eqn:PP1}.

We now give the details.


\begin{proof}[Proof of Theorem~\ref{theorem P P inequality one-parameter}]
For each $f\in (\GGs(\beta,\gamma))'$, by Theorem~\ref{thm
reproducing formula test function}, the functions
\[
    f_n(x)
    = \sum_{|k|\leq n}\sum_{\alpha \in \mathscr{Y}^k}
        \langle f, \psi^k_\alpha\rangle \psi^k_\alpha(x)
\]
belong to~$L^2(X)$ and converge to $f$ in
$(\GGs(\beta,\gamma))'$ as $n\to\infty$. Note that $\langle
f_n, \psi^k_\alpha\rangle = \langle f, \psi^k_\alpha\rangle $
for $|k|\leq n$, and $\langle f_n, \psi^k_\alpha\rangle = 0$
for $|k| > n$. Thus,
\begin{align*}
    \lefteqn{\sum_{|k'|\leq n}\sum_{\alpha'\in \mathscr{X}^{k'+N}}
            \Big[\sup_{z\in {Q}^{{k^\prime +N }}_{\alpha ^\prime }}
            \big|D_{k^\prime}(f)(z)\big|^2\Big]
            \chi_{{ Q}^{{k^\prime +N }}_{\alpha^\prime }}(x)} \hspace{1cm} \\
    &= \sum_{k'}\sum_{\alpha'\in \mathscr{X}^{k'+N}}
            \Big[\sup_{z\in {Q}^{{k^\prime +N }}_{\alpha ^\prime }}
            \big|D_{k^\prime}(f_n)(z)\big|^2\Big]
            \chi_{{ Q}^{{k^\prime +N }}_{\alpha^\prime }}(x)
\end{align*}
and
\[
    \sum_{|k|\leq n}\sum_{\alpha\in\mathscr{Y}^k}
            \big|\langle \psi_{\alpha}^{k},f \rangle
            \widetilde{\chi}_{{Q}_{\alpha}^{k}}(\cdot)\big|^2
    = \sum_{k}\sum_{\alpha\in\mathscr{Y}^k}
            \big|\langle \psi_{\alpha}^{k},f_n \rangle
            \widetilde{\chi}_{{Q}_{\alpha}^{k}}(\cdot)\big|^2.
\]
Therefore it suffices to show the inequality~\eqref{eqn:PP1} of
Theorem~\ref{theorem P P inequality one-parameter} for $f\in
L^2(X)$, and similarly for the inequality~\eqref{eqn:PP2}.

We first prove~\eqref{eqn:PP1}. Fix $N\in \N$. The idea is to
apply an almost-orthogonality estimate
(\eqref{eqn:almost_orthogonality_estimate} below). First, for
each $f\in L^2(X)$, by the wavelet expansion (Theorem~\ref{thm
reproducing formula test function}),
\begin{eqnarray*}
   f(x)
   = \sum_{k\in\mathbb{Z}}\sum_{\alpha \in \mathscr{Y}^k}
    \langle f,\psi_{\alpha}^k \rangle \psi_{\alpha}^k(x).
\end{eqnarray*}
Thus for each $z\in {Q}^{{k^\prime + N }}_{\alpha^\prime } $ we
have
\begin{eqnarray*}
    D_{k^\prime}(f)(z)
    = \sum_{k\in\mathbb{Z}}\sum_{\alpha \in \mathscr{Y}^k}
        \mu(Q_\alpha^k)
        \Big\langle f,{\psi_{\alpha}^k\over \sqrt{\mu(Q_\alpha^k)}} \Big\rangle
        \Big \langle {\psi_{\alpha}^k(\cdot)\over \sqrt{\mu(Q_\alpha^k)}},
        D_{k^\prime}(\cdot,z)\Big\rangle.
\end{eqnarray*}


\smallskip
\noindent {\bf Claim:} (\emph{Almost-orthogonality estimate})
We claim that $\big\langle \psi_{\alpha}^{k}(\cdot) /
\sqrt{\mu({ Q}_{\alpha}^{k})}, D_{k'}(\cdot,z) \big\rangle$
satisfies the following almost-orthogonality estimate: There
exists a constant~$C$ such that for each positive integer $N$,
each $\gamma > 0$, each point $z\in
Q^{k^\prime+N}_{\alpha^\prime}$ and each point
$x^{k^\prime+N}_{\alpha^\prime}\in
Q^{k^\prime+N}_{\alpha^\prime}$, we have
\begin{eqnarray}\label{eqn:almost_orthogonality_estimate}
    &&\Big|\Big\langle
        \frac{\psi_{\alpha}^{k}(\cdot)}{\sqrt{\mu({ Q}_{\alpha}^{k})}},
        D_{k'}(\cdot,z)
        \Big\rangle\Big| \nonumber \\
    &&\hskip.5cm \leq C \delta^{\vert k - {k^\prime }\vert{\eta }}
        \frac{1}{V_{\delta^{(k'\wedge k)}}(x_\alpha^k)
            + V_{\delta^{(k'\wedge k)}}(x^{{k^ \prime +N }}_{\alpha ^\prime })
            + V(x_\alpha^k,x^{{k^ \prime +N }}_{\alpha ^\prime })}
        \Big({{\delta^{(k\wedge {k^\prime })}}
            \over {\delta^{(k\wedge {k^\prime })}
            + d(x_\alpha^k,x^{{k^ \prime+N}}_{\alpha^\prime })}}
        \Big)^{\gamma }.
\end{eqnarray}
As usual, $k\wedge k' = \min\{k,k^\prime\}$ denotes the minimum
of $k$ and~$k'$.

\begin{remark}\label{rem:wavelet_test_function_conditions}
The key idea used below to prove the
claim~\eqref{eqn:almost_orthogonality_estimate} is that both
$\psi_{\alpha}^{k}(x) / \sqrt{\mu({ Q}_{\alpha}^{k})}$ and
$D_{k'}(\cdot,z)$ satisfy size conditions, H\"older regularity
conditions, and cancellation, since as we have shown,
$\psi_{\alpha}^{k}(x) / \sqrt{\mu({ Q}_{\alpha}^{k})}$ is a
test function in~$\GGs(\beta,\gamma)$ while $D_{k'}(\cdot,z)$
satisfies the properties~\eqref{e1 in lemma another version of
Lemma 9.1}--\eqref{e3 in lemma another version of Lemma 9.1} in
Lemma~\ref{lemma another version of Lemma 9.1}. Further, we
point out that if $D_k(x,y)$ satisfies the same size
condition~\eqref{e1 in lemma another version of Lemma 9.1}
together with the following H\"older regularity condition
(which is weaker than~\eqref{e2 in lemma another version of
Lemma 9.1}),
\begin{eqnarray}\label{eqn:weak_Holder}
    &&\big |D_k(x,y) - D_k(x,y')\big|
    \leq C \Big({d(y,y')\over\delta^k }\Big)^\eta
        \Big[{1\over V_{\delta^k}(x) + V(x,y)}
        \Big({\delta^k\over \delta^{k} + d(x,y)}\Big)^{\gamma} \nonumber \\
    &&\hspace{5cm} {}+ {1\over V_{\delta^k}(x) + V(x,y')}
        \Big({\delta^k\over \delta^{k} + d(x,y')}\Big)^{\gamma}\Big],
\end{eqnarray}
and if the above estimate holds with $x$ and $y$ interchanged,
then the above almost-orthogonality
estimate~\eqref{eqn:almost_orthogonality_estimate} still holds,
but with $\eta$ replaced by some $\eta^\prime \in (0, \eta)$.
\end{remark}

Assuming the claim for the moment, we obtain that
\begin{eqnarray*}
    \lefteqn{\sup_{z\in {Q}^{{k^\prime + N }}_{\alpha ^\prime }}
        \big|D_{k^\prime}(f)(z)\big|} \\
    &&\hskip.5cm
    \leq C \sum_{k\in\mathbb{Z}}\sum_{\alpha \in \mathscr{Y}^k}\mu(Q_\alpha^k)
        \Big|\Big\langle f,{\psi_{\alpha}^k\over \sqrt{\mu(Q_\alpha^k)}} \Big\rangle\Big|
        \delta^{\vert k - {k^\prime }\vert {\eta }}\\
    &&\hskip1cm {} \times \frac{1}{V_{\delta^{(k'\wedge k)}}(x_\alpha^k)
        + V_{\delta^{(k'\wedge k)}}(x^{{k^ \prime + N }}_{\alpha ^\prime })
        + V(x_\alpha^k,x^{{k^ \prime + N }}_{\alpha ^\prime })}
        \Big({{\delta^{(k\wedge {k^\prime })}}\over{\delta^{(k\wedge {k^\prime })}
        + d(x_\alpha^k,x^{{k^ \prime + N}}_{\alpha^\prime })}}\Big)^{\gamma }.
\end{eqnarray*}

As a consequence, we have
\begin{eqnarray*}
    &&\Big\{\sum_{k'}\sum_{\alpha'\in \mathscr{X}^{k'+N}}
        \sup_{z\in {Q}^{{k^\prime +N }}_{\alpha ^\prime }}
        \big|D_{k^\prime}(f)(z)\big|^2
        \chi_{Q^{k^\prime +N }_{\alpha^\prime }}(x)\Big\}^{1/2}\\
    &&\hskip.5cm \leq C \bigg\{\sum_{k'}\sum_{\alpha'\in \mathscr{X}^{k'+N}}
        \bigg|\sum_{k\in\mathbb{Z}}\sum_{\alpha \in \mathscr{Y}^k}\mu(Q_\alpha^k)
        \Big|\Big\langle f,{\psi_{\alpha}^k\over \sqrt{\mu(Q_\alpha^k)}} \Big\rangle\Big|
        \delta^{\vert k - {k^\prime }\vert{\eta }} \\
    &&\hskip1.7cm {}\times \frac{1}{V_{\delta^{(k'\wedge k)}}(x_\alpha^k)
        + V_{\delta^{(k'\wedge k)}}(x)
        + V(x_\alpha^k,x)}
        \Big({{\delta^{(k\wedge {k^\prime })}}\over{\delta^{(k\wedge {k^\prime })}
        + d(x_\alpha^k,x)}}\Big)^{\gamma } \bigg|^2
        \chi_{ Q^{{k^\prime +N }}_{\alpha^\prime }}(x) \bigg\}^{1/2}.
\end{eqnarray*}
Using the same estimate as in \cite{FJ}, pp.147--148 (see also
Lemma~2.12 in~\cite{HLL2}), we obtain
\begin{eqnarray*}
    \lefteqn{\sum_{\alpha \in \mathscr{Y}^k}\mu(Q_\alpha^k)
        \frac{1}{V_{\delta^{(k'\wedge k)}}(x_\alpha^k)
        + V_{\delta^{(k'\wedge k)}}(x)+V(x_\alpha^k,x)}
        \Big({{\delta^{(k\wedge {k^\prime })}}\over{\delta^{(k\wedge {k^\prime })}
        + d(x_\alpha^k,x)}}\Big)^{\gamma }
        \Big|\Big\langle f,{\psi_{\alpha}^k\over \sqrt{\mu(Q_\alpha^k)}} \Big\rangle\Big|}\hspace{0.5cm}\\
    &&\hskip.4cm\leq C \delta^{[(k\wedge k')-k]\omega(1-1/r)}
        \bigg\{\mathcal{M}\bigg( \sum \limits_{{ \alpha }\in {\mathscr{Y}^{k }}}
        \Big|\Big\langle f,{\psi_{\alpha}^k\over \sqrt{\mu(Q_\alpha^k)}} \Big\rangle\Big|^r
        \chi_{{Q}^{{k}}_{\alpha }}(\cdot) \bigg)(x)\bigg\}^{1/r},
\end{eqnarray*}
where $\mathcal{M}$ is the Hardy--Littlewood maximal function
on $X$ and $\frac{\omega}{\omega +\eta} < r < p$.

Thus, by the Fefferman--Stein vector-valued maximal function
inequality with $p/r>1$ (see~\cite{FS}), we obtain
\begin{eqnarray*}
    &&\Big\|\Big\{ \sum_{k'}\sum_{\alpha'\in \mathscr{X}^{k'+N}}
        \sup_{z\in {Q}^{{k^ \prime +N }}_{\alpha ^\prime}}
        \big|D_{k^\prime }(f)(z)\big|
        \chi_{Q^{{k^\prime + N }}_{\alpha^\prime }}(\cdot)
        \Big\}^{1/2}\Big\|_{L^p(X)} \nonumber\\
    &&\hskip.5cm \leq C \Big\|\Big\{
        \sum_{k}\sum_{\alpha\in\mathscr{Y}^k}
        \big| \langle \psi_{\alpha}^{k},f \rangle
        \widetilde{\chi}_{{ Q}_{\alpha}^{k}}(\cdot)
        \big|^2 \Big\}^{1/2} \Big\|_{L^p(X)}.
\end{eqnarray*}

It remains to show the claimed almost-orthogonality
estimate~\eqref{eqn:almost_orthogonality_estimate}. We first
consider the case $k\geq k'$. Applying the cancellation
property for $\psi_{\alpha}^{k}(x)$ yields
\begin{eqnarray*}
    \Big|\Big\langle {\psi_{\alpha}^{k}(\cdot)\over
        \sqrt{\mu({ Q}_{\alpha}^{k})}}, D_{k'}(\cdot,z) \Big\rangle\Big|
    &=& \Big|\int_X {\psi_{\alpha}^{k}(x)\over \sqrt{\mu({ Q}_{\alpha}^{k})}}
        \Big[D_{k'}(x,z) - D_{k'}(x_{\alpha}^{k},z) \Big]\, d\mu(x)\Big|\\
    &\leq& \int_{W_1} {|\psi_{\alpha}^{k}(x)|\over \sqrt{\mu({ Q}_{\alpha}^{k})}}\big|
        D_{k'}(x,z) - D_{k'}(x_{\alpha}^{k},z) \big|\, d\mu(x)\\
    && {}+ \int_{W_2} {|\psi_{\alpha}^{k}(x)|\over \sqrt{\mu({ Q}_{\alpha}^{k})}}
        \Big[\big| D_{k'}(x,z)\big| + \big|D_{k'}(x_{\alpha}^{k},z) \big|\Big]\, d\mu(x)\\
    &=:& \textup{U}_1 + \textup{U}_2,
\end{eqnarray*}
where $W_1 := \{x\in X: d(x,x_{\alpha}^{k}) \leq (2A_0)^{-1}
(\delta^{k'} + d(x_{\alpha}^{k},z))\}$ and $W_2 := X \setminus
W_1$.

Similarly to the estimate of $\textup{(A)}_1$ in the proof of
Lemma~\ref{lemma another version of Lemma 9.1}, for
$\textup{U}_1$, using the size condition
(Definition~\ref{def-of-test-func-space}(i)) on
$\psi_{\alpha}^{k}(x)/\sqrt{\mu({ Q}_{\alpha}^{k})}$ and the
smoothness condition (Lemma~\ref{lemma another version of Lemma
9.1}(ii)) on $D_{k'}(x,y)$, we obtain that for all $z$,
$x_{\alpha'}^{k'+N}\in Q_{\alpha'}^{k'+N}$,
\begin{eqnarray*}
\textup{U}_1
&\leq& C\int_{W_1}  {1\over V_{\delta^k}(x_\alpha^k) + V(x_\alpha^k,x)}
    \Big({\delta^k\over \delta^{k} +
    d(x_\alpha^k,x)}\Big)^{\Gamma}\\
&&{} \times \Big({d(x,x_\alpha^k)\over\delta^{k'} + d(x_\alpha^k,x_{\alpha'}^{k'+N})}\Big)^\eta
    {1\over V_{\delta^{k'}}(x_\alpha^k) + V(x,x_{\alpha'}^{k'+N})}
    \Big({\delta^{k'}\over \delta^{k'} +
    d(x_\alpha^k,x_{\alpha'}^{k'+N})}\Big)^{\gamma}
\,d\mu(x)\\
&\leq& C \delta^{( k - k' ) {\eta }} \int_{W_1}
    {1\over V_{\delta^k}(x_\alpha^k) + V(x_\alpha^k,x)}
    \Big({\delta^k\over \delta^{k} +
    d(x_\alpha^k,x)}\Big)^{\Gamma-\eta} \,d\mu(x) \\
&&{}\times
    \frac{1}{V_{\delta^{k'}}(x_\alpha^k) + V_{\delta^{k'}}(x^{{k^ \prime +N }}_{\alpha ^\prime })
    + V(x_\alpha^k,x^{{k^ \prime +N }}_{\alpha ^\prime })}
    \Big({{\delta^{k'}}\over{\delta^{k'} + d(x_\alpha^k,x^{{k^ \prime +N }}_{\alpha ^\prime })}}\Big)^{\gamma }
\end{eqnarray*}
for $\Gamma>\eta$ and $\gamma>0$.

The estimate for $\textup{U}_2$ is similar to the proof for
$\textup{(A)}_2$ as in Lemma~\ref{lemma another version of
Lemma 9.1}. Specifically, we have
\begin{align*}
\textup{U}_2
&\leq C\int_{W_2}  {1\over V_{\delta^k}(x_\alpha^k)+V(x_\alpha^k,x)}
    \Big({\delta^k\over \delta^{k}
    + d(x_\alpha^k,x)}\Big)^{\Gamma}\Big[{1\over V_{\delta^{k'}}(x)+V(x,x^{{k^ \prime +N }}_{\alpha ^\prime })}
    \Big({\delta^{k'}\over \delta^{k'}
    + d(x,x^{{k^ \prime +N }}_{\alpha ^\prime })}\Big)^{\gamma}\\
& \hspace{1cm} {}+ {1\over V_{\delta^{k'}}(x_\alpha^k)+V(x_\alpha^k,x^{{k^ \prime +N }}_{\alpha ^\prime })}
    \Big({\delta^{k'}\over \delta^{k'}
    + d(x_\alpha^k,x^{{k^ \prime +N }}_{\alpha ^\prime })}\Big)^{\gamma}\Big]
    \,d\mu(x)\\
&\leq C \delta^{( k - k' ) {\eta }}
    \frac{1}{V_{\delta^{k'}}(x_\alpha^k)
        + V_{\delta^{k'}}(x^{{k^ \prime +N }}_{\alpha ^\prime })
        + V(x_\alpha^k,x^{{k^ \prime +N }}_{\alpha ^\prime })}
    \Big({{\delta^{k'}}\over{\delta^{k'} + d(x_\alpha^k,x^{{k^ \prime +N }}_{\alpha ^\prime })}}
    \Big)^{\gamma }.
\end{align*}
These estimates of $\textup{U}_1$ and $\textup{U}_2$ establish
the claimed almost-orthogonality
estimate~\eqref{eqn:almost_orthogonality_estimate} when $k\geq
k'$. The proof for the case $k < k'$ is similar. This completes
the proof of the almost-orthogonality
estimate~\eqref{eqn:almost_orthogonality_estimate}, and hence
the proof of the first Plancherel--P\'olya
inequality~\eqref{eqn:PP1}.

To show the second Plancherel--P\'olya
inequality~\eqref{eqn:PP2}, we need the following result about
the operator~$T_N$, as mentioned in the outline of the proof of
Theorem~\ref{theorem P P inequality one-parameter}.

\begin{lemma}\label{lem:TN_bounded}
    \textup{(Properties of~$T_N$)} Suppose that $f\in L^2(X)$
    and $\frac{\omega}{\omega + \eta} < p < \infty$, where
    $\omega$ is the upper dimension of~$(X,d,\mu)$. Let $N$ be
    a positive integer. In each cube $Q^{k + N}_\alpha$, fix a
    point $x^{k + N}_\alpha$. Define the operator $T_{N}$ by
    \begin{equation}\label{eqn:T_Nbounded}
        T_{N}(f)(x)
        := \sum_k\sum_{\alpha\in {\mathscr X}^{k+N}}\mu({ Q}^{k+N}_\alpha)
            D_k(x, x^{k+N}_\alpha)D_k(f)(x_\alpha^{k+N}).
    \end{equation}
    Then the following assertions hold.
    \begin{enumerate}
        \item[(i)] $T_N$ is bounded on $L^2(X)$.

        \item[(ii)] There exists a constant~$C$ independent
            of~$f$ and of the choice of~$x^{k+N}_\alpha$
            such that
            \[
                \|S(T_N(f))\|_{L^p(X)}
                \leq C \|S(f)\|_{L^p(X)},
            \]
            where $S$ is the discrete Littlewood--Paley
            square function as in
            Definition~\ref{def:discrete_square_function}.

        \item[(iii)] If $N$ is chosen sufficiently large,
            then $T_N$ is invertible and there is a
            constant~$C$ independent of $f$ and of the
            choice of $x^{k+N}_\alpha$ such that
            \begin{equation}\label{eqn:TNinversebounded}
                \|S(T_N^{-1}(f))\|_{L^p(X)}
                \leq C \|S(f)\|_{L^p(X)}.
            \end{equation}
        \end{enumerate}
\end{lemma}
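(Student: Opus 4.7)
The plan is to use the wavelet orthogonality $D_kD_{k'}=\delta_{k,k'}D_k$, an immediate consequence of the orthonormality of the basis $\{\psi^k_\alpha\}$, which yields $\sum_kD_k^2=I$ on $L^2(X)$; this reveals $T_N$ as a Riemann-sum discretization of the identity, and each assertion will follow by combining this observation with the almost-orthogonality estimate~\eqref{eqn:almost_orthogonality_estimate}. For part~(i), write $T_Nf=\sum_kD_k(g_{k,N})$ where $g_{k,N}$ is the step function taking the value $D_kf(x^{k+N}_\alpha)$ on each cube $Q^{k+N}_\alpha$. Wavelet orthogonality then gives
\[
\|T_Nf\|_{L^2(X)}^2=\sum_k\|D_k(g_{k,N})\|_{L^2(X)}^2\leq\sum_k\|g_{k,N}\|_{L^2(X)}^2=\sum_{k,\alpha}\mu(Q^{k+N}_\alpha)|D_kf(x^{k+N}_\alpha)|^2,
\]
and the last expression is controlled by $C\|f\|_{L^2(X)}^2$ by the $p=2$ case of the first Plancherel--P\'olya inequality~\eqref{eqn:PP1} (already established), whose right-hand side at $p=2$ reduces to $\|f\|_{L^2(X)}^2$ via the wavelet Plancherel identity.

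For part~(ii), I would compute the wavelet coefficients of $T_Nf$ by inserting the definition of $T_N$:
\[
\big\langle T_Nf,\psi^{k_0}_{\alpha_0}\big\rangle
=\sum_k\sum_{\alpha\in\mathscr{X}^{k+N}}\mu(Q^{k+N}_\alpha)
\Big\langle\psi^{k_0}_{\alpha_0},D_k(\cdot,x^{k+N}_\alpha)\Big\rangle D_kf(x^{k+N}_\alpha).
\]
After normalizing by $\sqrt{\mu(Q^{k_0}_{\alpha_0})}$, the inner pairing is precisely the quantity controlled by the almost-orthogonality estimate~\eqref{eqn:almost_orthogonality_estimate}. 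Substituting this bound into $S(T_Nf)$, applying the summation estimate from~\cite{FJ} as in the proof of~\eqref{eqn:PP1}, and invoking the Fefferman--Stein vector-valued maximal inequality, one deduces $\|S(T_Nf)\|_{L^p(X)}\leq C\|S(f)\|_{L^p(X)}$.

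For part~(iii), set $R_N:=I-T_N$ and aim for a decay estimate of the form $\|S(R_Nf)\|_{L^p(X)}\leq C_0\delta^{N\eta'}\|S(f)\|_{L^p(X)}$ for some $\eta'>0$; once this is proved, choosing $N$ so large that $C_0\delta^{N\eta'}<1/2$ lets the Neumann series $T_N^{-1}=\sum_{j\geq 0}R_N^j$ converge in the quasi-norm $\|S(\cdot)\|_{L^p(X)}$, yielding both invertibility of $T_N$ and the bound~\eqref{eqn:TNinversebounded}. Using $\sum_kD_k^2=I$, write $R_Nf=\sum_kD_k(D_kf-g_{k,N})$; for $z\in Q^{k+N}_\alpha$, the H\"older regularity of $D_k(\cdot,y)$ (Lemma~\ref{lemma another version of Lemma 9.1}(ii) with $x$ and $y$ interchanged) combined with $d(z,x^{k+N}_\alpha)\leq C\delta^{k+N}$ yields an extra factor $\delta^{N\eta}$ on top of the standard size profile, and repeating the argument of part~(ii) with this extra factor gives the required decay. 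The main obstacle is precisely this last step: one must simultaneously extract the smallness factor $\delta^{N\eta}$ from the H\"older regularity of $D_k$ \emph{and} preserve the scale-decay $\delta^{|k-k_0|\eta}$ needed by the almost-orthogonality argument, forcing a case split ($k\geq k_0$ versus $k<k_0$) with the appropriate cancellation (of $\psi^{k_0}_{\alpha_0}$ or of $D_k$) invoked in each case, analogously to the proof of~\eqref{eqn:almost_orthogonality_estimate}.
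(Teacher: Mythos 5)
Your proposal hinges on the identity $T_Nf=\sum_kD_k(g_{k,N})$, where $g_{k,N}$ is the step function equal to $D_kf(x^{k+N}_\alpha)$ on each $Q^{k+N}_\alpha$, and this identity is false. Expanding the right-hand side gives
\[
\sum_kD_k(g_{k,N})(x)=\sum_k\sum_{\alpha\in\mathscr{X}^{k+N}}D_kf(x^{k+N}_\alpha)\int_{Q^{k+N}_\alpha}D_k(x,z)\,d\mu(z),
\]
whereas $T_Nf(x)=\sum_k\sum_{\alpha}\mu(Q^{k+N}_\alpha)D_k(x,x^{k+N}_\alpha)D_kf(x^{k+N}_\alpha)$. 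The operator $T_N$ discretizes \emph{both} arguments of $D_kD_k$ at the points $x^{k+N}_\alpha$: the kernel variable $z$ in $D_k(x,z)$ as well as the function value $D_kf(z)$. Your decomposition discretizes only the latter and keeps the integral of $D_k(x,\cdot)$ over the cube; the two differ by $\sum_{k,\alpha}D_kf(x^{k+N}_\alpha)\int_{Q^{k+N}_\alpha}[D_k(x,z)-D_k(x,x^{k+N}_\alpha)]\,d\mu(z)$, which is not zero and not negligible. Consequently, the orthogonality argument in part~(i) (which needs $T_Nf$ to lie exactly in the range of $\sum_kD_k$ applied to a sequence of $L^2$ functions) and the rewriting $R_Nf=\sum_kD_k(D_kf-g_{k,N})$ in part~(iii) both address the wrong operator. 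The paper instead proves~(i) by writing $T_N=\sum_kE_k$ with $E_k(x,y)=\sum_\alpha\mu(Q^{k+N}_\alpha)D_k(x,x^{k+N}_\alpha)D_k(x^{k+N}_\alpha,y)$, verifying that each $E_k$ enjoys the same decay, smoothness and cancellation as $D_k$, and invoking the Cotlar--Stein lemma.

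For part~(iii), the correct remainder $R_N=I-T_N$ has kernel
\[
R_N(x,y)=\sum_k\sum_\alpha\int_{Q^{k+N}_\alpha}\big[D_k(x,z)D_k(z,y)-D_k(x,x^{k+N}_\alpha)D_k(x^{k+N}_\alpha,y)\big]\,d\mu(z),
\]
which the paper splits as $R^{(1)}_N+R^{(2)}_N$ with the difference of kernels appearing once in the $x$-slot and once in the $y$-slot; each piece yields a $\delta^{\eta N}$ gain, and the almost-orthogonality machinery then gives $\|S(R_Nf)\|_{L^p}\leq C\delta^{\eta N}\|S(f)\|_{L^p}$. Your $\tilde R_N:=\sum_kD_k(D_kf-g_{k,N})$ has kernel $\sum_{k,\alpha}\int_{Q^{k+N}_\alpha}D_k(x,z)[D_k(z,y)-D_k(x^{k+N}_\alpha,y)]\,d\mu(z)$, which amounts to controlling only the $R^{(2)}_N$-type contribution (plus an even smaller cross term); the $R^{(1)}_N$ contribution is absent, and establishing smallness of $\tilde R_N$ does not imply smallness of $R_N$. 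Your heuristic about the H\"older regularity of $D_k$ producing $\delta^{N\eta}$ is exactly the right mechanism, but you must apply it to the genuine difference $D_k(x,z)D_k(z,y)-D_k(x,x^{k+N}_\alpha)D_k(x^{k+N}_\alpha,y)$, split as above. Your part~(ii), by contrast, looks sound: the direct computation of $\langle T_Nf,\psi^{k_0}_{\alpha_0}\rangle$ followed by almost-orthogonality, the Frazier--Jawerth summation estimate, Fefferman--Stein, and finally the already-proved inequality~\eqref{eqn:PP1} to pass from $D_kf(x^{k+N}_\alpha)$ back to $\|S(f)\|_p$ is a legitimate alternative to the paper's route (which derives~(ii) as a byproduct of the $R_N$ estimate via $\|S(T_Nf)\|_p\le\|S(f)\|_p+\|S(R_Nf)\|_p$); but~(ii) alone does not give invertibility, so the error in~(iii) remains fatal.
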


We defer the proof of this technical lemma until after the end
of the proof of Theorem~\ref{theorem P P inequality
one-parameter}. We now continue the proof of the second
Plancherel--P\'olya inequality~\eqref{eqn:PP2}. Choose $N$
sufficiently large that $T_N$ is invertible and
\eqref{eqn:TNinversebounded} holds. For $f\in L^2 (X)$, write
$f=T_N^{-1} T_N f$. Applying Lemma~\ref{lem:TN_bounded}, we
find that
\begin{eqnarray}\label{TN-1 TN}
    \Big\|\Big\{ \sum_{k}\sum_{\alpha\in\mathscr{Y}^k} \big| \langle
    \psi_{\alpha}^{k},f \rangle \widetilde{\chi}_{{ Q}_{\alpha}^{k}}(\cdot)
    \big|^2 \Big\}^{1/2}
      \Big\|_{L^p(X)} 
    &=& \Big\|\Big\{ \sum_{k}\sum_{\alpha\in\mathscr{Y}^k} \big| \langle
    \psi_{\alpha}^{k}, T_N^{-1} T_Nf \rangle \widetilde{\chi}_{{ Q}_{\alpha}^{k}}(\cdot)
    \big|^2 \Big\}^{1/2}
      \Big\|_{L^p(X)}\nonumber\\
    &\leq& C\Big\|\Big\{ \sum_{k}\sum_{\alpha\in\mathscr{Y}^k} \big| \langle
    \psi_{\alpha}^{k}, T_Nf \rangle \widetilde{\chi}_{{ Q}_{\alpha}^{k}}(\cdot)
    \big|^2 \Big\}^{1/2}
      \Big\|_{L^p(X)}.
\end{eqnarray}

By the definition of $T_N(f)$, we have
\begin{eqnarray*}
    \Big\langle {\psi_{\alpha}^{k}\over \sqrt{\mu({ Q}_{\alpha}^{k})}},
        T_Nf \Big\rangle
    &=& \sum_{k'}\sum_{\alpha'\in{\mathscr X}^{k'+N}}
        \mu({ Q}^{k'+N}_{\alpha'})
        \Big\langle {\psi_{\alpha}^{k}(\cdot)\over {\mu({ Q}_{\alpha}^{k})}},
        D_{k'}(\cdot, x^{k'+N}_{\alpha'})\Big\rangle
        D_{k'}(f)(x_{\alpha'}^{k'+N}).
\end{eqnarray*}

Therefore, for each fixed $\eta' \in (0,\eta),$
\begin{eqnarray*}
    &&\Big\{ \sum_{k}\sum_{\alpha\in\mathscr{Y}^k} \big| \langle
        \psi_{\alpha}^{k}, T_Nf \rangle \widetilde{\chi}_{{ Q}_{\alpha}^{k}}(x)
        \big|^2 \Big\}^{1/2}\\
    &&\leq C\Big\{ \sum_{k}\sum_{\alpha\in\mathscr{Y}^k} \Big[ \sum
        \limits_{{k^\prime }\in \mathbb{Z}}\sum \limits_{{ \alpha^\prime }\in
        {\mathscr{X}^{k^\prime +N }}}\mu
        ({Q}^{{k^\prime +N }}_{\alpha^\prime })
        \delta^{\vert k - {k^\prime }\vert {\eta'
        }}\frac{1}{V_{\delta^{k'}}(x_\alpha^k)+V_{\delta^{k'}}(x^{{k^ \prime
        +N}}_{\alpha ^\prime })+V(x_\alpha^k,x^{{k^ \prime
        +N}}_{\alpha ^\prime })}\\
    &&\hskip1.5cm {}\times
        \Big({{\delta^{(k\wedge
        {k^\prime })} }\over{\delta^{(k\wedge {k^\prime })}+
        d(x_\alpha^k,x^{{k^ \prime +N }}_{\alpha^\prime
        })}}\Big)^{\gamma } D_{k^\prime }(f)(x^{{k^ \prime +N }
        }_{\alpha ^\prime })  \chi_{Q_{\alpha}^{k}}(x) \Big]^2
        \Big\}^{1/2}\\
    &&\leq C\Big\{ \sum_{k}\sum_{\alpha\in\mathscr{Y}^k} \Big[ \sum
        \limits_{{k^\prime }\in\mathbb{Z}}\delta^{\vert k - {k^\prime }\vert
        {\eta' }} \sum \limits_{{ \alpha^\prime }\in
        {\mathscr{X}^{k^\prime +N }}}\mu
        ({Q}^{{k^\prime +N }}_{\alpha^\prime })
        \frac{1}{V_{\delta^{k'}}(x)+V_{\delta^{k'}}(x^{{k^ \prime +N }
        }_{\alpha ^\prime })+V(x,x^{{k^ \prime
        +N}}_{\alpha ^\prime })}\\
    &&\hskip1.5cm {}\times
        \Big({{\delta^{(k\wedge
        {k^\prime })} }\over{\delta^{(k\wedge {k^\prime })}+ d(x,x^{{k^
        \prime +N }}_{\alpha^\prime })}}\Big)^{\gamma }
        D_{k^\prime }(f)(x^{{k^ \prime +N }}_{\alpha ^\prime
        })\chi_{{ Q}_{\alpha}^{k}}(x) \Big]^2 \Big\}^{1/2}.
\end{eqnarray*}
By the same estimate from~\cite{FJ} as in the proof
of~\eqref{eqn:PP1} above, we have
\begin{eqnarray*}
    &&\sum_{{ \alpha^\prime }\in {\mathscr{X}^{k^\prime +N }}}
        \mu ({Q}^{{k^\prime +N }}_{\alpha^\prime })
        \frac{1}{V_{\delta^{k'}}(x)
            + V_{\delta^{k'}}(x^{{k^ \prime +N }}_{\alpha ^\prime })
            + V(x,x^{{k^ \prime +N }}_{\alpha ^\prime })} \\
    && \hspace{1cm} {}\times\Big({{\delta^{(k\wedge {k^\prime })} }
            \over {\delta^{(k\wedge {k^\prime })}
            + d(x,x^{{k^\prime +N }}_{\alpha^\prime })}}\Big)^{\gamma }
        D_{k^\prime }(f)(x^{{k^ \prime +N }}_{\alpha ^\prime })\\
    &&\hskip.4cm\leq C \delta^{[(k\wedge k')-k]\omega(1-1/r)}
        \Big\{\mathcal{M}\Big(
        \sum_{{ \alpha^\prime }\in {\mathscr{X}^{k^\prime +N }}}
        \big|D_{k^\prime }(f)(x^{{k^ \prime +N }}_{\alpha ^\prime })\big|^r
        \chi_{{Q}^{{k^\prime +N }}_{\alpha^\prime }}(\cdot)
        \Big)(x)\Big\}^{1/r},
\end{eqnarray*}
where $\mathcal{M}$ is the Hardy--Littlewood maximal function
on $X$ and $\frac{\omega}{\omega + \eta} < r < p$. Note that
the above inequality still holds when the point $x^{{k^ \prime
+N } }_{\alpha ^\prime }$ on the right-hand side is replaced by
an arbitrary point $z$ in ${Q}^{{k^ \prime +N } }_{\alpha
^\prime }$, and therefore also holds when the expression
$|D_{k^\prime }(f)(x^{{k^ \prime +N }}_{\alpha ^\prime })|^r$
on the right-hand side is replaced by the infimum of
$|D_{k^\prime }(z)|^r$ over all $z \in Q^{k' + N}_{\alpha'}$.
Thus, we have
\begin{eqnarray*}
    \lefteqn{\Big\{ \sum_{k}\sum_{\alpha\in\mathscr{Y}^k} \big| \langle
        \psi_{\alpha}^{k},f \rangle \widetilde{\chi}_{{ Q}_{\alpha}^{k}}(x)
        \big|^2 \Big\}^{1/2}}\hspace{1cm}\\
    &&\leq C\bigg\{ \sum_{k}\sum_{\alpha\in\mathscr{Y}^k} \bigg[ \sum
        \limits_{{k^\prime }\in\mathbb{Z}}\delta^{\vert k - {k^\prime }\vert
        {\epsilon }}\delta^{[(k\wedge k')-k]\omega(1-1/r)}\\
    &&\hskip1cm {}\times \Big\{ \mathcal{M}\Big( \sum \limits_{{
        \alpha^\prime }\in {\mathscr{X}^{k^\prime +N }}} \inf_{z\in
        {Q}^{{k^ \prime +N } }_{\alpha'}}\big|D_{k^\prime }(f)(z)\big|^r
        \chi_{{Q}^{{k^\prime +N } }_{\alpha^\prime }}(\cdot)
        \Big)(x)\Big\}^{1/r} \bigg]^2\chi_{{ Q}_{\alpha}^{k}}(x) \bigg\}^{1/2}.
\end{eqnarray*}
Applying the Fefferman--Stein vector-valued maximal function
inequality with $p/r > 1$, from~\cite{FS}, we obtain
\begin{eqnarray*}
    &&\Big\|\Big\{ \sum_{k}\sum_{\alpha\in\mathscr{Y}^k} \big| \langle
        \psi_{\alpha}^{k},f \rangle \widetilde{\chi}_{{ Q}_{\alpha}^{k}}(x)
        \big|^2 \Big\}^{1/2}
          \Big\|_{L^p(X)}\leq C\Big\|\Big\{ \sum_{k}\sum_{\alpha\in\mathscr{Y}^k} \big| \langle
        \psi_{\alpha}^{k}, T_Nf \rangle \widetilde{\chi}_{{ Q}_{\alpha}^{k}}(x)
        \big|^2 \Big\}^{1/2}
          \Big\|_{L^p(X)}  \nonumber\\
    &&\hskip.5cm \leq C \Big\|\Big\{  \sum \limits_{{k^\prime }\in\mathbb{Z}}
        \Big[\mathcal{M}\Big( \sum \limits_{{ \alpha^\prime }\in
        {\mathscr{X}^{k^\prime +N }}} \inf_{z\in {Q}^{{k^ \prime +N
        }}_{\alpha ^\prime }}\big|D_{k^\prime }(f)(z)\big|^r
        \chi_{Q^{{k^\prime +N } }_{\alpha^\prime }}(\cdot)
        \Big)(x) \Big]^{2/r} \Big\}^{1/2}\Big\|_{L^p(X)}\\
    &&\hskip.5cm\leq C \Big\|\Big\{\sum_{k'}\sum_{\alpha'\in
        \mathscr{X}^{k'+N}}
             \inf_{z\in {Q}^{{k^
        \prime +N }}_{\alpha ^\prime }}\big|D_{k^\prime }(f)(z)\big|
        \chi_{{Q}^{{k^\prime +N } }_{\alpha^\prime }}(x)
            \Big\}^{1/2}\Big\|_{L^p(X)},
\end{eqnarray*}
which implies that the second Plancherel--P\'olya
inequality~\eqref{eqn:PP2} holds for $f\in L^2(X)$. The proof
of Theorem~\ref{theorem P P inequality one-parameter} is
complete, except for the proof of Lemma~\ref{lem:TN_bounded}.
\end{proof}

It remains to prove the technical lemma used in the preceding
proof.

\begin{proof}[Proof of Lemma~\ref{lem:TN_bounded}]
\noindent (i) Fix $N\in \N$. We show that the operator~$T_N$ is
bounded on~$L^2(X)$. Write $T_N(f)(x) = \sum_k E_k(f)(x),$
where the kernel $E_k(x,y)$ of $E_k$ is given by
\[
    E_k(x,y)
    := \sum_{\alpha\in {\mathscr X}^{k+N}} \mu({ Q}^{k+N}_\alpha)
        D_k(x,x^{k+N}_\alpha) D_k(x^{k+N}_\alpha,y).
\]
This kernel $E_k(x,y)$ satisfies the same decay and smoothness
estimates \eqref{e1 in lemma another version of Lemma 9.1}
and~\eqref{e2 in lemma another version of Lemma 9.1} as
$D_k(x,y)$ does, with bounds independent of $x^{k+N}_\alpha$,
as can be shown by a proof similar to that for $D_k(x,y)$.
Moreover, $\int_X E_k(x,y) \, d\mu(y) = 0$ for each $x\in X$
and $\int_X E_k(x,y) \, d\mu(x) = 0$ for each $y\in X$.
Therefore the Cotlar--Stein lemma can be applied to show that
$T_N$ is bounded on~$L^2(X)$.

\noindent (ii) Suppose that $f\in L^2(X)$ and
$\frac{\omega}{\omega + \eta} < p < \infty$. Then by the
definition of $D_k$ and the wavelet reproducing
formula~(\ref{eqn:AH_reproducing formula}), we have
\begin{align*}
    f(x)
    &=\sum_{k}D_kD_k(f)(x)
        = \sum_k\sum_{\alpha\in {\mathscr X}^{k+N}}\mu({ Q}^{k+N}_\alpha)
        D_k(x, x^{k+N}_\alpha)D_k(f)(x_\alpha^{k+N})\\
    & \hspace{1cm} {}+ \Big(\sum_{k}D_kD_k(f)(x)
        - \sum_k\sum_{\alpha\in {\mathscr X}^{k+N}}
        \mu({ Q}^{k+N}_\alpha) D_k(x, x^{k+N}_\alpha)
        D_k(f)(x_\alpha^{k+N})\Big)\\
    &=: T_{N}(f)(x) + R_{N}(f)(x),
\end{align*}
where $x^{k+N}_\alpha$ are arbitrary fixed points in
${Q}^{k+N}_\alpha$.

Since $T_N = I - R_N$ by definition, to show (ii) in
Lemma~\ref{lem:TN_bounded}, it suffices to show that
\begin{equation}\label{eqn:S(R_N)_Lp_estimate}
    \|S(R_N(f))\|_{L^p(X)}
    \leq C\delta^{\eta N}\|S(f)\|_{L^p(X)}.
\end{equation}
For then
\[
    \|S(T_N(f))\|_{L^p(X)}
    \leq \|S(f)\|_{L^p(X)} + \|S(R_N(f))\|_{L^p(X)}
    \leq (1 + C\delta^{\eta N}) \|S(f)\|_{L^p(X)},
\]
as required.

To establish~\eqref{eqn:S(R_N)_Lp_estimate}, we write
\[
    R_N(f)(x)
    = \sum_k\sum_{\alpha\in {\mathscr X}^{k+N}}
        \int_{{Q}^{k+N}_\alpha}
        [D_k(x,z)D_k(f)(z) - D_k(x,x^{k+N}_\alpha)
        D_k(f)(x_\alpha^{k+N})] \, d\mu(z).
\]
Thus the kernel $R_N(x,y)$ of $R_N$ is given by
\begin{eqnarray*}
    R_N(x,y)
    &:=& \sum_k\sum_{\alpha\in {\mathscr X}^{k+N}}
        \int_{{Q}^{k+N}_\alpha}
        [D_k(x,z)D_k(z,y) - D_k(x,x^{k+N}_\alpha)
        D_k(x_\alpha^{k+N},y)] \, d\mu(z) \\
    &=& \sum_k\sum_{\alpha\in {\mathscr X}^{k+N}}\int_{{Q}^{k+N}_\alpha}
        [D_k(x,z) - D_k(x,x^{k+N}_\alpha)]
        D_k(z,y) \, d\mu(z)\\
    && \hspace{1cm} {}+ \sum_k\sum_{\alpha\in {\mathscr X}^{k+N}}
        \int_{{Q}^{k+N}_\alpha}
        D_k(x, x^{k+N}_\alpha)
        [D_k(z,y) - D_k(x_\alpha^{k+N},y)] \, d\mu(z)\\
    &=:& R^{(1)}_N(x,y) + R^{(2)}_N(x,y).
\end{eqnarray*}

Note that by the same proof as for~$T_N$, both $R^{(1)}_N$ and
$R^{(2)}_N$ are bounded on~$L^2(X)$, and therefore the inner
products $\langle \psi_{\alpha}^{k},R^{(1)}_N(f) \rangle$ and
$\langle \psi_{\alpha}^{k},R^{(2)}_N(f) \rangle$ are well
defined. To estimate $\|S(R^{(1)}_N(f))\|_{L^p(X)}$, we write
\begin{equation*}
    \|S(R^{(1)}_N(f))\|_{L^p(X)}
    = \Big\|\Big\{ \sum_{k}\sum_{\alpha\in\mathscr{Y}^k}
        \big| \langle \psi_{\alpha}^{k},R^{(1)}_N(f) \rangle
        \widetilde{\chi}_{{Q}_{\alpha}^{k}}(\cdot)
        \big|^2 \Big\}^{1/2} \Big\|_{L^p(X)}.
\end{equation*}
By the $L^2(X)$-boundedness of $R^{(1)}_N$ and the wavelet
reproducing formula~(\ref{eqn:AH_reproducing formula}) for
$f\in L^2(X)$, we have
\begin{eqnarray*}
    &&\Big\langle {\psi_{\alpha}^{k}\over \sqrt{\mu({Q}_{\alpha}^{k})}},
        R^{(1)}_N(f) \Big\rangle \\
    &&= \int_X {\psi_{\alpha}^{k}(x)\over \sqrt{\mu({Q}_{\alpha}^{k})}}
        \sum_{k'}\sum_{\alpha'\in {\mathscr X}^{k'+N}}\int_{{Q}^{k'+N}_{\alpha'}}
        [D_{k'}(x,z)- D_{k'}(x,x^{k'+N}_{\alpha'})]  \\
    &&\hskip2cm {}\times \int_X D_{k'}(z,y)
        \sum_{k^{''}\in \Z}\sum_{\alpha^{''}\in {\mathscr Y}^{k^{''}}}
        \langle \psi_{\alpha^{''}}^{k^{''}}, f\rangle
        \psi_{\alpha^{''}}^{k^{''}}(y) \, d\mu(y)\, d\mu(z)\,   d\mu(x)\\
    &&= \sum_{k^{''}}\sum_{\alpha^{''} \in {\mathscr Y}^{k^{''}}}
        \mu({Q}_{\alpha^{''}}^{k^{''}})
        \Big\langle {\psi_{\alpha^{''}}^{k^{''}} \over \sqrt{\mu({Q}_{\alpha^{''}}^{k^{''}})} }, f \Big\rangle
        \sum_{k'} \int_X \int_X {\psi_{\alpha}^{k}(x)\over \sqrt{\mu({Q}_{\alpha}^{k})}}\
        \overline{D}_{k'}(x,y)\
        {\psi_{\alpha^{''}}^{k^{''}}(y) \over \sqrt{\mu({Q}_{\alpha^{''}}^{k^{''}})} }
        \, d\mu(y)\, d\mu(x)\\
    &&= \sum_{k^{''}}\sum_{\alpha^{''} \in {\mathscr Y}^{k^{''}}}
        \mu({Q}_{\alpha^{''}}^{k^{''}})
        \Big\langle {\psi_{\alpha^{''}}^{k^{''}} \over \sqrt{\mu({Q}_{\alpha^{''}}^{k^{''}})} }, f \Big\rangle
        \Big\langle {\psi_{\alpha}^{k}(\cdot) \over \sqrt{\mu({Q}_{\alpha}^{k})} },
        F_{k^{''}(\cdot,x_{\alpha^{''}}^{k^{''}})}\Big\rangle,
\end{eqnarray*}
where
\[
    \overline{D}_{k'}(x,y)
    := \sum_{\alpha'\in {\mathscr X}^{k'+N}}
        \int_{{ Q}^{k'+N}_{\alpha'}}
        \big[D_{k'}(x,z) - D_{k'}(x, x^{k'+N}_{\alpha'})\big]
        D_{k'}(z,y) \, d\mu(z)
\]
and
\[
    F_{k^{''}}(x, x_{\alpha^{''}}^{k^{''}})
    := \sum_{k'}\int_X \overline{D}_{k'}(x,y)\
    {\psi_{\alpha^{''}}^{k^{''}}(y) \over \sqrt{\mu({Q}_{\alpha^{''}}^{k^{''}})} }
    \, d\mu(y).
\]

We now show that $\Big\langle
\psi_{\alpha}^{k}(\cdot)/\sqrt{\mu({Q}_{\alpha}^{k})} ,
F_{k^{''}}(\cdot, x_{\alpha^{''}}^{k^{''}}) \Big\rangle$
satisfies an almost-orthogonality estimate similar
to~\eqref{eqn:almost_orthogonality_estimate}, by following the
philosophy of
Remark~\ref{rem:wavelet_test_function_conditions}. Recall that
$\psi_{\alpha}^{k}(\cdot) / \sqrt{\mu({Q}_{\alpha}^{k})}$ is a
test function. It remains to show that the function
$F_{k^{''}}(x,x_{\alpha^{''}}^{k^{''}})$ satisfies a size
condition, a H\"older regularity condition, and cancellation.

Next, it seems unlikely that $F_{k^{''}}(x,
x_{\alpha^{''}}^{k^{''}})$ satisfies the H\"older regularity
condition~\eqref{e2 in lemma another version of Lemma 9.1}.
However, as noted in
Remark~\ref{rem:wavelet_test_function_conditions}, it suffices
to establish the weaker H\"older regularity
condition~\eqref{eqn:weak_Holder}, which we now do. To begin,
we show that $F_{k^{''}}(x, x_{\alpha^{''}}^{k^{''}})$
satisfies
\begin{eqnarray*}
    &&\textup{(a)}'\ \ |F_{k^{''}}(x, x_{\alpha^{''}}^{k^{''}})|
        \leq C \delta^{\eta N}
            {1\over V_{\delta^{k^{''}}}(x) + V(x,x_{\alpha^{''}}^{k^{''}})}
            \Big({\delta^{k^{''}}\over \delta^{k^{''}}
            + d(x,x_{\alpha^{''}}^{k^{''}})}\Big)^{\gamma}, \hspace{4.2cm}
\end{eqnarray*}
for all $\gamma \in (0,\eta)$, and
\begin{eqnarray*}
    \lefteqn{\textup{(b)}'\ \
            |F_{k^{''}}(x, x_{\alpha^{''}}^{k^{''}}) - F_{k^{''}}(x',x_{\alpha^{''}}^{k^{''}})|} \hspace{3cm}\\
    &&\leq C\delta^{\eta N} \Big({d(x,x')\over \delta^{k'}}\Big)^{\eta'}
    {}\times
        \bigg[{1\over V_{\delta^{k^{''}}}(x) + V(x,x_{\alpha^{''}}^{k^{''}})}
        \Big({\delta^{k^{''}}\over \delta^{k^{''}} + d(x,x_{\alpha^{''}}^{k^{''}})}\Big)^{\gamma} \\
    && \hspace{1cm} {}+ {1\over V_{\delta^{k^{''}}}(x') + V(x',x_{\alpha^{''}}^{k^{''}})}
        \Big({\delta^{k^{''}}\over \delta^{k^{''}}
        + d(x',x_{\alpha^{''}}^{k^{''}})}\Big)^{\gamma}\bigg],
\end{eqnarray*}
for all $\eta' \in (0,\eta)$.

To prove $\textup{(a)}'$ and $\textup{(b)}'$, we first show
that $\overline{D}_{k'}(x,y)$ satisfies the same estimates
\eqref{e1 in lemma another version of Lemma 9.1} and~\eqref{e2
in lemma another version of Lemma 9.1} as~$D_k(x,y)$, but with
the constant $C$ replaced by $C\delta^{\eta N}$, that is,
\begin{eqnarray*}
    &&\textup{(a)}\ \ |\overline{D}_{k'}(x,y)|
        \leq C \delta^{\eta N}
        {1\over V_{\delta^{k'}}(x) + V(x,y)}
        \Big({\delta^{k'}\over \delta^{k'}
            + d(x,y)}\Big)^{\gamma};\\
    &&\textup{(b)}\ \ |\overline{D}_{k'}(x,y) - \overline{D}_{k'}(x',y)|
        \leq C\delta^{\eta N}
            \Big({d(x,x')\over \delta^{k'} + d(x,y)}\Big)^\eta
            {1\over V_{\delta^{k'}}(x) + V(x,y)}
            \Big({\delta^{k'}\over \delta^{k'} + d(x,y)}\Big)^{\gamma}
\end{eqnarray*}
for $d(x,x')\leq (2A_0)^{-1}\max\{\delta^{k'} + d(x,y),
\delta^{k'} + d(x',y)\}$, and
\begin{eqnarray*}
    &&\textup{(c)}\ \  |\overline{D}_{k'}(x,y) - \overline{D}_{k'}(x,y')|
        \leq C\delta^{\eta N}
            \Big({d(y,y')\over \delta^{k'} + d(x,y)}\Big)^\eta
            {1\over V_{\delta^{k'}}(x) + V(x,y)}
            \Big({\delta^{k'}\over \delta^{k'} + d(x,y)}\Big)^{\gamma}
\end{eqnarray*}
for $d(y,y')\leq (2A_0)^{-1} \max\{\delta^{k'} + d(x,y),
\delta^{k'} + d(x,y')\}$. Moreover,
\[
    \int_X \overline{D}_{k'}(x,y) \, d\mu(x) = 0
    \qquad\text{and}\qquad
    \int_X \overline{D}_{k'}(x,y) \, d\mu(y)
    = 0,
\]
for all $y\in X$ and all $x\in X$, respectively. Indeed, note
that $\big[D_{k'}(x,z) - D_{k'}(x,x^{k'+N}_{\alpha'})\big]$
satisfies the same estimates~\eqref{e1 in lemma another version
of Lemma 9.1} and~\eqref{e2 in lemma another version of Lemma
9.1} as $D_{k'}(x,z)$ does, but with the constant~$C$ replaced
by $C\delta^{\eta N}$. Therefore, the proofs for (a), (b)
and~(c) follow from a similar proof to that for
Lemma~\ref{lemma another version of Lemma 9.1}. As a
consequence, the almost-orthogonality
estimate~\eqref{eqn:almost_orthogonality_estimate} holds for
$\Big\langle \overline{D}_{k'}(x,\cdot),
\psi_{\alpha^{''}}^{k^{''}}(\cdot)/
\sqrt{\mu({Q}_{\alpha^{''}}^{k^{''}})} \, \Big\rangle$.  We
omit the details.

Now, to verify the estimate in $\textup{(a)}'$, applying this
almost-orthogonality estimate yields that
\begin{eqnarray*}
    |F_{k^{''}}(x, x_{\alpha^{''}}^{k^{''}})|
    &\leq&\sum_{k'}
        \Big|\Big\langle \overline{D}_{k'}(x,\cdot),
        \frac{\psi_{\alpha^{''}}^{k^{''}}(\cdot)}
        {\sqrt{\mu({Q}_{\alpha^{''}}^{k^{''}})} } \Big\rangle\Big|\\
    &\leq& C\delta^{\eta N}\sum_{k'}\delta^{\vert k' - {k^{''} }\vert
        {\eta}} \frac{1}{V_{\delta^{(k'\wedge k^{''})}}(x)+V(x,x^{{k^{''}}
        }_{\alpha ^{''}})} \Big({{\delta^{(k'\wedge {k^{''}})}
        }\over{\delta^{(k'\wedge {k^{''}})}+
        d(x,x^{{k^{''}}}_{\alpha^{''}})}}\Big)^{\gamma }\\
    &\leq& C\delta^{\eta N}\sum_{k'}\delta^{\vert k - {k^\prime }\vert
        {(\eta-\gamma)}} \frac{1}{V_{\delta^{k^{''}}}(x)+V(x,x^{{k^{''}}
        }_{\alpha ^{''}})} \Big({\delta^{k^{''}}\over \delta^{k^{''}}+
            d(x,x_{\alpha^{''}}^{k^{''}})}\Big)^{\gamma}\\
    &\leq& C \delta^{\eta N} {1\over
        V_{\delta^{k^{''}}}(x)+V(x,x_{\alpha^{''}}^{k^{''}})}
        \Big({\delta^{k^{''}}\over \delta^{k^{''}}+
        d(x,x_{\alpha^{''}}^{k^{''}})}\Big)^{\gamma},
\end{eqnarray*}
for all $\gamma \in (0,\eta)$.

Next we show the estimate in $\textup{(b)}'$. Note that
\begin{eqnarray*}
    &&\big|\overline{D}_{k'}(x,y)-\overline{D}_{k'}(x',y)\big|\leq
        C\delta^{\eta N} \Big( {d(x,x')\over \delta^{k'}}\Big)^{\eta}\\
    &&\hskip.5cm {}\times\Big[{1\over V_{\delta^{k'}}(x) + V(x,y)} \Big(
        {\delta^{k'}\over \delta^{k'}+d(x,y)}\Big)^\gamma +{1\over
        V_{\delta^{k'}}(x') + V(x,y)} \Big( {\delta^{k'}\over
        \delta^{k'}+d(x',y)}\Big)^\gamma\Big]
\end{eqnarray*}
and
$$\int_X \Big[\overline{D}_{k'}(x,y)-\overline{D}_{k'}(x',y)\Big]\, d\mu(y)=0.$$
Therefore, as pointed out in
Remark~\ref{rem:wavelet_test_function_conditions}, we obtain
for $k' > k^{''}$ that
\begin{eqnarray*}
    &&\hskip-.7cm\Big|E_{k^{''}}(x, x_{\alpha^{''}}^{k^{''}})-E_{k^{''}}(x',
        x_{\alpha^{''}}^{k^{''}})\Big|\\
    &&\leq \sum_{k'}\Big|\Big\langle
        \big[\,\overline{D}_{k'}(x,\cdot)-\overline{D}_{k'}(x',\cdot)\big],{\psi_{\alpha^{''}}^{k^{''}}(\cdot) \over
        \sqrt{\mu({Q}_{\alpha^{''}}^{k^{''}})}
        }\Big\rangle\Big|\\
    &&\leq C\delta^{\eta N}\sum_{k'}\delta^{\vert k' - {k^{''} }\vert
        {\eta}}\Big(
        {d(x,x')\over \delta^{k'}}\Big)^{\eta'} \Big[\frac{1}{V_{\delta^{
        k^{''}}}(x)+V(x,x^{{k^{''}}
        }_{\alpha ^{''}})} \Big({{\delta^{{k^{''}}}
        }\over{\delta^{{k^{''}}} + d(x,x^{{k^{''}}}_{\alpha^{''}})}}\Big)^{\gamma }\\
    &&\hskip1cm {}+ \frac{1}{V_{\delta^{k^{''}}}(x')+V(x',x^{{k^{''}}
        }_{\alpha ^{''}})} \Big({{\delta^{{k^{''}}}
        }\over{\delta^{{k^{''}}}+ d(x',x^{{k^{''}}}_{\alpha^{''}})}}\Big)^{\gamma }\Big]\\
    &&\leq C\delta^{\eta N}\sum_{k'}\delta^{\vert k' - {k^{''} }\vert
        {(\eta - \eta')}} \Big(
        {d(x,x')\over \delta^{k''}}\Big)^{\eta'}  \Big[\frac{1}{V_{\delta^{
        k^{''})}}(x)+V(x,x^{{k^{''}}
        }_{\alpha ^{''}})} \Big({{\delta^{{k^{''}}}
        }\over{\delta^{{k^{''}}}+ d(x,x^{{k^{''}}}_{\alpha^{''}})}}\Big)^{\gamma }\\
    &&\hskip1cm {}+ \frac{1}{V_{\delta^{
        k^{''})}}(x')+V(x',x^{{k^{''}}
        }_{\alpha ^{''}})} \Big({{\delta^{{k^{''}}}
        }\over{\delta^{{k^{''}}}+ d(x',x^{{k^{''}}}_{\alpha^{''}})}}\Big)^{\gamma }\Big],\\
    &&\leq  C\delta^{\eta N} \Big({d(x,x')\over
        \delta^{k^{''}}}\Big)^{\eta'} \Big[{1\over
        V_{\delta^{k^{''}}}(x)+V(x,x_{\alpha^{''}}^{k^{''}})}
        \Big({\delta^{k^{''}}\over \delta^{k^{''}}+
        d(x,x_{\alpha^{''}}^{k^{''}})}\Big)^{\gamma}\\
    &&\hskip1cm {}+ {1\over V_{\delta^{k^{''}}}(x')+V(x,x_{\alpha^{''}}^{k^{''}})}
        \Big({\delta^{k^{''}}\over \delta^{k^{''}}+
        d(x',x_{\alpha^{''}}^{k^{''}})}\Big)^{\gamma}\Big],
\end{eqnarray*}
for all $\eta' \in (0,\eta)$.

For $k'\leq k^{''}$, we have
\begin{eqnarray*}
    &&\hskip-.3cm|E_{k^{''}}(x, x_{\alpha^{''}}^{k^{''}})-E_{k^{''}}(x',
        x_{\alpha^{''}}^{k^{''}})|\\
    &&\leq \sum_{k'<k^{''}}\int_X
        \big|\big[\,\overline{D}_{k'}(x,y) - \overline{D}_{k'}(x',y)\big]\big|\
        \Big|{\psi_{\alpha^{''}}^{k^{''}}(y) \over
        \sqrt{\mu({Q}_{\alpha^{''}}^{k^{''}})} }\Big| \, d\mu(y)\\
    &&\leq C\delta^{\eta N}\sum_{k'\leq k^{''}}\int_X
        \Big( {d(x,x')\over \delta^{k'}}
        \Big)^{\eta} \Big[{1\over V_{\delta^{k'}}(x) +
        V(x,y)} \Big( {\delta^{k'}\over \delta^{k'}+d(x,y)} \Big)^\gamma \\
    &&\hskip1cm {}+ {1\over V_{\delta^{k'}}(x') +
        V(x',y)} \Big( {\delta^{k'}\over \delta^{k'}+d(x',y)} \Big)^\gamma\Big]
        {1\over V_{\delta^{k^{''}}}(y) +
        V(y,x_{\alpha^{''}}^{k^{''}})} \Big( {\delta^{k^{''}}\over
        \delta^{k^{''}}+d(y,x_{\alpha^{''}}^{k^{''}})} \Big)^\gamma
         \, d\mu(y)\\
    &&\leq C\delta^{\eta N}\Big( {d(x,x')\over \delta^{k^{''}}}
        \Big)^{\eta}\sum_{k'\leq k^{''}}  \delta^{\eta(k^{''}-k')}
        \Big[{1\over V_{\delta^{k'}}(x) +
        V(x,x_{\alpha^{''}}^{k^{''}})} \Big( {\delta^{k'}\over \delta^{k'}
            + d(x,x_{\alpha^{''}}^{k^{''}})} \Big)^\gamma \\
    &&\hskip1cm {}+ {1\over V_{\delta^{k'}}(x') +
        V(x',x_{\alpha^{''}}^{k^{''}})} \Big( {\delta^{k'}\over \delta^{k'}
            + d(x',x_{\alpha^{''}}^{k^{''}})} \Big)^\gamma\Big]\\
    &&\leq C\delta^{\eta N}\Big( {d(x,x')\over \delta^{k^{''}}}
        \Big)^{\eta}\sum_{k'\leq k^{''}}  \delta^{(\eta-\gamma)(k^{''}-k')}
        \Big[{1\over V_{\delta^{k''}}(x) +
        V(x,x_{\alpha^{''}}^{k^{''}})} \Big( {\delta^{k''}\over \delta^{k''}
            + d(x,x_{\alpha^{''}}^{k^{''}})} \Big)^\gamma \\
    &&\hskip1cm {}+ {1\over V_{\delta^{k''}}(x') +
        V(x',x_{\alpha^{''}}^{k^{''}})} \Big( {\delta^{k''}\over \delta^{k''}
            + d(x',x_{\alpha^{''}}^{k^{''}})} \Big)^\gamma\Big]\\
    &&\leq C\delta^{\eta N}\Big( {d(x,x')\over \delta^{k^{''}}}
        \Big)^{\eta}\Big[{1\over V_{\delta^{k''}}(x) +
        V(x,x_{\alpha^{''}}^{k^{''}})} \Big( {\delta^{k''}\over \delta^{k''}
            + d(x,x_{\alpha^{''}}^{k^{''}})} \Big)^\gamma \\
    &&\hskip1cm {}+ {1\over V_{\delta^{k''}}(x') +
        V(x',x_{\alpha^{''}}^{k^{''}})} \Big( {\delta^{k''}\over \delta^{k''}
            + d(x',x_{\alpha^{''}}^{k^{''}})} \Big)^\gamma\Big],
\end{eqnarray*}
for all $\gamma \in (0,\eta)$.

With the almost-orthogonality estimate in hand, the same
argument as for~\eqref{eqn:PP1}, via the estimate
from~\cite{FJ} and the Fefferman--Stein vector-valued maximal
function, yields
\begin{eqnarray*}
    \|S(R^{(1)}_N(f))\|_{L^p(X)}
    &=&\Big\|\Big\{ \sum_{k}\sum_{\alpha\in\mathscr{Y}^k} \big| \langle
        \psi_{\alpha}^{k},R^{(1)}_N(f) \rangle \widetilde{\chi}_{{Q}_{\alpha}^{k}}(\cdot)
        \big|^2 \Big\}^{1/2}
        \Big\|_{L^p(X)}\\
    &\leq& C\delta^{\eta N}\Big\|\Big\{ \sum_{k}\sum_{\alpha\in\mathscr{Y}^k} \big| \langle
        \psi_{\alpha}^{k},(f) \rangle \widetilde{\chi}_{{Q}_{\alpha}^{k}}(\cdot)
        \big|^2 \Big\}^{1/2}
        \Big\|_{L^p(X)}
    = C\delta^{\eta N}\|S(f)\|_{L^p(X)}.
\end{eqnarray*}
A similar proof shows that $\|S(R^{(2)}_N(f))\|_{L^p(X)} \leq
C\delta^{\eta N}\|S(f)\|_{L^p(X)}$.
Therefore~\eqref{eqn:S(R_N)_Lp_estimate} holds:
\[
    \|S(R_N(f))\|_{L^p(X)}
    \leq C\delta^{\eta N}\|S(f)\|_{L^p(X)},
\]
as required.

\noindent (iii) Consider the Neumann series $(T_N)^{-1} = (I -
R_N)^{-1} = \sum_{i=0}^{\infty}(R_N)^i$.
By~\eqref{eqn:S(R_N)_Lp_estimate} we have
\[
    \|S((T_N)^{-1}(f))\|_{L^p(X)}
    \leq \sum_{i=0}^{\infty} \| S((R_N)^i(f)) \|_{L^p(X)}
    \leq (1 - C\delta^{\eta N})^{-1} \|S(f)\|_{L^p(X)},
\]
as required, if $N$ is chosen sufficiently large
that~$C\delta^{\eta N} < 1$.


This completes the proof of Lemma~\ref{lem:TN_bounded}.
\end{proof}

We turn to the product setting.


\subsection{Product square functions via wavelets, and
Plancherel--P\'olya
inequalities}\label{sec:productsquarefunction}

We now assume that $\widetilde X = X_1\times X_2$ where each
$X_i$ is a space of homogeneous type as above. In this
subsection, $(x_1,x_2)$ denotes an element of~$X_1 \times X_2$.

\begin{definition}\label{def:product square functions}
    (Product square functions) Take $\beta_i \in (0,\eta_i)$
    and $\gamma_i > 0$, for $i = 1$, $2$, and consider $f\in
    \big(\GGp(\beta_{1},\beta_{2};
    \gamma_{1},\gamma_{2})\big)'$.

    \noindent (a) The {\it discrete product Littlewood--Paley square
    function $\widetilde{S}(f)$} in terms of wavelet
    coefficients is defined by
    \begin{eqnarray}\label{g function}
        \widetilde{S}(f)(x_1,x_2)
        := \Big\{
        \sum_{k_1} \sum_{\alpha_1\in\mathscr{Y}^{k_1}}
            \sum_{k_2} \sum_{\alpha_2\in\mathscr{Y}^{k_2}}
            \Big| \langle \psi_{\alpha_1}^{k_1}\psi_{\alpha_2}^{k_2},f \rangle
            \widetilde{\chi}_{ Q_{\alpha_1}^{k_1}}(x_1)
            \widetilde{\chi}_{Q_{\alpha_2}^{k_2}}(x_2) \Big|^2 \Big\}^{1/2},
    \end{eqnarray}
    where $\psi_{\alpha_1}^{k_1}\psi_{\alpha_2}^{k_2} =
    \psi_{\alpha_1}^{k_1}\otimes \psi_{\alpha_2}^{k_2}$ with
    $\psi_{\alpha_i}^{k_i}$ acting on the $X_i$ variable for
    $i = 1$, $2$, and $\widetilde{\chi}_{Q_{\alpha_i}^{k_i}}(x_i)
    := \chi_{Q_{\alpha_i}^{k_i}}(x_i)
    \mu_i(Q_{\alpha_i}^{k_i})^{-1/2}$.

    \noindent (b) The {\it continuous product Littlewood--Paley
    square function ${\widetilde S}_c(f)$} in terms of wavelet
    operators is defined by
    \begin{eqnarray}
        {\widetilde S}_c(f)(x_1,x_2)
        := \Big\{\sum_{k_1}\sum_{k_2}
            \Big| D_{k_1}D_{k_2}(f)(x_1,x_2)\Big|^2 \Big\}^{1/2},
    \end{eqnarray}
    where $D_{k_i} := \sum_{\alpha_i\in\mathscr{Y}^{k_i}}
    \psi_{\alpha_i}^{k_i}$ for $i = 1$, $2$, and
    $D_{k_1}D_{k_2} := D_{k_1}\otimes D_{k_2}$.
\end{definition}

The main results of this subsection are the following product
versions of the Littlewood--Paley theory and the
Plancherel--P\'olya inequalities.

\begin{theorem}\label{theorem Littlewood Paley product}
    \textup{(Product Littlewood--Paley theory)} Suppose
    $\beta_i \in (0,\eta_i)$, $\gamma_i > 0$, and \linebreak
    $\max\{\frac{\omega_1}{\omega_1 +
    \eta_1},\frac{\omega_2}{\omega_2 + \eta_2}\} < p < \infty$,
    where $\omega_i$ is the upper dimension of $X_i$, for $i =
    1$, $2$. For all $f\in \big(\GGp(\beta_{1},\beta_{2};
    \gamma_{1},\gamma_{2})\big)'$, we have
    \[
        \|{\widetilde S}(f)\|_{L^p(X_1\times X_2)}
        \sim \|{\widetilde S}_c(f)\|_{L^p(X_1\times X_2)}.
    \]
    Moreover, if $1 < p < \infty$, then
    \[
        \|{\widetilde S}(f)\|_{L^p(X_1\times X_2)}
        \sim \|{\widetilde S}_c(f)\|_{L^p(X_1\times X_2)}
        \sim \|f\|_{L^p(X_1\times X_2)}.
    \]
\end{theorem}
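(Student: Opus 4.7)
The plan is to follow the one-parameter blueprint of Theorem~\ref{theorem Littlewood Paley}: first establish two product Plancherel--P\'olya inequalities that are the exact analogues of~\eqref{eqn:PP1} and~\eqref{eqn:PP2}, then deduce the equivalence $\|\widetilde S(f)\|_p \sim \|\widetilde S_c(f)\|_p$ by sandwiching $\sum_{k_1,k_2}|D_{k_1}D_{k_2}(f)(x_1,x_2)|^2$ between the corresponding sup-- and inf--type expressions on cubes $Q^{k_1+N_1}_{\alpha_1'} \times Q^{k_2+N_2}_{\alpha_2'}$. For $1<p<\infty$ the equivalence with $\|f\|_p$ will follow from treating $\widetilde S_c$ as a bi--parameter vector--valued singular integral. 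Since the bulk of the work is the product Plancherel--P\'olya inequality, I describe that step in detail.

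First I would establish the product almost-orthogonality estimate: using the product wavelet reproducing formula (Theorem~\ref{thm product wavelet reproducing formula test function}) applied to $f \in L^2(X_1 \times X_2)$, one writes
\[
 D_{k_1'}D_{k_2'}(f)(z_1,z_2) = \!\!\sum_{k_1,\alpha_1}\sum_{k_2,\alpha_2}\!\!\mu_1(Q^{k_1}_{\alpha_1})\mu_2(Q^{k_2}_{\alpha_2})\Big\langle f, \tfrac{\psi^{k_1}_{\alpha_1}\psi^{k_2}_{\alpha_2}}{\sqrt{\mu_1(Q^{k_1}_{\alpha_1})\mu_2(Q^{k_2}_{\alpha_2})}}\Big\rangle K^{(1)}_{k_1,k_1'}(x^{k_1}_{\alpha_1},z_1)K^{(2)}_{k_2,k_2'}(x^{k_2}_{\alpha_2},z_2),
\]
where each factor $K^{(i)}_{k_i,k_i'}$ is of the type already controlled in~\eqref{eqn:almost_orthogonality_estimate}. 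Because the kernels separate, the one-parameter bound applies in each variable and produces a product of two decay factors $\delta_1^{|k_1-k_1'|\eta_1}\delta_2^{|k_2-k_2'|\eta_2}$ against product Poisson-type kernels. Then the key inequality from~\cite{FJ} (as used in the one-parameter proof) is applied successively in each variable, followed by the strong maximal function $\mathcal{M}_s = \mathcal{M}_1 \mathcal{M}_2$ on $X_1 \times X_2$, and finally the Fefferman--Stein vector-valued inequality for the strong maximal function for $p/r_1, p/r_2 > 1$, with $r_i \in (\omega_i/(\omega_i+\eta_i), p)$. This yields~\eqref{eqn:PP1} in the product setting.

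The second product inequality is the hard part. The one-parameter argument built an auxiliary operator $T_N$ (Lemma~\ref{lem:TN_bounded}) and worked with $f = T_N^{-1}T_Nf$ to manufacture $D_{k'}(f)(x^{k'+N}_{\alpha'})$ factors; the essential obstacle is that no frame-type reproducing formula is available. In the product setting I would construct the tensor-product operator
\[
 T_{N_1,N_2}(f)(x_1,x_2) := \sum_{k_1,\alpha_1'}\sum_{k_2,\alpha_2'} \mu_1(Q^{k_1+N_1}_{\alpha_1'})\mu_2(Q^{k_2+N_2}_{\alpha_2'}) D_{k_1}(x_1,x^{k_1+N_1}_{\alpha_1'})D_{k_2}(x_2,x^{k_2+N_2}_{\alpha_2'}) D_{k_1}D_{k_2}(f)(x^{k_1+N_1}_{\alpha_1'},x^{k_2+N_2}_{\alpha_2'}),
\]
and prove a product version of Lemma~\ref{lem:TN_bounded}: boundedness of $T_{N_1,N_2}$ on $L^2(X_1\times X_2)$ by iterating the Cotlar--Stein argument, and $\|\widetilde S(T_{N_1,N_2}(f))\|_p \le C(\delta_1^{\eta_1 N_1}+\delta_2^{\eta_2 N_2})\|\widetilde S(f)\|_p$ via the iterated kernel estimates~(a)(b)(c) from Lemma~\ref{lem:TN_bounded}, now applied separately in each variable. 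Choosing $N_1, N_2$ sufficiently large makes the remainder contractive on $\widetilde S$-norms and hence $T_{N_1,N_2}$ invertible with $\|\widetilde S(T_{N_1,N_2}^{-1}(f))\|_p \lesssim \|\widetilde S(f)\|_p$. Writing $f = T_{N_1,N_2}^{-1}T_{N_1,N_2}f$ and inserting this into the wavelet coefficients $\langle \psi^{k_1}_{\alpha_1}\psi^{k_2}_{\alpha_2}, T_{N_1,N_2}^{-1}T_{N_1,N_2}f\rangle$ produces the desired $D_{k_1'}D_{k_2'}(f)$ factors at the sample points. Applying the product almost-orthogonality estimate, the~\cite{FJ} bound in each variable, and the vector-valued strong maximal inequality as above proves~\eqref{eqn:PP2} in the product setting.

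Given the two product Plancherel--P\'olya inequalities, the first conclusion $\|\widetilde S(f)\|_p \sim \|\widetilde S_c(f)\|_p$ is immediate by the sandwich. For $1<p<\infty$, treating $\widetilde S_c$ as a tensor-product vector-valued Calder\'on--Zygmund operator with kernel $\{D_{k_1}(x_1,y_1)D_{k_2}(x_2,y_2)\}_{k_1,k_2}$ and invoking the crucial summation estimate in Remark~\ref{remark:crucial_estimate} in each factor gives $\|\widetilde S_c(f)\|_p \le C\|f\|_p$; the reverse inequality comes from the product wavelet reproducing formula (Theorem~\ref{thm product wavelet reproducing formula test function}) paired with duality. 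The main obstacle is undoubtedly the construction of the invertible operator $T_{N_1,N_2}$ and verification that the product remainder $R_{N_1,N_2}$ is small in $\widetilde S$-norm, since the two-parameter kernel estimates must be decoupled carefully into an $x_1$-part and an $x_2$-part before the strong maximal function can be brought to bear.
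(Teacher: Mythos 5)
Your proposal matches the paper's proof exactly: the paper also derives the product Littlewood--Paley theory by tensorizing the one-parameter Plancherel--P\'olya argument (in particular using the factorization
\(\langle D_{k_1}D_{k_2}(x_1,x_2,\cdot,\cdot), \psi_{\alpha_1}^{k_1}\psi_{\alpha_2}^{k_2}(\cdot,\cdot,y_1,y_2)\rangle = \langle D_{k_1}(x_1,\cdot),\psi_{\alpha_1}^{k_1}(\cdot,y_1)\rangle\langle D_{k_2}(x_2,\cdot),\psi_{\alpha_2}^{k_2}(\cdot,y_2)\rangle\)
into one-parameter almost-orthogonality factors) together with the strong maximal function and the vector-valued Fefferman--Stein inequality, and the paper is merely much terser, asserting that the product analogues of the estimates (a)--(c), (a)$'$--(b)$'$, \eqref{eqn:PP1} and~\eqref{eqn:PP2} ``follow similarly.'' One small slip worth fixing: the bound $C(\delta_1^{\eta_1 N_1}+\delta_2^{\eta_2 N_2})\|\widetilde S(f)\|_p$ should apply to the remainder $R_{N_1,N_2}=I-T_{N_1,N_2}$ (which expands as $R_{N_1}\otimes I + I\otimes R_{N_2} - R_{N_1}\otimes R_{N_2}$, each piece controlled one factor at a time), not to $T_{N_1,N_2}$ itself.
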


\begin{theorem}\label{theorem P P inequality product}
    \textup{(Product Plancherel--P\'olya inequalities)} Suppose
    $\beta_i \in (0,\eta_i)$, $\gamma_i > 0$,
    and $\max\{\frac{\omega_1}{\omega_1
    +\eta_1},\frac{\omega_2}{\omega_2 +\eta_2}\} < p < \infty$,
    where $\omega_i$ is the upper dimension of $X_i$, for $i = 1$, $2$.
    Take $N_1$, $N_2\in\N$. Then there is a positive constant
    $C$ such that for all $f\in \big(\GGp(\beta_{1},\beta_{2};
    \gamma_{1},\gamma_{2})\big)'$, we have
    \begin{eqnarray}
        &&\Big\|\Big\{\sum_{k'_1}
            \sum_{\alpha'_1\in\mathscr{X}^{k'_1+N_1}}
            \sum_{k'_2}\sum_{\alpha'_2\in\mathscr{X}^{k'_2+N_2}}\nonumber\\
        &&\hskip2cm\sup_{(z_1,z_2)\in Q^{{k'_1 +N_1}}_{\alpha'_1 }
            \times Q^{{k'_2 +N_2}}_{\alpha'_2 }}
            \big|D_{k'_1}D_{k'_2}(f)(z_1,z_2)\big|
            \chi_{Q^{{k'_1 +N_1}}_{\alpha'_1 }}(\cdot)
            \chi_{Q^{{k'_2 +N_2}}_{\alpha'_2 }}(\cdot)
            \Big\}^{1/2}\Big\|_{L^p(X_1\times X_2)}\nonumber\\
        &&\hskip.5cm \leq C \Big\|\Big\{ \sum_{k_1}
            \sum_{\alpha_1\in\mathscr{Y}^{k_1}}
            \sum_{k_2}\sum_{\alpha_2\in\mathscr{Y}^{k_2}}
            \big| \langle \psi_{\alpha_1}^{k_1}\psi_{\alpha_2}^{k_2},f \rangle
            \widetilde{\chi}_{Q_{\alpha_1}^{k_1}}(\cdot)
            \widetilde{\chi}_{Q_{\alpha_2}^{k_2}}(\cdot)
            \big|^2 \Big\}^{1/2}
            \Big\|_{L^p(X_1\times X_2)}.\label{P P 2}
    \end{eqnarray}
    Further, suppose $N_1$ and $N_2$ are sufficiently large
    positive integers, to be determined during the proof below.
    Then there is a positive constant $C$ such that for all
    $f\in \big(\GGp(\beta_{1},\beta_{2};
    \gamma_{1},\gamma_{2})\big)'$, we have
    \begin{eqnarray}
        &&\Big\|\Big\{ \sum_{k_1}
            \sum_{\alpha_1\in\mathscr{Y}^{k_1}}
            \sum_{k_2}
            \sum_{\alpha_2\in\mathscr{Y}^{k_2}}
            \big| \langle \psi_{\alpha_1}^{k_1}\psi_{\alpha_2}^{k_2},f \rangle
            \widetilde{\chi}_{Q_{\alpha_1}^{k_1}}(\cdot)
            \widetilde{\chi}_{Q_{\alpha_2}^{k_2}}(\cdot)
            \big|^2 \Big\}^{1/2}
            \Big\|_{L^p(X_1\times X_2)}\nonumber\\
        &&\hskip.5cm \leq C \Big\|\Big\{
            \sum_{k'_1}
            \sum_{\alpha'_1\in\mathscr{X}^{k'_1+N_1}}
            \sum_{k'_2}\sum_{\alpha'_2\in\mathscr{X}^{k'_2+N_2}}\nonumber\\
        &&\hskip2cm\inf_{(z_1,z_2)\in Q^{{k'_1 +N_1}}_{\alpha'_1 }
            \times Q^{{k'_2 +N_2}}_{\alpha'_2 }}
            \big|D_{k'_1}D_{k'_2}(f)(z_1,z_2)
            \big| \chi_{Q^{{k'_1 +N_1}}_{\alpha'_1 }}(\cdot)
            \chi_{Q^{{k'_2 +N_2}}_{\alpha'_2 }}(\cdot)
            \Big\}^{1/2}\Big\|_{L^p(X_1\times X_2)}.\label{P P 1}
    \end{eqnarray}
\end{theorem}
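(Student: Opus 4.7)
The plan is to iterate the one-parameter argument used in Theorem~\ref{theorem P P inequality one-parameter}, exploiting the tensor-product structure of the wavelets $\psi_{\alpha_1}^{k_1}\otimes\psi_{\alpha_2}^{k_2}$ and of the operators $D_{k_1}\otimes D_{k_2}$. As in the one-parameter case, both inequalities may be reduced to $f\in L^2(X_1\times X_2)$ by truncating the product wavelet expansion given in Theorem~\ref{thm product wavelet reproducing formula test function}.

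For the first inequality~\eqref{P P 2}, I would substitute the product wavelet reproducing formula for~$f$ into $D_{k'_1}D_{k'_2}(f)(z_1,z_2)$, obtaining
\[
    D_{k'_1}D_{k'_2}(f)(z_1,z_2)
    = \sum_{k_1,\alpha_1}\sum_{k_2,\alpha_2}
        \mu_1(Q_{\alpha_1}^{k_1})\mu_2(Q_{\alpha_2}^{k_2})
        \Big\langle f,\tfrac{\psi_{\alpha_1}^{k_1}\psi_{\alpha_2}^{k_2}}
        {\sqrt{\mu_1(Q_{\alpha_1}^{k_1})\mu_2(Q_{\alpha_2}^{k_2})}}\Big\rangle
        \prod_{i=1,2}\Big\langle \tfrac{\psi_{\alpha_i}^{k_i}}
        {\sqrt{\mu_i(Q_{\alpha_i}^{k_i})}},D_{k'_i}(\cdot,z_i)\Big\rangle.
\]
Each inner product factor in the last product satisfies the one-parameter almost-orthogonality estimate~\eqref{eqn:almost_orthogonality_estimate}, which gives a product almost-orthogonality estimate of the form $\prod_{i=1,2}\delta_i^{|k_i-k'_i|\eta_i}\cdot(\text{kernel in variable $i$})$. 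Taking the supremum over $(z_1,z_2)\in Q^{k'_1+N_1}_{\alpha'_1}\times Q^{k'_2+N_2}_{\alpha'_2}$ and applying the estimate from~\cite{FJ} separately in each variable, as in the proof of~\eqref{eqn:PP1}, reduces the left-hand side of~\eqref{P P 2} to an expression involving the product (strong) Hardy--Littlewood maximal function $\mathcal{M}_s$ acting on $\sum_{\alpha_1,\alpha_2}|\langle f,\psi_{\alpha_1}^{k_1}\psi_{\alpha_2}^{k_2}/\sqrt{\mu_1 \mu_2}\rangle|^r\chi_{Q_{\alpha_1}^{k_1}\times Q_{\alpha_2}^{k_2}}$ for some $r$ with $\max\{\omega_i/(\omega_i+\eta_i)\}<r<p$. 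The product Fefferman--Stein vector-valued maximal inequality for $\mathcal{M}_s$ with $p/r>1$ then yields~\eqref{P P 2}.

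For the second inequality~\eqref{P P 1}, the obstacle is the absence of a frame reproducing formula on $X_1\times X_2$, mirroring the one-parameter situation. The plan is to introduce a product operator
\[
    T_{N_1,N_2}(f)(x_1,x_2)
    := \sum_{k_1,\alpha_1}\sum_{k_2,\alpha_2}
        \mu_1(Q^{k_1+N_1}_{\alpha_1})\mu_2(Q^{k_2+N_2}_{\alpha_2})\,
        D_{k_1}(x_1,x^{k_1+N_1}_{\alpha_1})D_{k_2}(x_2,x^{k_2+N_2}_{\alpha_2})\,
        D_{k_1}D_{k_2}(f)(x^{k_1+N_1}_{\alpha_1},x^{k_2+N_2}_{\alpha_2})
\]
and establish a product analogue of Lemma~\ref{lem:TN_bounded}: $T_{N_1,N_2}$ is bounded on $L^2(X_1\times X_2)$ by two applications of the Cotlar--Stein lemma; the product discrete square function $\widetilde{S}$ satisfies $\|\widetilde{S}(R_{N_1,N_2}f)\|_{L^p}\leq C(\delta_1^{\eta_1 N_1}+\delta_2^{\eta_2 N_2})\|\widetilde{S}(f)\|_{L^p}$, where $R_{N_1,N_2}=I-T_{N_1,N_2}$; and hence for $N_1,N_2$ large enough, $T_{N_1,N_2}$ is invertible on $L^2$ with $\|\widetilde{S}(T_{N_1,N_2}^{-1}f)\|_{L^p}\leq C\|\widetilde{S}(f)\|_{L^p}$. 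Writing $f=T_{N_1,N_2}^{-1}T_{N_1,N_2}f$ and unwrapping $T_{N_1,N_2}f$, the wavelet coefficients of $T_{N_1,N_2}f$ become sums over $(k'_1,\alpha'_1,k'_2,\alpha'_2)$ of products of one-parameter almost-orthogonality kernels against $D_{k'_1}D_{k'_2}(f)(x^{k'_1+N_1}_{\alpha'_1},x^{k'_2+N_2}_{\alpha'_2})$. Replacing these point values by the infimum over $Q^{k'_1+N_1}_{\alpha'_1}\times Q^{k'_2+N_2}_{\alpha'_2}$ and applying the estimate from~\cite{FJ} in each variable together with the product Fefferman--Stein inequality yields~\eqref{P P 1}.

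The main obstacle I expect is verifying the $L^p$ bound $\|\widetilde{S}(R_{N_1,N_2}f)\|_{L^p}\leq C(\delta_1^{\eta_1 N_1}+\delta_2^{\eta_2 N_2})\|\widetilde{S}(f)\|_{L^p}$: the remainder $R_{N_1,N_2}$ decomposes into one piece that is ``small'' in the $X_1$ direction, one that is small in $X_2$, and a cross term. For each piece, the analogue of functions $F_{k''}$ from Lemma~\ref{lem:TN_bounded} must be shown to satisfy weak H\"older regularity~\eqref{eqn:weak_Holder} in each variable separately, after which an iterated almost-orthogonality estimate and the product Fefferman--Stein maximal inequality finish the argument. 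The cancellation conditions $\int D_{k_i}(x_i,y_i)\,d\mu_i(y_i)=0=\int D_{k_i}(x_i,y_i)\,d\mu_i(x_i)$ in each variable are crucial here, as they were in the one-parameter case.
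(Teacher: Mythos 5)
Your proposal is correct and takes essentially the same route as the paper: iterate the one-parameter argument by exploiting the tensor-product factorization $\langle D_{k_1}D_{k_2}(x_1,x_2,\cdot,\cdot),\psi_{\alpha_1}^{k_1}\psi_{\alpha_2}^{k_2}\rangle=\langle D_{k_1}(x_1,\cdot),\psi_{\alpha_1}^{k_1}\rangle\langle D_{k_2}(x_2,\cdot),\psi_{\alpha_2}^{k_2}\rangle$, apply the one-parameter almost-orthogonality estimate~\eqref{eqn:almost_orthogonality_estimate} in each variable, and then use the estimate from~\cite{FJ} together with the product Fefferman--Stein inequality for the strong maximal function, with the product operator $T_{N_1,N_2}=T_{N_1}^{(1)}\otimes T_{N_2}^{(2)}$ and its remainder $R_{N_1,N_2}$ replacing the one-parameter $T_N$ and $R_N$. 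The paper compresses all of this into a single paragraph; you have usefully filled in the intermediate steps, including the decomposition of $R_{N_1,N_2}$ into pieces that are small in at least one variable, which is exactly the intended mechanism.
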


\begin{proof}[Proofs of Theorems~\ref{theorem Littlewood Paley product}
and~\ref{theorem P P inequality product}] The proofs of
Theorems~\ref{theorem Littlewood Paley product}
and~\ref{theorem P P inequality product} are analogous to those
for the case of one factor. As mentioned in that case, the
proofs of Theorems~\ref{theorem Littlewood Paley}
and~\ref{theorem P P inequality one-parameter} follow from the
almost-orthogonality estimates, namely the
claim~\eqref{eqn:almost_orthogonality_estimate}. To see that
these proofs can be carried over to the product case, we make
an observation analogous to the
claim~\eqref{eqn:almost_orthogonality_estimate}, as follows:
\[
    \Big\langle D_{k_1}D_{k_2}(x_1,x_2,\cdot,\cdot),
        \psi_{\alpha_1}^{k_1}\psi_{\alpha_2}^{k_2}(\cdot,\cdot,y_1,y_2)\Big\rangle
    = \big\langle D_{k_1}(x_1,\cdot),
        \psi_{\alpha_1}^{k_1}(\cdot,y_1)\big\rangle
        \big\langle D_{k_2}(x_2,\cdot),\psi_{\alpha_2}^{k_2}(\cdot,y_2)\big\rangle,
\]
which together with the almost-orthogonality estimates for the
one-factor case yields the desired almost-orthogonality
estimates for the product case. For the product case, all
estimates analogous to those in (a)--(c),
$\textup{(a)}'$--$\textup{(b)}'$, \eqref{eqn:PP1}
and~\eqref{eqn:PP2} follow similarly. We omit the details.
\end{proof}


\section{Product $H^p$, $\cmo^p$, $\bmo$ and $\vmo$, and
duality}\label{sec:functionspaces}
\setcounter{equation}{0}

In this section we define the Hardy spaces $H^p$, the Carleson
measure spaces $\cmo^p$ (including the bounded mean oscillation
space $\bmo = \cmo^1$), and the vanishing mean oscillation
space~$\vmo$, in the setting of product spaces of homogeneous
type. Both $H^p$ and $\cmo^p$ are defined here for $p$ in the
range $\max\{{\omega_1\over \omega_1 + \eta_1},{\omega_2\over
\omega_2 + \eta_2}\} < p \leq 1$, where $\omega_i$ is the upper
dimension of $X_i$, for $i = 1$, $2$. We prove that $\cmo^p$ is
the dual of $H^p$, and in particular that $\bmo$ is the dual of
$H^1$, and also that $H^1$ is the dual of~$\vmo$.

We develop this theory in the product case with two parameters.
The generalization to $k$ parameters, $k\in\N$, is similar to
the two-parameter case, while the specialization to one
parameter is immediate. Note the difference from the
Littlewood--Paley theory developed in
Section~\ref{sec:squarefunctionPP} above; there it was
necessary to develop the one-parameter theory first, then to
pass to the product case by iteration.

Fix $\beta_i \in (0,\eta_i)$ and $\gamma_i > 0$, for $i = 1$,
2.

For brevity, we denote by $\GG$ and $(\GG)'$ the test function
space $\GGp(\beta_{1},\beta_{2};\gamma_{1},\gamma_{2})$ and the
space of distributions $\big(\GGp(\beta_{1},\beta_{2};
\gamma_{1},\gamma_{2})\big)^{'}$, respectively.

We are now ready to introduce the Hardy spaces
$H^p(\XX_1\times\XX_2)$ and the Carleson measure
spaces~$\cmo^p(\XX_1\times\XX_2)$. In this section, $(x_1,x_2)$
denotes an element of~$X_1 \times X_2$.

\begin{definition}\label{def-Hp}
    (Hardy spaces) Suppose $\max\{{\omega_1\over \omega_1+\eta_1},
    {\omega_2\over \omega_2+\eta_2}\} < p \leq 1$, where
    $\omega_i$ is the upper dimension of $X_i$ for $i = 1$, 2.
    The {\it Hardy spaces} $H^p(X_1 \times X_2)$ are defined by
    \[
        H^p(X_1\times X_2)
        :=\big\lbrace f \in (\GG)':
            \widetilde{S}(f)\in L^p(\XX_1\times\XX_2)\big\rbrace,
    \]
    where $\widetilde{S}(f)$ is the discrete product Littlewood--Paley
    square function as in~Definition~\ref{def:product square functions}.

    For $f\in H^p(X_1\times X_2)$, we define
    $\|f\|_{H^p(X_1\times X_2)} := \|\widetilde{S}(f)\|_{L^p(X_1\times
    X_2)}$.
\end{definition}

For completeness, we note that as in the classical case, for $1
< p < \infty$ the Hardy space~$H^p(X_1\times X_2)$ of
Definition~\ref{def-Hp} coincides with $L^p(X_1\times X_2)$.

We point out that $\GG$ and hence $H^p(X_1\times X_2)\cap
L^2(X_1\times X_2)$ are dense in $H^p(X_1\times X_2)$. Indeed,
if $f\in H^p(X_1\times X_2)$, then by Theorem~\ref{thm
reproducing formula test function}, the functions
\[
    f_n(x_1,x_2)
    := \sum_{|k_1|,|k_2|\leq n}
        \sum_{\alpha_1\in\mathscr{Y}^{k_1}}
        \sum_{\alpha_2\in\mathscr{Y}^{k_2}}
        \psi_{\alpha_1}^{k_1}(x_1)
        \psi_{\alpha_2}^{k_2}(x_2)
        \langle \psi_{\alpha_1}^{k_1}\psi_{\alpha_2}^{k_2},f \rangle
\]
belong to $\GG$. Moreover,
\[
    \widetilde{S}(f - f_n)(x_1,x_2)
    \leq\Big\{ \sum_{|k_1| > n \ {\rm or}\ |k_2|>n}
        \sum_{\alpha_1\in\mathscr{Y}^{k_1}}
        \sum_{\alpha_2\in\mathscr{Y}^{k_2}}
        \Big| \langle \psi_{\alpha_1}^{k_1}\psi_{\alpha_2}^{k_2},f \rangle
        \widetilde{\chi}_{Q_{\alpha_1}^{k_1}}(x_1)
        \widetilde{\chi}_{Q_{\alpha_2}^{k_2}}(x_2) \Big|^2
        \Big\}^{1/2}.
\]
Therefore $\|\widetilde{S}(f - f_n)\|_{L^p(X_1\times X_2)}$
tends to zero as $n$ tends to infinity. Hence $\GG$ is dense in
$H^p(X_1\times X_2)$.

\begin{definition}\label{def-CMO}
    (Carleson measure spaces, and bounded mean oscillation)
    Suppose that $\max\{{\omega_1\over \omega_1 + \eta_1},
    {\omega_2\over \omega_2 + \eta_2}\} < p \leq 1$, where
    $\omega_i$ is the upper dimension of $X_i$ for $i = 1$, 2.
    We define the {\it Carleson measure spaces} $\cmo^p$ in
    terms of wavelet coefficients by
    $$
        \cmo^p(\XX_1\times\XX_2)
        := \big\{ f \in (\GG)' : \mathcal{C}_p(f)< L^\infty\},
    $$
    with the quantity $\mathcal{C}_p(f)$ defined as follows:
    \begin{eqnarray}\label{Carleson norm}
        \mathcal{C}_p(f)
        := \sup_{\Omega}
            \Big\{ {1\over\mu(\Omega)^{{2\over p} - 1}}
            \sum_{\substack{R = Q_{\alpha_1}^{k_1}\times Q_{\alpha_2}^{k_2}
                \subset \Omega, \\
                k_1,k_2\in\mathbb{Z}, \alpha_1\in\mathscr{Y}^{k_1},
                \alpha_2\in\mathscr{Y}^{k_2} }}
            \big| \langle \psi_{\alpha_1}^{k_1}\psi_{\alpha_2}^{k_2}, f \rangle
            \big|^2 \Big\}^{1/2},
    \end{eqnarray}
    where $\Omega$ runs over all open sets in $\XX_1\times\XX_2$ with
    finite measure.

    The \emph{space $\bmo$ of functions of bounded mean oscillation} is
    defined by
    \[
        \bmo(X_1 \times X_2)
        := \cmo^1(X_1\times X_2).
    \]
\end{definition}

The main result in this section is the following.

\begin{theorem}\label{thm-duality}
    Suppose $\max\{{\omega_1 \over \omega_1 + \eta_1},
    {\omega_2\over \omega_2 + \eta_2}\} < p \leq 1$, where
    $\omega_i$ is the upper dimension of $X_i$ for $i = 1$, $2$.
    Then the Carleson measure space~$\cmo^p(X_1\times X_2)$ is the
    dual of the Hardy space~$H^p(X_1\times X_2)$:
    \[
        \big(H^p(\XX_1\times\XX_2)\big)'
        = \cmo^p(\XX_1\times\XX_2).
    \]
    In particular,
    \[
        \big(H^1(\XX_1\times\XX_2)\big)'
        = \bmo(\XX_1\times\XX_2).
    \]
\end{theorem}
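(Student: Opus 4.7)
My plan is to reduce the duality to the corresponding duality between two sequence spaces, using the wavelet coefficient map as a bridge. Define the sequence spaces
\[
s^p := \Big\{ t = \{t_R\}_R : \|t\|_{s^p} := \Big\|\Big(\sum_R |t_R|^2 \widetilde{\chi}_{Q^{k_1}_{\alpha_1}} \widetilde{\chi}_{Q^{k_2}_{\alpha_2}}\Big)^{1/2}\Big\|_{L^p(X_1\times X_2)} < \infty \Big\}
\]
and
\[
c^p := \Big\{ t = \{t_R\}_R : \|t\|_{c^p} := \sup_\Omega \Big(\frac{1}{\mu(\Omega)^{2/p-1}}\sum_{R\subset\Omega}|t_R|^2\Big)^{1/2} < \infty\Big\},
\]
where $R = Q^{k_1}_{\alpha_1}\times Q^{k_2}_{\alpha_2}$ runs over the dyadic rectangles indexed by $\mathscr{Y}^{k_1}\times\mathscr{Y}^{k_2}$ and $\Omega$ runs over open sets of finite measure. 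The lifting $\Phi : f\mapsto \{\langle f,\psi_{\alpha_1}^{k_1}\psi_{\alpha_2}^{k_2}\rangle\}_R$ is, by Definitions~\ref{def-Hp} and~\ref{def-CMO}, an isometric embedding of $H^p$ into $s^p$ and of $\cmo^p$ into $c^p$; the product wavelet reproducing formula (Theorem~\ref{thm product wavelet reproducing formula test function}) provides the inverse map.

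The heart of the argument is the discrete duality $(s^p)' = c^p$. For $t\in c^p$ and $s\in s^p$ I will estimate the pairing $\langle s,t\rangle = \sum_R s_R \overline{t_R}$ by the classical Chang--Fefferman stopping-time argument: set $\Omega_k := \{(x_1,x_2) : g_s(x_1,x_2) > 2^k\}$, where $g_s$ is the discrete square function of $s$, and split the sum over $R$ according to the largest $k$ for which $R$ is contained in the dilated set $\widetilde\Omega_k$. Cauchy--Schwarz inside each level, combined with the Carleson condition on $t$ and the good-$\lambda$ bound $\mu(\widetilde\Omega_k)\leq C\mu(\Omega_k)$, yields $|\langle s,t\rangle|\leq C\|s\|_{s^p}\|t\|_{c^p}$. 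Conversely, given $\ell\in (s^p)'$ I will use Hahn--Banach to extend $\ell$ from the natural embedding of $s^p$ into a weighted $\ell^2$ tent space and produce a representing sequence $t\in c^p$ by a Riesz-type construction; the Carleson bound on $t$ comes out by testing $\ell$ against the normalized sequence supported on the rectangles below a given open set.

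The transfer back to function spaces is then routine: given $g\in\cmo^p$, the functional $\ell_g(f):=\sum_R\langle f,\psi_{\alpha_1}^{k_1}\psi_{\alpha_2}^{k_2}\rangle\overline{\langle g,\psi_{\alpha_1}^{k_1}\psi_{\alpha_2}^{k_2}\rangle}$ is well defined on the dense subspace $\GGp\cap H^p$ by the sequence-space pairing bound, extends to all of $H^p$, and satisfies $\|\ell_g\|\leq C\|g\|_{\cmo^p}$; the product reproducing formula guarantees $\ell_g$ is independent of the representation. Conversely, given $\ell\in(H^p)'$, the composition $\ell\circ\Phi^{-1}$ defines a bounded functional on the image $\Phi(H^p)\subset s^p$; extending by Hahn--Banach and applying the sequence-space duality produces $t\in c^p$ from which $g := \sum_R t_R \psi_{\alpha_1}^{k_1}\psi_{\alpha_2}^{k_2}$ (interpreted distributionally via Theorem~\ref{thm product wavelet reproducing formula test function}) gives the required element of $\cmo^p$.

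The main obstacle is the sequence-space duality at the level of the stopping-time argument: in the product setting one cannot apply a straightforward one-parameter Carleson pasting, and the classical Chang--Fefferman proof requires a Journé-type covering lemma to bound $\mu(\widetilde\Omega_k)$ in terms of $\mu(\Omega_k)$. Since we work on product spaces of homogeneous type with no additional structure, I will need to use the Hyt\"onen--Kairema dyadic grids (Theorem~\ref{theorem dyadic cubes}) together with the maximal-function control on each factor to produce the required enlargement $\widetilde\Omega_k$ with bounded measure expansion, and then quote the resulting geometric covering result; this is the only point where the full product geometry intervenes in an essential way.
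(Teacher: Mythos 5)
Your proposal takes essentially the same route as the paper: lift $H^p$ and $\cmo^p$ to the sequence spaces $s^p$ and $c^p$ via the wavelet coefficient map, invoke the sequence-space duality $(s^p)' = c^p$, and transfer back via the projection operator. The paper does not prove the sequence duality itself but cites it as Proposition~\ref{prop dual-of-sequence-space} from [HLL2]; you propose to re-derive it via the Chang--Fefferman stopping-time argument, which is indeed the standard mechanism behind that citation. Two points deserve correction. First, the enlargement bound $\mu(\widetilde\Omega_k)\lesssim\mu(\Omega_k)$ for $\widetilde\Omega_k:=\{M_s\chi_{\Omega_k}>c\}$ follows directly from the $L^q$-boundedness of the strong maximal function for some $q>1$ together with Chebyshev's inequality; no Journ\'e-type covering lemma is required. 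Journ\'e's lemma enters product Hardy space theory for atomic characterizations and related rectangle-to-open-set covering arguments, neither of which is needed at this step, so your final paragraph misidentifies the difficulty. Second, in the converse direction you propose Hahn--Banach to extend $\ell\circ\Phi^{-1}$ off the image $\Phi(H^p)\subset s^p$. For $p<1$ the quasi-Banach space $s^p$ is not locally convex and Hahn--Banach fails in general; it happens to work here only because the image is a norm-one complemented subspace (project by zeroing out coordinates indexed outside $\mathscr{Y}^{k_1}\times\mathscr{Y}^{k_2}$). The paper avoids this subtlety entirely by observing that $T_P$ is a bounded map from all of $s^p$ to $H^p$ (Proposition~\ref{prop-of-sp-Hp}) with $T_P\circ T_L=\mathrm{Id}$, so that $\ell_1:=\ell\circ T_P$ is already a bounded functional on all of $s^p$ and no extension theorem is invoked. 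Adopting that phrasing would make your argument cleaner and unconditionally valid for $p\leq 1$.
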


To prove Theorem \ref{thm-duality}, we follow the approach
developed in \cite{HLL2}; see also \cite{HLL1}. We first recall
the definitions of the product sequence spaces $s^p$ and $c^p$
for $0 < p \leq 1$. These sequence spaces are discrete
analogues of $H^p(X_1 \times X_2)$ and $\cmo^p(X_1 \times X_2)$
respectively.

The space $s^p$ is defined to be the set of sequences
$s=\{s_R\}_R$ of real numbers such that
\begin{eqnarray}\label{sp space}
    \|s\|_{s^p}
    := \big\|\big\{\sum_R
        |\mu(R)^{-1/2}s_R\,\chi_R(\cdot,\cdot)|^2
        \big\}^{1/2}\big\|_{L^p(X_1 \times X_2)}
    < \infty,
\end{eqnarray}
where $R$ runs over all dyadic rectangles in $X_1\times X_2$. The
space $c^p$ is defined to be the set of sequences $t=\{t_R\}_R$ of
real numbers such that
\begin{eqnarray}\label{cp space}
    \|t\|_{c^p}
    := \sup_{\Omega}
        \Big({1\over \mu(\Omega)^{{2\over p} - 1}}
        \sum_{R\subset\Omega}|t_R|^2\Big)^{1/2}
    < \infty,
\end{eqnarray}
where $\Omega$ runs over all open sets in $\XX_1\times\XX_2$
with finite measure, and $R$ runs over all dyadic rectangles
contained in~$\Omega$.

We emphasize that in the above definitions of $s^p$ and $c^p$,
the expression ``all dyadic rectangles~$R$'' indicates the
rectangles of the form $R = Q_{\alpha_1}^{k_1}\times
Q_{\alpha_2}^{k_2}$ for all $k_i\in \mathbb Z$ and $\alpha_i\in
{\mathscr X}^{k_i}$ for $i = 1$, 2.

The main result about the sequence spaces $s^p$ and
$c^p$ is the following duality result.

\begin{prop}[\cite{HLL2}]\label{prop dual-of-sequence-space}
    For $0 < p \leq 1$, $\big(s^p\big)' = c^p$.
\end{prop}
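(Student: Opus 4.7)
The plan is to prove the two inclusions $c^p \subseteq (s^p)'$ and $(s^p)' \subseteq c^p$ separately by a standard stopping-time/Chang--Fefferman type argument adapted to the product dyadic grid on $X_1\times X_2$. Throughout, the pairing is $\langle s,t\rangle := \sum_R s_R\, t_R$, where the sum runs over all product dyadic rectangles $R = Q^{k_1}_{\alpha_1}\times Q^{k_2}_{\alpha_2}$.

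For the inclusion $c^p \subseteq (s^p)'$, fix $t\in c^p$ and a sequence $s$ of finite support. Let $\widetilde{S}(s)(x_1,x_2) := \big(\sum_R |s_R|^2 \mu(R)^{-1}\chi_R(x_1,x_2)\big)^{1/2}$ so that $\|s\|_{s^p} = \|\widetilde{S}(s)\|_{L^p}$. For each $k\in\Z$ set
\[
    \Omega_k := \{(x_1,x_2): M_s(\chi_{\{\widetilde{S}(s)>2^k\}})(x_1,x_2) > 1/2\},
\]
where $M_s$ is the product (strong) Hardy--Littlewood maximal function; the $L^p$ weak-type bound gives $\mu(\Omega_k)\leq C\, \mu(\{\widetilde{S}(s)>2^k\})$. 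Partition the rectangles by $B_k := \{R: R\subseteq \Omega_k,\ R\not\subseteq \Omega_{k+1}\}$. Since $R\subseteq\Omega_k$ forces $|R\cap\{\widetilde{S}(s)>2^k\}|>\mu(R)/2$, while $R\not\subseteq\Omega_{k+1}$ forces $|R\cap\{\widetilde{S}(s)\leq 2^{k+1}\}|\geq\mu(R)/2$, Cauchy--Schwarz together with the definition of $\|t\|_{c^p}$ yields
\[
    |\langle s,t\rangle|
    \leq \sum_k \Big(\sum_{R\in B_k}|s_R|^2\Big)^{1/2}
        \Big(\sum_{R\in B_k}|t_R|^2\Big)^{1/2}
    \leq C\|t\|_{c^p}\sum_k 2^{k}\mu(\Omega_k)^{1/p}.
\]
Using $0<p\leq 1$ to embed $\ell^1\hookrightarrow\ell^p$ and the layer-cake identity for $\|\widetilde{S}(s)\|_{L^p}^p$ produces $|\langle s,t\rangle|\leq C\|t\|_{c^p}\|s\|_{s^p}$, and density of finitely supported sequences in $s^p$ extends the pairing to all of $s^p$.

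For the reverse inclusion $(s^p)'\subseteq c^p$, given $\ell\in(s^p)'$ define $t_R := \ell(e_R)$ with $e_R$ the sequence with a single nonzero entry. Fix an open set $\Omega\subset X_1\times X_2$ of finite measure and set $s_R := \overline{t_R}$ for $R\subseteq\Omega$, $s_R := 0$ otherwise. Since $\widetilde{S}(s)$ is supported in $\Omega$ (each term has $\chi_R$ with $R\subseteq\Omega$), Hölder's inequality with exponents $p/2$ and $p/(2-p)$ gives
\[
    \|s\|_{s^p}
    = \|\widetilde{S}(s)\|_{L^p(\Omega)}
    \leq \mu(\Omega)^{\frac{1}{p}-\frac{1}{2}} \|\widetilde{S}(s)\|_{L^2}
    = \mu(\Omega)^{\frac{1}{p}-\frac{1}{2}}
        \Big(\sum_{R\subseteq\Omega}|t_R|^2\Big)^{1/2}.
\]
Combining with the identity $\sum_{R\subseteq\Omega}|t_R|^2 = \ell(s)$ and the bound $|\ell(s)|\leq \|\ell\|\,\|s\|_{s^p}$, one cancels a factor of $(\sum_{R\subseteq\Omega}|t_R|^2)^{1/2}$ and takes the supremum over $\Omega$ to obtain $\|t\|_{c^p}\leq \|\ell\|$.

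The main obstacle is the first direction: naively using Cauchy--Schwarz over all $R$ fails, and in the product setting a single-parameter stopping-time decomposition is inadequate because $\Omega$ need not be a union of rectangles of controlled overlap. The device of enlarging $\Omega_k$ via the strong maximal function $M_s$ and comparing $R$ against the two consecutive level sets $\Omega_k,\Omega_{k+1}$ is the essential Chang--Fefferman trick that keeps the sums telescoping correctly; it is the only place where the product geometry and the range $0<p\leq 1$ interact substantively, and it dictates the need for the $c^p$ normalization with the factor $\mu(\Omega)^{2/p-1}$.
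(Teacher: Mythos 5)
The paper does not actually prove this proposition; it cites it from~\cite{HLL2}. Your argument is the standard Chang--Fefferman/Frazier--Jawerth stopping-time proof of duality for product sequence spaces, and it is essentially what~\cite{HLL2} does, so the approach is right and the key device --- comparing rectangles $R$ against consecutive maximal-function enlargements $\Omega_k$ and $\Omega_{k+1}$ so that the $\ell^2$ mass of $s$ over $B_k$ is controlled by $2^{2k}\mu(\Omega_k)$ while the $\ell^2$ mass of $t$ over $B_k$ is controlled by $\mathcal{C}_p(t)^2\mu(\Omega_k)^{2/p-1}$ --- is the crucial one and is correctly identified.

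A few slips and omissions, all repairable but worth flagging. (1) The embedding arrow is reversed: you want $\ell^p\hookrightarrow\ell^1$, i.e.\ $\sum_k a_k \le (\sum_k a_k^p)^{1/p}$ for $0<p\le 1$, applied to $a_k = 2^k\mu(\Omega_k)^{1/p}$. (2) For the size control you invoke an ``$L^p$ weak-type bound'' for $M_s$; what is actually available on a product of doubling spaces is $L^2$ boundedness of $M_s$ (the strong maximal operator is not weak $(1,1)$), and one then applies Chebyshev to the characteristic function $\chi_{\{\widetilde S(s)>2^k\}}$ to get $\mu(\Omega_k)\le C\mu(\{\widetilde S(s)>2^k\})$. (3) The H\"older exponents in the second direction should be $2/p$ and $2/(2-p)$, not $p/2$ and $p/(2-p)$; as stated they are not conjugate. (4) In the second direction you set $s_R = \overline{t_R}$ for all $R\subset\Omega$ and write $\|s\|_{s^p}\le\mu(\Omega)^{1/p-1/2}(\sum_{R\subset\Omega}|t_R|^2)^{1/2}$, but this presupposes the right-hand side is finite, which is circular; one should first restrict to a finite subfamily $\mathcal F$ of rectangles in $\Omega$, obtain $(\sum_{R\in\mathcal F}|t_R|^2)^{1/2}\le\|\ell\|\mu(\Omega)^{1/p-1/2}$, and then let $\mathcal F$ exhaust. (5) You show $t\in c^p$ but do not explicitly verify that $\ell(s)=\sum_R s_R t_R$ extends from finitely supported $s$ to all of $s^p$; this follows from density of finitely supported sequences in $s^p$ together with the bound in the first direction, but should be said. (6) Finally, the $c^p$ norm takes a supremum over \emph{open} $\Omega$, and the level sets $\Omega_k$ need not be open a priori in a quasi-metric space; one either observes that $M_s$ defined via open balls is lower semicontinuous, or enlarges $\Omega_k$ to an open set of comparable measure. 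None of these affect the substance of the argument.
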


We now introduce the lifting and projection operators $T_L$
and~$T_p$, as follows.

\begin{definition}\label{def-of-lifting-operator-on-product-case}
    For $f\in (\GG)'$, the {\it lifting operator} $T_{L}$ is
    defined by
    \begin{eqnarray}\label{lifting-operator}
        \lbrace (T_Lf)_R\rbrace_R
        := \big\{ \langle\psi_{\alpha_1}^{k_1}\psi_{\alpha_2}^{k_2},
            f \rangle \big\}_R,
    \end{eqnarray}
    where $R = Q_{\alpha_1}^{k_1}\times Q_{\alpha_2}^{k_2}$,
    $k_1$, $k_2\in\mathbb{Z}$, $\alpha_1\in \mathscr{Y}^{k_1}$,
    $\alpha_2\in \mathscr{Y}^{k_2}$ are dyadic rectangles in
    $\XX_1\times\XX_2$.
\end{definition}

\begin{definition}\label{def-of-projection-operator-on-product-case}
    Given a sequence $\lambda = \{\lambda_{R}\}$ of real numbers, we
    define the associated {\it projection operator}~$T_P$ by
    \begin{eqnarray}\label{projection-operator}
        T_P(\lambda)(x_1,x_2)
        := \sum_{\substack{R = Q_{\alpha_1}^{k_1}\times Q_{\alpha_2}^{k_2}, \\
            k_1,k_2\in\mathbb{Z},\alpha_1\in\mathscr{Y}^{k_1},
            \alpha_2\in\mathscr{Y}^{k_2}}}
            \lambda_{R}\cdot \psi_{\alpha_1}^{k_1}(x_1)\psi_{\alpha_2}^{k_2}(x_2).
    \end{eqnarray}
\end{definition}

From the definitions of the lifting and projection operators
$T_L$ and $T_P$, it follows that $f = T_P\circ T_{L}(f)$ in the
sense of the test function space $\GG$ and of the
distributions~$(\GG)'$. That is, $T_P\circ T_{L}$ is an
identity operator on the distributions~$(\GG)'$.

Next we give two auxiliary results which will be used in
establishing the duality in Theorem~\ref{thm-duality}.

\begin{prop}\label{prop-of-sp-Hp}
    Suppose $\max\big\{\frac{ \omega_1}{ \omega_1 + \eta_1 },
    \frac{ \omega_2}{\omega_2 + \eta_2 }\big\} < p \leq 1$,
    where $\omega_i$ is the upper dimension of~$X_i$ for $i =
    1$, $2$. Then for all $f\in H^p(\XX_1\times \XX_2)$, we have
    \begin{eqnarray}\label{Hp to sp}
        \|T_L(f)\|_{s^p}
        \lesssim \|f\|_{H^p(\XX_1\times \XX_2)}.
    \end{eqnarray}
    In the other direction, for each $s \in s^p$ we have
    \begin{eqnarray}\label{sp to Hp}
        \|T_P(s)\|_{H^p(\XX_1\times \XX_2)}
        \lesssim \|s\|_{s^p}.
    \end{eqnarray}
\end{prop}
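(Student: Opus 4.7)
The plan is to unpack the definitions and invoke orthonormality of the Auscher--Hyt\"onen wavelet basis; both inequalities reduce to essentially tautological computations, up to a single convergence issue in the quasi-Banach regime $p<1$ that is handled by truncation.

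For~\eqref{Hp to sp}, I will substitute $T_L(f)$ directly into the definition~\eqref{sp space} of the $s^p$ quasi-norm. The sequence $T_L(f)_R = \langle\psi_{\alpha_1}^{k_1}\psi_{\alpha_2}^{k_2},f\rangle$ is supported on rectangles $R = Q_{\alpha_1}^{k_1}\times Q_{\alpha_2}^{k_2}$ with $\alpha_i\in\mathscr{Y}^{k_i}$, extended by zero to the remaining dyadic rectangles indexed by $\mathscr{X}^{k_i}\setminus\mathscr{Y}^{k_i}$, so the sum defining $\|T_L(f)\|_{s^p}$ collapses to the sum appearing in $\widetilde S(f)$ from~\eqref{g function}. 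This gives the equality $\|T_L(f)\|_{s^p} = \|\widetilde S(f)\|_{L^p(X_1\times X_2)} = \|f\|_{H^p(X_1\times X_2)}$, which is stronger than~\eqref{Hp to sp}.

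For~\eqref{sp to Hp}, the core computation uses the orthonormality of the product basis $\{\psi_{\alpha_1}^{k_1}\psi_{\alpha_2}^{k_2}\}$ in $L^2(X_1\times X_2)$ inherited from Theorem~\ref{theorem AH orth basis}. When $s$ has only finitely many nonzero entries, $T_P(s)\in L^2(X_1\times X_2)$ and orthonormality yields $\langle \psi_{\alpha_1}^{k_1}\psi_{\alpha_2}^{k_2}, T_P(s)\rangle = s_R$ for $R = Q_{\alpha_1}^{k_1}\times Q_{\alpha_2}^{k_2}$ with $\alpha_i\in\mathscr{Y}^{k_i}$. Substituting into $\widetilde S(T_P(s))$ produces a sum over $\mathscr{Y}$-indexed rectangles that is pointwise dominated by the full sum over $\mathscr{X}$-indexed rectangles in~\eqref{sp space}, and taking $L^p$ norms gives $\|T_P(s)\|_{H^p}\le\|s\|_{s^p}$ for finitely supported~$s$.

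The main obstacle is passing from finitely supported sequences to arbitrary $s\in s^p$, ensuring that $T_P(s)$ converges in $(\GG)'$ and that the wavelet coefficient identity above survives the limit. I would handle this by truncation: for $s\in s^p$ consider $s^{(N)}$ obtained by restricting to $|k_1|,|k_2|\le N$, apply the estimate just established to the differences $s^{(N)} - s^{(M)}$, and use the quasi-triangle inequality in the $H^p$ quasi-norm together with $s^{(N)}\to s$ in $s^p$ to conclude that $\{T_P(s^{(N)})\}$ is Cauchy in $H^p(X_1\times X_2)$. Since test functions $\phi\in\GG$ have rapidly decaying wavelet coefficients---this is the content of the one-parameter estimate~\eqref{eqn:waveletreproducingformula2} iterated to the product setting, combined with Theorem~\ref{theorem wavelet is test function}---the formal pairings $\langle T_P(s),\phi\rangle := \sum_R s_R\langle\psi_{\alpha_1}^{k_1}\psi_{\alpha_2}^{k_2},\phi\rangle$ converge absolutely, so the $H^p$-limit of $T_P(s^{(N)})$ coincides with the distribution $T_P(s)$ and~\eqref{sp to Hp} follows with constant independent of~$s$.
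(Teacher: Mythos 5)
Your proposal is correct and uses the same core mechanism as the paper: inequality~\eqref{Hp to sp} is a direct unpacking of the definitions (indeed an equality, since $T_L(f)$ is supported on $\mathscr{Y}$-indexed rectangles and $\mu(R)^{-1/2}\chi_R = \tilde\chi_{Q_{\alpha_1}^{k_1}}\tilde\chi_{Q_{\alpha_2}^{k_2}}$), while~\eqref{sp to Hp} reduces via orthonormality of the product wavelet basis to the observation that the $\mathscr{Y}$-indexed sum in $\widetilde{S}(T_P(s))$ is a subsum of the $\mathscr{X}$-indexed sum defining $\|s\|_{s^p}$. The only difference is that you spell out the passage from finitely supported sequences to general $s\in s^p$ via truncation and completeness of $H^p$, a density argument the paper leaves implicit; this is a reasonable tidying-up rather than a different route, though your assertion that the formal pairing $\sum_R s_R\langle\psi_{\alpha_1}^{k_1}\psi_{\alpha_2}^{k_2},\phi\rangle$ converges absolutely for arbitrary $s\in s^p$ would need its own justification, and the cleaner move is simply to define $T_P(s)$ as the $H^p$-limit and use the embedding $H^p\subset(\GGp)'$.
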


\begin{proof}
Inequality~(\ref{Hp to sp}) follows directly from the
definitions of $H^p(\XX_1\times \XX_2)$
(Definition~\ref{def-Hp}) and the sequence space $s^p$
(formula~(\ref{sp space})).

We now prove (\ref{sp to Hp}). For each $s\in s^p$, by the
definitions of $H^p(X_1 \times X_2)$ and $T_P(s)$, we have
\begin{eqnarray*}
    \lefteqn{\|T_P(s)\|_{H^p(\XX_1\times \XX_2)}
        = \|\widetilde{S}(T_P(s))\|_{L^p(\XX_1\times \XX_2)}} \hspace{1cm}\\
    &&= \Big\|\Big\{ \sum_{\substack{R = Q_{\alpha_1}^{k_1}\times
        Q_{\alpha_2}^{k_2}, \\
        k_1,k_2\in\mathbb{Z},
        \alpha_1\in\mathscr{Y}^{k_1},
        \alpha_2\in\mathscr{Y}^{k_2}}}
        \Big| \Big\langle
        \psi_{\alpha_1}^{k_1}\psi_{\alpha_2}^{k_2},\ \
        \sum_{\substack{R'=Q_{\alpha'_1}^{k'_1}\times
        Q_{\alpha'_2}^{k'_2} \\
        k'_1,k'_2\in\mathbb{Z}, \alpha'_1\in\mathscr{Y}^{k'_1},
        \alpha'_2\in\mathscr{Y}^{k'_2}}}
        s_{R'}\cdot
        \psi_{\alpha'_1}^{k'_1}\psi_{\alpha'_2}^{k'_2}
        \Big\rangle\hskip.5cm \\
    &&\hskip1cm\tilde{\chi}_{Q_{\alpha_1}^{k_1}}(x_1)
        \tilde{\chi}_{Q_{\alpha_2}^{k_2}}(x_2) \Big|^2
        \Big\}^{1/2}\Big\|_{L^p(\XX_1\times
        \XX_2)}\\
    &&=\Big\|\Big\{ \sum_{\substack{R = Q_{\alpha_1}^{k_1}\times
        Q_{\alpha_2}^{k_2}, \\
        k_1,k_2\in\mathbb{Z},
        \alpha_1\in\mathscr{Y}^{k_1},
        \alpha_2\in\mathscr{Y}^{k_2}}}
        \big| s_{R}\cdot
        \tilde{\chi}_{Q_{\alpha_1}^{k_1}}(x_1)
        \tilde{\chi}_{Q_{\alpha_2}^{k_2}}(x_2) \big|^2
        \Big\}^{1/2}\Big\|_{L^p(\XX_1\times \XX_2)}\\
    &&\leq \|s\|_{s^p},
\end{eqnarray*}
where the third equality follows from the orthogonality of the
bases $\{\psi_{\alpha_1}^{k_1}\}$ and $\{\psi_{\alpha_2}^{k_2}\}$.
\end{proof}

\begin{prop}\label{prop-of-cp-CMOp}
    Suppose $\max\big\{\frac{ \omega_1}{ \omega_1 + \eta_1
    },\frac{ \omega_2}{ \omega_2 + \eta_2 }\big\} < p \leq 1$, where
    $\omega_i$ is the upper dimension of~$X_i$ for $i = 1$, $2$.
    For all $f\in \cmo^p( X_1\times X_2)$, we have
    \begin{eqnarray}\label{CMOp to cp}
        \|T_L(f)\|_{c^p}
        \lesssim \mathcal{C}_p(f).
    \end{eqnarray}
    In the other direction, for each $t\in c^p$,
    \begin{eqnarray}\label{cp to CMOp}
        \mathcal{C}_p\big(T_P(t)\big)
        \lesssim \|t\|_{c^p}.
    \end{eqnarray}
\end{prop}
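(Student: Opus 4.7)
\medskip
\noindent\textbf{Plan of proof of Proposition~\ref{prop-of-cp-CMOp}.}
Both bounds reduce, once one unpacks definitions, to the orthogonality of the Auscher--Hyt\"onen wavelet basis. The key observation is that $T_L$ only records coefficients against $\mathscr{Y}$-indexed wavelet products, and $T_P$ only synthesizes from $\mathscr{Y}$-indexed entries of the sequence. Interpreting $T_L(f)$ as a sequence indexed over \emph{all} dyadic rectangles by setting its entry to be $0$ whenever the rectangle $R=Q_{\alpha_1}^{k_1}\times Q_{\alpha_2}^{k_2}$ is not $\mathscr{Y}$-indexed (that is, whenever $\alpha_1\notin\mathscr{Y}^{k_1}$ or $\alpha_2\notin\mathscr{Y}^{k_2}$), the sup in the definition~\eqref{cp space} of $\|T_L(f)\|_{c^p}$ degenerates to the sup over the same open sets~$\Omega$ of the same normalization of $\sum_{R\subset\Omega,\ \mathscr{Y}\text{-indexed}}|\langle \psi_{\alpha_1}^{k_1}\psi_{\alpha_2}^{k_2},f\rangle|^2$, which is precisely $\mathcal{C}_p(f)^2$. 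Hence \eqref{CMOp to cp} holds (in fact with equality).

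For~\eqref{cp to CMOp} the plan is first to check that $T_P(t)$ is a well-defined distribution in $(\GG)'$ whenever $t\in c^p$. Since $t\in c^p$ implies a pointwise decay of $|t_R|$ in terms of $\mu(R)^{1/p-1/2}$ (by applying the $c^p$ definition to the singleton $\Omega\supset R$), and since the product $\psi_{\alpha_1}^{k_1}\psi_{\alpha_2}^{k_2}$ (suitably rescaled) is a product test function with uniform bounds as established in Section~\ref{sec:producttestfunctions}, the series defining $T_P(t)$ converges in $(\GG)'$ by the routine test-function/distribution pairing argument used in the proofs of Theorems~\ref{thm reproducing formula test function} and~\ref{thm product wavelet reproducing formula test function}. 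Then, by the orthogonality of the wavelet basis combined with the tensor structure, for each $\mathscr{Y}$-indexed rectangle $R=Q_{\alpha_1}^{k_1}\times Q_{\alpha_2}^{k_2}$,
\[
   \langle \psi_{\alpha_1}^{k_1}\psi_{\alpha_2}^{k_2},T_P(t)\rangle
   =\sum_{R'\ \mathscr{Y}\text{-indexed}} t_{R'}
     \langle \psi_{\alpha_1}^{k_1},\psi_{\alpha'_1}^{k'_1}\rangle
     \langle \psi_{\alpha_2}^{k_2},\psi_{\alpha'_2}^{k'_2}\rangle
   =t_R.
\]
Consequently, $\mathcal{C}_p(T_P(t))^2$ equals the sup over $\Omega$ of $\mu(\Omega)^{1-2/p}\sum_{R\subset\Omega,\ \mathscr{Y}\text{-indexed}}|t_R|^2$, which is dominated by the same quantity with the sum extended to all dyadic $R\subset\Omega$, i.e.\ by $\|t\|_{c^p}^2$. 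This yields~\eqref{cp to CMOp}.

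The main (only genuine) technical point I expect to have to address is the convergence of the series defining $T_P(t)$ and the legitimacy of swapping the sum with the distributional pairing $\langle \psi_{\alpha_1}^{k_1}\psi_{\alpha_2}^{k_2},\cdot\rangle$. This is standard once one knows that (i) $c^p$-control gives $|t_R|\lesssim \|t\|_{c^p}\mu(R)^{1/p-1/2}$, and (ii) each $\psi_{\alpha_i}^{k_i}/\sqrt{\mu(Q_{\alpha_i}^{k_i})}$ is a test function with the norm bound from Theorem~\ref{theorem wavelet is test function}; then the pairing can be handled either by truncating the sum defining $T_P$ (as with $f_n$ in Section~\ref{sec:functionspaces}) and passing to the limit in $(\GG)'$, or by directly invoking the decay estimates used earlier. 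Once this convergence step is in place the algebra is immediate and no further analytic estimates are required.
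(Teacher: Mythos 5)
Your proof is correct and follows essentially the same route as the paper: for~\eqref{CMOp to cp} both arguments rest on identifying $T_L(f)$ (via zero-padding on the non-$\mathscr{Y}$-indexed rectangles) as a sequence whose $c^p$ quantity is exactly $\mathcal{C}_p(f)$, and for~\eqref{cp to CMOp} both use orthogonality of the wavelet basis to get $\langle\psi_{\alpha_1}^{k_1}\psi_{\alpha_2}^{k_2},T_P(t)\rangle = t_R$, after which $\mathcal{C}_p(T_P(t))\le\|t\|_{c^p}$ follows because the $c^p$ supremum sums over the larger set of all dyadic rectangles. The only thing you add is the explicit remark that one should first verify $T_P(t)\in(\GG)'$ before evaluating $\mathcal{C}_p(T_P(t))$; the paper leaves this implicit, and your sketch (pointwise bound $|t_R|\lesssim\|t\|_{c^p}\,\mu(R)^{1/p-1/2}$ by testing a small open $\Omega\supset R$, combined with Theorem~\ref{theorem wavelet is test function} and the product-test-function estimates of Section~\ref{sec:producttestfunctions}) is the right way to fill that in.
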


\begin{proof}
Inequality~(\ref{CMOp to cp}) follows directly from the
definitions of $\cmo^p(\XX_1\times \XX_2)$
(Definition~\ref{def-CMO}) and $c^p$ (formula~(\ref{cp
space})).

We now prove~(\ref{cp to CMOp}). For each $t\in c^p$ we have
\begin{eqnarray*}
    \mathcal{C}_p\big(T_P(t)\big)
    &=& \sup_{\Omega}\Big\{{1\over\mu(\Omega)}
        \sum_{\substack{R = Q_{\alpha_1}^{k_1}\times
        Q_{\alpha_2}^{k_2}\subset \Omega, \\
        k_1,k_2\in\mathbb{Z},
        \alpha_1\in\mathscr{Y}^{k_1},
        \alpha_2\in\mathscr{Y}^{k_2}}}
        \big| \langle
        \psi_{\alpha_1}^{k_1}\psi_{\alpha_2}^{k_2},\ T_P(t) \rangle
        \big|^2 \Big\}^{1/2}\\
    &=& \sup_{\Omega}\Big\{{1\over\mu(\Omega)}
        \sum_{\substack{R = Q_{\alpha_1}^{k_1}\times
        Q_{\alpha_2}^{k_2}\subset \Omega, \\
        k_1,k_2\in\mathbb{Z},
        \alpha_1\in\mathscr{Y}^{k_1},
        \alpha_2\in\mathscr{Y}^{k_2}}}
        \big| t_R \big|^2 \Big\}^{1/2}\\
    &\leq& \|t\|_{c^p},
\end{eqnarray*}
where the second equality follows from the orthogonality of the
bases $\{\psi_{\alpha_1}^{k_1}\}$ and $\{\psi_{\alpha_2}^{k_2}\}$.
\end{proof}

We would like to point out that thanks to the orthogonality of
the wavelet basis from~\cite{AH}, the proofs given here of
(\ref{sp to Hp}) and~(\ref{cp to CMOp}) are much simpler than
those given in~\cite{HLL2}.

We are ready to prove the duality
$\big(H^p(\XX_1\times\XX_2)\big)' = \cmo^p(\XX_1\times\XX_2)$.

\begin{proof}[Proof of Theorem \ref{thm-duality}]
Suppose $\max\{{\omega_1\over \omega_1+\eta_1},{\omega_2\over
\omega_2 + \eta_2}\} <p\leq1$. We first show that there exists
a positive constant $C$ such that for each $g\in
\cmo^p(\XX_1\times\XX_2)$,
\begin{eqnarray}\label{duality inequality}
    |\langle f,g\rangle|
    \leq C\|f\|_{H^p(\XX_1\times\XX_2)}\mathcal{C}_p(g)
\end{eqnarray}
for all $f\in \GG$. It follows that
$\cmo^p(\XX_1\times\XX_2)\subset
\big(H^p(\XX_1\times\XX_2)\big)'$, since $\GG$ is dense in
$H^p(X_1\times X_2).$

To prove inequality (\ref{duality inequality}), for each $f \in
\GG$ and $g\in \cmo^p(\XX_1\times \XX_2)$, by the reproducing
formula (\ref{product reproducing formula}) we have
\begin{eqnarray*}
    \langle f,g\rangle &=& \sum_{k_1,k_2\in\mathbb{Z},
        \alpha_1\in\mathscr{Y}^{k_1}, \alpha_2\in\mathscr{Y}^{k_2}}
        \langle\psi_{\alpha_1}^{k_1}\psi_{\alpha_2}^{k_2},\ f\rangle
        \langle\psi_{\alpha_1}^{k_1}\psi_{\alpha_2}^{k_2},\  g \rangle\\
    &=& \sum_{R} (T_L(f))_R\cdot (T_L(g))_R.
\end{eqnarray*}
where $T_L(f)$ and $T_L(g)$ are the lifting operators as in
Definition \ref{def-of-lifting-operator-on-product-case}.

Then, by Propositions \ref{prop-of-sp-Hp}
and~\ref{prop-of-cp-CMOp}, we obtain
\begin{eqnarray*}
|\langle f,g\rangle|\leq |\langle T_L(f),
T_L(g)\rangle| \leq C
\|f\|_{H^p(\XX_1\times \XX_2)}\mathcal{C}_p\big(g).
\end{eqnarray*}

Conversely, suppose $l\in \big(H^p(\XX_1\times \XX_2)\big)'$.
Let $l_1 := l\circ T_P$. By Proposition \ref{prop-of-sp-Hp}, we
see that $l_1 \in (s^p)'$, since for each $s\in s^p$, $|l_1(s)|
= |l\big(T_P(s)\big)| \leq C\|l\|\,\|T_P(s)\|_{H^p(\XX_1\times
\XX_2)} \leq C\|l\|\|s\|_{s^p}$. Now we have
$$ l(g) = l\circ T_P\circ T_L(g) = l_1(T_L(g)) $$
for each $g\in \GG$. So by Proposition \ref{prop
dual-of-sequence-space}, there exists $t\in c^p$ such that
$l_1(s) = \langle t,s\rangle$ for all $s\in s^p$ and
$\|t\|_{c^p} \sim \|l_1\|\lesssim \|l\|$. Hence
\begin{eqnarray*}
    l(g)
    = \langle t, T_L(g)\rangle
    = \langle T_P(t), g\rangle.
\end{eqnarray*}
By Definition \ref{def-CMO} and Proposition
\ref{prop-of-cp-CMOp}, we obtain that
$\|T_P(t)\|_{\cmo^p(\XX_1\times \XX_2)}\lesssim
\|t\|_{c^p}\lesssim \|l\|$. Hence $\big(H^p(\XX_1\times
\XX_2)\big)'\subset \cmo^p(\XX_1\times \XX_2)$.
\end{proof}

Now we introduce the space of functions of vanishing mean
oscillation.

\begin{definition}\label{def-vmo}
    (Vanishing mean oscillation) We define the \emph{space
    $\vmo(X_1\times X_2)$ of functions of vanishing mean
    oscillation} to be the subspace of $\bmo(X_1\times X_2)$
    consisting of those $f\in \bmo(X_1\times X_2)$ satisfying
    the three properties
    \begin{eqnarray*}
        &&{\rm (a)}\hskip.5cm \lim_{\delta\rightarrow 0}\ \sup_{\Omega:\
            \mu(\Omega)<\delta}\Big\{
            {1\over\mu(\Omega)}
            \sum_{\substack{R = Q_{\alpha_1}^{k_1} \times
            Q_{\alpha_2}^{k_2}\subset \Omega \\
            k_1,k_2\in\mathbb{Z},
            \alpha_1\in\mathscr{Y}^{k_1},
            \alpha_2\in\mathscr{Y}^{k_2}}}
            \big| \langle
            \psi_{\alpha_1}^{k_1}\psi_{\alpha_2}^{k_2},f \rangle
             \big|^2 \Big\}^{1/2} = 0;\\
        &&{\rm (b)}\hskip.5cm \lim_{N\rightarrow \infty}\ \sup_{\Omega:\
            {\rm diam}(\Omega)>N}\Big\{
            {1\over\mu(\Omega)}
            \sum_{\substack{R = Q_{\alpha_1}^{k_1}\times
            Q_{\alpha_2}^{k_2}\subset \Omega, \\
            k_1,k_2\in\mathbb{Z},
            \alpha_1\in\mathscr{Y}^{k_1},
            \alpha_2\in\mathscr{Y}^{k_2}}}
            \big| \langle
            \psi_{\alpha_1}^{k_1}\psi_{\alpha_2}^{k_2},f \rangle
             \big|^2 \Big\}^{1/2} = 0;\\
        &&{\rm (c)}\hskip.5cm \lim_{N\rightarrow \infty}\ \sup_{\Omega:\
            \Omega \subset ( B(x_1,N)\times B(x_2,N))^c }\\
        &&\hskip3cm\Big\{ {1\over\mu(\Omega)}
            \sum_{\substack{R=Q_{\alpha_1}^{k_1}\times
            Q_{\alpha_2}^{k_2}\subset \Omega \\
            k_1,k_2\in\mathbb{Z},
            \alpha_1\in\mathscr{Y}^{k_1},
            \alpha_2\in\mathscr{Y}^{k_2}}}
            \big| \langle
            \psi_{\alpha_1}^{k_1}\psi_{\alpha_2}^{k_2},f \rangle
             \big|^2 \Big\}^{1/2} = 0,\ {\rm where\ } \\
        &&\hskip1.15cm \text{$x_1$ and $x_2$ are arbitrary fixed points
            in $X_1$ and $X_2$, respectively.}
    \end{eqnarray*}
\end{definition}

We now show the duality of $\vmo(\XX_1\times\XX_2)$ with
$H^1(\XX_1\times\XX_2)$.

\begin{theorem}\label{thm-duality 2}
    The Hardy space $H^1(\XX_1\times\XX_2)$ is the dual of
    $\vmo(\XX_1\times\XX_2)$:
    \[
        \big(\vmo(\XX_1\times\XX_2)\big)'
        =  H^1(\XX_1\times\XX_2).
    \]
\end{theorem}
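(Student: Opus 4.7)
The plan is to adapt the Lacey--Terwilleger--Wick approach \cite{LTW} to our wavelet setting, transferring the duality to a sequence-space problem via the lifting operator $T_L$ and the projection operator $T_P$. First, I would introduce a sequence-space analogue $c^1_0$ of $\vmo(X_1\times X_2)$, consisting of those sequences $t = \{t_R\}_R$ in $c^1$ (indexed over dyadic rectangles $R = Q^{k_1}_{\alpha_1}\times Q^{k_2}_{\alpha_2}$ with $\alpha_i \in \mathscr{Y}^{k_i}$) such that the three vanishing conditions in Definition~\ref{def-vmo} hold with $|\langle \psi^{k_1}_{\alpha_1} \psi^{k_2}_{\alpha_2}, f\rangle|$ replaced by $|t_R|$. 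By a direct verification analogous to Proposition~\ref{prop-of-cp-CMOp}, the lifting operator $T_L$ maps $\vmo(X_1\times X_2)$ continuously into $c^1_0$ with $\|T_L(f)\|_{c^1_0} \lesssim \|f\|_{\vmo}$, and conversely $T_P$ maps $c^1_0$ continuously into $\vmo(X_1\times X_2)$; moreover $T_P \circ T_L = \mathrm{id}$ on $(\GG)'$, so in particular on $\vmo$.

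Second, I would establish the sequence-space duality $(c^1_0)' = s^1$. The inclusion $s^1 \hookrightarrow (c^1_0)'$ is immediate from the natural pairing $\langle s, t\rangle = \sum_R s_R t_R$ combined with the duality $(s^1)' = c^1$ from Proposition~\ref{prop dual-of-sequence-space}. For the converse, I would show that the sequences with finite support are dense in $c^1_0$: any $t \in c^1_0$ can be approximated in $c^1$-norm by truncating to rectangles $R = Q^{k_1}_{\alpha_1}\times Q^{k_2}_{\alpha_2}$ with $|k_1|, |k_2| \leq M$ and whose centers $x^{k_i}_{\alpha_i}$ lie in a bounded region, where the truncation error is controlled simultaneously by the three vanishing conditions (a), (b), (c). Given this density, any $l \in (c^1_0)'$ is uniquely determined by its action on the indicator sequences $\{\delta_R\}$; the resulting sequence $s_R := l(\delta_R)$ must lie in $s^1$ with $\|s\|_{s^1} \lesssim \|l\|$, by testing against arbitrary $t \in c^1$ supported on a given open set $\Omega$ and invoking Proposition~\ref{prop dual-of-sequence-space}.

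Third, I would transfer this sequence-space duality back to the function-space setting. For the inclusion $H^1(X_1\times X_2) \hookrightarrow (\vmo(X_1\times X_2))'$, given $g \in H^1(X_1\times X_2)$, the product wavelet reproducing formula (Theorem~\ref{thm product wavelet reproducing formula test function}) yields $\langle f, g\rangle = \sum_R (T_L f)_R \, (T_L g)_R$ for $f$ in a dense subset of $\vmo$, and the estimate $|\langle f, g\rangle| \leq \|T_L f\|_{c^1} \|T_L g\|_{s^1} \lesssim \|f\|_{\vmo} \|g\|_{H^1}$ follows by combining the $s^1$--$c^1$ pairing with Propositions~\ref{prop-of-sp-Hp} and~\ref{prop-of-cp-CMOp}. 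Conversely, any $l \in (\vmo(X_1\times X_2))'$ pulls back through $T_P$ to a functional $l \circ T_P \in (c^1_0)' = s^1$, corresponding to some sequence $s \in s^1$; the function $g := T_P s$ lies in $H^1(X_1\times X_2)$ by Proposition~\ref{prop-of-sp-Hp} and represents $l$ via $l(f) = l(T_P T_L f) = \langle s, T_L f\rangle = \langle T_P s, f\rangle = \langle g, f\rangle$.

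The main obstacle will be establishing the density of finitely supported sequences in $c^1_0$ and, relatedly, confirming that $T_L$ genuinely maps $\vmo(X_1\times X_2)$ into $c^1_0$ rather than merely into $c^1$. The delicate point is that the three vanishing conditions (a), (b), (c) must be handled in parallel on a product space of homogeneous type without additional structure: the truncation must simultaneously drop rectangles of excessively small measure (using (a) applied to the open set formed by the union of those rectangles), rectangles of excessively large diameter (using (b)), and rectangles whose centers escape to infinity (using (c)). Controlling the three error terms requires carefully partitioning the collection of dyadic rectangles at each truncation level and exploiting the geometric doubling property of each factor~$(X_i, d_i, \mu_i)$ to ensure the resulting truncated sequences have finite support; once this is done, the duality follows cleanly by the argument sketched above.
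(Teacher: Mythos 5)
Your plan follows the same Lacey--Terwilleger--Wick strategy as the paper, simply routed explicitly through the sequence spaces $s^1$ and $c^1$. The paper works directly at the function-space level: it observes that $\vmo(X_1\times X_2)$ is the closure of the set $FW$ of finite wavelet sums in the $\bmo$ norm, derives $H^1\subset(\vmo)'$ from the already-established $H^1$--$\bmo$ duality (Theorem~\ref{thm-duality}), and obtains the reverse inclusion from density of $FW$ in $H^1$ together with the key norm estimate $\|f\|_{H^1}\leq C\sup_{b\in\vmo,\,\|b\|_{\bmo}=1}|\langle b,f\rangle|$ for $f\in FW$. Your version pushes the whole argument into the sequence spaces by defining $c^1_0$, showing that finitely supported sequences are dense, proving $(c^1_0)'=s^1$, and transferring via $T_L$, $T_P$. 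Since $T_L$ and $T_P$ are isometries up to constants (Propositions~\ref{prop-of-sp-Hp} and~\ref{prop-of-cp-CMOp}), your density claim is exactly the same fact as the paper's claim that $FW$ is dense in $\vmo$, and your $(c^1_0)'=s^1$ is the sequence-space mirror of the paper's reverse inclusion. So the two arguments are logically equivalent; neither buys anything essentially new, although your formulation is a bit more self-contained in that it keeps the Banach-space bookkeeping at the discrete level.

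One point worth tightening: showing $s=\{l(\delta_R)\}\in s^1$ does not follow from Proposition~\ref{prop dual-of-sequence-space} alone, since that proposition gives $(s^1)'=c^1$, which is the opposite direction of duality. What you actually need is the predual statement $\|s\|_{s^1}\lesssim\sup\{|\langle s,t\rangle|:\ t\ \text{finitely supported},\ \|t\|_{c^1}\leq1\}$, which is the sequence-space form of the norm inequality the paper states explicitly. The argument does go through --- for a finitely supported $s$ one may truncate $t$ to $\operatorname{supp} s$ without increasing $\|t\|_{c^1}$, so the supremum over finitely supported $t$ coincides with the supremum over all of $c^1$, and the isometric embedding of $s^1$ into its double dual then gives the bound; for general $s$ one truncates $s$ and passes to the limit by Fatou. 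But these steps should be made explicit, as they are where the LTW-style content actually resides, rather than being ascribed to Proposition~\ref{prop dual-of-sequence-space}.
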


\begin{proof}
The proof of this theorem is similar to the proof of the
duality between $\vmo$ and~$H^1$ on Euclidean space given in
Section~5 of~\cite{LTW}. Following \cite{LTW}, we only sketch
the main steps of the proof. First, we use $FW$ to denote the
set of finite linear combinations of terms of the form $\{
\psi_{\alpha_1}^{k_1}\cdot \psi_{\alpha_2}^{k_2} \}$, where $\{
\psi_{\alpha_i}^{k_i} \}$ are wavelets on $X_i$, $i=1,2$, as in
Theorem~\ref{theorem AH orth basis}. Second, from
Definition~\ref{def-vmo}, we obtain that $\vmo(X_1\times X_2)$
is the closure of $FW$ in the $\bmo(X_1\times X_2)$ norm.

The inclusion $ H^1(\XX_1\times\XX_2)\subset\big(
\vmo(\XX_1\times\XX_2)\big)'$ follows from the duality of
$H^1(X_1\times X_2)$ with $\bmo(X_1\times X_2)$, which was
shown in Theorem \ref{thm-duality}. The reverse containment
follows from the fact that $FW$ is dense in
$H^1(\XX_1\times\XX_2)$ in terms of the $H^1(\XX_1\times\XX_2)$
norm and from the following inequality: for $f\in FW$,
\[
    \|f\|_{H^1(\XX_1\times\XX_2)}
    \leq C \sup_{\substack{b\in \vmo(\XX_1\times\XX_2), \\
        \|b\|_{\bmo(\XX_1\times\XX_2)} = 1}}
        |\langle b,f\rangle|.
    \qedhere
\]
\end{proof}

\section{Calder\'on--Zygmund decomposition and interpolation
on Hardy spaces}\label{sec:CZdecomposition}
\setcounter{equation}{0}

In this section we provide the Calder\'on--Zygmund
decomposition and prove an interpolation theorem on $H^p(
X_1\times X_2 )$. Note that $H^p( X_1\times X_2 ) = L^p(
X_1\times X_2 )$ for $1 < p < \infty$. In this section,
$(x_1,x_2)$ denotes an element of~$X_1 \times X_2$.

\begin{theorem}\label{theorem C-Z decomposition for Hp}
    Let $\max\{\frac{\omega_1}{\omega_1 + \eta_1},
    \frac{\omega_2}{\omega_2 + \eta_2}\} < p_2 \le 1$, where
    $\omega_i$ is the upper dimension of $X_i$ for $i = 1$, $2$.
    Suppose $p_2 < p < p_1 < \infty$, $\alpha > 0$, and $f\in
    H^p(X_1\times X_2)$. Then we may write
    \[
        f(x_1,x_2) = g(x_1,x_2) + b(x_1,x_2),
    \]
    where
    \[
        g\in H^{p_1}(X_1\times X_2)
        \qquad\text{and}\qquad
        b\in H^{p_2}(X_1\times X_2)
    \]
    are such that $\|g\|^{p_1}_{H^{p_1}(X_1 \times X_2)} \le
    C\alpha^{p_1-p}\|f\|^p_{H^p(X_1 \times X_2)}$ and
    $\|b\|^{p_2}_{H^{p_2}(X_1 \times X_2)}\le
    C\alpha^{p_2-p}\|f\|^p_{H^p(X_1 \times X_2)}$. Here $C$ is
    an absolute constant.
\end{theorem}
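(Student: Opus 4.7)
The plan is to perform a stopping-time decomposition of the wavelet expansion of $f$, driven by the level sets of the discrete square function $\widetilde{S}(f)$. By the product reproducing formula (Theorem~\ref{thm product wavelet reproducing formula test function}),
$$
f = \sum_R \langle f,\psi^{k_1}_{\alpha_1}\psi^{k_2}_{\alpha_2}\rangle\,\psi^{k_1}_{\alpha_1}\psi^{k_2}_{\alpha_2},
$$
where $R=Q^{k_1}_{\alpha_1}\times Q^{k_2}_{\alpha_2}$ ranges over product dyadic rectangles with $\alpha_i\in\mathscr{Y}^{k_i}$. For $\ell\in\mathbb{Z}$, set $\Omega_\ell:=\{(x_1,x_2):\widetilde{S}(f)(x_1,x_2)>2^\ell\}$, and let $\widetilde{\Omega}_\ell$ be the enlargement obtained by applying the strong maximal operator on $X_1\times X_2$ to $\chi_{\Omega_\ell}$ at threshold $1/2$; the $L^q$-boundedness of the strong maximal function for some $q>1$ gives $\mu(\widetilde{\Omega}_\ell)\lesssim\mu(\Omega_\ell)$.

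Assign each rectangle $R$ to the unique class $\mathcal{R}_\ell$ with $\mu(R\cap\Omega_\ell)>\tfrac{1}{2}\mu(R)$ and $\mu(R\cap\Omega_{\ell+1})\leq\tfrac{1}{2}\mu(R)$; then $R\subset\widetilde{\Omega}_\ell$ almost everywhere. Fix $\ell_0$ with $2^{\ell_0}\leq\alpha<2^{\ell_0+1}$ and put
$$
g := \sum_{\ell\leq\ell_0}\sum_{R\in\mathcal{R}_\ell}\langle f,\psi_R\rangle\psi_R, \qquad
b := \sum_{\ell>\ell_0}\sum_{R\in\mathcal{R}_\ell}\langle f,\psi_R\rangle\psi_R,
$$
so $f=g+b$ in $(\GG)'$. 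The essential intermediate estimate is
$$
\sum_{R\in\mathcal{R}_\ell}|\langle f,\psi_R\rangle|^2 \lesssim 2^{2\ell}\mu(\widetilde{\Omega}_\ell),
$$
which follows because $\widetilde{S}(f)\leq 2^{\ell+1}$ on at least half of each $R\in\mathcal{R}_\ell$, combined with Fubini and the defining expression of $\widetilde{S}(f)$.

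For the $H^{p_2}$-norm of $b$ (with $p_2\leq 1<2$), apply the subadditivity $(\sum_\ell a_\ell)^{p_2/2}\leq\sum_\ell a_\ell^{p_2/2}$ to $\widetilde{S}(b)^{p_2}$, and on each level $\ell$ invoke H\"older's inequality with exponents $2/p_2$ and $2/(2-p_2)$, exploiting that the relevant integrand is supported in $\widetilde{\Omega}_\ell$. Combined with the core inequality, this yields $\|b\|_{H^{p_2}}^{p_2}\lesssim\sum_{\ell>\ell_0}2^{\ell p_2}\mu(\widetilde{\Omega}_\ell)$. The analogous argument for $g$ gives $\|g\|_{H^{p_1}}^{p_1}\lesssim\sum_{\ell\leq\ell_0}2^{\ell p_1}\mu(\widetilde{\Omega}_\ell)$ when $p_1\leq 2$; for $p_1>2$, the same bound follows from $H^{p_1}=L^{p_1}$ together with the $L^{p_1}$-boundedness of the square function (Theorem~\ref{theorem Littlewood Paley product}). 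Using $\mu(\widetilde{\Omega}_\ell)\lesssim\mu(\Omega_\ell)$ and the elementary numerical bounds $\sum_{\ell\leq\ell_0}2^{\ell(p_1-p)}2^{\ell p}\mu(\Omega_\ell)\leq\alpha^{p_1-p}\|f\|_{H^p}^p$ (since $p_1>p$) and $\sum_{\ell>\ell_0}2^{\ell(p_2-p)}2^{\ell p}\mu(\Omega_\ell)\leq\alpha^{p_2-p}\|f\|_{H^p}^p$ (since $p_2<p$) closes the argument.

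The principal obstacle is the core inequality $\sum_{R\in\mathcal{R}_\ell}|\langle f,\psi_R\rangle|^2\lesssim 2^{2\ell}\mu(\widetilde{\Omega}_\ell)$ in the absence of compact support for the wavelets $\psi_R$: while orthonormality in $L^2$ gives a Parseval identity for the full wavelet sum, the restriction to a stopping family $\mathcal{R}_\ell$ must be effected by pairing against $\widetilde{S}(f)^2$ pointwise on the complement of $\Omega_{\ell+1}$ and using that the rectangles $R\in\mathcal{R}_\ell$ occupy a definite fraction of that complement. The constraint $p_2>\max_i\{\omega_i/(\omega_i+\eta_i)\}$ enters at exactly one step, namely to guarantee that the Plancherel--P\'olya norm equivalence at exponent $p_2$ (Theorem~\ref{theorem P P inequality product}) is available, which is what allows the H\"older-plus-subadditivity estimate to be transferred from the square function of~$b$ back to the $H^{p_2}$-norm.
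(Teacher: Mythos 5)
Your proposal follows the same stopping-time scheme as the paper (level sets $\Omega_\ell$ of $\widetilde{S}(f)$, density classes $\mathcal{R}_\ell$, split at scale $\alpha$), and the core $\ell^2$ estimate
\[
\sum_{R\in\mathcal{R}_\ell}|\langle f,\psi_R\rangle|^2 \lesssim 2^{2\ell}\mu(\widetilde{\Omega}_\ell)
\]
is correct and is proved exactly as you sketch: since $\mu(R\setminus\Omega_{\ell+1})\ge\tfrac12\mu(R)$, one writes $1\le 2\mu(R\setminus\Omega_{\ell+1})/\mu(R)$, pulls the sum inside an integral over $\widetilde{\Omega}_\ell\setminus\Omega_{\ell+1}$ by Fubini, and dominates the integrand by $\widetilde{S}(f)^2\le 2^{2(\ell+1)}$ there. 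For the bad part $b$ this $\ell^2$ bound, followed by subadditivity of $t\mapsto t^{p_2/2}$ and H\"older on $\widetilde{\Omega}_\ell$, does give $\|b\|_{H^{p_2}}^{p_2}\lesssim\sum_{\ell>\ell_0}2^{\ell p_2}\mu(\widetilde{\Omega}_\ell)$ without invoking Fefferman--Stein, which is a genuinely cleaner route than the paper's Claim~2 (whose last step implicitly uses the strong-maximal-function comparison and the vector-valued Fefferman--Stein inequality, as in Claim~1).

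The gap is in the treatment of $g$ when $p_1>2$. The subadditivity step $(\sum_\ell G_\ell)^{p_1/2}\le\sum_\ell G_\ell^{p_1/2}$ requires $p_1/2\le 1$, so it only works for $p_1\le 2$; and for $p_1>2$ the $L^1$ control $\|G_\ell\|_{L^1}\lesssim 2^{2\ell}\mu(\widetilde{\Omega}_\ell)$ cannot be upgraded to an $L^{p_1/2}$ bound by H\"older (H\"older runs in the wrong direction on $\widetilde{\Omega}_\ell$ when $p_1/2>1$). The remark that ``$H^{p_1}=L^{p_1}$ together with the $L^{p_1}$-boundedness of the square function'' yields the same bound does not close this gap: those facts only give $\|g\|_{H^{p_1}}\sim\|\widetilde{S}(g)\|_{L^{p_1}}$, which is the quantity you are trying to estimate, not a bound for it. The argument that actually works uniformly for all $p_1$ in the admissible range is the one in the paper's Claim~1: since $\mu(R\cap\Omega_0^c)\ge\tfrac12\mu(R)$ for $R\in\mathcal{R}_0$, one has $\chi_R\le 2^{1/q}M_s(\chi_{R\cap\Omega_0^c})^{1/q}$ for $0<q<\min(p_1,2)$, and the vector-valued Fefferman--Stein inequality for the strong maximal operator $M_s$ (with exponents $2/q>1$ and $p_1/q>1$) gives $\|\widetilde{S}(g)\|_{L^{p_1}}^{p_1}\lesssim\int_{\widetilde{S}(f)\le\alpha}\widetilde{S}(f)^{p_1}\le\alpha^{p_1-p}\|f\|_{H^p}^p$. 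Finally, a minor point: the lower bound $p_2>\max_i\{\omega_i/(\omega_i+\eta_i)\}$ is not what makes the H\"older-plus-subadditivity step work (that step uses only Fubini and $p_2\le 2$); it is the ambient hypothesis guaranteeing that $H^{p_2}(X_1\times X_2)$ is defined and that Theorem~\ref{theorem P P inequality product} makes the square-function norm unambiguous, and it is also what makes the Fefferman--Stein argument applicable (since one must choose $q<p_i$ with $p_i/q>1$).
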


\begin{theorem}\label{theorem interpolation Hp}
    Suppose $\max\{\frac{\omega_1}{\omega_1+\eta_1},
    \frac{\omega_2}{\omega_2+\eta_2}\} < p_2 < p_1 < \infty$,
    where $\omega_i$ is the upper dimension of $X_i$ for $i =
    1$, $2$. Then the following two assertions hold.
    \begin{enumerate}
        \item[(a)] Let $T$ be a linear operator that is
            bounded from $H^{p_2}( X_1\times X_2 )$ to
            $L^{p_2}( X_1\times X_2 )$ and from $H^{p_1}(
            X_1\times X_2 )$ to $L^{p_1}( X_1\times X_2 )$.
            Then $T$ is bounded from $H^p( X_1\times X_2 )$
            to $L^p( X_1\times X_2 )$ for all $p$ with $p_2
            < p < p_1$.

        \item[(b)] Suppose $T$ is bounded on $H^{p_2}(
            X_1\times X_2 )$ and on $H^{p_1}( X_1\times X_2
            )$. Then $T$ is bounded on $H^p( X_1\times X_2
            )$ for all $p$ with $p_2 < p < p_1$.
    \end{enumerate}
\end{theorem}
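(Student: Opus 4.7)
The plan is a Marcinkiewicz-type real-interpolation argument in which Theorem~\ref{theorem C-Z decomposition for Hp} takes the place of the usual $L^p$-truncation of functions. I will present part~(a) in detail; part~(b) is obtained by exactly the same scheme, with the distribution function of $|Tf|$ replaced by that of $\widetilde{S}(Tf)$ and $L^{p_i}$ replaced by $H^{p_i}$ in the Chebyshev step. Fix $p_2<p<p_1$ and $f\in\GG\subset H^p(X_1\times X_2)$, which is dense in $H^p$ as observed after Definition~\ref{def-Hp}. Starting from the layer-cake formula
\[
    \|Tf\|_{L^p(X_1\times X_2)}^p
    = p\int_0^\infty \alpha^{p-1}\,\mu\{x:|Tf(x)|>\alpha\}\,d\alpha,
\]
for each $\alpha>0$ apply Theorem~\ref{theorem C-Z decomposition for Hp} at height $\alpha$ to split $f = g_\alpha + b_\alpha$. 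By linearity of $T$, Chebyshev's inequality, and the hypothesized strong-type bounds $T:H^{p_i}\to L^{p_i}$,
\[
    \mu\{|Tf|>\alpha\}
    \leq \mu\{|Tg_\alpha|>\alpha/2\}+\mu\{|Tb_\alpha|>\alpha/2\}
    \leq C\alpha^{-p_1}\|g_\alpha\|_{H^{p_1}}^{p_1}+C\alpha^{-p_2}\|b_\alpha\|_{H^{p_2}}^{p_2}.
\]

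Integrating against $p\alpha^{p-1}\,d\alpha$, the naive substitution of the stated C--Z summary bounds $\|g_\alpha\|_{H^{p_1}}^{p_1}\le C\alpha^{p_1-p}\|f\|_{H^p}^p$ and $\|b_\alpha\|_{H^{p_2}}^{p_2}\le C\alpha^{p_2-p}\|f\|_{H^p}^p$ makes both integrands proportional to $\alpha^{-1}\|f\|_{H^p}^p$, divergent at both endpoints; this delivers only a weak-type $(p,p)$ estimate, and constitutes the main obstacle. To upgrade weak type to strong type I will reach inside the proof of Theorem~\ref{theorem C-Z decomposition for Hp} and extract the layer-cake structure of $g_\alpha$ and $b_\alpha$: because the decomposition is built from the wavelet expansion of $f$ by thresholding coefficients according to the level sets $\Omega_k:=\{\widetilde{S}(f)>2^k\alpha\}$, one has the sharper estimates
\[
    \|g_\alpha\|_{H^{p_1}}^{p_1}
    \leq C\int_{\{\widetilde{S}(f)\le\alpha\}}\widetilde{S}(f)^{p_1}\,d\mu
        + C\alpha^{p_1}\mu\{\widetilde{S}(f)>\alpha\},
    \qquad
    \|b_\alpha\|_{H^{p_2}}^{p_2}
    \leq C\int_{\{\widetilde{S}(f)>\alpha\}}\widetilde{S}(f)^{p_2}\,d\mu,
\]
from which the summary bounds follow but which retain the $x$-dependence needed for interpolation.

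Plugging these refined estimates in and swapping the order of integration via Fubini converts each $\alpha$-integral into a convergent one: for example,
\[
    \int_0^\infty \alpha^{p-1-p_1}\!\!\int_{\{\widetilde{S}(f)\le\alpha\}}\!\!\widetilde{S}(f)^{p_1}\,d\mu\,d\alpha
    = \int \widetilde{S}(f)^{p_1}\!\!\int_{\widetilde{S}(f)}^\infty \alpha^{p-1-p_1}\,d\alpha\,d\mu
    = \frac{1}{p_1-p}\,\|f\|_{H^p}^p,
\]
where convergence at $\infty$ uses $p<p_1$, while the companion term $\int_0^\infty \alpha^{p-1}\mu\{\widetilde{S}(f)>\alpha\}\,d\alpha = p^{-1}\|f\|_{H^p}^p$ is immediate and the parallel Fubini computation for the bad part produces $(p-p_2)^{-1}\|f\|_{H^p}^p$, with convergence at $0$ guaranteed by $p>p_2$. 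Summing the contributions yields $\|Tf\|_{L^p}^p \leq C\|f\|_{H^p}^p$ on the dense subspace $\GG\subset H^p$, and the estimate extends to all of $H^p$ by continuous extension. For part~(b), the identical scheme applies after substituting $\widetilde{S}(Tf)$ for $|Tf|$ and invoking the hypothesized $H^{p_i}$-boundedness of $T$ on $g_\alpha$ and $b_\alpha$ in the Chebyshev step; the subsequent Fubini computations are unchanged.
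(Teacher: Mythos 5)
Your proposal is correct and follows essentially the same route as the paper: apply Theorem~\ref{theorem C-Z decomposition for Hp} at each height $\alpha$, use Chebyshev/Markov together with the hypothesized $H^{p_i}\to L^{p_i}$ (resp.\ $H^{p_i}\to H^{p_i}$) bounds, and then invoke the refined $\alpha$-dependent estimates for $\|g_\alpha\|_{H^{p_1}}^{p_1}$ and $\|b_\alpha\|_{H^{p_2}}^{p_2}$ established inside the proof of the Calder\'on--Zygmund decomposition, before integrating and swapping the order via Fubini. You correctly identify that the summary bounds of the C--Z theorem alone only yield weak type and that the key is to retain the restricted integrals of $\widetilde{S}(f)^{p_i}$ over the level sets of $\widetilde{S}(f)$; the paper does exactly this, quoting the refined estimates as ``Moreover, we have proved...''. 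The only cosmetic difference is that your bound for the good part carries an extra harmless term $C\alpha^{p_1}\mu\{\widetilde{S}(f)>\alpha\}$, which the paper's sharper Claim~1 avoids but which integrates to the same $C\|f\|_{H^p}^p$; and you carry out the Fubini computation explicitly where the paper elides it.
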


We first prove Theorem \ref{theorem C-Z decomposition for Hp}.

\begin{proof}[\bf Proof of Theorem \ref{theorem C-Z decomposition for Hp}]
Suppose that $f\in H^p( X_1\times X_2 )$ and $\alpha > 0$. Let
$\Omega_\ell := \{(x_1,x_2)\in  X_1\times X_2 :
\widetilde{S}(f)(x_1,x_2)
> \alpha 2^\ell\}$, where $\widetilde{S}(f)$ is the discrete product square
function defined in~\eqref{g function}.

Let $${\mathcal R}_0 := \left\{R = Q_{\alpha_1}^{k_1}\times
Q_{\alpha_2}^{k_2}: k_1, k_2\in\mathbb{Z},\alpha_1 \in
\mathscr{Y}^{k_1}, \alpha_2 \in \mathscr{Y}^{k_2}, \mu(R\cap
\Omega_0)<\frac{1}{2A_0}\mu(R)\right\}$$ and for $\ell\ge 1$
\begin{eqnarray*}
    {\mathcal R}_\ell
    &:=& \bigg\{R=Q_{\alpha_1}^{k_1}\times Q_{\alpha_2}^{k_2},\
        k_1, k_2\in\mathbb{Z},\alpha_1 \in \mathscr{Y}^{k_1},
        \alpha_2 \in \mathscr{Y}^{k_2}\\
    &&\hskip.6cm\,\text{such that}\,\,
        \mu(R\cap \Omega_{\ell - 1})
        \ge \frac{1}{2A_0}\mu(R)\,\,\text{and}\,\,
        \mu(R\cap \Omega_\ell)
        < \frac{1}{2A_0}\mu(R)\bigg\}.
\end{eqnarray*}
Applying the wavelet reproducing formula from Theorem 3.11, we have
\begin{eqnarray*}
    f(x_1,x_2)
    &=& \sum_{k_1}\sum_{\alpha_1 \in \mathscr{Y}^{k_1}}
        \sum_{k_2}\sum_{\alpha_2 \in \mathscr{Y}^{k_2}}
        \langle f,\psi_{\alpha_1}^{k_1}\psi_{\alpha_2}^{k_2} \rangle
        \psi_{\alpha_1}^{k_1}(x_1)\psi_{\alpha_2}^{k_2}(x_2)\\
    &=& \sum_{\ell\ge 1} \sum_{R = Q_{\alpha_1}^{k_1}\times Q_{\alpha_2}^{k_2}\in {\mathcal R}_\ell}
        \langle f,\psi_{\alpha_1}^{k_1}\psi_{\alpha_2}^{k_2} \rangle
        \psi_{\alpha_1}^{k_1}(x_1)\psi_{\alpha_2}^{k_2}(x_2)\\
    && {}+ \sum_{R=Q_{\alpha_1}^{k_1}\times Q_{\alpha_2}^{k_2}\in {\mathcal R}_0}
        \langle f,\psi_{\alpha_1}^{k_1}\psi_{\alpha_2}^{k_2} \rangle
        \psi_{\alpha_1}^{k_1}(x_1)\psi_{\alpha_2}^{k_2}(x_2)\\
    &=:& b(x,y) + g(x,y).
\end{eqnarray*}
When $p_1 > 1$, the $L^p(X_1 \times X_2)$, $1 < p < \infty$,
estimate for the Littlewood--Paley square function
implies that
\begin{eqnarray*}
    \|g\|_{L^{p_1}(X_1 \times X_2)}
    \le C \Big\|\Big\{
        \sum_{R=Q_{\alpha_1}^{k_1}\times Q_{\alpha_2}^{k_2}\in {\mathcal R}_0}\Big|
        \langle \psi_{\alpha_1}^{k_1}\psi_{\alpha_2}^{k_2},f \rangle
        \widetilde{\chi}_{Q_{\alpha_1}^{k_1}}(x_1)
        \widetilde{\chi}_{Q_{\alpha_2}^{k_2}}(x_2) \Big|^2 \Big\}^{1/2}
        \Big\|_{L^{p_1}(X_1 \times X_2)}.
\end{eqnarray*}

Next, we estimate $\|g\|_{H^{p_1}(X_1\times X_2)}$ when
$\max\{\frac{\omega_1}{\omega_1 + \eta_1},
\frac{\omega_2}{\omega_2 + \eta_2}\} < p_1 \leq 1$. We
estimate the $H^{p_1}(X_1\times X_2)$ norm directly. To this
end, using the wavelet coefficients of $g$, we observe that
\begin{eqnarray*}
    \|g\|_{H^{p_1}(X_1 \times X_2)}
    &\le& \Big\|\Big\{\sum_{k'_1}\sum_{\alpha'_1\in\mathscr{Y}^{k'_1}}
        \sum_{k'_1}\sum_{\alpha_2\in\mathscr{Y}^{k'_2}}\big|
        \langle \psi_{\alpha'_1}^{k'_1}\psi_{\alpha'_2}^{k'_2},g \rangle
        \widetilde{\chi}_{Q_{\alpha'_1}^{k'_1}}(x_1)
        \widetilde{\chi}_{Q_{\alpha'_2}^{k'_2}}(x_2) \big|^2 \Big\}^{1/2}\Big\|_{L^{p_1}(X_1 \times X_2)}\\
    &\leq & C\Big\|\Big\{ \sum\limits_{R=Q_{\alpha_1}^{k_1}\times Q_{\alpha_2}^{k_2}
        \in {\mathcal R}_0}\Big|
        \langle \psi_{\alpha_1}^{k_1}\psi_{\alpha_2}^{k_2},f \rangle
        \widetilde{\chi}_{Q_{\alpha_1}^{k_1}}(x_1)
        \widetilde{\chi}_{Q_{\alpha_2}^{k_2}}(x_2) \Big|^2\Big\}^{1/2}\Big\|_{L^{p_1}(X_1 \times X_2)}.
\end{eqnarray*}
Thus for all $p_1$ with $\max\{\frac{\omega_1}{\omega_1 +
\eta_1}, \frac{\omega_2}{\omega_2 + \eta_2}\} < p_1 < \infty$,
we have
\begin{eqnarray*}
    \|g\|_{H^{p_1}(X_1 \times X_2)}
    \leq C \Big\|\Big\{ \sum\limits_{R=Q_{\alpha_1}^{k_1}\times Q_{\alpha_2}^{k_2}
        \in {\mathcal R}_0}\Big|
        \langle \psi_{\alpha_1}^{k_1}\psi_{\alpha_2}^{k_2},f \rangle
        \widetilde{\chi}_{Q_{\alpha_1}^{k_1}}(x_1)
        \widetilde{\chi}_{Q_{\alpha_2}^{k_2}}(x_2) \Big|^2\Big\}^{1/2}\Big\|_{L^{p_1}(X_1 \times X_2)}.
\end{eqnarray*}

\noindent{\bf Claim 1:} We claim that
\begin{eqnarray*}
    \lefteqn{\int_{\widetilde{S}(f)(x_1,x_2)\le \alpha} \widetilde{S}(f)(x_1,x_2)^{p_1} \, d\mu_1(x_1)\, d\mu_2(x_2)}\hspace{1cm}\\
    &&\ \geq C
        \Big\|\Big\{ \sum_{R=Q_{\alpha_1}^{k_1}\times Q_{\alpha_2}^{k_2}\in {\mathcal R}_0}\Big|
        \langle \psi_{\alpha_1}^{k_1}\psi_{\alpha_2}^{k_2},f \rangle
        \widetilde{\chi}_{Q_{\alpha_1}^{k_1}}(x_1)
        \widetilde{\chi}_{Q_{\alpha_2}^{k_2}}(x_2) \Big|^2\Big\}^{1/2}\Big\|_{L^{p_1}(X_1 \times X_2)}.
\end{eqnarray*}
This implies that
\begin{eqnarray*}
    \|g\|_{H^{p_1}(X_1 \times X_2)}&\le& C \int_{\widetilde{S}(f)(x_1,x_2)\le \alpha} \widetilde{S}(f)(x_1,x_2)^{p_1} \, d\mu_1(x_1)\, d\mu_2(x_2)\\
    &\le&
    C\alpha^{p_1-p}  \int_{\widetilde{S}(f)(x_1,x_2)\le \alpha} \widetilde{S}(f)(x_1,x_2)^{p} \, d\mu_1(x_1)\, d\mu_2(x_2)\\
    &\le & C\alpha^{p_1-p}\|f \|^p_{H^p(X_1 \times X_2)}.
\end{eqnarray*}
To show Claim 1, we choose $0 < q < p_1$ and $q < 2$, and
observe that
\begin{align*}
    &\int_{\widetilde{S}(f)(x_1,x_2)\leq \alpha} \widetilde{S}(f)(x_1,x_2)^{p_1} \, d\mu_1(x_1)\, d\mu_2(x_2)\\
    &= \int_{\Omega_0^c}\Big\{
        \sum_{k_1}\sum_{\alpha_1\in\mathscr{Y}^{k_1}}\sum_{k_1}\sum_{\alpha_2\in\mathscr{Y}^{k_2}}
        \Big| \langle \psi_{\alpha_1}^{k_1}\psi_{\alpha_2}^{k_2},f \rangle
        \widetilde{\chi}_{Q_{\alpha_1}^{k_1}}(x_1)
        \widetilde{\chi}_{Q_{\alpha_2}^{k_2}}(x_2) \Big|^2 \Big\}^{p_1/2} \, d\mu_1(x_1)\, d\mu_2(x_2)\\
    &\geq C\int_{\Omega_0^c} \Big\{ \sum_{R=Q_{\alpha_1}^{k_1}\times Q_{\alpha_2}^{k_2}\in {\mathcal R}_0}\big| \langle \psi_{\alpha_1}^{k_1}\psi_{\alpha_2}^{k_2},f \rangle
        \widetilde{\chi}_{Q_{\alpha_1}^{k_1}}(x_1)
        \widetilde{\chi}_{Q_{\alpha_2}^{k_2}}(x_2) \big|^2 \Big\}^{\frac{p_1}{2}}\, d\mu_1(x_1)\, d\mu_2(x_2)\\
    &= C\int_{X_1\times X_2} \Big\{ \sum_{R=Q_{\alpha_1}^{k_1}\times Q_{\alpha_2}^{k_2}\in {\mathcal R}_0}\big| \langle \psi_{\alpha_1}^{k_1}\psi_{\alpha_2}^{k_2},f \rangle
        \widetilde{\chi}_{Q_{\alpha_1}^{k_1}}(x_1)
        \widetilde{\chi}_{Q_{\alpha_2}^{k_2}}(x_2)\chi_{\Omega_0^c}(x_1,x_2) \big|^2\Big\}^{\frac{p_1}{2}}\, d\mu_1(x_1)\, d\mu_2(x_2)
        \\
    &\geq  C \int_{ X_1\times X_2 } \bigg[\Big\{\sum_{R=Q_{\alpha_1}^{k_1}\times Q_{\alpha_2}^{k_2}\in {\mathcal R}_0} \\
    & \hspace{3.5cm}
        \Big(M_s \big(\langle \psi_{\alpha_1}^{k_1}\psi_{\alpha_2}^{k_2},f \rangle^q\mu(R)^{-q}
        \chi_{R\cap \Omega_0^c}\big)(x_1,x_2)\Big)^{\frac{2}{q}}\Big\}^{\frac{q}{2}}\bigg]^{\frac{p_1}{q}}\, d\mu_1(x_1)\, d\mu_2(x_2)\\
    &\geq  C \int_{X_1\times X_2 } \Big\{ \sum_{R=Q_{\alpha_1}^{k_1}\times Q_{\alpha_2}^{k_2}\in {\mathcal R}_0}\Big| \langle \psi_{\alpha_1}^{k_1}\psi_{\alpha_2}^{k_2},f \rangle
        \widetilde{\chi}_{Q_{\alpha_1}^{k_1}}(x_1)
        \widetilde{\chi}_{Q_{\alpha_2}^{k_2}}(x_2)\Big|^2\Big\}^{\frac{p_1}{2}}\, d\mu_1(x_1)\, d\mu_2(x_2),
\end{align*}
where in the last inequality we have used the fact that
$\mu(\Omega_0^c\cap R)\geq \frac{1}{2}\mu(R)$ for $R\in
{\mathcal R}_0$, and thus $\chi_R(x_1,x_2)\leq 2^{\frac{1}{q}}
M_s(\chi_{R\cap\Omega_0^c})^{\frac{1}{q}}(x_1,x_2),$ and in the
second to last inequality we have used the vector-valued
Fefferman--Stein inequality for strong maximal functions:
$$
    \Big\| \Big\{\sum\limits_{k=1}^\infty
        M_s(f_k)^r \Big\}^{\frac{1}{r}} \Big\|_{L^p(X_1\times X_2)}
    \le C \Big\| \Big\{\sum\limits_{k=1}^\infty
        |f_k|^r \Big\}^{\frac{1}{r}} \Big\|_{L^p(X_1\times X_2)},
$$
with the exponents $r = 2/q > 1$ and $p = p_1/q > 1$. Thus the
claim follows.

Let $\widetilde{\Omega_\ell}$ be the enlargement of the set
$\Omega_\ell$ given by $\widetilde{\Omega_\ell} :=
\{(x_1,x_2)\in X_1\times X_2: M_s(\chi_{\Omega_\ell})
> (2A_0)^{-1}\}$.

\medskip
\noindent {\bf Claim 2:} For $p_2\le 1$,
\begin{eqnarray*}
    \Big\|\sum_{R=Q_{\alpha_1}^{k_1}\times Q_{\alpha_2}^{k_2}\in {\mathcal R}_\ell}
        \langle f,\psi_{\alpha_1}^{k_1}\psi_{\alpha_2}^{k_2} \rangle
        \psi_{\alpha_1}^{k_1}(x_1)\psi_{\alpha_2}^{k_2}(x_2)  \Big\|^{p_2}_{H^{p_2}(X_1 \times X_2)}
        \le C(2^{\ell}\alpha)^{p_2} \mu(\widetilde{\Omega}_{\ell-1}).
\end{eqnarray*}
Claim 2 implies that
\begin{eqnarray*}
    ||b||^{p_2}_{H^{p_2}(X_1 \times X_2)}&\le & \sum_{\ell\ge 1}(2^{\ell}\alpha)^{p_2}\mu(\widetilde{\Omega}_{\ell-1})
    \le C \sum_{\ell\ge 1}(2^{\ell}\alpha)^{p_2}\mu(\Omega_{\ell-1})\\
    &\le & C\int_{\widetilde{S}(f)(x_1,x_2)>\alpha}\widetilde{S}(f)(x_1,x_2)^{p_2} \, d\mu_1(x_1)\, d\mu_2(x_2) \\
    &\le & C\alpha^{p_2-p}\int_{\widetilde{S}(f)(x_1,x_2)>\alpha}\widetilde{S}(f)(x_1,x_2)^{p} \, d\mu_1(x_1)\, d\mu_2(x_2)\\
    &\le& C\alpha^{p_2-p}\|f\|^p_{H^p(X_1 \times X_2)}.
\end{eqnarray*}

To show Claim 2, note that
\begin{eqnarray*}
    \lefteqn{\Big\|\sum_{R=Q_{\alpha_1}^{k_1}\times Q_{\alpha_2}^{k_2}\in {\mathcal R}_\ell}
        \langle f,\psi_{\alpha_1}^{k_1}\psi_{\alpha_2}^{k_2} \rangle
        \psi_{\alpha_1}^{k_1}(x_1)\psi_{\alpha_2}^{k_2}(x_2)  \Big\|^{p_2}_{H^{p_2}(X_1 \times X_2)}} \hspace{1cm}\\
    &\le &  C \Big\|\Big\{ \sum_{R=Q_{\alpha_1}^{k_1}\times Q_{\alpha_2}^{k_2}\in {\mathcal R}_\ell}\Big|
        \langle \psi_{\alpha_1}^{k_1}\psi_{\alpha_2}^{k_2},f \rangle
        \widetilde{\chi}_{Q_{\alpha_1}^{k_1}}(x_1)
        \widetilde{\chi}_{Q_{\alpha_2}^{k_2}}(x_2) \Big|^2\Big\}^{1/2}\Big\|_{L^{p_2}(X_1 \times X_2)}.
\end{eqnarray*}
Then
\begin{eqnarray*}
    \lefteqn{\sum_{\ell=1}^\infty (2^\ell
        \alpha)^{p_2}\mu(\widetilde{\Omega}_{\ell-1})} \\
    &\ge &
        \int_{\widetilde{\Omega}_{\ell-1}\backslash
        \Omega_{\ell}}\widetilde{S}(f)^{p_2}(x_1,x_2)\, d\mu_1(x_1)\, d\mu_2(x_2)\\
    & = &
        \int_{\widetilde{\Omega}_{\ell-1}\backslash \Omega_{\ell}}\Big\{
        \sum_{k_1}\sum_{\alpha_1\in\mathscr{Y}^{k_1}}\sum_{k_1}\sum_{\alpha_2\in\mathscr{Y}^{k_2}}
        \Big| \langle \psi_{\alpha_1}^{k_1}\psi_{\alpha_2}^{k_2},f \rangle
        \widetilde{\chi}_{Q_{\alpha_1}^{k_1}}(x_1)
        \widetilde{\chi}_{Q_{\alpha_2}^{k_2}}(x_2) \Big|^2 \Big\}^{\frac{p_2}{2}}\, d\mu_1(x_1)\, d\mu_2(x_2)\\
    &\geq & \int_{X_1\times X_2} \Big\{ \sum_{R=Q_{\alpha_1}^{k_1}\times Q_{\alpha_2}^{k_2}\in {\mathcal R}_0} \\
    && \hspace{2.5cm} \big| \langle \psi_{\alpha_1}^{k_1}\psi_{\alpha_2}^{k_2},f \rangle
        \widetilde{\chi}_{Q_{\alpha_1}^{k_1}}(x_1)
        \widetilde{\chi}_{Q_{\alpha_2}^{k_2}}(x_2)\chi_{\widetilde{\Omega}_{\ell-1}\backslash \Omega_{\ell}}(x_1,x_2) \big|^2 \Big\}^{\frac{p_2}{2}}\, d\mu_1(x_1)\, d\mu_2(x_2)\\
    &\geq & C \int_{X_1\times X_2 } \Big\{ \sum_{R=Q_{\alpha_1}^{k_1}\times Q_{\alpha_2}^{k_2}\in {\mathcal R}_0}\Big| \langle \psi_{\alpha_1}^{k_1}\psi_{\alpha_2}^{k_2},f \rangle
        \widetilde{\chi}_{Q_{\alpha_1}^{k_1}}(x_1)
        \widetilde{\chi}_{Q_{\alpha_2}^{k_2}}(x_2)\Big|^2\Big\}^{\frac{p_2}{2}}\, d\mu_1(x_1)\, d\mu_2(x_2),
\end{eqnarray*}
where the last inequality follows from the fact that if $R\in
{\mathcal R}_\ell$ then $R\subset \widetilde{\Omega}_{\ell-1}$,
and therefore $\mu\big(R\cap
(\widetilde{\Omega}_{\ell-1}\backslash
\Omega_\ell)\big)>\frac{1}{2}\mu(R)$. This establishes Claim~2.
Hence, the proof of Theorem~\ref{theorem C-Z decomposition for
Hp} is complete.
\end{proof}

We end our paper by proving the interpolation theorem on Hardy
spaces~$H^p(X_1\times X_2)$.

\begin{proof}[Proof of Theorem~\ref{theorem interpolation Hp}]
(a) Suppose that $T$  is bounded from $H^{p_2}(X_1\times X_2)$
to $L^{p_2}(X_1\times X_2)$ and  from $H^{p_1}(X_1\times X_2)$
to $L^{p_1}(X_1\times X_2)$. For each given $\lambda > 0$ and
$f\in H^p(X_1\times X_2)$, by the Calder\'on--Zygmund
decomposition we may write
\[
    f(x_1,x_2)
    = g(x_1,x_2) + b(x_1,x_2)
\]
with
\[
    \|g\|^{p_1}_{H^{p_1}(X_1 \times X_2)}
    \le C\lambda^{p_1-p}\|f\|_{H^p(X_1 \times X_2)}^p
    \,\,\,\ {\rm and\ }\,\,
    \|b\|_{H^{p_2}(X_1 \times X_2)}^{p_2}
    \le C\lambda^{p_2-p}\|f\|_{H^p(X_1 \times X_2)}^p.
\]

Moreover, we have proved the estimates
$$
\|g\|^{p_1}_{H^{p_1}(X_1 \times X_2)}\le C\int_{\widetilde{S}(f)(x_1,x_2)
\le \alpha}\widetilde{S}(f)^{p_1}(x_1,x_2)\, d\mu_1(x_1)\, d\mu_2(x_2)
$$
and
$$
\|b\|^{p_2}_{H^{p_2}(X_1 \times X_2)}\le C\int_{\widetilde{S}(f)(x_1,x_2)
> \alpha}\widetilde{S}(f)^{p_2}(x_1,x_2)\, d\mu_1(x_1)\, d\mu_2(x_2),
$$
which imply that
\begin{eqnarray*}
    \|Tf\|^p_{L^p(X_1 \times X_2)}
    &= &  p\int_0^\infty \alpha^{p-1} \mu(\{(x_1,x_2): |Tf(x_1,x_2)|>\alpha\}) \, d\alpha\\
    &\le & p\int_0^\infty \alpha^{p-1}\mu(\{(x_1,x_2): |Tg(x_1,x_2)|>\alpha/2\}) \, d\alpha\\
    && {}+ p\int_0^\infty \alpha^{p-1}\mu(\{(x_1,x_2): |Tb(x_1,x_2)|>\alpha/2\}) \, d\alpha\\
    &\le & p\int_0^\infty \alpha^{p-p_1-1}\int_{\widetilde{S}(f)(x_1,x_2)\le \alpha}\widetilde{S}(f)^{p_1}(x_1,x_2)\, d\mu_1(x_1)\, d\mu_2(x_2)  \, d\alpha\\
    && {}+ p\int_0^\infty \alpha^{p-p_2-1}\int_{\widetilde{S}(f)(x_1,x_2)>\alpha}\widetilde{S}(f)^{p_2}(x_1,x_2)\, d\mu_1(x_1)\, d\mu_2(x_2)  \, d\alpha\\
    &\le & C\|f\|^p_{H^p(X_1 \times X_2)}
\end{eqnarray*}
for all $p$ with $p_2 < p < p_1$. Hence $T$ is bounded from
$H^p(X_1 \times X_2)$ to $L^p(X_1 \times X_2)$, as required.

\noindent (b) We turn to the second assertion. For each given
$\lambda > 0$ and $f\in H^p(X_1 \times X_2)$, by the
Calder\'on--Zygmund decomposition again we have
\begin{eqnarray*}
    &&\hskip-1cm \mu(\{(x_1,x_2): |\widetilde{S}(Tf)(x_1,x_2)|>\alpha\})\\
    &\le & \mu(\{(x_1,x_2): |\widetilde{S}(Tg)(x_1,x_2)|>\frac{\alpha}{2}\})+\mu(\{(x_1,x_2): |\widetilde{S}(Tb)(x_1,x_2)|>\frac{\alpha}{2}\})\\
    &\le & C\alpha^{-p_1}\|Tg\|_{H^{p_1}(X_1 \times X_2)}^{p_1}+ C\alpha^{-p_2}\|Tb\|_{H^{p_2}(X_1 \times X_2)}^{p_2}\\
    &\le &  C\alpha^{-p_1}\|g\|^{p_1}_{H^{p_1}(X_1 \times X_2)}+ C\alpha^{-p_2}\|b\|^{p_2}_{H^{p_2}(X_1 \times X_2)}\\
    &\le  &  C \alpha^{-p_1}\int_{\widetilde{S}(f)(x_1,x_2)\le \alpha}\widetilde{S}(f)^{p_1}(x_1,x_2)\, d\mu_1(x_1)\, d\mu_2(x_2)\\
    && {}+ C\alpha^{-p_2}\int_{\widetilde{S}(f)(x_1,x_2)>
    \alpha}\widetilde{S}(f)^{p_2}(x_1,x_2)\, d\mu_1(x_1)\, d\mu_2(x_2).
\end{eqnarray*}
Therefore $\| \widetilde{S}(Tf)\|_{L^p(X_1\times X_2)}\le
C\|\widetilde{S}(f)\|_{H^p(X_1 \times X_2)}$. Hence $\|
Tf\|_{H^p(X_1 \times X_2)}\le C\|f\|_{H^p(X_1 \times X_2)}$ for
all $p$ with $p_2 < p < p_1$, as required.
\end{proof}



\bigskip

\noindent Department of Mathematics, Auburn University, AL
36849-5310, USA.

\noindent {\it E-mail address}: \texttt{hanyong@auburn.edu}

\medskip

\noindent School of Information Technology and Mathematical
Sciences, University of South Australia, Mawson Lakes SA 5095,
Australia,

\smallskip
\noindent and
\smallskip

\noindent Department of Mathematics, Macquarie University, NSW
2019, Australia.

\noindent {\it E-mail address}: \texttt{ji.li@mq.edu.au}

\medskip

\noindent School of Information Technology and Mathematical
Sciences, University of South Australia, Mawson Lakes SA 5095,
Australia.

\noindent {\it E-mail address}:
\texttt{Lesley.Ward@unisa.edu.au}

\end{document}